

\documentclass[custombib, a4paper,12pt,times,print,index]{PhDThesisPSnPDF}

\ifsetCustomMargin
  \RequirePackage[left=37mm,right=30mm,top=35mm,bottom=30mm]{geometry}
  \setFancyHdr 
\fi



\ifsetCustomFont
  \RequirePackage{helvet}
\fi






\RequirePackage[labelsep=space,tableposition=top]{caption}




\usepackage{subcaption}

\usepackage{booktabs} 
\usepackage{multirow}



\usepackage{amsfonts}
\usepackage{amsmath}
\usepackage{amssymb}
\usepackage{siunitx} 
\usepackage{amsthm}
\usepackage{comment}
\usepackage{bm}
\usepackage{bbm}



\raggedbottom






\ifuseCustomBib


\RequirePackage[backend=bibtex, style=ieee, citestyle=numeric-verb, sorting=nty, natbib=true]{biblatex}
\nocite{*}
\fi









\setcounter{secnumdepth}{2}
\setcounter{tocdepth}{2}

















\newtheorem{definition}{Definition}[section]
\newtheorem{proposition}[definition]{Proposition}

\newtheorem{corollary}[definition]{Corollary}
\newtheorem{theorem}[definition]{Theorem}
\newtheorem{lemma}[definition]{Lemma}
\newtheorem{remark}[definition]{Remark}
\newtheorem{notation}[definition]{Notation}


\usepackage[T1]{fontenc}
\usepackage[english]{babel}
\usepackage[latin1]{inputenx}
\usepackage[pdftex]{graphicx}           
\usepackage{setspace}                   
\usepackage{indentfirst}                
\usepackage{makeidx}                    
\usepackage[nottoc]{tocbibind}          
\usepackage{courier}                    
\usepackage{type1cm}                    
\usepackage{listings}                   
\usepackage{titletoc}
\usepackage{enumerate}
\usepackage[all]{xy}
\usepackage{quotchap}
\usepackage{url}
\usepackage{hyphenat}
\usepackage{fancyhdr}
\usepackage{xcolor}
\usepackage{bbold}
\usepackage[utf8]{inputenc}
\usepackage{csquotes}
\usepackage{ stmaryrd }


\usepackage{lmodern}
\usepackage{titlesec}
\usepackage{microtype}
\usepackage{tikz}

\definecolor{myblue}{RGB}{0,0,0} 

\titleformat{\chapter}[display]
  {\normalfont\bfseries\color{myblue}}
  {\filleft%
    \begin{tikzpicture}
    \node[
      outer sep=0pt,
      text width=2.5cm,
      minimum height=3cm,
      fill=myblue,
      font=\color{white}\fontsize{80}{90}\selectfont,
      align=center
      ] (num) {\thechapter};
    \node[
      rotate=90,
      anchor=south,
      font=\color{black}\Large\normalfont
      ] at ([xshift=-5pt]num.west) {\textls[180]{\textsc{\chaptertitlename}}};  
    \end{tikzpicture}%
  }
  {10pt}
  {\titlerule[2.5pt]\vskip3pt\titlerule\vskip4pt\Large\centering\sffamily}

\newcommand{\calgebra}{\mathfrak{A}}
\newcommand{\nalgebra}{\mathfrak{M}}
\newcommand{\algebra}{\mathfrak{S}}
\newcommand{\nanalytic}{\mathfrak{M}_\mathcal{A}}

\newcommand{\hilbert}{\mathcal{H}}
\newcommand{\iu}{i\mkern1mu}
\newcommand{\Dom}[1]{\mathcal{D}\left(#1\right)}
\newcommand{\ie}{\textit{i}.\textit{e}. }
\newcommand{\eg}{\textit{e}.\textit{g}.}
\newcommand{\strip}[1]{\mathcal{D}_{#1}}
\newcommand{\ip}[2]{\left\langle #1 , #2 \right \rangle}

\newcommand{\cchull}[1]{\overline{conv}\left(#1\right)}

\makeatletter
\newcommand*{\defeq}{\mathrel{\rlap{%
			\raisebox{0.3ex}{$\m@th\cdot$}}%
		\raisebox{-0.3ex}{$\m@th\cdot$}}%
	=}
\makeatother

\DeclareMathOperator{\Ker}{Ker}
\DeclareMathOperator{\Ran}{Ran}

\let\Re\relax
\newcommand{\Re}[1]{\operatorname{Re}\left(#1\right)}
\let\Im\relax
\newcommand{\Im}[1]{\operatorname{Im}\left(#1\right)}


\title{Lecture Notes on Noncommutative $L_p$ Spaces}


\author{Ricardo Correa da Silva}

\dept{Department of Physics}

\university{University of S\~ao Paulo}
\crest{\includegraphics[width=0.3\textwidth]{brasao_usp_cor}}



\college{University of São Paulo}


\subject{Operator Algebras} \keywords{{von Neumann Algebras} {KMS States} {Non-Commutative Integration} {}}




\ifdefineAbstract
 \pagestyle{PageStyleII}
 \includeonly{Titlepage/title,Titlepage/titleport, Declaration/declaration, Abstract/abstract}
\fi



\begin{document}

\frontmatter



\begin{titlepage}
    \begin{center}
        \vspace*{1cm}
        
        \fontsize{22}{0}
        \textbf{Lecture Notes on Non-Commutative $L_p$ Spaces}

        \vspace{0.5cm} 
        
        \vspace{2.5cm}
        
        \fontsize{18}{0}
        \textbf{Ricardo Correa da Silva}
                
        \vspace{2 cm}
        
		
        \vspace{3cm}
        
        
        
		\vfill
		
		S\~ao Paulo, 2016
        
    \end{center}
\end{titlepage}




\tableofcontents




\printnomencl

\mainmatter
\renewcommand\thechapter{P}
\chapter{Preamble}

These notes started been written during my studies on the subject and have been largely improved to be used as lecture notes in a mini-course on "Noncommutative $L_p$-spaces: the tracial case" at the Basque Center for Applied Mathematics (BCAM). This text has no intention of being a complete treatment on the topic, it is just a brief presentation which contains some known results of special interest of the author.

Here we intend to present a detailed construction of noncommutative measure and a first approach to noncomutative $L_p$-spaces. By the end, we intend to give an idea of the general constructions by U. Haagerup, and H. Araki and T. Masuda.

The majority of the results presented here can be found in M. Terp's lecture notes, \cite{terp81}, or in Q. Xu's lecture notes, \cite{Xu07}, with some minor modifications to suit the author's style or to give more details in the proofs.

We expect the reader is familiar with Functional Analysis and has some knowledge of concepts such as von Neumann Algebras, GNS-Contruction and Measure Theory. There are very good books on these topics such as \cite{Blackadar2006}, \cite{Bratelli1}, \cite{Bratelli2}, \cite{KR83}, and \cite{KR86}.

This test is divided in two chapters: the first one is devoted to present noncommutative measure, and the second one to noncommutative $L_p$-spaces, starting with the tracial case, in a quite detailed way, and finishing with just a general idea of the non-tracial one.

\renewcommand\thechapter{I}
\chapter{Introduction}


In Takesaki words, ``noncommutative measure and integration refers to the theory of weights, states, and traces on von Neumann algebras". It is quite clear that the explanation for this name relies on the Riesz-Markov-Kakutani Theorem, which puts Measure Theory in the light of Functional Analysis.

For any functional analyst who is told about the existence of such a thing called ``noncommutative integration'', a natural question arises: how about ``noncommutative $L_p$-spaces?''.
 
The first attempt to answer this question was done by E. Segal in \cite{Segal53}. In his work, Segal defined $L_1$ and $L_2$ in von Neumann algebras which admit a trace.
Later, J. Dixmier, in \cite{Dixmier53}, extended the concept for $1<p\leq \infty$, again for tracial von Neumann algebra.

Many years later, U. Haagerup was able to give a definition of noncommutative $L_p$-spaces including the type III algebras. After some works of A. Connes and Hilsum, a equivalent definition of noncommutative $L_p$-spaces was proposed by H. Araki and T. Masuda in \cite{Araki82}\footfullcite{Araki82} using the Hilbert space and based in Tomita-Takesaki (Relative) Modular Theory, but unfortunately not much seems to be known about this approach.

These topics have been object of extensively investigation, specially noncommutative measure which is the basis to define other objects of interest in Physics and Mathematics, \eg Noncommutative Geometry, and Noncommutative Probability.
\renewcommand{\thechapter}{\arabic{chapter}}
\setcounter{chapter}{0}

\chapter{Measurability with respect to a trace}  

\ifpdf
    \graphicspath{{Chapter1/Figs/Raster/}{Chapter1/Figs/PDF/}{Chapter1/Figs/}}
\else
    \graphicspath{{Chapter1/Figs/Vector/}{Chapter1/Figs/}}
\fi

In this chapter we will construct the noncommutative $L_p$-space in the particular case in which the von Neumann algebra has a trace.

\section{Positive Operators and Functionals}

To start this section we need the definition of positive operator and functionals.

\begin{definition}
	Let $\calgebra$ be a $C^\ast$-algebra. $A\in\calgebra$ is said to be positive if
	$$\left\|\mathbbm{1}-\frac{A}{\|A\|}\right\|\leq1.$$
	We denote by $\calgebra_+$ the set of positive operators in the algebra.
\end{definition}
\begin{definition}[Positive Linear Functional and States]
	\
	\begin{enumerate}[(i)]
		
		\item Let $\calgebra$ be a $C^\ast$-algebra and $\omega$ a linear functional on $\calgebra$, $\omega$ is said to be positive \index{functional !positive} if
		$$\omega(A)\geq 0 \ \forall A\in \calgebra_+.$$
		
		\item Let $\calgebra$ be a $C^\ast$-algebra and state a positive linear functional on $\calgebra$, $\omega$ is said to be a state \index{state} if
		$$\|\omega\|=1.$$
	\end{enumerate}
\end{definition}

\begin{remark}
	If $\calgebra$ is unital we also have $\omega(\mathbbm{1})=1$, because if $\omega$ is a state on $\calgebra$, $\|\omega\|=1 \Leftrightarrow \omega(\mathbbm{1}) =1$. Moreover, no continuity is required in the definition because it becomes a consequence of positiveness.
	
\end{remark}

\begin{proposition}[Cauchy-Schwarz inequality]
	\label{cauchyschwarz}
	Let $\calgebra$ be a $C^\ast$-algebra and $\omega$ a state. Then
	$$\begin{aligned}
	\omega(A^\ast B)	&=\overline{\omega(B^\ast A)}\\
	\left|\omega(A^\ast B)\right|^2 &\leq \omega(A^\ast A)\omega(B^\ast B)
	\end{aligned}$$
\end{proposition}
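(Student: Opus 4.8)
The plan is to read $(A,B)\mapsto\omega(A^\ast B)$ as a positive semi-definite sesquilinear form on $\calgebra$ and then imitate the usual Cauchy--Schwarz argument for inner products, after first pinning down the conjugate-symmetry identity. The one non-elementary input I will take for granted is the standard $C^\ast$-fact that $C^\ast C\in\calgebra_+$ for every $C\in\calgebra$ (the reader is assumed to know basic von Neumann / $C^\ast$-algebra theory).

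First I would establish that $\omega$ is real-valued on self-adjoint elements: any self-adjoint $H$ satisfies $H=H_+-H_-$ with $H_\pm\in\calgebra_+$ (continuous functional calculus, $H_\pm=\tfrac12(|H|\pm H)$, passing to the unitization if $\calgebra$ has no unit), so $\omega(H)=\omega(H_+)-\omega(H_-)\in\mathbb{R}$. Since $A^\ast B+B^\ast A$ and $\iu(B^\ast A-A^\ast B)$ are self-adjoint, writing $u=\omega(A^\ast B)$ and $v=\omega(B^\ast A)$ gives $u+v\in\mathbb{R}$ and $u-v\in\iu\mathbb{R}$; conjugating these two relations and adding them yields $\bar u=v$, i.e. $\omega(A^\ast B)=\overline{\omega(B^\ast A)}$.

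For the inequality, fix $A,B$. For every $\lambda\in\mathbb{C}$ the element $(A-\lambda B)^\ast(A-\lambda B)$ is positive, so positivity of $\omega$ gives
$$0\leq\omega(A^\ast A)-\bar\lambda\,\omega(B^\ast A)-\lambda\,\omega(A^\ast B)+|\lambda|^2\,\omega(B^\ast B).$$
Substituting $\omega(B^\ast A)=\overline{\omega(A^\ast B)}$ and choosing $\lambda=t\,\overline{\omega(A^\ast B)}$ with $t\in\mathbb{R}$ turns the right-hand side into the real quadratic
$$\omega(A^\ast A)-2t\,|\omega(A^\ast B)|^2+t^2\,|\omega(A^\ast B)|^2\,\omega(B^\ast B)\ \geq\ 0\qquad(t\in\mathbb{R}).$$
If $\omega(A^\ast B)=0$ the claimed bound is trivial; if $\omega(A^\ast B)\neq0$ and $\omega(B^\ast B)>0$, nonnegativity of this quadratic forces its discriminant to be nonpositive, which after dividing by $4|\omega(A^\ast B)|^2>0$ is exactly $|\omega(A^\ast B)|^2\leq\omega(A^\ast A)\,\omega(B^\ast B)$. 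The leftover case $\omega(B^\ast B)=0$ with $\omega(A^\ast B)\neq0$ is impossible, since then letting $t\to+\infty$ in the quadratic would force $-\infty\geq0$.

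The proof is essentially mechanical; the only points deserving attention are the real-valuedness of $\omega$ on self-adjoint elements (where the non-unital case is absorbed into the unitization together with the canonical extension of $\omega$) and the separate treatment of the degenerate cases $\omega(A^\ast B)=0$ and $\omega(B^\ast B)=0$. An alternative, if one is willing to invoke the GNS construction, is to write $\omega(A^\ast B)=\ip{\pi_\omega(A)\xi_\omega}{\pi_\omega(B)\xi_\omega}$ and apply Cauchy--Schwarz in $\hilbert_\omega$ directly, but this is heavier and mildly circular, since the well-definedness of the GNS inner product rests on precisely this estimate.
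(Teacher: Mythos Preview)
Your proof is correct and follows essentially the same line as the paper: both expand $\omega\big((A+\lambda B)^\ast(A+\lambda B)\big)\geq 0$ and read off conjugate symmetry and the Cauchy--Schwarz bound. The only notable difference is in the first identity: the paper simply plugs $\lambda=1$ and $\lambda=\iu$ into the positivity inequality and uses that the resulting nonnegative quantities are real, which immediately gives $\omega(A^\ast B)=\overline{\omega(B^\ast A)}$ without appealing to the Jordan decomposition or functional calculus; your route via ``$\omega$ is real on self-adjoints, hence on $A^\ast B+B^\ast A$ and $\iu(B^\ast A-A^\ast B)$'' is correct but slightly heavier machinery for the same conclusion. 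On the other hand, you treat the degenerate case $\omega(B^\ast B)=0$ more carefully than the paper, which dismisses it as ``trivial''.
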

\begin{proof}
	It is a consequence of positivity that, for all $\lambda\in\mathbb{C}$,
	$$0\leq \omega\left((A+\lambda B)^\ast(A+\lambda B)\right)=\omega(A^\ast A)+\overline{\lambda}\omega(B^\ast A)+\lambda\omega(A^\ast B)+|\lambda|^2\omega(B^\ast B).$$
	
	In Particular, for $\lambda=1,\iu$ we conclude, respectively, that $\Im{\omega(A^\ast B)}=-\Im{\omega(B^\ast A)}$ and $\Re{\omega(A^\ast B)}=\Re{\omega(B^\ast A)}$. Hence $\omega(A^\ast B)=\overline{\omega(B^\ast A)}$ and
	$$0\leq \omega(A^\ast A)+\overline{\lambda\omega(A^\ast B)}+\lambda\omega(A^\ast B)+|\lambda|^2\omega(B^\ast B).$$
	
	If $\omega(B^\ast B)=0$ the inequality is trivial. Assuming $\omega(B^\ast B)\neq0$ and taking \linebreak $\lambda=-\frac{\overline{\omega(A^\ast B)}}{\omega(B^\ast B)}$ we have that
	$$0\leq \omega(A^\ast A)-\frac{|\omega(A^\ast B)|^2}{\omega(B^\ast B)}.$$
	
\end{proof}

\begin{proposition}
	\label{PPC}
	Let $\omega$ be a linear functional on a unital $C^\ast$-algebra $\calgebra$. The following are equivalent
	\begin{enumerate}[(i)]
		\item $\omega$ is positive.
		\item $\omega$ is continuous and $\|\omega\|=\omega(\mathbbm{1})$.
	\end{enumerate} 
\end{proposition}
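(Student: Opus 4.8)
The plan is to establish the two implications separately; $(i)\Rightarrow(ii)$ is essentially a direct computation with the order structure, whereas $(ii)\Rightarrow(i)$ contains the one genuinely clever step.

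First I would prove $(i)\Rightarrow(ii)$. Assume $\omega$ is positive. For a self-adjoint $A$ with $\|A\|\leq 1$ the spectrum lies in $[-1,1]$, so both $\mathbbm{1}+A$ and $\mathbbm{1}-A$ are positive; applying $\omega$ gives two nonnegative reals whose half-difference is $\omega(A)$ and whose half-sum is $\omega(\mathbbm{1})$, so $\omega(A)\in\mathbb{R}$ and $|\omega(A)|\leq\omega(\mathbbm{1})$. Writing a general $A$ as a combination of the self-adjoint elements $(A+A^\ast)/2$ and $(A-A^\ast)/(2\iu)$, each of norm $\leq\|A\|$, already yields continuity of $\omega$. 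To pin down the sharp value of the norm I would invoke the Cauchy--Schwarz inequality of Proposition~\ref{cauchyschwarz} — whose proof uses only positivity — in the form $|\omega(A)|^2=|\omega(\mathbbm{1}^\ast A)|^2\leq\omega(\mathbbm{1})\,\omega(A^\ast A)$, and then estimate $\omega(A^\ast A)\leq\|A\|^2\,\omega(\mathbbm{1})$ using $\|A\|^2\mathbbm{1}-A^\ast A\geq 0$ and positivity. This gives $\|\omega\|\leq\omega(\mathbbm{1})$, while $\omega(\mathbbm{1})\leq\|\omega\|\,\|\mathbbm{1}\|=\|\omega\|$ is automatic, hence equality.

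For $(ii)\Rightarrow(i)$, assume $\omega$ is continuous with $\|\omega\|=\omega(\mathbbm{1})$; discarding the trivial case $\omega=0$, rescale so that $\|\omega\|=\omega(\mathbbm{1})=1$. The first step is to show $\omega$ is real on self-adjoint elements: for $A=A^\ast$ write $\omega(A)=\alpha+\iu\beta$ and evaluate on the perturbations $A+\iu t\mathbbm{1}$, $t\in\mathbb{R}$. The $C^\ast$-identity gives $\|A+\iu t\mathbbm{1}\|^2=\|A^2+t^2\mathbbm{1}\|=\|A\|^2+t^2$, while $|\omega(A+\iu t\mathbbm{1})|^2=\alpha^2+(\beta+t)^2$; the bound $|\omega(\cdot)|\leq\|\cdot\|$ then forces $\alpha^2+\beta^2+2\beta t\leq\|A\|^2$ for all real $t$, which is impossible unless $\beta=0$. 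The second step is immediate: given $A\geq 0$, rescale to $\|A\|\leq 1$, so $\mathbbm{1}-A$ is self-adjoint with spectrum in $[0,1]$, hence $\|\mathbbm{1}-A\|\leq 1$ and $|1-\omega(A)|=|\omega(\mathbbm{1}-A)|\leq 1$; since $\omega(A)$ is real by the first step, this yields $\omega(A)\geq 0$.

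I expect the main obstacle to be precisely that first step of the converse — the trick of perturbing a self-adjoint element by $\iu t\mathbbm{1}$ and squeezing with the $C^\ast$-norm identity to kill the imaginary part of $\omega$ is the only non-routine idea in the argument. A secondary point to handle with care is the legitimacy of applying Proposition~\ref{cauchyschwarz} to an unnormalized positive functional: one either notes that its proof never uses $\|\omega\|=1$, or first observes that $\omega(\mathbbm{1})>0$ whenever $\omega\neq 0$ and normalizes. Everything else reduces to the spectral observation that $\mathbbm{1}\pm A$, respectively $\mathbbm{1}-A$, is positive when $\|A\|\leq 1$.
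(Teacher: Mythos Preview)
Your proposal is correct. The converse direction is essentially the same as the paper's, which also normalizes and invokes $\left\|\mathbbm{1}-\frac{A}{\|A\|}\right\|\leq 1$ for $A\geq 0$; in fact you supply the missing ingredient the paper leaves implicit, namely the $A+\iu t\mathbbm{1}$ trick that forces $\omega(A)\in\mathbb{R}$ for self-adjoint $A$ so that $|1-\omega(A)|\leq 1$ actually yields $\omega(A)\geq 0$.

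Where you genuinely diverge is in establishing continuity in $(i)\Rightarrow(ii)$. The paper argues by contradiction: if $\omega$ were unbounded on positive norm-one elements one could build a norm-convergent series $\sum 2^{-n}A_n$ whose image under $\omega$ dominates every integer; then the decomposition of an arbitrary element as a combination of four positives gives $|\omega(A)|\leq 4M$. You instead split into real and imaginary self-adjoint parts and use $\mathbbm{1}\pm A\geq 0$ directly. Your route is slightly more elementary (no limiting argument, no appeal to completeness), while the paper's has the virtue of isolating a uniform bound on positives as a standalone step. Both then finish the sharp norm identification the same way, via Cauchy--Schwarz; your caveat about applying Proposition~\ref{cauchyschwarz} to an unnormalized positive functional is well taken and the paper is equally casual on that point.
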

\begin{proof}
	$(i)\Rightarrow(ii)$ First, let us prove that $\omega(A)$, with $A$ positive and norm-one operator, are uniformly bounded by some constant $M>0$. In fact, if this is not true, there exists a sequence of positive elements $A_n$, $\|A_n\|=1$, such that $\omega(A_n)>n 2^n$. But then $\displaystyle B_m=\sum_{n=1}^{m} 2^{-n} A_n$ is a norm convergent sequence to some positive element $B$ and it follows from positivity that $$n=\omega(B_m)\leq \omega(B) \ \forall n\in \mathbb{N}.$$
	
	It is a contradiction and we can define
	$$M=\sup\{\omega(A) | A\geq0,\ \|A\|=1\}.$$
	
	It follow from the polarization identity that for each $\displaystyle A\in \calgebra$ with $\|A\|=1$, there exists $A_i\in \calgebra_+$ with $\|A_i\|\leq 1$ and $\displaystyle A=\sum_{j=0}^{3}\iu^jA_j$. Thus, $|\omega(A)|\leq 4M$ which means $\omega$ is continuous.
	
	Furthermore, $\|\omega\|\geq \omega(\mathbbm{1})$. On the other hand, by the Cauchy-Schwarz inequality
	$$|\omega(A)|^2= |\omega(A\mathbbm{1})|^2\leq |\omega(A)| \|\omega(\mathbbm{1})| \Rightarrow |\omega(A)|\leq |\omega(\mathbbm{1})|.$$
	
	$(ii)\Rightarrow(i)$ Dividing by its norm, we can suppose $\|\omega\|=1$, and by hypotheses, $\omega(\mathbbm{1})=1$.
	
	Now, let $A$ be a positive element, then
	$$\left\|\mathbbm{1}-\frac{A}{\|A\|}\right\|\leq 1.$$
	
\end{proof}


\section{Weights}

A weight is, in some way, a natural generalization of positive linear functionals where the co-domain is the positive extended real line $\overline{\mathbb{R}}$, as will be clear soon.

\begin{definition}
	A weight\index{weight} on a $C^*$ -algebra $\calgebra$ is a function $\phi:\calgebra_+ \to \overline{\mathbb{R}}_+$ such that
	\begin{enumerate}[(i)]
		\item $\phi(\lambda A)=\lambda \phi(A) \quad \forall A\in \calgebra_+$ and $\forall \lambda \geq 0$;
		\item $\phi(A+B)=\phi(A)+\phi(B)  \quad \forall A, B\in \calgebra_+$
	\end{enumerate}
\end{definition}

It is important to stress that we use the convention $\infty\cdot 0=0$. 

\begin{definition}
	\label{weightsets}
	Let $\phi$ be a weight on a $C^\ast$-algebra $\calgebra$, we define:
	\begin{enumerate}[(i)]
		\item $ \displaystyle \mathfrak{N}_\phi=\mathfrak{D}^2_\phi=\{A \in \calgebra \ | \ \phi(A^\ast A)<\infty\}$
		\item $ \displaystyle \mathfrak{F}_\phi=\{A \in \calgebra_+ \ | \ \phi(A)<\infty\}$
		\item $ \displaystyle \mathfrak{M}_\phi=\mathfrak{D}^1_\phi=span\left[\mathfrak{F}_\phi \right]$
		\item $ \displaystyle N_\phi= \{A \in \calgebra \ | \ \phi(A^\ast A)=0\}$
	\end{enumerate}
\end{definition}

Some important definitions and results for a thorough understanding of the following, such as the Krein-Milman Theorem, can be found in the standard literature.

\begin{proposition}
	\label{simplepropweights}
	The following properties hold:
	\begin{enumerate}[(i)]
		\item $ \displaystyle \mathfrak{F}_\phi \subset \mathfrak{N}_\phi\cap \mathfrak{N}_\phi^\ast $.
		\item $ \displaystyle \mathfrak{F}_\phi$ is a face of $\calgebra_+$.
		\item  $ \displaystyle \mathfrak{N}_\phi$ and $ \displaystyle N_\phi$ are left ideals of $\calgebra$.
		\item $ \displaystyle \mathfrak{M}_\phi=\mathfrak{N}_\phi^\ast \mathfrak{N}_\phi=span\left[\{A^\ast A | A \in \mathfrak{N}_\phi\}\right]$.
		\item $\displaystyle \mathfrak{M}_\phi$ is a sub-$\ast$-algebra of $\calgebra$ and $\mathfrak{M}_+=\mathfrak{M}\cap \calgebra_+=\mathfrak{F}_\phi$.
	\end{enumerate}
\end{proposition}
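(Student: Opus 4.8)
The plan is to base everything on one elementary observation --- \emph{monotonicity} of weights: if $0\le A\le B$ in $\calgebra_+$ then $\phi(A)\le\phi(B)$, which follows at once from $\phi(B)=\phi(A)+\phi(B-A)$ together with $\phi(B-A)\ge 0$. I would then prove the five items in the stated order, since item (i) is needed in (iv) and (v).

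For (i), if $A\in\mathfrak{F}_\phi$ then $A^{\ast}A=A^{2}\le\|A\|\,A$, because $\|A\|\,A-A^{2}=A^{1/2}(\|A\|\mathbbm{1}-A)A^{1/2}\ge0$; monotonicity then gives $\phi(A^{\ast}A)\le\|A\|\,\phi(A)<\infty$, and since $A=A^{\ast}$ this puts $A$ in $\mathfrak{N}_\phi\cap\mathfrak{N}_\phi^{\ast}$. For (ii), $\mathfrak{F}_\phi$ is closed under sums and nonnegative scalar multiples directly from the weight axioms, and it is a face of $\calgebra_+$ because $a,b\in\calgebra_+$ with $a+b\in\mathfrak{F}_\phi$ satisfy $\phi(a)+\phi(b)=\phi(a+b)<\infty$, hence $a,b\in\mathfrak{F}_\phi$ (equivalently, by monotonicity, $\mathfrak{F}_\phi$ is hereditary). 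For (iii), $\mathfrak{N}_\phi$ is a linear subspace via the estimate $(A+B)^{\ast}(A+B)\le 2A^{\ast}A+2B^{\ast}B$ together with homogeneity of $\phi$, and it is a left ideal because $\|B\|^{2}\mathbbm{1}-B^{\ast}B\ge0$ yields $(BA)^{\ast}(BA)=A^{\ast}B^{\ast}BA\le\|B\|^{2}A^{\ast}A$, so $\phi((BA)^{\ast}BA)\le\|B\|^{2}\phi(A^{\ast}A)<\infty$; the identical estimates with ``$<\infty$'' replaced by ``$=0$'' settle the statement for $N_\phi$.

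For (iv), the polarization identity $B^{\ast}A=\tfrac{1}{4}\sum_{k=0}^{3}\iu^{k}(A+\iu^{k}B)^{\ast}(A+\iu^{k}B)$ together with the fact that $\mathfrak{N}_\phi$ is a subspace shows $\mathfrak{N}_\phi^{\ast}\mathfrak{N}_\phi=span\{A^{\ast}A : A\in\mathfrak{N}_\phi\}$; each such $A^{\ast}A$ is positive with $\phi(A^{\ast}A)<\infty$, hence lies in $\mathfrak{F}_\phi\subseteq\mathfrak{M}_\phi$, giving one inclusion, while conversely every $A\in\mathfrak{F}_\phi$ equals $(A^{1/2})^{\ast}(A^{1/2})$ with $A^{1/2}\in\mathfrak{N}_\phi$ (since $\phi((A^{1/2})^{\ast}A^{1/2})=\phi(A)<\infty$), so $\mathfrak{F}_\phi$, and hence $\mathfrak{M}_\phi=span[\mathfrak{F}_\phi]$, sits inside $span\{A^{\ast}A\}$. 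For (v), $\mathfrak{M}_\phi$ is a $\ast$-closed subspace because $\mathfrak{F}_\phi$ consists of self-adjoint elements; it is closed under multiplication because, by bilinearity it is enough to multiply generators $B^{\ast}A$ and $D^{\ast}C$ with $A,B,C,D\in\mathfrak{N}_\phi$, and their product $B^{\ast}(AD^{\ast}C)$ lies in $\mathfrak{N}_\phi^{\ast}\mathfrak{N}_\phi=\mathfrak{M}_\phi$ since $AD^{\ast}C\in\mathfrak{N}_\phi$ by the left-ideal property; and for $\mathfrak{M}_+=\mathfrak{F}_\phi$ one inclusion is trivial, while if $x\in\mathfrak{M}_\phi\cap\calgebra_+$ we write $x$ as a real linear combination of elements of $\mathfrak{F}_\phi$ (possible since $x=x^{\ast}$), collect the positive and negative coefficients into $x=P-N$ with $P,N\in\mathfrak{F}_\phi$, and observe $0\le x\le x+N=P\in\mathfrak{F}_\phi$, so the face property of (ii) forces $x\in\mathfrak{F}_\phi$.

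All the steps are routine manipulations with the order relation on $\calgebra_+$ and the continuous functional calculus; the points that need a little care are the bookkeeping in (iv) and (v) --- in particular passing through $A^{1/2}$ in order to land inside $\mathfrak{N}_\phi$, and the positive/negative splitting for $\mathfrak{M}_+=\mathfrak{F}_\phi$ --- and keeping in mind that inequalities such as $\|B\|^{2}\mathbbm{1}-B^{\ast}B\ge0$ are to be read in the unitization of $\calgebra$, whereas the conclusions drawn from them, e.g.\ $A^{\ast}B^{\ast}BA\le\|B\|^{2}A^{\ast}A$, remain statements about elements of $\calgebra$ itself.
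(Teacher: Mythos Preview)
Your proof is correct and follows essentially the same approach as the paper's: monotonicity of $\phi$, the inequality $A^{\ast}A\le\|A\|A$ for (i), the bound $B^{\ast}A^{\ast}AB\le\|A\|^{2}B^{\ast}B$ and the parallelogram identity for (iii), polarization for (iv), and the decomposition $x=P-N$ with $0\le x\le P$ for (v). You are in fact more explicit than the paper in two places --- supplying the reverse inclusion $\mathfrak{F}_\phi\subset\mathfrak{N}_\phi^{\ast}\mathfrak{N}_\phi$ via $A=(A^{1/2})^{\ast}A^{1/2}$ in (iv), and verifying closure of $\mathfrak{M}_\phi$ under multiplication in (v) --- which the paper leaves implicit.
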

\begin{proof}
	$(i)$ It is obvious that $\mathfrak{F}_\phi \subset \mathfrak{N}_\phi$, since for each $A\in \mathfrak{F}_\phi$, $A^\ast A=AA\leq \|A\|A$ hence
	$$0\leq \phi(\|A\|A-A^\ast A)=\|A\|\phi(A)-\phi(A^\ast A) \Rightarrow \phi(A^\ast A)\leq \|A\|\phi(A)<\infty.$$
	Using now the positiveness we also get $\mathfrak{F}_\phi \subset \mathfrak{N}_\phi^\ast$
	
	$(ii)$ Trivial.
	
	$(iii)$ Notice that $B^\ast A^\ast A B\leq \|A\|^2 B^\ast B$ because, if we call $D$ the unique positive square root of $\|A\|^2-A^\ast A$, we have
	$$\|A\|^2 B^\ast B-B^\ast A^\ast A B=B^\ast\left(\|A\|^2-A^\ast A\right)B=(DB)^\ast (DB)\geq0.$$
	
	It follows that, for $A, B\in \calgebra$,
	$$\phi\left((AB)^\ast AB\right)=\phi(B^\ast A^\ast A B)\leq \|A\|^2 \phi(B^\ast B).$$
	
	Moreover, it is easy to see that $\lambda \in \mathbb{C}, A\in \mathfrak{N}_\phi \Rightarrow \lambda A \in \mathfrak{N}_\phi$, furthermore 
	$$(A+B)^\ast (A+B)+(A-B)^\ast (A-B)=2A^\ast A +2 B^\ast B.$$
	hence if we take $A,B \in \mathfrak{N}_\phi$ we must have
	$$\infty>\phi\left(2A^\ast A+2B^\ast B\right)=\phi\left((A+B)^\ast (A+B)+(A-B)^\ast (A-B)\right)\geq\phi\left((A+B)^\ast (A+B)\right),$$
	from which $A+B \in \mathfrak{N}_\phi$
	
	The analogous properties for $N_\phi$ are trivial.
	
	$(iv)$ The polarization identity states that
	$$B^\ast A=\frac{1}{4} \sum_{n=0}^{3}\iu^n(A+i^n B)^\ast(A+\iu^n B),$$
	but the proof of $(iii)$ above says that $ A+i^n B \in \mathfrak{N}_\phi$, $0\leq n\leq 3$, and the conclusion holds.
	
	$(v)$ From the definition, every $A\in \mathfrak{M}$ can be written as a linear combination $A=A_1-A_2+\iu A_3-\iu A_4$ with $A_i\in \mathfrak{F}_\phi$. If $A\geq 0$, then $0\leq A=A_1-A_2\leq A_1 \Rightarrow A\in \mathfrak{F}_\phi$.
	
\end{proof}

\begin{remark}
	At that point it is important to note that a weight $\phi$ admits a natural linear extension $\tilde{\phi}$ to $\mathfrak{M}_\phi$. It is a simple consequence of $(iv)$ in Proposition \ref{simplepropweights} and the uniqueness of polarization identity.
	
	There will be no distinction between $\phi$ and $\tilde{\phi}$ in the following.
\end{remark}

Notice that these properties make the quotient $\mathfrak{N}_\phi/N_\phi$ a pre-Hilbert space, provided with $\ip{A}{B}_\phi=\phi(B^\ast A)$. We call the completion of this space $\hilbert_\phi$.

\begin{definition}
	A weight $\phi$ on a $C^*$-algebra $\calgebra$ is said to be:
	\begin{enumerate}[(i)]
		\item densely defined\index{weight! densely defined}  if $\mathfrak{F}_\phi$ is dense in $\calgebra_+$;
		\item faithful\index{weight! faithful} if $\phi(A^\ast A)=0 \Rightarrow A=0$;
		\item normal\index{weight !normal} if $\phi(\sup A_i)=\sup \phi(A_i)$ for all bounded increasing net $(A_i)_i \in \calgebra_+$.
		\item semifinite\index{weight! semifinite} if $\mathfrak{M}_\phi$ is weakly dense in $\calgebra$;
		\item a trace \index{weight! trace} if $\phi(A^\ast A)=\phi(A A^\ast)$.

	\end{enumerate}
\end{definition}

\section{Traces and Continuity} 

\begin{lemma}
	Let $\nalgebra$ be a von Neumann algebra, $\mathfrak{I}\subset \nalgebra$ a two-sided ideal and $A\in\left(\overline{\mathfrak{I}}^{WOT}\right)^+$. Then there exists a increasing net $(A_i)_{i\in I} \subset\mathfrak{I}^+$ converging to $A$.
\end{lemma}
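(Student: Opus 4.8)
The plan is to realise the net by conjugating $A^{1/2}$ with an approximate unit of $\mathfrak{I}$ which lies \emph{inside} $\mathfrak{I}$ itself (not merely inside its norm closure). Since $A\in\left(\overline{\mathfrak{I}}^{WOT}\right)^+\subseteq\nalgebra_+$, the square root $A^{1/2}$ belongs to $\nalgebra$. Set
$$\mathcal{E}:=\{e\in\mathfrak{I}\ :\ 0\le e,\ \|e\|<1\},$$
ordered by $\le$, and for $e\in\mathcal{E}$ put $A_e:=A^{1/2}eA^{1/2}$. I will show that $\mathcal{E}$ is upward directed, so that $(e)_{e\in\mathcal{E}}$ is an increasing bounded net, and that the induced net $(A_e)_{e\in\mathcal{E}}\subseteq\mathfrak{I}^+$ increases to $A$.

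\textbf{Directedness of $\mathcal{E}$ (the main point).} Given $e_1,e_2\in\mathcal{E}$, each $\mathbbm{1}-e_i$ is invertible in $\nalgebra$ (spectrum in $(0,1]$), so I may set $f_i:=e_i(\mathbbm{1}-e_i)^{-1}\ge0$, then $S:=f_1+f_2\ge0$, and finally $e_3:=S(\mathbbm{1}+S)^{-1}$. Because $\mathfrak{I}$ is a two-sided ideal of the von Neumann algebra $\nalgebra$, the resolvents $(\mathbbm{1}-e_i)^{-1}$ and $(\mathbbm{1}+S)^{-1}$ belong to $\nalgebra$, hence $f_i\in\mathfrak{I}$ and $e_3\in\mathfrak{I}$, while $0\le e_3<\mathbbm{1}$ gives $e_3\in\mathcal{E}$. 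Since $t\mapsto t(1+t)^{-1}$ is operator monotone and is the functional inverse of $t\mapsto t(1-t)^{-1}$, the inequalities $S\ge f_i$ yield $e_3\ge e_i$. I expect this step — getting the approximate unit inside $\mathfrak{I}$ rather than only inside $\overline{\mathfrak{I}}^{\|\cdot\|}$ — to be the crux; the rest is bookkeeping.

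\textbf{Identifying the limit.} As an increasing net bounded above by $\mathbbm{1}$, $(e)_{e\in\mathcal{E}}$ converges strongly to $p:=\sup\mathcal{E}\in\nalgebra$, with $0\le p\le\mathbbm{1}$. For $x\in\mathfrak{I}$ one has $x^\ast x\in\nalgebra\cdot\mathfrak{I}\subseteq\mathfrak{I}$, hence $x^\ast x/(1+\|x\|^2)\in\mathcal{E}$ and so $x^\ast x\le(1+\|x\|^2)p$; this forces $\ker p\subseteq\ker x$, i.e. $x=xp$, and the same argument applied to $x^\ast$ (using $xx^\ast\in\mathfrak{I}$) gives $x=px$. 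Thus $\mathfrak{I}\subseteq\nalgebra p$, and since $\nalgebra p$ is WOT-closed, $\overline{\mathfrak{I}}^{WOT}\subseteq\nalgebra p$. In particular $A=Ap=pA$, so $A=pAp$; as $A^{1/2}$ lies in the corner $p\nalgebra p$ by uniqueness of square roots, $A^{1/2}=pA^{1/2}p$, whence $A^{1/2}pA^{1/2}=A$.

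\textbf{Conclusion.} For each $e\in\mathcal{E}$ the element $A_e=A^{1/2}eA^{1/2}=\left(e^{1/2}A^{1/2}\right)^\ast\left(e^{1/2}A^{1/2}\right)$ is positive and lies in $\mathfrak{I}$ (because $e\in\mathfrak{I}$ and $A^{1/2}\in\nalgebra$); the net $(A_e)_{e\in\mathcal{E}}$ is increasing because conjugation by $A^{1/2}$ preserves the order and $(e)_{e\in\mathcal{E}}$ is increasing; and $A_e\le A^{1/2}\mathbbm{1}A^{1/2}=A$. Since $e\to p$ strongly and left and right multiplication by the fixed bounded operator $A^{1/2}$ are strongly continuous on bounded sets, $A_e\to A^{1/2}pA^{1/2}=A$ strongly (in particular weakly); being increasing and dominated by $A$, the net then increases to $A$. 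Hence $(A_e)_{e\in\mathcal{E}}\subseteq\mathfrak{I}^+$ is the required net.
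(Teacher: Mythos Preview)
Your approach is sound and genuinely different from the paper's. The paper argues via Zorn's Lemma: it takes a maximal family in $\mathfrak{I}^+\setminus\{0\}$ whose finite partial sums are bounded by $A$, and then shows the defect $B=A-\sup$ must vanish because otherwise elements $B^{1/2}p_jB^{1/2}$ (with the $p_j$ running through an approximate unit of $\mathfrak{I}$) could be adjoined, contradicting maximality. Your construction is explicit and avoids the transfinite step: you simply conjugate the approximate unit by $A^{1/2}$. Both proofs ultimately rest on the same resolvent trick producing an increasing approximate unit inside the (not necessarily closed) ideal $\mathfrak{I}$, but you use it head-on rather than as a device for a contradiction.

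There is, however, one real slip. The step ``$\ker p\subseteq\ker x$, \emph{i.e.} $x=xp$'' is invalid as written: for a positive contraction $p$ that is not a projection (take $p=\tfrac12\mathbbm{1}$) the kernel inclusion is vacuous while $x=xp$ fails. What the kernel argument actually yields is $x=xq$ with $q$ the range projection of $p$, and at this point you do not yet know $p=q$. The cleanest repair is to prove the approximate-identity property directly: for $x\in\mathfrak{I}$ set $a=x^\ast x\in\mathfrak{I}^+$; since $na(\mathbbm{1}+na)^{-1}\in\mathcal{E}$, for $e\ge na(\mathbbm{1}+na)^{-1}$ one has $\mathbbm{1}-e\le(\mathbbm{1}+na)^{-1}$, and hence
\[
\|x(\mathbbm{1}-e)\|^2=\|(\mathbbm{1}-e)a(\mathbbm{1}-e)\|=\|a^{1/2}(\mathbbm{1}-e)\|^2\le\|a^{1/2}(\mathbbm{1}-e)a^{1/2}\|\le\|a(\mathbbm{1}+na)^{-1}\|\le\tfrac{1}{n}.
\]
Thus $xe\to x$ in norm, giving $xp=x$; applying this with $x=e_0\in\mathcal{E}$ and passing to the strong limit yields $p^2=p$. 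With this in place the remainder of your argument goes through unchanged.
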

\begin{proof}
	First, to simplify the notation, lets write $\mathfrak{I}^+ \setminus\{0\}=\{A_j\}_{j\in J}$. Now, consider the set
	$$\mathcal{F}=\left\{K\subset J \ \middle| \ \sum_{k\in K_f}A_k \leq A, \ \forall K_f \subset K, K_f \mbox{ finite}\right\}$$
	provided with the partial order $K_1\preceq K_2 \Leftrightarrow K_1\subset K_2$.
	It is easy to see that it satisfies the requirements to conclude by Zorn's Lemma the existence of a maximal element $I\subset F$.
	
	Consider $\displaystyle B=A-\sup_{i \in I} A_i\geq0$ and the $(p_j)_{j\in J} \subset \mathfrak{I}$ a increasing net of projections which converges to the identity of $\overline{\mathfrak{I}}^{WOT}$. By the ideal property, $B^\frac{1}{2}p_jB^\frac{1}{2} \in \mathfrak{I}^+ \ \forall j \in J$ but it does not correspond to any index in $I$, in fact, if we had $B^\frac{1}{2}p_jB^\frac{1}{2}=A_r$ we will also had the inequality $$\sum_{i\in I_f}A_i \leq B+(B-A)\leq A, \ \forall I_f \subset K\cup\{r\}, I_f \mbox{ finite,}$$
	but the maximality of $I$ forbids it. The only remaining possibility is $$B^\frac{1}{2}p_jB^\frac{1}{2} =0, \quad \forall j \in J \Rightarrow B=0.$$
\end{proof}

\begin{remark}
	3Notice that by Vigier's theorem the net $(A_i)_{i\in I}$ described in the previous result converges to $A$ in the SOT.
\end{remark}

\begin{lemma}
	\label{positiveneighbourhood}
	Let $\nalgebra$ be a von Neumann algebra, $A \in \nalgebra_+$ and $\phi,\psi$ two normal positive functionals such that $\phi(A)<\psi(A)$. Then, there exists $B\in \nalgebra_+\setminus\{0\}$ such that $$\phi(C)<\psi(C), \quad \forall C\leq B, C\in\nalgebra^+\setminus\{0\}.$$
\end{lemma}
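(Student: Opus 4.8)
The plan is to reduce the statement to the Jordan decomposition of the self-adjoint normal functional $\omega:=\psi-\phi$ and then take for $B$ the support projection of the positive part of $\omega$. As preliminaries I would record that, since $\phi$ and $\psi$ are normal positive functionals, they are bounded (a positive functional on a unital $C^\ast$-algebra is continuous, cf. Proposition \ref{PPC}) and $\sigma$-weakly continuous; hence $\omega=\psi-\phi$ is a bounded, $\sigma$-weakly continuous (i.e. normal) functional, and it is clearly self-adjoint. The hypothesis says precisely that $\omega(A)=\psi(A)-\phi(A)>0$.

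Next I would invoke the Jordan decomposition of a self-adjoint normal functional: there exist normal positive functionals $\omega_+,\omega_-$ with $\omega=\omega_+-\omega_-$ and with mutually orthogonal supports (equivalently, $\|\omega\|=\|\omega_+\|+\|\omega_-\|$). Since $\omega_-\ge 0$ we have $\omega_+\ge\omega$, so $\omega_+(A)\ge\omega(A)>0$, and in particular $\omega_+\neq 0$. I would then set $B:=s(\omega_+)$, the support projection of $\omega_+$ (the least projection $p$ with $\omega_+(\mathbbm{1}-p)=0$); it is nonzero because $\omega_+\neq 0$. Orthogonality of the supports gives $B\le\mathbbm{1}-s(\omega_-)$, whence $0\le\omega_-(B)\le\omega_-(\mathbbm{1}-s(\omega_-))=0$.

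To finish, let $C\in\nalgebra_+\setminus\{0\}$ with $C\le B$. Since $0\le(\mathbbm{1}-B)C(\mathbbm{1}-B)\le(\mathbbm{1}-B)B(\mathbbm{1}-B)=0$, the corners of $C$ off $B$ vanish and $C=BCB\in(B\nalgebra B)_+$. The functional $\omega_+$ is faithful on the corner $B\nalgebra B$: this is the standard property of the support projection — if $\omega_+(C)=0$, then for $\varepsilon>0$ the spectral projection $E_{[\varepsilon,\infty)}(C)\le\varepsilon^{-1}C$ has $\omega_+\big(E_{[\varepsilon,\infty)}(C)\big)=0$, and letting $\varepsilon\downarrow 0$ and using normality of $\omega_+$ gives $\omega_+\big(s(C)\big)=0$, which by minimality of $B$ forces $s(C)=0$, i.e. $C=0$. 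Hence $\omega_+(C)>0$, while $0\le\omega_-(C)\le\|C\|\,\omega_-(B)=0$. Combining, $\psi(C)-\phi(C)=\omega(C)=\omega_+(C)-\omega_-(C)=\omega_+(C)>0$, which is the claim.

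The only nontrivial ingredient is the Jordan decomposition invoked in the second paragraph — the splitting of a normal self-adjoint functional into positive and negative parts with mutually orthogonal supports; everything after it is bookkeeping with support projections (the identity $C=BCB$ and the faithfulness of $\omega_+$ on $B\nalgebra B$ being routine, the latter using normality together with the spectral calculus). If one preferred not to cite the Jordan decomposition, one could try to produce, via Zorn's lemma, a maximal projection $e_0$ such that $\omega(q)\le 0$ for every projection $q\le e_0$ (normality makes the supremum of an increasing chain stay in the family) and take $B=\mathbbm{1}-e_0$; but upgrading ``$\omega\le 0$ on subprojections of $e_0$'' to the strict inequality for \emph{all} positive $C\le\mathbbm{1}-e_0$ essentially reconstructs the Jordan decomposition, so citing it directly is the cleanest route.
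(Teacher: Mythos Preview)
Your proof is correct and takes a genuinely different route from the paper. The paper applies Zorn's lemma directly: it considers the family $\mathcal{F}=\{P\in\nalgebra_+ : P\le A,\ \phi(P)\ge\psi(P)\}$, uses normality to show chains have upper bounds, extracts a maximal element $Q$, and sets $B=A-Q$; maximality then forces $\phi(C)<\psi(C)$ for every nonzero positive $C\le B$. Your argument instead invokes the Jordan decomposition of $\omega=\psi-\phi$ and takes $B$ to be the support projection of $\omega_+$. Your approach is slicker if one is willing to cite the Jordan decomposition as a black box, and it yields a slightly stronger output: your $B$ is a projection and depends only on $\psi-\phi$, not on the particular witness $A$. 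The paper's approach is more self-contained---only Zorn and normality are needed---and produces a $B\le A$, which is not required by the statement but can be convenient downstream. (One small slip: the bound $\omega_-(C)\le\|C\|\,\omega_-(B)$ should simply read $\omega_-(C)\le\omega_-(B)$, but since $\omega_-(B)=0$ this is harmless.) Your closing remark about a Zorn-type alternative is apt, though the paper's version is in fact simpler than what you sketch: it works with positive elements rather than projections and needs no ``upgrading'' step.
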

\begin{proof}
	Let
	$$\mathcal{F}=\left\{P\in\nalgebra^+ \ \middle| \ P\leq A, \ \phi(P)\geq\psi(P)\right\}.$$
	
	Consider a chain $\{P_i\}_{i\in I}\subset \mathcal{F}$, we know $\displaystyle P=\sup_{i\in I}P_i\leq A$ and, by normality,
	$$\phi(P)=\sup_{i\in I} \phi(P_i)\geq \sup_{i\in I} \psi(P_i)=\psi(P).$$
	
	Since every chain has a maximal element, the Zorn's lemma says $\mathcal{F}$ has a maximal element $Q$.
	
	Take $B=A-Q\in \nalgebra_+$. Of course $B\leq A$ and, since $$\psi(B)=\psi(A)-\psi(Q)>\phi(A)-\phi(Q)=\phi(B)\geq0,$$
	$B\neq0$. Finally, for every $P\in\nalgebra^+$, $P\leq B$, we must have $\phi(P)<\psi(P)$, otherwise the positive element $A-P\geq Q$ would violate the maximality of $Q$.
	
\end{proof}

\begin{lemma}
	\label{ultraweakcontinuous}
	Let $\nalgebra$ be a von Neumann algebra and $\omega$ a linear functional on $\nalgebra$. Then $\omega$ is ultra-weakly (weakly) continuous if, and only if, it is ultra-strongly (strongly) continuous. Furthermore, for every ultra-strongly functional there exists $(x_n)_n, (y_n)_n \in \hilbert$ satisfying $\displaystyle\sum_{n \in\mathbb{N}}\|x_n\|^2, \sum_{n \in\mathbb{N}}\|y_n\|^2 < \infty$ such that
	$$\omega(A)=\sum_{n \in\mathbb{N}}\ip{y_n}{Ax_n}, \quad \forall A \in \nalgebra.$$
	
\end{lemma}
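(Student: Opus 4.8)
The plan is to dispatch the two ``easy'' implications by a topology comparison and to obtain \emph{both} remaining implications \emph{and} the integral formula at once, via a short Hilbert-space (GNS-flavoured) argument. Since the ultra-strong topology is finer than the ultra-weak one, and the strong topology is finer than the weak one, every ultra-weakly continuous functional is automatically ultra-strongly continuous and every weakly continuous one is strongly continuous; so only the converses, together with the ``Furthermore'' part, need work. Assume then that $\omega$ is ultra-strongly continuous. Recalling that a base of ultra-strong neighbourhoods of $0$ is obtained, after merging finitely many square-summable sequences into one, from the sets $\{A\in\nalgebra : (\sum_n\|Ax_n\|^2)^{1/2}<\varepsilon\}$ with $(x_n)_n\subset\hilbert$ and $\sum_n\|x_n\|^2<\infty$, continuity of $\omega$ at $0$ together with linearity yields such a sequence $(x_n)_n$ and a constant $C>0$ with
$$|\omega(A)|\le C\Big(\sum_{n}\|Ax_n\|^2\Big)^{1/2},\qquad\forall A\in\nalgebra.$$
In the merely strongly continuous case the same reasoning gives the estimate with a \emph{finite} sequence $x_1,\dots,x_m$.

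Next I would form the amplified Hilbert space $\widehat{\hilbert}=\bigoplus_{n\in\mathbb{N}}\hilbert$ and the amplification $\pi\colon\nalgebra\to B(\widehat{\hilbert})$, $\pi(A)(\zeta_n)_n=(A\zeta_n)_n$, and set $\xi=(x_n)_n\in\widehat{\hilbert}$, so that the estimate becomes $|\omega(A)|\le C\,\|\pi(A)\xi\|$. On the linear subspace $\mathcal{K}_0=\pi(\nalgebra)\xi\subset\widehat{\hilbert}$ define $\rho_0(\pi(A)\xi)=\omega(A)$. The estimate shows in one stroke that $\rho_0$ is well defined (if $\pi(A)\xi=\pi(B)\xi$ then $|\omega(A)-\omega(B)|\le C\|\pi(A-B)\xi\|=0$) and that it is bounded with $\|\rho_0\|\le C$; hence it extends to a bounded functional on $\mathcal{K}=\overline{\mathcal{K}_0}$. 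By the Riesz representation theorem there is $\eta\in\mathcal{K}\subseteq\widehat{\hilbert}$ with $\omega(A)=\ip{\eta}{\pi(A)\xi}$ for all $A$. Writing $\eta=(y_n)_n$ we get $\sum_n\|y_n\|^2=\|\eta\|^2<\infty$, and since $\ip{\eta}{\pi(A)\xi}=\sum_n\ip{y_n}{Ax_n}$ the asserted formula
$$\omega(A)=\sum_{n}\ip{y_n}{Ax_n},\qquad\forall A\in\nalgebra,$$
holds with $\sum_n\|x_n\|^2,\sum_n\|y_n\|^2<\infty$. Running the same argument with $\widehat{\hilbert}=\bigoplus_{i=1}^{m}\hilbert$ in the strongly continuous case produces a finite sum $\omega(A)=\sum_{i=1}^{m}\ip{y_i}{Ax_i}$.

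To close the equivalences, I would invoke the standard fact that functionals of the form $A\mapsto\sum_n\ip{y_n}{Ax_n}$ with square-summable $(x_n)_n,(y_n)_n$ are exactly the elements of the predual $\nalgebra_*$, hence ultra-weakly continuous (they are bounded near $0$ by the single ultra-weak seminorm $A\mapsto|\sum_n\ip{y_n}{Ax_n}|$), and likewise a finite sum $A\mapsto\sum_{i=1}^{m}\ip{y_i}{Ax_i}$ is weakly continuous, being a finite combination of the weakly continuous functionals $A\mapsto\ip{y_i}{Ax_i}$. The only genuinely delicate point is the middle step: one must observe that the Step-1 estimate simultaneously forces $\rho_0$ to be well defined \emph{and} contractive on $\pi(\nalgebra)\xi$, and that the Riesz vector $\eta$ really lies in $\widehat{\hilbert}$, so that its coordinates furnish the required $\ell^2$-sequence $(y_n)_n$; the rest is soft topology.
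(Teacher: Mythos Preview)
Your proof is correct and follows essentially the same route as the paper: obtain an $\ell^2$-estimate from an ultra-strong (resp.\ strong) neighbourhood, pass to the amplification $\bigoplus_n\hilbert$, define the functional on the orbit $\{(Ax_n)_n\}$, and invoke Riesz to produce $(y_n)_n$. The only cosmetic difference is that the paper extends the functional to all of $\bigoplus_n\hilbert$ via Hahn--Banach before applying Riesz, whereas you extend by density to the closed orbit and apply Riesz there; both are fine and yours is arguably tidier.
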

\begin{proof}
	It is obvious that ultra-weakly continuous functionals are ultra-strongly continuous.
	
	For the other implication, by taking a basic neighbourhood in the ultra-strongly topology, there exists a sequence $(x_n)_n \in \hilbert$ with $\displaystyle\sum_{n\in \mathbb{N}}\|x_n\|^2 < \infty$ such that \sloppy ${\displaystyle |\omega(A)|^2<\sum_{n\in \mathbb{N}}\|Ax_n\|^2}$.
	
	We can define now define $\displaystyle\tilde{\hilbert}=\bigoplus_{n\in \mathbb{N}}\hilbert$ and the norm-continuous functional
	$$\tilde{\omega}\left((Ax_n)_n\right)=\omega(A)$$
	defined on the linear subspace $\left\{ (Ax_n)_n \in \tilde{\hilbert} \ | \ A \in \nalgebra\right\}$. This functional can be extended to the whole space $\tilde{\hilbert}$ (still denoted $\tilde{\omega}$) using the Hahn-Banach's Theorem and the Riesz's Theorem ensures the existence of a vector $(y_n)_n \in \tilde{\hilbert}$ (with means $\displaystyle \sum_{n\in \mathbb{N}}\|y_n\|^2 < \infty$) such that
	$$\tilde{\omega}\left((Ax_n)_n\right)=\omega(A)=\ip{(y_n)_n}{(Ax_n)_n}_{\tilde{\hilbert}}=\sum_{n \in\mathbb{N}}\ip{y_n}{Ax_n}.$$
	
	This establishes not only the ultra-weakly continuity of $\omega$, but it gives us the mentioned general form of the ultra-weakly (and ultra-strongly, since they are the same) continuous functionals.
	
	The case when $\omega$ is strongly-continuous is basically the same.
	
\end{proof}

\begin{corollary}
	\label{WOT=SOT}
	Let $K\in \nalgebra$ be a convex set. Then, the ultra-weak (weak) closure and the ultra-strong (strong) closure of $K$ coincides.
\end{corollary}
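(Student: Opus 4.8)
The statement is a Hahn--Banach separation argument built on top of Lemma~\ref{ultraweakcontinuous}, whose whole point is that passing from the ultra-weak to the ultra-strong topology (or from weak to strong) does not enlarge the dual space. I will argue for the ultra-weak/ultra-strong pair; the weak/strong case is verbatim the same, using the corresponding part of Lemma~\ref{ultraweakcontinuous}.

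First, one inclusion is immediate and uses no convexity: the ultra-strong topology is finer than the ultra-weak one, so every ultra-weakly closed set is ultra-strongly closed, and therefore $\overline{K}^{\text{u-s}}\subset\overline{K}^{\text{u-w}}$.

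For the reverse inclusion I would show that a point $x\in\nalgebra$ not lying in $\overline{K}^{\text{u-s}}$ cannot lie in $\overline{K}^{\text{u-w}}$ either. Since $\nalgebra$ equipped with the ultra-strong topology is a locally convex Hausdorff topological vector space and $\overline{K}^{\text{u-s}}$ is a closed convex set not containing $x$, the Hahn--Banach separation theorem produces an ultra-strongly continuous linear functional $f$ and a real number $\alpha$ with $\Re f(y)\le\alpha<\Re f(x)$ for all $y\in\overline{K}^{\text{u-s}}$, in particular for all $y\in K$. By Lemma~\ref{ultraweakcontinuous} the functional $f$, being ultra-strongly continuous, is also ultra-weakly continuous; hence the half-space $H=\{z\in\nalgebra : \Re f(z)\le\alpha\}$ is ultra-weakly closed and convex, it contains $K$, and it does not contain $x$. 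Consequently $x\notin\overline{K}^{\text{u-w}}$, which gives $\overline{K}^{\text{u-w}}\subset\overline{K}^{\text{u-s}}$ and completes the proof.

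The only real content is invoking Lemma~\ref{ultraweakcontinuous} at the right moment; the rest is the standard fact that the closure of a convex set in a locally convex topology is the intersection of the closed half-spaces containing it, so that the closure depends only on the continuous dual. The mild point to be careful about is that Hahn--Banach separation of a point from a closed convex set requires the ambient locally convex space to be Hausdorff, which the ultra-strong (and strong) topology on $\nalgebra$ is; with that in hand there is no further obstacle.
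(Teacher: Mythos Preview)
Your proof is correct and follows essentially the same route as the paper: one inclusion is trivial from comparing the topologies, and the other is obtained by separating a point outside the strong closure from that closure via a strongly continuous functional, then invoking Lemma~\ref{ultraweakcontinuous} to see that the same functional is weakly continuous. The paper carries out the separation after translating the point to the origin and appealing to its appendix Corollary~\ref{CTMD}, whereas you invoke the standard Hahn--Banach separation of a point from a closed convex set directly; the content is the same.
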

\begin{proof}
	Of course $\overline{K}^{SOT}\subset \overline{K}^{WOT}$, so the thesis is equivalent to show $\overline{K}^{WOT}\setminus \overline{K}^{SOT}=\emptyset$.
	
	Suppose there exists $x\in\overline{K}^{WOT}\setminus \overline{K}^{SOT}$. Then $\overline{K}^{SOT}-x$ is a closed convex subset that does not contain the null-vector. Let $U$ be a convex balanced neighbourhood of zero such that $\left(x+\overline{U}^{SOT}\right)\cap \overline{K}^{SOT}=\emptyset$. Therefore $\overline{K}^{SOT}+\overline{U}^{SOT}-x$ is also a closed convex subset that does not contain the null-vector, by Corollary \ref{CTMD}, there exists a SOT-continuous functional $x^\ast$ such that $x^\ast(x)=0$ and $x^\ast(k)>0, \ \forall k\in \overline{K}^{SOT}+\overline{U}^{SOT}$. 
	
	Then, there exists $\alpha>0$ such that
	$x^\ast(x)=0$ and $x^\ast(k)>\alpha>0, \ \forall k\in \overline{K}^{SOT}$, but it is not possible, once the previous lemma stated that such a $x^\ast$ is also WOT-continuous and $x\in \overline{K}^{WOT}$.
	
\end{proof}

\begin{proposition}
	Let $\nalgebra$ be a von Neumann algebra, $\phi$ a positive functional on $\nalgebra$. The following conditions are equivalent:
	\begin{enumerate}[(i)]
		\item $\phi$ is normal;
		\item $\phi$ is ultra-strongly continuous;
		\item there exists $(x_n)_n\subset \hilbert$, such that $\displaystyle\sum_{n=1}^{\infty}\|x_n\|^2 <\infty$ and $\displaystyle \phi=\sum_{n=1}^{\infty}\omega_{x_n}$.
	\end{enumerate}
\end{proposition}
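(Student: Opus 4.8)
The plan is to prove the cycle $(iii)\Rightarrow(ii)\Rightarrow(i)\Rightarrow(ii)\Rightarrow(iii)$, writing $\omega_x(A)=\ip{x}{Ax}$ for a vector functional. The implication $(iii)\Rightarrow(ii)$ is immediate from Cauchy--Schwarz (Proposition \ref{cauchyschwarz}): if $\phi=\sum_n\omega_{x_n}$ and $c:=\sum_n\|x_n\|^2<\infty$, then $|\phi(A)|^2\le c\sum_n\|Ax_n\|^2=\sum_n\|A(\sqrt c\,x_n)\|^2$ with $\sum_n\|\sqrt c\,x_n\|^2<\infty$, which is exactly a basic ultra-strong continuity estimate. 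For $(ii)\Rightarrow(i)$, a bounded increasing net $(A_i)_i\subset\nalgebra_+$ converges strongly to its supremum $A$ by Vigier's theorem, hence ultra-strongly (the two topologies agree on norm-bounded sets), so $\phi(A_i)\to\phi(A)$; since $(\phi(A_i))_i$ increases, $\sup_i\phi(A_i)=\phi(A)=\phi(\sup_iA_i)$, i.e.\ $\phi$ is normal.

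The substantive step is $(i)\Rightarrow(ii)$. Assume $\phi$ normal and let $\mathcal P$ be the set of projections $p\in\nalgebra$ with $z\mapsto\phi(pzp)$ ultra-strongly continuous, ordered by $\leq$. This set is inductive: for a chain $(p_i)_i$ with $p:=\sup_ip_i$, normality gives $\phi(p-p_i)\searrow 0$, so the Cauchy--Schwarz bound $|\phi(pz(p-p_i))|^2\le\|z\|^2\phi(p)\,\phi(p-p_i)$ (and its mirror) forces $\phi(p_i\cdot p_i)\to\phi(p\cdot p)$ uniformly on the unit ball, and the ultra-strongly continuous functionals form a norm-closed subspace of $\nalgebra^*$ (Lemma \ref{ultraweakcontinuous}), whence $p\in\mathcal P$. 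Let $p_0$ be a maximal element (Zorn) and $q=\mathbbm 1-p_0$. If $\phi(qzq)=0$ for all $z\geq0$, then $\phi(q)=0$, all cross terms vanish by Cauchy--Schwarz, and $\phi=\phi(p_0\cdot p_0)$ is ultra-strongly continuous. Otherwise $\phi$ restricts to a nonzero normal positive functional on the von Neumann algebra $q\nalgebra q$; picking $\eta\in q\hilbert\setminus\{0\}$ and $M$ with $\phi(q)<M\|\eta\|^2$, Lemma \ref{positiveneighbourhood} applied inside $q\nalgebra q$ to $\phi$ and $M\omega_\eta$ at $A=q$ produces $B\in(q\nalgebra q)_+\setminus\{0\}$ with $\phi<M\omega_\eta$ on the order interval $[0,B]$; passing to a nonzero spectral projection $e\leq q$ of $B$ with $\delta e\leq B$ gives $\phi|_{e\nalgebra e}\le M\omega_{e\eta}|_{e\nalgebra e}$, so the Radon--Nikodym theorem for vector functionals identifies $\phi(e\cdot e)$ with a vector functional $\omega_\xi$, $\xi\in e\hilbert$. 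Since $e\perp p_0$ and $\phi(ew^\ast we)=\|w\xi\|^2$, the estimate $|\phi(p_0we)|^2\le\phi(p_0)\,\|w\xi\|^2$ shows the cross terms $z\mapsto\phi(p_0ze)$ are ultra-strongly continuous; hence so is $\phi((p_0+e)\cdot(p_0+e))$, i.e.\ $p_0+e\in\mathcal P$, contradicting maximality. Thus only the first case occurs and $\phi$ is ultra-strongly continuous. (Corollary \ref{WOT=SOT} is what renders the weak and strong closures appearing in these arguments interchangeable.)

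For $(ii)\Rightarrow(iii)$, Lemma \ref{ultraweakcontinuous} gives $\phi(A)=\sum_n\ip{y_n}{Ax_n}$ with $(x_n)_n,(y_n)_n$ square-summable. Amplifying to $\widetilde\hilbert=\bigoplus_n\hilbert$ and $\widetilde\nalgebra=\{\widetilde A=\bigoplus_nA:A\in\nalgebra\}$, $\phi$ becomes the vector-pair functional $A\mapsto\ip{\mathbf y}{\widetilde A\,\mathbf x}$; since $\phi\geq0$ and $|\ip{u}{Tv}|\le\tfrac12(\ip{u}{Tu}+\ip{v}{Tv})$ for $T\geq0$, it is dominated by the vector functional $\tfrac12(\omega_{\mathbf y}+\omega_{\mathbf x})$ on a further amplification $\{\widetilde A\oplus\widetilde A\}$; the Radon--Nikodym theorem then represents $\phi$ there as a single vector functional $\omega_{\zeta'}$, and decomposing $\zeta'$ along the direct sum yields $\phi=\sum_n\omega_{\xi_n}$ with $\sum_n\|\xi_n\|^2=\|\zeta'\|^2=\phi(\mathbbm 1)<\infty$.

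I expect the delicate point to be the extraction in $(i)\Rightarrow(ii)$: combining Lemma \ref{positiveneighbourhood}, a spectral-projection reduction, and the Radon--Nikodym theorem for vector functionals to produce, inside the corner $q\nalgebra q$, a nonzero projection $e$ on which $\phi$ restricts to a vector functional, together with checking that adjoining $e$ to $p_0$ keeps the cross terms continuous. Keeping $e$ and $\xi$ nonzero via the support/spectral-projection bookkeeping, using normality to secure $\phi(p-p_i)\to0$ in the inductiveness step, and invoking the Radon--Nikodym theorem as an external input are the load-bearing ingredients; everything else is routine.
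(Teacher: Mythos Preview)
Your argument is correct, and the overall architecture (Zorn on a family of ``good'' pieces, then Lemma~\ref{positiveneighbourhood} to enlarge a non-maximal piece) matches the paper. The route, however, differs in two respects worth noting.

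First, the paper maximises over \emph{positive operators} $P\le\mathbbm 1$ for which the one-sided map $A\mapsto\phi(AP)$ is ultra-weakly continuous, rather than over projections with the two-sided corner $z\mapsto\phi(pzp)$. This buys a small simplification at the contradiction step: the $B$ produced by Lemma~\ref{positiveneighbourhood} (with $B\le\mathbbm 1-Q$) is itself an admissible positive element, so $Q+B$ is immediately a larger member of the family --- no passage to a spectral projection is needed.

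Second, and more substantively, the paper never invokes Radon--Nikodym. Once Lemma~\ref{positiveneighbourhood} yields $\phi(C)<\psi(C)=\ip{Cz}{z}$ for all nonzero $C\le B$, Cauchy--Schwarz and $BA^\ast AB\le\|A\|^2\|B\|\,B$ give
\[
|\phi(AB)|^2\le\phi(\mathbbm 1)\,\phi(BA^\ast AB)<\phi(\mathbbm 1)\,\ip{BA^\ast AB\,z}{z}=\phi(\mathbbm 1)\,\|ABz\|^2,
\]
which is a strong-topology estimate on $A\mapsto\phi(AB)$ with the single vector $Bz$; Lemma~\ref{ultraweakcontinuous} then upgrades this to ultra-weak continuity. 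Your argument can be streamlined the same way: from $\phi\le M\omega_{e\eta}$ on $e\nalgebra e$ you already have $|\phi(eze)|^2\le\phi(e)\,\phi(ez^\ast ze)\le M\phi(e)\,\|ze\eta\|^2$ and $|\phi(p_0ze)|^2\le M\phi(p_0)\,\|ze\eta\|^2$, so the corner and cross terms are ultra-strongly continuous without ever representing $\phi(e\cdot e)$ as $\omega_\xi$. Since the Radon--Nikodym theorems in these notes appear only later, the paper's self-contained route is the intended one here; your version is correct but imports a result that is, strictly speaking, downstream.
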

\begin{proof}
	Let
	$$\mathcal{F}=\left\{P\in\nalgebra^+ \ \middle| \ P\leq1 \mbox{ and } A \mapsto \phi(AP) \mbox{ is ultra-weakly continuous}\right\}.$$
	
	Consider a chain $\{P_i\}_{i\in I}\subset \mathcal{F}$, we know $\displaystyle P=\sup_{i\in I}P_i \in \nalgebra^+$, $P_i \xrightarrow{SOT} P$ by the Vigier's theorem, so it also converges in the $WOT$, but on the unitary ball the weak and ultra-weak topologies coincide. Furthermore, for $\|A\|\leq 1$,
	$$\left|\phi\left(A(P-P_i)\right)\right|^2\leq \phi(A^\ast A)\phi(P-P_i)\leq \phi(1)\phi(P-P_i)$$
	
	Thus $A\mapsto \phi(AP)$ is the uniform limit of the ultra-weakly continuous positive functionals $\left(A \mapsto \phi(AP_i)\right)_{i\in I}$ on the unitary ball, thus it is also ultra-weakly continuous on the unitary ball. Finally, linearity it is ultra-weakly continuous on $\nalgebra$.
	
	Applying now the Zorn's lemma, there exists a maximal positive operator $Q\in \mathcal{F}$, $Q\leq 1$, such that $A\mapsto \phi(AQ)$ is ultra-weakly continuous.
	
	Remains to prove $Q=1$, in fact if $1-Q>0$ we can take an ultra-weak positive functional $\psi$, such that $0\leq \phi(1-Q)<\psi(1-Q)$ by choosing a $z \in \hilbert$ such that $\phi(1-Q)<\ip{(1-Q)z}{z}$ and $\psi=\left(A\mapsto \ip{Az}{z}\right)$), for example.
	
	By Lemma \ref{positiveneighbourhood} there exists $B\in\nalgebra^+\setminus\{0\}$ such that $B\leq 1-Q$ and
	$$\phi(P)<\psi(P) \ \forall C\leq B, \  C\in\nalgebra^+\setminus\{0\}.$$
	
	Hence, since for each $A\in\nalgebra\setminus\{0\}$ we have $BA^\ast A B \leq \|A\|^2B B \leq \|A\|^2 \|B\| B$, the Cauchy-Schwarz inequality gives us
	$$\left|\phi\left(\frac{AB}{\|A\|\|B\|^\frac{1}{2}}\right)\right|^2\leq\phi(1)\phi\left(\frac{BA^\ast A B}{\|A\|^2\|B\|}\right)<\psi\left(\frac{BA^\ast A B}{\|A\|^2\|B\|}\right)\Leftrightarrow \left|\phi\left(AB\right)\right)|^2<\psi\left(BA^\ast A B\right).$$
	
	Notice know that if $(A_i)_{i\in I}\subset\nalgebra$ is a net such that $A_i\to0$ ultra-strongly, then, for every $x,y\in \hilbert$, $\ip{B A_i^\ast A_i B x}{y}= \ip{A_i B x}{A_i B y}\leq\|A_i Bx\| \|A_i B y\| \to 0$. Thus, $\phi$ is ultra-strongly continuous, and by Lemma \ref{ultraweakcontinuous} $\phi$ is ultra-weakly continuous, since ultra-strongly and ultra-weakly functionals coincides on $\nalgebra$ , so $A\mapsto\phi(AB)$ is ultra-weakly continuous.
	
	This contradicts the maximality of $Q$, since $A\mapsto \phi\left(A(Q+B)\right)$ is ultra-strongly continuous and $Q+B\leq Q+1-Q=1$ and the conclusion holds.
	
\end{proof}

\section{Measure with respect to a trace}

Henceforth we will use $\tau$ and call it simply a trace, but we mean a normal, faithful, semifinite trace. It is important to note that supposing the existence of such a trace we restrict our options of algebras to the semifinite ones.

\begin{definition}
	Let $\nalgebra\subset B(\hilbert)$ be a von Neumann algebra, we say a linear operator $A:\Dom{A} \to \hilbert$ is affiliated to $\nalgebra$ if, for every unitary operator $U\in\nalgebra^\prime$, $UAU^\ast=A$. We denote that an operator is affiliated by $A\eta\nalgebra$ and the set of all affiliated operator by $\nalgebra_\eta$.
\end{definition}

A very interesting way to look at this definition is through spectral projections. By this definition, the spectral projections of an affiliated operator belong to the von Neumann algebra. In fact, an equivalent condition to an operator $A$ being affiliated to a von Neumann algebra is that the partial isometry in the polar decomposition of $A$ and all the spectral projections of $|A|$ lie in the von Neumann algebra. This guarantees that an affiliated operator can be approached by algebra elements in the spectral sense.

\begin{lemma}
	Let $\nalgebra$ be a von Neumann algebra and $A\eta \nalgebra$
	\begin{enumerate}[(i)]
		\item If $A$ is closable, then $\overline{A}\eta\nalgebra$.
		\item If $A$ is densely defined, then $A^\ast\eta \nalgebra$.
	\end{enumerate}
\end{lemma}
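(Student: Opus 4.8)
The plan is to translate the affiliation relation into a statement about domains and commutation, and then to argue on the level of graphs for (i) and directly from the definition of the adjoint for (ii).

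First I would unpack what $A\eta\nalgebra$ means for a fixed unitary $U\in\nalgebra^\prime$. Reading $UAU^\ast$ as the composition $U\circ A\circ U^\ast$, its domain is $\{x:U^\ast x\in\Dom{A}\}=U\Dom{A}$, since $U$ is a bijection of $\hilbert$ with inverse $U^\ast$. Hence the operator equality $UAU^\ast=A$ forces $U\Dom{A}=\Dom{A}$ and, comparing the actions, $UAx=AUx$ for every $x\in\Dom{A}$; that is, $U$ leaves $\Dom{A}$ invariant and commutes with $A$ there. Because $\nalgebra^\prime$ is a von Neumann algebra, hence $\ast$-closed, $U^\ast$ is again a unitary of $\nalgebra^\prime$, so the same two facts hold with $U^\ast$ in place of $U$.

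For (i), consider the unitary $V=U\oplus U$ on $\hilbert\oplus\hilbert$ and the graph $\mathcal{G}(A)=\{(x,Ax):x\in\Dom{A}\}$. From $U\Dom{A}=\Dom{A}$ and $UAx=AUx$ one checks at once that $V\mathcal{G}(A)=\mathcal{G}(A)$. Since $V$ is a homeomorphism of $\hilbert\oplus\hilbert$, it also fixes the closure, $V\,\overline{\mathcal{G}(A)}=\overline{\mathcal{G}(A)}$; and because $A$ is closable, $\overline{\mathcal{G}(A)}=\mathcal{G}(\overline{A})$. Unwinding $V\mathcal{G}(\overline{A})=\mathcal{G}(\overline{A})$ gives $U\Dom{\overline{A}}=\Dom{\overline{A}}$ and $U\overline{A}x=\overline{A}Ux$, i.e. $U\overline{A}U^\ast=\overline{A}$; as $U\in\nalgebra^\prime$ was arbitrary, $\overline{A}\eta\nalgebra$.

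For (ii), dense definedness of $A$ is exactly what makes $A^\ast$ meaningful. Let $y\in\Dom{A^\ast}$ and $x\in\Dom{A}$; then $U^\ast x\in\Dom{A}$ and, using the defining relation of the adjoint together with $U^\ast Ax=AU^\ast x$,
$$\ip{Ax}{Uy}=\ip{U^\ast Ax}{y}=\ip{AU^\ast x}{y}=\ip{U^\ast x}{A^\ast y}=\ip{x}{UA^\ast y}.$$
Thus $Uy\in\Dom{A^\ast}$ with $A^\ast Uy=UA^\ast y$, so $U\Dom{A^\ast}\subseteq\Dom{A^\ast}$ and $U$ commutes with $A^\ast$ there; running the same computation with $U^\ast$ yields the reverse inclusion, whence $UA^\ast U^\ast=A^\ast$ and $A^\ast\eta\nalgebra$. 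The arguments are short, and no step is a serious obstacle; the only care needed is the bookkeeping of domains of unbounded operators — in particular that $UAU^\ast=A$ really encodes both $U\Dom{A}=\Dom{A}$ and the commutation relation, and that $\overline{\mathcal{G}(A)}$ is a genuine graph, which is precisely where closability enters in (i). One could instead invoke the characterization of affiliation via the polar decomposition and the spectral projections of $|A|$ mentioned above, but the graph computation is more direct.
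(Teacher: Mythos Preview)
Your proof is correct and follows the same route as the paper, only with the details actually written out. The paper's argument is essentially the two one-line implications $UAU^\ast=A\Rightarrow U\overline{A}U^\ast=\overline{A}$ and $UAU^\ast=A\Rightarrow UA^\ast U^\ast=A^\ast$, with closability and dense definedness invoked solely to guarantee that $\overline{A}$ and $A^\ast$ exist; your graph computation for (i) and adjoint computation for (ii) are precisely the content hidden behind those arrows.
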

\begin{proof}
	$(i)$ The condition just warranty the existence of $\overline{A}$. Since it exists $U\in\nalgebra^\prime$, $UAU^\ast=A \Rightarrow U\overline{A}U^\ast=\overline{A}$.
	
	$(ii)$ The condition plays the same role and, since $A^\ast$ exists, $U\in\nalgebra^\prime$, $UAU^\ast=A \Rightarrow UA^\ast U^\ast=A^\ast$.
	
\end{proof}

\begin{definition}
	Let $\varepsilon, \delta>0$, define $$D(\varepsilon,\delta)=\left\{A\in \nalgebra_\eta \ \middle| \ \exists p\in \nalgebra_p \textrm{ such that } p\hilbert\subset\Dom{A}, \ \|Ap\|\leq\varepsilon \textrm{ and } \tau(1-p)\leq\delta\right\}$$
\end{definition}

Some important properties we are about to present rely on equivalence of projections. One of these important equivalences is $s_R^\nalgebra(A)\sim s_R^\nalgebra(A)$, it is, the right and left support of an operator are equivalent projections. This standard result is a consequence of polar decomposition, since for the partial isometry, $u$, in the polar decomposition of $A$ holds $uu^\ast= s_L^\nalgebra(A)$ and $u^\ast u= s_R^\nalgebra(A)$. We can used it to deduce another important equivalence as it follow
$$s^\nalgebra_L(q(1-p)=proj[\Ran\left(q(1-p)\right]=q-(p\wedge q)$$
$$proj[\Ker\left(q(1-p)\right]=p+(1-p)\wedge (1-q)\Rightarrow s^\nalgebra_R(q(1-p)=1-p-(1-p\vee q)= (p\vee q)-p.$$

Then $(p\vee q)-p \sim q-(p\wedge q)$. It also has a interesting consequence: if two projections $p$ and $q$ are such that $(p\wedge q)=0$ (which means the intersection of the ranges is the null space), then $p\lesssim 1-q$.

\begin{proposition}
	\label{Dsumandproduct}
	Let $\varepsilon_1,\varepsilon_2, \delta_1, \delta_2>0$, then
	\begin{enumerate}[(i)]
		\item $D(\varepsilon_1,\delta_1)+D(\varepsilon_2,\delta_2)\subset D(\varepsilon_1+\varepsilon_2,\delta_1+\delta_2)$;
		\item $D(\varepsilon_1,\delta_1)D(\varepsilon_2,\delta_2)\subset D(\varepsilon_1\varepsilon_2,\delta_1+\delta_2)$.
	\end{enumerate}
\end{proposition}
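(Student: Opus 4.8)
The plan is to treat the two inclusions separately, in each case building from the two witnessing projections a single projection that certifies membership in the target set. Throughout I use freely the Kaplansky-type relation $(p\vee q)-p\sim q-(p\wedge q)$ recalled just above, together with the fact that $\tau$, being a trace, takes equal values on equivalent projections and is additive on orthogonal ones; combining these gives the subadditivity $\tau(p\vee q)\le\tau(p)+\tau(q)$ for all $p,q\in\nalgebra_p$, which is the only property of $\tau$ I shall need.

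For (i), let $A\in D(\varepsilon_1,\delta_1)$ and $B\in D(\varepsilon_2,\delta_2)$, witnessed by projections $p_1$ and $p_2$ respectively. The operator $A+B$, with its natural domain $\Dom{A}\cap\Dom{B}$, is affiliated to $\nalgebra$: that domain is invariant under conjugation by any unitary $U\in\nalgebra^\prime$, and such conjugation fixes both $A$ and $B$, hence fixes $A+B$. Put $p=p_1\wedge p_2\in\nalgebra_p$. Then $p\hilbert\subset p_1\hilbert\cap p_2\hilbert\subset\Dom{A}\cap\Dom{B}\subset\Dom{A+B}$; since $p\le p_i$ we have $Ap=Ap_1p$ and $Bp=Bp_2p$, so $\|(A+B)p\|\le\|Ap\|+\|Bp\|\le\|Ap_1\|+\|Bp_2\|\le\varepsilon_1+\varepsilon_2$; and $1-p=(1-p_1)\vee(1-p_2)$, so $\tau(1-p)\le\tau(1-p_1)+\tau(1-p_2)\le\delta_1+\delta_2$. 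Hence $A+B\in D(\varepsilon_1+\varepsilon_2,\delta_1+\delta_2)$.

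For (ii), keep the same data. First note that $Ap_1$ and $Bp_2$ are everywhere-defined, bounded, and fixed by conjugation by unitaries of $\nalgebra^\prime$, hence lie in $\nalgebra$, with $\|Ap_1\|\le\varepsilon_1$ and $\|Bp_2\|\le\varepsilon_2$. Set $X=(1-p_1)Bp_2\in\nalgebra$; since $\Ran X^\ast\subset p_2\hilbert$ we have $s_R^\nalgebra(X)\le p_2$, so $p\defeq p_2-s_R^\nalgebra(X)=p_2\wedge(1-s_R^\nalgebra(X))\in\nalgebra_p$ and $1-p=(1-p_2)+s_R^\nalgebra(X)$ is an orthogonal sum. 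For $x\in p\hilbert$ we have $x\in p_2\hilbert\subset\Dom{B}$ and $(1-p_1)Bx=Xx=0$ (as $x\in\Ker X$), so $Bx\in p_1\hilbert\subset\Dom{A}$; thus $p\hilbert\subset\Dom{AB}$, and $AB$ is affiliated by the same domain-invariance argument as before. Moreover $\|ABp\,x\|=\|Ap_1(Bx)\|\le\varepsilon_1\|Bx\|\le\varepsilon_1\|Bp_2\|\,\|x\|\le\varepsilon_1\varepsilon_2\|x\|$, so $\|ABp\|\le\varepsilon_1\varepsilon_2$. Finally, $s_R^\nalgebra(X)\sim s_L^\nalgebra(X)$ and $s_L^\nalgebra(X)\le 1-p_1$, so $\tau\bigl(s_R^\nalgebra(X)\bigr)=\tau\bigl(s_L^\nalgebra(X)\bigr)\le\tau(1-p_1)\le\delta_1$, whence $\tau(1-p)=\tau(1-p_2)+\tau\bigl(s_R^\nalgebra(X)\bigr)\le\delta_1+\delta_2$. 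Therefore $AB\in D(\varepsilon_1\varepsilon_2,\delta_1+\delta_2)$.

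The affiliation verifications and the norm estimates are routine. The one step that needs a real idea is the choice of $p$ in (ii): one must force $B$ to carry $p\hilbert$ into $\Dom{A}$ — which pins $p$ down as (essentially) the kernel projection of $X=(1-p_1)Bp_2$ inside $p_2$ — while still keeping $\tau(1-p)$ below $\delta_1+\delta_2$. It is precisely the equivalence of the left and right supports of $X$, combined with $s_L^\nalgebra(X)\le 1-p_1$, that makes the trace bookkeeping close, and I expect this to be the main obstacle; case (i) is immediate once subadditivity of $\tau$ on joins is available.
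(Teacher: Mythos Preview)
Your proof is correct and follows essentially the same route as the paper: in (i) you take $p=p_1\wedge p_2$, and in (ii) your projection $p=p_2-s_R^\nalgebra(X)=p_2\wedge(1-s_R^\nalgebra(X))$ is exactly the paper's $p_2\wedge r$ with $r=[\Ker((1-p_1)Bp_2)]$. The only cosmetic difference is that for the trace estimate in (ii) you exploit the orthogonal decomposition $1-p=(1-p_2)+s_R^\nalgebra(X)$ directly, whereas the paper writes $1-(p_2\wedge r)=(1-p_2)\vee(1-r)$ and invokes subadditivity on joins; both arrive at the same bound via $s_R^\nalgebra(X)\sim s_L^\nalgebra(X)\le 1-p_1$.
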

\begin{proof}
	$(i)$ Let $A_i\in D(\varepsilon_i,\delta_i)$, $i=1,2$. By definition, there exist projections $p_i$ such that $p_i\hilbert\subset\Dom{A_i}$,$\|Ap_i\|\leq\varepsilon_i$ and $\tau(1-p_i)\leq\delta_i$. Taking $q=p_1\wedge p_2$ we get $q\hilbert \subset\Dom{A_1}\cap\Dom{A_2}=\Dom{A_1+A_2}$, $\|(A_1+A_2)q\|\leq \|A_1 q\|+\|A_2 q\|\leq \|A_1 p_1\|+\|A_1 p_2\|\leq\varepsilon_1+\varepsilon_2$ and $$\begin{aligned}
	\tau(1-q) &= \tau(1-p_1\wedge p_2)\\
			&=\tau\left((1-p_1)\vee(1 -p_2)\right)\\
			&\leq \tau(1-p_1)+\tau(1-p_2)\leq \delta_1+\delta_2.
	\end{aligned}$$

$(ii)$ Let $A_i$, $p_i$, $i=1,2$, as before. Note that $(1-p_1)A_2)p_2\in \nalgebra$, then we can define $r=[\Ker{\left((1-p_1)A_2 p_2\right)}]$.

It follow from this definition that, for every $x \in r\hilbert$, $A_2p_2 x\in p_1\hilbert\subset\Dom{A_1}$. This means $r\hilbert\subset\Dom{A_1A_2p_2}$ which would imply $(p_2\wedge r)\hilbert\subset \Dom{A_1A_2}$ and 
$$\begin{aligned}
\|A_1A_2(p_2\wedge r)\|	&=\|A_1p_1A_2p_2(p_2\wedge r)\|\\
&\leq \|A_1p_1\|\|A_2p_2\|\\
&=\varepsilon_1\varepsilon_2.
\end{aligned}$$

Now, $1-r=s^\nalgebra_R\left((1-p_1)A_2p_2\right) \sim s^\nalgebra_L\left((1-p_1)A_2p_2\right)\leq 1-p_1$, and thus
$$\begin{aligned}
\tau(1-p_2\wedge r)&=\tau\left((1-p_2)\vee(1-r)\right)\\
&\leq \tau(1-p_2)+\tau(1-p_1)\\
&\leq\delta_1+\delta_2.
\end{aligned}$$

\end{proof}

\begin{proposition}
	\label{proposition d<->tau}
	Let $A\eta\nalgebra$ a closed densely defined operator, $\varepsilon, \delta>0$ and \\ $E_{(\varepsilon,\infty)}=s^\nalgebra_L\left((|A|-\varepsilon)_+\right)$ the spectral projection of $|A|$ on the interval ${(\varepsilon,\infty)}$. Then
	$$A\in D(\varepsilon,\delta) \Leftrightarrow \tau\left(E_{(\varepsilon,\infty)}\right)\leq\delta$$
\end{proposition}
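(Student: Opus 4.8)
The plan is to handle the two implications separately. For $(\Leftarrow)$ I will produce the witnessing projection explicitly from the spectral resolution of $|A|$; for $(\Rightarrow)$ I will compare $E_{(\varepsilon,\infty)}$ with $1-p$ using Murray--von Neumann subequivalence together with the consequence of polar decomposition recorded just before the statement, namely that $e\wedge f=0$ implies $e\lesssim 1-f$.

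For $(\Leftarrow)$, assume $\tau\!\left(E_{(\varepsilon,\infty)}\right)\leq\delta$ and set $p=1-E_{(\varepsilon,\infty)}$, the spectral projection of $|A|$ on $[0,\varepsilon]$. Writing $A=u|A|$ for the polar decomposition and recalling $\Dom{A}=\Dom{|A|}$, the Borel functional calculus shows that for $x\in p\hilbert$ the integral $\||A|x\|^2=\int_{[0,\varepsilon]}\lambda^2\,d\|E_\lambda x\|^2$ is finite, so $p\hilbert\subset\Dom{|A|}=\Dom{A}$; moreover $\|Ap\|\leq\||A|p\|=\||A|E_{[0,\varepsilon]}\|\leq\varepsilon$, and $\tau(1-p)=\tau\!\left(E_{(\varepsilon,\infty)}\right)\leq\delta$ by hypothesis. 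Hence $p$ witnesses $A\in D(\varepsilon,\delta)$.

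For $(\Rightarrow)$, let $p\in\nalgebra_p$ witness $A\in D(\varepsilon,\delta)$, so $p\hilbert\subset\Dom{A}$, $\|Ap\|\leq\varepsilon$ and $\tau(1-p)\leq\delta$. The \emph{key step} is the claim $E_{(\varepsilon,\infty)}\wedge p=0$. Indeed, if $x\in E_{(\varepsilon,\infty)}\hilbert\cap p\hilbert$ then $x\in p\hilbert\subset\Dom{A}=\Dom{|A|}$, so $|A|x$ is defined and $\|Ax\|=\||A|x\|$; on one hand $\|Ax\|=\|(Ap)x\|\leq\varepsilon\|x\|$ since $x=px$, while on the other hand the spectral measure $\lambda\mapsto\|E_\lambda x\|^2$ is carried by $(\varepsilon,\infty)$, so $\||A|x\|^2=\int_{(\varepsilon,\infty)}\lambda^2\,d\|E_\lambda x\|^2>\varepsilon^2\|x\|^2$ unless $x=0$; the two estimates force $x=0$. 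By the cited consequence of polar decomposition, $E_{(\varepsilon,\infty)}\wedge p=0$ gives $E_{(\varepsilon,\infty)}\lesssim 1-p$, and since $\tau$ is a trace it is monotone under $\lesssim$ (from $\tau(vv^\ast)=\tau(v^\ast v)$ and positivity of $\tau$), whence $\tau\!\left(E_{(\varepsilon,\infty)}\right)\leq\tau(1-p)\leq\delta$.

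The only point needing a little care is the strict inequality $\int_{(\varepsilon,\infty)}\lambda^2\,d\|E_\lambda x\|^2>\varepsilon^2\|x\|^2$ for $x\neq0$: it holds because $\lambda^2-\varepsilon^2>0$ throughout the support of the nonzero finite measure $d\|E_\lambda x\|^2$, so $\int(\lambda^2-\varepsilon^2)\,d\|E_\lambda x\|^2>0$. This is precisely where the description of $E_{(\varepsilon,\infty)}=s^\nalgebra_L\!\left((|A|-\varepsilon)_+\right)$ as the spectral projection on the \emph{open} interval is used; everything else reduces to routine manipulations with the polar decomposition, the functional calculus, and the trace property.
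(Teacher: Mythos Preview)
Your argument is correct and follows essentially the same route as the paper: for $(\Leftarrow)$ you take $p=1-E_{(\varepsilon,\infty)}$, and for $(\Rightarrow)$ you establish $E_{(\varepsilon,\infty)}\wedge p=0$ by contrasting the strict spectral lower bound on $E_{(\varepsilon,\infty)}\hilbert$ with $\|Ap\|\leq\varepsilon$, then invoke $E_{(\varepsilon,\infty)}\lesssim 1-p$ and the trace property. If anything, your version is slightly more careful than the paper's in justifying the strict inequality via the spectral integral and in noting that $x\in p\hilbert$ guarantees $x\in\Dom{|A|}$.
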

\begin{proof}
$(\Leftarrow)$ Just take $p=1-E_{(\varepsilon,\infty)}$, then $p\hilbert \subset\Dom{|A|}$, $\|Ap\|=\|A(1-E_{(\varepsilon,\infty)})\|\leq \varepsilon$ and by hypothesis $\tau(1-p)\leq\delta$.

$(\Rightarrow)$ Suppose $A\in D(\varepsilon,\delta)$, there exists $p\in\nalgebra_p$ such that $p\hilbert\subset\Dom{A}$, $\|Ap\|\leq \varepsilon$ and $\tau(1-p)\leq\delta$. Let us take $\left\{E_{(\lambda,\infty)}\right\}_{\lambda \in \mathbb{R}_+}$ the spectral projections of $|A|$. Notice that $\||A|E_{(\varepsilon,\infty)}x\|>\varepsilon\|E_{(\varepsilon,\infty)}x\| \ \forall x \in \hilbert$, while $\||A|p x\|\leq \varepsilon \|px\|$. It follows that $\left(E_{(\varepsilon,\infty)} \wedge p\right)=0$ or, in other words, $E_{(\varepsilon,\infty)}\lesssim 1-p$.

Finally, the previous inequality leads us to $\tau(E_{(\varepsilon,\infty)})\leq\tau(1-p)\leq \delta$.

\end{proof}

\begin{proposition}
	\label{AinD=>A*inD}
	Let $A\eta \nalgebra$ be a closed densely defined operator.
	$$A\in D(\varepsilon,\delta)\Leftrightarrow A^\ast \in D(\varepsilon,\delta).$$
\end{proposition}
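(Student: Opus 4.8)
The plan is to reduce the statement to the spectral characterisation in Proposition \ref{proposition d<->tau} together with the invariance of the trace under equivalence of projections. First note that since $A$ is closed and densely defined, $A^\ast$ is automatically closed and densely defined, and $A^\ast\eta\nalgebra$ by the lemma on affiliated operators; hence Proposition \ref{proposition d<->tau} applies to $A^\ast$ as well as to $A$. So it suffices to prove that, for every $\varepsilon>0$,
$$\tau\left(s^\nalgebra_L\left((|A|-\varepsilon)_+\right)\right)=\tau\left(s^\nalgebra_L\left((|A^\ast|-\varepsilon)_+\right)\right),$$
that is, that the spectral projection $E_{(\varepsilon,\infty)}$ of $|A|$ on $(\varepsilon,\infty)$ and the corresponding spectral projection $F_{(\varepsilon,\infty)}$ of $|A^\ast|$ have the same trace.

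Next I would bring in the polar decomposition $A=u|A|$, with $u$ the partial isometry satisfying $u^\ast u=s^\nalgebra_R(A)$ and $uu^\ast=s^\nalgebra_L(A)=s^\nalgebra_R(A^\ast)$, all of which lie in $\nalgebra$ because $A\eta\nalgebra$. From $AA^\ast=u|A|^2u^\ast$ and the functional calculus one obtains $|A^\ast|=u|A|u^\ast$, and more generally $F_{(\varepsilon,\infty)}=uE_{(\varepsilon,\infty)}u^\ast$ for every $\varepsilon>0$; the restriction $\varepsilon>0$ is exactly what keeps the interval $(\varepsilon,\infty)$ away from the point $0$, where $u$ fails to be isometric, so that conjugation by $u$ transports the relevant part of the functional calculus of $|A|$ to that of $|A^\ast|$. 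Since $E_{(\varepsilon,\infty)}\leq s^\nalgebra_R(A)=u^\ast u$, the element $v=uE_{(\varepsilon,\infty)}$ is a partial isometry with $v^\ast v=E_{(\varepsilon,\infty)}$ and $vv^\ast=uE_{(\varepsilon,\infty)}u^\ast=F_{(\varepsilon,\infty)}$, so $E_{(\varepsilon,\infty)}\sim F_{(\varepsilon,\infty)}$ in $\nalgebra$.

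Finally, since $\tau$ is a trace, $\tau(v^\ast v)=\tau(vv^\ast)$, hence $\tau(E_{(\varepsilon,\infty)})=\tau(F_{(\varepsilon,\infty)})$ for every $\varepsilon>0$. Combining this with Proposition \ref{proposition d<->tau} applied to $A$ and to $A^\ast$ gives
$$A\in D(\varepsilon,\delta)\Leftrightarrow\tau(E_{(\varepsilon,\infty)})\leq\delta\Leftrightarrow\tau(F_{(\varepsilon,\infty)})\leq\delta\Leftrightarrow A^\ast\in D(\varepsilon,\delta),$$
which is the claim.

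The one step that requires genuine care — and which I expect to be the main obstacle — is the identity $|A^\ast|=u|A|u^\ast$ and the ensuing passage to spectral projections: for an unbounded positive operator $|A|$ one must verify that conjugation by the partial isometry $u$ really does intertwine the bounded Borel functional calculi of $|A|$ and of $|A^\ast|$ on sets bounded away from $0$, which is done by checking the relation on the (dense) ranges of the spectral projections and invoking closedness. Once that is in place, the rest is bookkeeping with left and right supports and the defining property of a trace.
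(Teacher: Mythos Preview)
Your proof is correct and follows essentially the same route as the paper: polar decomposition $A=u|A|$, the identity $|A^\ast|^2=u|A|^2u^\ast$ yielding $E^{|A^\ast|}_{(\varepsilon,\infty)}=uE^{|A|}_{(\varepsilon,\infty)}u^\ast$, and then the trace property combined with Proposition~\ref{proposition d<->tau}. If anything, you are more explicit than the paper about why the conjugation by $u$ transports spectral projections on $(\varepsilon,\infty)$ and why $v=uE_{(\varepsilon,\infty)}$ is a partial isometry implementing the equivalence; the paper compresses this into a two-line computation.
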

\begin{proof}
	Let $A=u|A|$  be the polar decomposition of $A$, so $|A^\ast|^2=A A^\ast=u|A|\left(u|A|\right)^\ast=u|A|^2u^\ast$.
	
	Using the previous identity, where $E_{(\varepsilon,\infty)}^{|A|}$ and $E_{(\varepsilon,\infty)}^{|A^\ast|}$  are the spectral projections of $|A|$ and $|A^\ast|$ respectively, we have $$\begin{aligned}
			\tau\left(E_{(\varepsilon,\infty)}^{|A^\ast|}\right)&=\tau\left(u E_{(\varepsilon,\infty)}^{|A^\ast|} u^\ast\right)\\
			&=\tau\left(E_{(\varepsilon,\infty)}^{|A|}\right)\\
			&\leq\delta.
	\end{aligned}$$ 
\end{proof}

\begin{definition}
A subspace $V\subset\hilbert$ is $\tau$-dense if $\forall \delta>0$, there exists $p\in\nalgebra_p$ such that $p\hilbert\subset V$ and $\tau(1-p)<\delta$.	
\end{definition}

\begin{proposition}
	\label{pn->1}
A subspace $V\subset\hilbert$ is $\tau$-dense if, and only if, there exists an increasing sequence of projections $(p_n)_n\subset\nalgebra_p$ such that $p_n\to 1$ and $\tau(1-p_n)\to 0$ and $\displaystyle \bigcup_{n\in\mathbb{N}} p_n\hilbert\subset V$.
\end{proposition}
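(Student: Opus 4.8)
The plan is to prove the two implications separately; the backward implication is immediate and the forward one carries the content. If an increasing sequence $(p_n)_n \subset \nalgebra_p$ with $p_n \to 1$, $\tau(1-p_n)\to 0$ and $\bigcup_n p_n\hilbert \subseteq V$ is given, then for any $\delta > 0$ one picks $n$ with $\tau(1-p_n) < \delta$; the projection $p_n$ satisfies $p_n\hilbert \subseteq V$ and $\tau(1-p_n) < \delta$, which is precisely the definition of $\tau$-density.

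For the converse, assume $V$ is $\tau$-dense. Applying the definition with $\delta = 2^{-n}$ yields, for each $n \in \mathbb{N}$, a projection $q_n \in \nalgebra_p$ with $q_n\hilbert \subseteq V$ and $\tau(1-q_n) < 2^{-n}$. These need not be monotone, and one cannot repair this by passing to $q_1 \vee \dots \vee q_n$, since the range of a supremum of projections is the \emph{closed} linear span of the ranges, which may escape $V$. The device is to intersect tails instead:
$$p_n = \bigwedge_{k \geq n} q_k,$$
the countable infimum taken in the (complete) projection lattice of $\nalgebra$, so $p_n \in \nalgebra_p$. Then $(p_n)_n$ is increasing, because the index set $\{k : k \geq n\}$ shrinks with $n$; and $p_n\hilbert = \bigcap_{k\geq n} q_k\hilbert \subseteq q_n\hilbert \subseteq V$, so $\bigcup_n p_n\hilbert \subseteq V$.

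It remains to estimate $\tau(1-p_n)$ and to check $p_n \to 1$. By De Morgan, $1 - p_n = \bigvee_{k\geq n}(1-q_k)$; from the equivalence $(e\vee f) - e \sim f - (e\wedge f)$ recorded above and the tracial property of $\tau$ one gets $\tau(e \vee f) \leq \tau(e) + \tau(f)$ for projections, and normality of $\tau$ (applied to the increasing net of partial suprema $\bigvee_{k=n}^{m}(1-q_k)$) lets one pass to the countable supremum, giving $\tau(1-p_n) \leq \sum_{k\geq n}\tau(1-q_k) < \sum_{k\geq n} 2^{-k} = 2^{1-n} \to 0$. Since $(p_n)_n$ is increasing it converges in the strong operator topology (Vigier) to $p = \sup_n p_n \in \nalgebra_p$; from $0 \leq 1 - p \leq 1 - p_n$ we get $\tau(1-p) \leq \inf_n \tau(1-p_n) = 0$, and faithfulness of $\tau$ forces $1 - p = 0$, i.e. $p_n \to 1$.

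The one genuinely nonroutine point is the realisation that the increasing sequence must be manufactured by intersecting the tails of the $q_n$ rather than by taking suprema, this being forced by the fact that $V$ is only assumed to be a subspace and not a closed subspace; everything else is bookkeeping with facts already available: completeness of the projection lattice of $\nalgebra$, sub-additivity of $\tau$ on suprema of projections via the parallelogram-type equivalence, and the normality and faithfulness of $\tau$.
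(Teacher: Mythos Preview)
Your proof is correct and follows essentially the same approach as the paper: the same choice of $q_n$ with $\tau(1-q_n)<2^{-n}$, the same tail-intersection $p_n=\bigwedge_{k\geq n}q_k$, the same De~Morgan/subadditivity estimate $\tau(1-p_n)\leq\sum_{k\geq n}\tau(1-q_k)\leq 2^{1-n}$, and the same appeal to faithfulness of $\tau$ to conclude $\sup_n p_n=1$. Your added explanation of \emph{why} one must intersect tails rather than take suprema (because $V$ need not be closed) is a worthwhile clarification that the paper leaves implicit.
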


\begin{proof}
	Of course the existence of such a sequence of projections implies the $\tau$-density of $V$.
	
	On the other hand, if $V$ is $\tau$-dense, for each $n\in\mathbb{N}$, there exists a projection $q_n$ such that $q_n\hilbert\subset V$ and ${\tau(1-q_n)<2^{-n}}$.
	
	Now, in order to obtain an increasing sequence, define
	$\displaystyle p_n=\bigwedge_{k\geq n}q_k$.

	To show that $p_n\to 1$, define $p=\displaystyle \bigvee_{n\in\mathbb{N}}q_n$, then, for all $n\in \mathbb{N}$,
	\begin{equation}\begin{aligned}
	\label{increasingtau}
	\tau(1-p)&\leq \tau(1-p_n)\\ &=\tau\left(1-\bigwedge_{k\geq n}q_k\right)\\
	&=\tau\left(\bigvee_{k\geq n}(1-q_k)\right)\\
	&\leq \sum_{k\geq n}\tau(1-q_k)\\
	&\leq \sum_{k\geq n}2^{-k} \\
	&=2^{1-n}
	\end{aligned}\end{equation}
but this is only possible if $\tau(1-p)=0$ and since $\tau$ is faithful, $p=1$. It follows by the Vigier's Theorem that $p_n\to1$ in the SOT.
 
\end{proof}

\begin{corollary}
If $V\subset \hilbert$ is a $\tau$-dense subspace, $V$ is dense in $\hilbert$.
\end{corollary}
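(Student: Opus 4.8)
The plan is to read off the conclusion directly from Proposition \ref{pn->1}, which already packages all the work. Given that $V$ is $\tau$-dense, that proposition supplies an increasing sequence of projections $(p_n)_n\subset\nalgebra_p$ with $p_n\to 1$ in the SOT, $\tau(1-p_n)\to 0$, and $\bigcup_{n\in\mathbb{N}}p_n\hilbert\subset V$. The only property I actually need here is the SOT convergence $p_n\to 1$ together with the inclusion of the ranges in $V$; the trace estimates were the substantive content and have already been dealt with.

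The argument itself is then a one-liner: fix an arbitrary $x\in\hilbert$. Since $p_n\to 1$ strongly, $p_nx\to x$ in norm. But $p_nx\in p_n\hilbert\subset V$ for every $n$, so $x$ is a norm-limit of a sequence in $V$, hence $x\in\overline{V}$. As $x$ was arbitrary, $\overline{V}=\hilbert$, i.e.\ $V$ is dense.

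There is no real obstacle to overcome at this stage; the corollary is essentially a restatement of the SOT convergence obtained via Vigier's theorem in the proof of Proposition \ref{pn->1}. The one point worth being careful about is purely bookkeeping: one should make sure the $p_n$ are taken with $p_n\hilbert\subset V$ (not merely $p_n\in\nalgebra_p$ with small $\tau(1-p_n)$), which is exactly what Proposition \ref{pn->1} guarantees, so that each approximant $p_nx$ genuinely lies in $V$.
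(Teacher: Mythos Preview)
Your proof is correct and is exactly the intended argument: the paper states this corollary without proof, as an immediate consequence of Proposition~\ref{pn->1}, and your one-line use of the SOT convergence $p_n\to 1$ together with $p_n\hilbert\subset V$ is precisely how that consequence is drawn.
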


\begin{corollary}
Let $V_1,V_2 \subset\hilbert$ be $\tau$-dense subspaces. Then $V_1\cap V_2$ is $\tau$-dense. 	
\end{corollary}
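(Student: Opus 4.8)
The plan is to reduce this to a single application of the meet (infimum) of two projections, exactly as in the proof of Proposition~\ref{Dsumandproduct}(i). Fix $\delta>0$. Since $V_1$ and $V_2$ are $\tau$-dense, I would first choose projections $p_1,p_2\in\nalgebra_p$ with $p_i\hilbert\subset V_i$ and $\tau(1-p_i)<\delta/2$ for $i=1,2$, and then set $p=p_1\wedge p_2\in\nalgebra_p$.

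The next step is to check that $p$ works on the range side: $p\hilbert=p_1\hilbert\cap p_2\hilbert\subset V_1\cap V_2$, since $p_1\wedge p_2$ is by definition the projection onto the intersection of the ranges of $p_1$ and $p_2$. After that I would estimate $\tau(1-p)$. Using $1-p_1\wedge p_2=(1-p_1)\vee(1-p_2)$ together with subadditivity of $\tau$ on the supremum of two projections — which follows from the equivalence $(r\vee s)-r\sim s-(r\wedge s)\le s$ recorded just before Proposition~\ref{Dsumandproduct}, combined with additivity of $\tau$ across orthogonal projections and the trace property — one obtains
\[
\tau(1-p)=\tau\big((1-p_1)\vee(1-p_2)\big)\le\tau(1-p_1)+\tau(1-p_2)<\delta .
\]
Since $\delta>0$ is arbitrary, this exhibits, for each $\delta$, a projection $p$ with $p\hilbert\subset V_1\cap V_2$ and $\tau(1-p)<\delta$, which is precisely $\tau$-density of $V_1\cap V_2$.

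The argument is essentially immediate, so there is no genuine obstacle; the only point demanding a little care is the inequality $\tau(r\vee s)\le\tau(r)+\tau(s)$ for projections, but this is already at our disposal from the equivalence-of-projections discussion in the text. If one prefers to phrase the statement in terms of Proposition~\ref{pn->1}, one could instead take the increasing sequences $(p_n^{(1)})_n$ and $(p_n^{(2)})_n$ associated with $V_1$ and $V_2$ and work with $q_n=p_n^{(1)}\wedge p_n^{(2)}$, using the same supremum estimate to control $\tau(1-q_n)$; but the one-projection version above is shorter and follows directly from the definition of $\tau$-density.
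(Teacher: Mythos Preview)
Your argument is correct and is essentially the same as the paper's: both reduce to taking the meet $p_1\wedge p_2$ of two suitable projections and using $\tau\big((1-p_1)\vee(1-p_2)\big)\le\tau(1-p_1)+\tau(1-p_2)$. The only cosmetic difference is that the paper writes out the sequence version via Proposition~\ref{pn->1} (your alternative phrasing), while you lead with the direct one-projection version from the definition; the paper itself remarks that the direct argument is already contained in Proposition~\ref{Dsumandproduct}.
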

\begin{proof}
First, although we are going to present a proof based on the previous proposition, it is quite interesting to notice that this result has already been proved. In fact, the proof can be found in Proposition \ref{Dsumandproduct}.

Let $(p^i_n)_n\in \nalgebra_p$ such that $tau(1-p^i_n)\to0$ and $\displaystyle \bigcup_{n\in\mathbb{N}} p^i_n\hilbert\subset V_i$. Define $q_n=p^1_n\wedge p^2_n$. Then
$$\begin{aligned}
\tau(1-q_n) &= \tau(1-p^1_n\wedge p^2_n)\\
&=\tau\left((1-p^1_n)\vee(1 -p^2_n)\right)\\
&\leq \tau(1-p^1_n)+\tau(1-p^2_n) \to 0.\\
\end{aligned}$$

Furthermore, $q=p^1_n\wedge p^2_n$ is the projection in the intersection of the image of $p_1$ and $p_2$, which is a subset of $V_1\cap V_2$, for every $n\in\mathbb{N}$.

\end{proof}

\begin{definition}[Balanced Weight]
	Let $\nalgebra$ be a von Neumann algebra and $\phi, \psi$ weights. We define the \index{weight! balanced} balanced weight
	$$\begin{aligned}
	\theta_{\phi,\psi}:	& \quad M_{2\times2}(\nalgebra)	&\to	& \ \overline{R}\\
	& \begin{pmatrix} A_{11} & A_{12} \\ A_{21} & A_{22}\end{pmatrix} &\mapsto	& \ \phi(A_{11})+\psi(A_{22}).
	\end{aligned}$$
\end{definition}

\begin{notation}
	To simplify the notation we will not use $\theta_{\phi,\psi}$, instead, we will only use $\theta$ when it is clear what $\phi$ and $\psi$ are.
	
	In addition, it is much easier to write $\displaystyle \sum_{i,j=1}^{2}A_{ij}e_{ij}$ to substitute $\begin{pmatrix} A_{11} & A_{12} \\ A_{21} & A_{22}\end{pmatrix}$, where
	$$e_{11}=\begin{pmatrix} \mathbb{1} & 0 \\ 0 & 0\end{pmatrix}, \quad
	e_{12}=\begin{pmatrix} 0 & \mathbb{1} \\ 0 & 0\end{pmatrix}, \quad
	e_{21}=\begin{pmatrix} 0 & 0 \\ \mathbb{1} & 0\end{pmatrix}, \quad
	e_{22}=\begin{pmatrix} 0 & 0 \\ 0 & \mathbb{1}\end{pmatrix}. \quad$$
\end{notation}

\begin{lemma}
	Let $\nalgebra$ be a von Neumann algebra and $\phi, \psi$ weights.
	\label{lemmabalanced}
	\begin{enumerate}[(i)]
		\item $\theta_{\phi,\psi}$ is a normal semifinite weight if, and only if, $\phi, \psi$ have these properties;
		\item $\theta_{\phi,\psi}$ is faithful if, and only if, $\phi, \psi$ are faithful;
	\end{enumerate}
\end{lemma}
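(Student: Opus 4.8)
The plan is to reduce everything to the two corner projections $e_{11}, e_{22}$ and the corner maps they induce, since $\theta=\theta_{\phi,\psi}$ is built ``diagonally'' from $\phi$ and $\psi$. First I would note that $\theta$ is automatically a weight: for $A\in M_{2\times2}(\nalgebra)_+$ the corner $A_{11}$ (resp.\ $A_{22}$) is positive in $\nalgebra$, being the compression of $A$ by $e_{11}$ (resp.\ $e_{22}$), and $A\mapsto A_{11}$, $A\mapsto A_{22}$ are additive and positively homogeneous, so $\theta$ is. The one computation I would do once and reuse is
$$\theta(A^\ast A)=\phi(A_{11}^\ast A_{11})+\phi(A_{21}^\ast A_{21})+\psi(A_{12}^\ast A_{12})+\psi(A_{22}^\ast A_{22}),$$
obtained from $(A^\ast A)_{11}=A_{11}^\ast A_{11}+A_{21}^\ast A_{21}$, $(A^\ast A)_{22}=A_{12}^\ast A_{12}+A_{22}^\ast A_{22}$ and additivity of $\phi,\psi$ on $\nalgebra_+$. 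This already identifies $\mathfrak{N}_\theta$ with the set of matrices whose first column lies in $\mathfrak{N}_\phi$ and whose second column lies in $\mathfrak{N}_\psi$, and $\mathfrak{F}_\theta=\{A\in M_{2\times2}(\nalgebra)_+: A_{11}\in\mathfrak{F}_\phi,\ A_{22}\in\mathfrak{F}_\psi\}$. Throughout I will use that the corner embeddings $a\mapsto ae_{11}$, $a\mapsto ae_{22}$ and the corner maps $X\mapsto X_{11}$, $X\mapsto X_{22}$ are all SOT- and WOT-continuous (tested on vectors of the form $(\xi,0)$ and $(0,\xi)$).

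For normality, in the forward direction I would use that compression by a projection is order-continuous: if $(A^{(i)})_i$ is a bounded increasing net in $M_{2\times2}(\nalgebra)_+$ with supremum $A$, then $A^{(i)}_{11}\nearrow A_{11}$ and $A^{(i)}_{22}\nearrow A_{22}$, so normality of $\phi$ and $\psi$ gives $\phi(A_{11})=\sup_i\phi(A^{(i)}_{11})$ and $\psi(A_{22})=\sup_i\psi(A^{(i)}_{22})$; since the supremum of the sum of two monotone nets of extended reals is the sum of the suprema, $\theta(A)=\sup_i\theta(A^{(i)})$. Conversely, from a bounded increasing net $B_i\nearrow B$ in $\nalgebra_+$ I form $B_ie_{11}\nearrow Be_{11}$ and $B_ie_{22}\nearrow Be_{22}$ in $M_{2\times2}(\nalgebra)_+$ (the supremum of a bounded increasing net being its SOT-limit, and the embeddings being SOT-continuous) and read off normality of $\phi$, then of $\psi$, from normality of $\theta$ applied to these two nets.

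For semifiniteness: if $\theta$ is semifinite, i.e.\ $\mathfrak{M}_\theta$ is weakly dense in $M_{2\times2}(\nalgebra)$, then since $A\in\mathfrak{F}_\theta\Rightarrow A_{11}\in\mathfrak{F}_\phi$ and taking the $(1,1)$-corner is linear, $\{X_{11}:X\in\mathfrak{M}_\theta\}\subseteq\mathfrak{M}_\phi$; as $X\mapsto X_{11}$ is a weakly continuous surjection $M_{2\times2}(\nalgebra)\to\nalgebra$, $\mathfrak{M}_\phi$ is weakly dense, and symmetrically $\mathfrak{M}_\psi$. Conversely, if $\phi,\psi$ are semifinite, then placing an element of $\mathfrak{F}_\phi$ in the $(1,1)$-corner lands in $\mathfrak{F}_\theta$, so $\mathfrak{M}_\phi e_{11}\subseteq\mathfrak{M}_\theta$ and $\mathfrak{M}_\psi e_{22}\subseteq\mathfrak{M}_\theta$; hence the weak closure of $\mathfrak{M}_\theta$ contains $\nalgebra e_{11}$ and $\nalgebra e_{22}$. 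To reach the off-diagonal corners I would invoke $\mathfrak{M}_\theta=\mathfrak{N}_\theta^\ast\mathfrak{N}_\theta$ (Proposition~\ref{simplepropweights}): with the description of $\mathfrak{N}_\theta$ above, $(ae_{11})^\ast(be_{12})=(a^\ast b)e_{12}\in\mathfrak{M}_\theta$ whenever $a\in\mathfrak{N}_\phi$, $b\in\mathfrak{N}_\psi$, and taking for $a$ a net in $\mathfrak{N}_\phi\cap\nalgebra_+$ increasing to $1$ (which exists for a semifinite weight) shows the $(1,2)$-corner of $\mathfrak{M}_\theta$ is weakly dense in $\nalgebra$, and symmetrically the $(2,1)$-corner. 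Then the weakly closed subspace $\overline{\mathfrak{M}_\theta}^{\,w}$ contains $\nalgebra e_{11}+\nalgebra e_{12}+\nalgebra e_{21}+\nalgebra e_{22}=M_{2\times2}(\nalgebra)$. This last point --- squeezing out weak density of the off-diagonal corners, not just the diagonal ones --- is the only step that is not pure bookkeeping, and is where I expect the genuine work to sit.

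Finally, for faithfulness: from the displayed formula, all four summands being nonnegative, $\theta(A^\ast A)=0$ forces each of $\phi(A_{11}^\ast A_{11})$, $\phi(A_{21}^\ast A_{21})$, $\psi(A_{12}^\ast A_{12})$, $\psi(A_{22}^\ast A_{22})$ to vanish; if $\phi,\psi$ are faithful this gives $A_{11}=A_{21}=A_{12}=A_{22}=0$, so $\theta$ is faithful. Conversely, if $\phi$ is not faithful, say $\phi(a^\ast a)=0$ with $a\neq0$, then $A=ae_{11}\neq0$ has $\theta(A^\ast A)=\phi(a^\ast a)=0$, so $\theta$ is not faithful; and symmetrically in $\psi$ via $e_{22}$.
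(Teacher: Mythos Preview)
Your argument is correct. In the paper this lemma is actually stated without proof (the author's proof sketch is commented out in the source), so there is nothing to compare against line by line. Your approach---reducing everything to the corner maps $X\mapsto X_{11}$, $X\mapsto X_{22}$ and the embeddings $a\mapsto ae_{11}$, $a\mapsto ae_{22}$, together with the explicit computation of $\theta(A^\ast A)$---is exactly the standard one and is what the author's suppressed sketch also gestures at. The one point where your write-up is more careful than the suppressed sketch is the semifiniteness direction $\phi,\psi\Rightarrow\theta$: you correctly observe that knowing only $\nalgebra e_{11}$ and $\nalgebra e_{22}$ lie in the weak closure of $\mathfrak{M}_\theta$ is not enough, and you supply the off-diagonal corners via $\mathfrak{M}_\theta=\mathfrak{N}_\theta^\ast\mathfrak{N}_\theta$ and the description of $\mathfrak{N}_\theta$. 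That step is genuinely needed and your handling of it is fine.
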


\begin{proposition}
	Let $\nalgebra\subset B(\hilbert)$ be a von Neumann algebra and let $A_1, A_2\in \nalgebra_\eta$ be closed (densely defined) operators such that there exists a $\tau$-dense subspace $V\subset \Dom{A_1}\cap\Dom{A_2}$ where $\left. A_1\right|_V=\left. A_2\right|_V$. Then $A_1=A_2$.
	
\end{proposition}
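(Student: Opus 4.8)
The plan is to prove $A_1\subseteq A_2$; since the hypotheses are symmetric in $A_1$ and $A_2$, the same argument gives $A_2\subseteq A_1$, hence $A_1=A_2$. To show $A_1\subseteq A_2$, I fix $x\in\Dom{A_1}$ and aim to produce a sequence $(y_k)_k\subseteq V$ with $y_k\to x$ and $A_1 y_k\to A_1 x$ in $\hilbert$. Once this is done, $y_k\in V$ forces $A_2 y_k=A_1 y_k\to A_1 x$, and since $A_2$ is closed we conclude $x\in\Dom{A_2}$ with $A_2 x=A_1 x$.

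First I would fix two families of projections. Because $V$ is $\tau$-dense, Proposition \ref{pn->1} furnishes an increasing sequence $(p_n)_n\subseteq\nalgebra_p$ with $p_n\to 1$ in SOT, $\tau(1-p_n)\to 0$, and $\bigcup_n p_n\hilbert\subseteq V$. Next, write $A_1=u|A_1|$ for the polar decomposition; since $A_1\eta\nalgebra$, both $u$ and the spectral projections of $|A_1|$ belong to $\nalgebra$, so $f_M\defeq E^{|A_1|}_{[0,M]}\in\nalgebra$ is well defined. By elementary spectral calculus $f_M\uparrow 1$, $f_M\hilbert\subseteq\Dom{|A_1|}=\Dom{A_1}$, $f_M$ commutes with $|A_1|$, and $\|A_1 f_M\|\leq M$. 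Observe that for all $n,M$ one has $(p_n\wedge f_M)\hilbert\subseteq p_n\hilbert\subseteq V$, so $A_1$ and $A_2$ coincide on $(p_n\wedge f_M)\hilbert$.

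The crux is the following claim: for each fixed $M$, $p_n\wedge f_M\uparrow f_M$ in SOT as $n\to\infty$. This is the main obstacle, because $p_n\uparrow 1$ does \emph{not} in general imply $p_n\wedge f_M\uparrow f_M$ — the projection lattice is not distributive — so the trace must enter. I would argue as follows. The parallelogram identity recorded just before Proposition \ref{Dsumandproduct} gives $(p_n\vee f_M)-p_n\sim f_M-(p_n\wedge f_M)$, and since $(p_n\vee f_M)-p_n\leq 1-p_n$ we obtain $f_M-(p_n\wedge f_M)\lesssim 1-p_n$, whence $\tau\big(f_M-(p_n\wedge f_M)\big)\leq\tau(1-p_n)$ by monotonicity of the trace under subequivalence. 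The projections $f_M-(p_n\wedge f_M)$ decrease (in $n$) to $e\defeq f_M-\bigvee_n(p_n\wedge f_M)$, and the first term has finite trace ($\leq\tau(1-p_1)$), so applying normality of $\tau$ to the increasing sequence $\big((p_n\wedge f_M)-(p_1\wedge f_M)\big)_n$ yields $\tau(e)=\lim_n\tau\big(f_M-(p_n\wedge f_M)\big)=0$; faithfulness gives $e=0$, i.e.\ $\bigvee_n(p_n\wedge f_M)=f_M$, and Vigier's theorem upgrades this to SOT convergence, proving the claim.

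Granting the claim, the conclusion is routine. For fixed $M$ one has $A_1(p_n\wedge f_M)=(A_1 f_M)(p_n\wedge f_M)$ (as $p_n\wedge f_M\leq f_M$), and since $A_1 f_M$ is bounded and $p_n\wedge f_M\to f_M$ in SOT, $A_1(p_n\wedge f_M)\to A_1 f_M$ in SOT; in particular $A_1(p_n\wedge f_M)x\to A_1 f_M x$. Moreover, for $x\in\Dom{A_1}$, $A_1 f_M x=u f_M|A_1|x\to u|A_1|x=A_1 x$ and $f_M x\to x$ as $M\to\infty$. A diagonal selection — choose $M_k\uparrow\infty$, then $n_k$ large depending on $M_k$ — produces $y_k\defeq(p_{n_k}\wedge f_{M_k})x\in p_{n_k}\hilbert\subseteq V$ with $y_k\to x$ and $A_1 y_k\to A_1 x$, which is exactly what was needed. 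Hence $A_1\subseteq A_2$, and by symmetry $A_1=A_2$.
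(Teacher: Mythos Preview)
Your proof is correct and takes a genuinely different route from the paper's. The paper passes to $M_{2\times 2}(\nalgebra)$ with the balanced trace $\theta_{\tau,\tau}$, realises the graphs $\Gamma(A_i)$ as projections $p_i\in M_{2\times 2}(\nalgebra)$, and shows that the range projection of $p_1-p_2$ has $\theta$-trace bounded by $2\tau(1-p)$ for every $p$ with $p\hilbert\subset V$; faithfulness then forces $p_1=p_2$. Your argument stays entirely inside $\nalgebra$ and $B(\hilbert)$: you approximate an arbitrary $x\in\Dom{A_1}$ in the graph norm by vectors in $V$, manufacturing the approximants from $(p_n\wedge f_M)x$ and using the trace only to force $p_n\wedge f_M\uparrow f_M$ via the Kaplansky parallelogram law. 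The paper's approach is shorter once the $M_2$-machinery and balanced weight are available, and it generalises cleanly to other ``coincidence on a dense piece'' statements; your approach is more elementary and self-contained, needing nothing beyond Proposition~\ref{pn->1}, the equivalence $f_M-(p_n\wedge f_M)\lesssim 1-p_n$, and the closedness of $A_2$. One small cosmetic point: when you invoke $\tau(1-p_1)<\infty$, note that Proposition~\ref{pn->1} only guarantees $\tau(1-p_n)\to 0$, so strictly speaking you should start your sequence at an index where this trace is already finite---a harmless relabelling.
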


\begin{proof}
	
Note that $\theta=\theta_{\tau,\tau}$ is a trace in $M_{2\times2}(\nalgebra)$.

Let $p_i$ be the projection on the graph of $A_i$, $i=1,2$. Notice that $$M_{2\times2}(\nalgebra)^\prime=\left\{\begin{pmatrix} A^\prime & 0 \\ 0 & A^\prime\end{pmatrix}\middle| A^\prime \in \nalgebra^\prime\right\} \textrm{ and }$$
$$
\begin{pmatrix} A^\prime & 0 \\ 0 & A^\prime\end{pmatrix}\begin{pmatrix} x  \\  A_i x\end{pmatrix}=
\begin{pmatrix} A^\prime x  \\  A^\prime A_i x\end{pmatrix}=
\begin{pmatrix} A^\prime x  \\  A_i A^\prime x\end{pmatrix}\in \Gamma(A_i)
$$
hence, $p_i \in M_{2\times2}(\nalgebra)^{\prime\prime}=M_{2\times2}(\nalgebra)$.

By hypothesis, there exists $p\in \nalgebra_p$ such that $p\hilbert\subset V$ and $\tau(1-p)<\delta$.
	
Take $r=s^\nalgebra_L(p_1-p_2)=[\Ran(p_1-p_2)]$ and notice $r\wedge p=0$. Thus $r\leq 1-p\oplus p$ and it follows that $\theta(r\oplus r)\leq\theta\left((\mathbb{1}-p)\oplus(1-p)\right)=\tau(\mathbb{1}-p)+\tau(\mathbb{1}-p)<2\delta$. Since $\delta>0$ is arbitrary, $\theta(r\oplus r)=0\Rightarrow p_1=p_2$.

\end{proof}

\begin{definition}
	Let $\nalgebra$ be a von Neumann algebra and $\tau$ be a trace, a closed (densely defined) operator $A\in \nalgebra_\eta$ is said $\tau$-measurable if $\Dom{A}$ is $\tau$-dense. We denote by $\nalgebra_\tau$ the set of all $\tau$-measurable operators.
\end{definition}

Notice that by the previous proposition, if $A$ is a $\tau$-measurable operator and $B$ extends $A$, we must have $A=B$. This, in turn, implies that a $\tau$-measurable symmetric operator is self-adjoint.

\begin{definition}
	\label{defpremeasurable}
	Let $\nalgebra$ be a von Neumann algebra and $\tau$ be a trace. An operator $A\eta\nalgebra$ is said $\tau$-premeasurable if, $\forall \delta>0$, there exists $p\in\nalgebra_p$ such that $p\hilbert \subset \Dom{A}$, $\|Ap\|<\infty$ and $\tau(1-p)\leq \delta$.  
\end{definition}

An equivalent way to define a $\tau$-premeasurable operator relies on $D(\varepsilon,\delta)$: $A$ is $\tau$-premeasurable if, and only if, $\forall \delta>0$, there exists $\varepsilon>0$ such that $A\in D(\varepsilon,\delta)$.

Another interesting thing to notice is that a $\tau$-premeasurable operator is densely defined since $D(A)$ must be $\tau$-dense.

\begin{proposition}
\label{equivalencetaumeasurability}
Let $A \eta \nalgebra$ be a closed densely defined operator and $\left\{E_{(\lambda,\infty)}\right\}_{\lambda\in\mathbb{R}_+}$ the spectral decomposition of $|A|$. The following are equivalent:
\begin{enumerate}[(i)]
	\item $A$ is $\tau$-measurable;
	\item $|A|$ is $\tau$-measurable;
	\item $\forall \delta>0 \ \exists \varepsilon>0$ such that $A\in D(\varepsilon,\delta)$;
	\item $\forall \delta>0 \ \exists \varepsilon>0$ such that $\tau\left(E_{(\varepsilon,\infty)}\right)<\delta$;
	\item $\displaystyle \lim_{\lambda\to\infty}\tau\left(E_{(\lambda,\infty)}\right)=0$;
	\item $\exists \lambda_0>0$ such that $\tau\left(E_{(\lambda_0,\infty)}\right)<\infty$.
\end{enumerate}
\end{proposition}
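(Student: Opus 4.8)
The plan is to reduce everything to Proposition~\ref{proposition d<->tau}, which already says $A\in D(\varepsilon,\delta)\Leftrightarrow\tau(E_{(\varepsilon,\infty)})\leq\delta$, and then prove a short cycle among the ``spectral'' conditions (iv)--(vi). I would prove (i)$\Leftrightarrow$(ii)$\Leftrightarrow$(iii)$\Leftrightarrow$(iv) first, and then (iv)$\Rightarrow$(v)$\Rightarrow$(vi)$\Rightarrow$(iv).

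\textbf{The first block.} The equivalence (iii)$\Leftrightarrow$(iv) is immediate from Proposition~\ref{proposition d<->tau}: if for each $\delta$ there is $\varepsilon$ with $A\in D(\varepsilon,\delta)$, apply this with $\delta/2$ to get $\tau(E_{(\varepsilon,\infty)})\leq\delta/2<\delta$; conversely $\tau(E_{(\varepsilon,\infty)})<\delta$ gives $A\in D(\varepsilon,\delta)$ at once. For (i)$\Leftrightarrow$(iii), note that (iii) is literally the ``equivalent formulation'' of $\tau$-premeasurability recorded after Definition~\ref{defpremeasurable}, so it suffices to see that for a closed densely defined $A$ the conditions ``$\Dom{A}$ is $\tau$-dense'' and ``$A$ is $\tau$-premeasurable'' coincide. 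Premeasurability trivially forces $\Dom{A}$ to be $\tau$-dense; for the converse, given $p\in\nalgebra_p$ with $p\hilbert\subset\Dom{A}$, the operator $Ap$ is everywhere defined and closed (if $x_n\to x$ and $Apx_n\to y$ then $px_n\to px$, and $A$ closed gives $Apx=y$), hence bounded by the closed graph theorem; so $\|Ap\|<\infty$ is automatic and $\tau$-density of $\Dom{A}$ already yields premeasurability. Finally (i)$\Leftrightarrow$(ii) because the polar decomposition $A=u|A|$ gives $\Dom{A}=\Dom{|A|}$, so the two domains are $\tau$-dense simultaneously.

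\textbf{The cycle (iv)$\Rightarrow$(v)$\Rightarrow$(vi)$\Rightarrow$(iv).} The net $\lambda\mapsto E_{(\lambda,\infty)}$ is decreasing, so $\lambda\mapsto\tau(E_{(\lambda,\infty)})$ is decreasing and has a limit in $[0,\infty]$ as $\lambda\to\infty$. Condition (iv) says this limit lies below every $\delta>0$, hence it is $0$: that is (v). Taking $\delta=1$ in (v) produces $\lambda_0$ with $\tau(E_{(\lambda_0,\infty)})<1<\infty$, which is (vi). For (vi)$\Rightarrow$(iv) I would use the standard spectral fact $\bigwedge_{\lambda>0}E_{(\lambda,\infty)}=0$ (the spectral measure of the self-adjoint operator $|A|$ carries no mass ``at infinity'', i.e. $\sup_\lambda(1-E_{(\lambda,\infty)})=1$) together with normality of $\tau$. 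For $\lambda\geq\lambda_0$ put $F_\lambda=E_{(\lambda_0,\infty)}-E_{(\lambda,\infty)}\in\nalgebra_p$; this is an increasing net with $\sup_{\lambda\geq\lambda_0}F_\lambda=E_{(\lambda_0,\infty)}-\bigwedge_\lambda E_{(\lambda,\infty)}=E_{(\lambda_0,\infty)}$. Normality gives $\tau(E_{(\lambda_0,\infty)})=\sup_{\lambda}\tau(F_\lambda)$, while additivity of $\tau$ and $\tau(E_{(\lambda_0,\infty)})<\infty$ let us write $\tau(F_\lambda)=\tau(E_{(\lambda_0,\infty)})-\tau(E_{(\lambda,\infty)})$; combining, $\tau(E_{(\lambda_0,\infty)})=\tau(E_{(\lambda_0,\infty)})-\inf_\lambda\tau(E_{(\lambda,\infty)})$, and cancelling the finite quantity $\tau(E_{(\lambda_0,\infty)})$ yields $\inf_{\lambda}\tau(E_{(\lambda,\infty)})=0$. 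Thus for each $\delta>0$ there is $\varepsilon\geq\lambda_0$ with $\tau(E_{(\varepsilon,\infty)})<\delta$, which is (iv).

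\textbf{Main obstacle.} Everything except (vi)$\Rightarrow$(iv) is bookkeeping; the genuine point is that a single finite value $\tau(E_{(\lambda_0,\infty)})<\infty$ propagates to $\tau(E_{(\lambda,\infty)})\to 0$. The delicate part is applying normality of $\tau$ in a decreasing-net situation, which is only legitimate after passing to the increasing net $(F_\lambda)$ and only because $\tau(E_{(\lambda_0,\infty)})$ is finite, so that the cancellation is allowed; finiteness is essential here (without it the implication is false, as the identity operator on a semifinite factor of infinite trace shows). The other input, $\bigwedge_\lambda E_{(\lambda,\infty)}=0$, is exactly where closedness (self-adjointness of $|A|$) is used.
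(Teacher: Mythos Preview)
Your proof is correct and follows essentially the same route as the paper: reduce (i)--(iv) to Proposition~\ref{proposition d<->tau} and then handle the spectral cycle, with the only substantive step being (vi)$\Rightarrow$(iv)/(v) via the increasing net $F_\lambda=E_{(\lambda_0,\infty)}-E_{(\lambda,\infty)}$ and normality of $\tau$. You are in fact more careful than the paper in a few places: the closed-graph argument making (i)$\Leftrightarrow$(iii) honest, the $\delta/2$ adjustment for the strict inequality in (iv), and the explicit remark that finiteness of $\tau(E_{(\lambda_0,\infty)})$ is what licenses the cancellation.
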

\begin{proof}

$(i)\Leftrightarrow(iii)$ Simply rewrite the definition, as mentioned before;

$(i)\Leftrightarrow(ii)$ Its is just to notice that $A\in D(\varepsilon,\delta)\Leftrightarrow |A|\in D(\varepsilon,\delta)$, which follow from Proposition \ref{proposition d<->tau};

$(iii)\Leftrightarrow(iv)$ It is Proposition \ref{proposition d<->tau};

$(iv)\Leftrightarrow(v)$ $\left(E_{(\lambda,\infty)}\right)_{\lambda\in\mathbb{R}}$ is a decreasing net of projections. Let $\delta>0$, there exists $\varepsilon>0$ such that $\tau\left(E_{(\varepsilon,\infty)}\right)<\delta$. Hence, for every $\lambda>\varepsilon$, $\tau\left(E_{(\varepsilon,\infty)}\right)<\tau\left(E_{(\lambda,\infty)}\right)<\delta$ and the other implication is analogous;

$(v)\Rightarrow(vi)$ is obvious;

$(vi)\Rightarrow (v)$ Let $\lambda_0>0$ such that $\tau\left(E_{(\lambda_0,\infty)}\right)<\infty$. Define the increasing upper bounded net $\left(E_{(\lambda_0,\infty)}-E_{(\lambda,\infty)}\right)_{\lambda>\lambda_0}$, notice $\left(\tau\left(E_{(\lambda_0,\infty)}-E_{(\lambda,\infty)}\right)\right)_{\lambda>\lambda_0}\subset \mathbb{R}_+$ is also an increasing net and $\left(\tau\left(E_{(\lambda,\infty)}\right)\right)_{\lambda>\lambda_0}\subset \mathbb{R}_+$ is a decreasing net, hence, both nets have limits. By normality of $\tau$

$$\begin{aligned}
\lim_{\lambda\to \infty}\tau\left(E_{(\lambda_0,\infty)}-E_{(\lambda,\infty)}\right)&=\sup_{\lambda>\lambda_0}\tau\left(E_{(\lambda_0,\infty)}-E_{(\lambda,\infty)}\right)\\
&=\tau\left(E_{(\lambda_0,\infty)}-\bigwedge_{\lambda >\lambda_0}E_{(\lambda,\infty)}\right)\\
&=\tau\left(E_{(\lambda_0,\infty)}\right).\\
\end{aligned}$$

\end{proof}



\chapter{Noncommutative $L_p$-spaces}

Hitherto, we have presented the theory of noncommutative measure which enable us to start presenting now the first approach to noncommutative spaces.

\section{Segal-Dixmier's noncommutative $L_p$-spaces}
\begin{proposition}
	$\nalgebra_\tau$ provided with the usual scalar operations and involution, and the following vector operations is a $\ast$-algebra.
	\begin{enumerate}[(i)]
		\item $A\bm{+}B=\overline{A+B}$
		\item $A\bm{\times}B=\overline{AB}$
	\end{enumerate}
\end{proposition}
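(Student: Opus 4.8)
The plan is to prove the statement in two stages. First I would show that $\nalgebra_\tau$ is \emph{closed} under the four operations, i.e.\ that every closure appearing in the statement is a closed, densely defined operator affiliated to $\nalgebra$ whose domain is $\tau$-dense; second I would verify the $\ast$-algebra axioms, and here the only genuine subtlety arises. The tool used throughout is the uniqueness proposition proved above — two closed affiliated operators that agree on a $\tau$-dense subspace coincide — equivalently the remark recorded after the definition of $\tau$-measurability that a $\tau$-measurable operator admits no proper closed affiliated extension.

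For closure under the operations: first $\nalgebra\subset\nalgebra_\tau$, since a bounded operator has domain $\hilbert$, which is $\tau$-dense (take $p=\mathbb{1}$); in particular $0$ and $\mathbb{1}$ lie in $\nalgebra_\tau$ and will be the additive and multiplicative units, while $-A$ and $\lambda A$ are obviously in $\nalgebra_\tau$ (same $\tau$-dense domain, affiliation by the lemma, closedness). For the involution, Proposition \ref{AinD=>A*inD} together with the affiliation lemma gives $A^\ast\in\nalgebra_\tau$ whenever $A\in\nalgebra_\tau$, and $A^{\ast\ast}=\overline A=A$. For $A,B\in\nalgebra_\tau$ the uncompleted operator $A+B$ on $\Dom A\cap\Dom B$ has $\tau$-dense domain (the intersection of two $\tau$-dense subspaces is $\tau$-dense, by the corollary to Proposition \ref{pn->1}), is affiliated, and is closable, because $(A+B)^\ast\supset A^\ast+B^\ast$ is densely defined, its domain $\Dom{A^\ast}\cap\Dom{B^\ast}$ being $\tau$-dense since $A^\ast,B^\ast\in\nalgebra_\tau$; hence $A\bm{+}B=\overline{A+B}\in\nalgebra_\tau$. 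For $A\bm{\times}B=\overline{AB}$, using the $D(\varepsilon,\delta)$ description of $\tau$-measurability (Proposition \ref{equivalencetaumeasurability}) and Proposition \ref{Dsumandproduct}(ii): the proof of the latter exhibits, for each $\delta>0$, a projection $q$ with $\tau(\mathbb{1}-q)\le\delta$ and $q\hilbert\subset\Dom{AB}$, so $\Dom{AB}$ is $\tau$-dense; $AB$ is affiliated; and $AB$ is closable, since $(AB)^\ast\supset B^\ast A^\ast$ is densely defined, $\Dom{B^\ast A^\ast}$ being $\tau$-dense by the same application of Proposition \ref{Dsumandproduct}(ii) to $B^\ast,A^\ast\in\nalgebra_\tau$; hence $\overline{AB}\in\nalgebra_\tau$.

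For the algebraic identities the point is that for unbounded operators the familiar identities hold only as \emph{inclusions} between the uncompleted operators and must be promoted to equalities via the uniqueness proposition. Associativity of $\bm{\times}$: both $(A\bm{\times}B)\bm{\times}C$ and $A\bm{\times}(B\bm{\times}C)$ extend the closure of the naive triple product $ABC$ with domain $\{x\in\Dom C:\ Cx\in\Dom B,\ BCx\in\Dom A\}$; iterating Proposition \ref{Dsumandproduct} shows $\Dom{ABC}$ is $\tau$-dense and $(ABC)^\ast\supset C^\ast B^\ast A^\ast$ is densely defined, so $\overline{ABC}\in\nalgebra_\tau$ and the three $\tau$-measurable operators coincide. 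Right distributivity is cleaner still: $(A+B)C$ and $AC+BC$ have the identical domain $\{x\in\Dom C:\ Cx\in\Dom A\cap\Dom B\}$, so $(A\bm{+}B)\bm{\times}C$ and $A\bm{\times}C\bm{+}B\bm{\times}C$ are both equal to the ($\tau$-measurable) closure of this common operator; left distributivity uses that $A\bm{\times}(B\bm{+}C)$ and $A\bm{\times}B\bm{+}A\bm{\times}C$ agree, on the $\tau$-dense subspace $\Dom{AB}\cap\Dom{AC}$, with $x\mapsto ABx+ACx$. Commutativity and associativity of $\bm{+}$ and the scalar identities are immediate, since passage to the closure commutes with them. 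Finally $(A\bm{+}B)^\ast=A^\ast\bm{+}B^\ast$ and $(A\bm{\times}B)^\ast=B^\ast\bm{\times}A^\ast$: in each case the left side equals $(A+B)^\ast$, resp.\ $(AB)^\ast$ (adjoints being insensitive to closure), which is a closed affiliated operator extending the $\tau$-measurable right side, so the two coincide.

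I expect the main obstacle to be exactly this bookkeeping: every closure one writes must be shown to land back in $\nalgebra_\tau$, which each time requires both a closability argument (density of the relevant adjoint's domain, reducing to $\tau$-density of an intersection or product of domains) and a $\tau$-density statement for a product domain — both supplied by Propositions \ref{Dsumandproduct} and \ref{equivalencetaumeasurability} and the stability of $\tau$-density under finite intersections. The most delicate single point is the iterated product-domain estimate for associativity of $\bm{\times}$ (and the parallel estimate for $\Dom{C^\ast B^\ast A^\ast}$); once that is in place, the rigidity of $\tau$-measurable operators turns every operator inclusion into an equality with no further work.
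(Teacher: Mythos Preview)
Your proposal is correct and follows essentially the same route as the paper: closability of $A+B$ and $AB$ via density of the domain of $A^\ast+B^\ast$ and $B^\ast A^\ast$ (obtained from Propositions \ref{AinD=>A*inD} and \ref{Dsumandproduct}), membership of the closures in $\nalgebra_\tau$ via Proposition \ref{equivalencetaumeasurability}(iii), and then each $\ast$-algebra identity by observing that both sides are closed affiliated operators agreeing on a $\tau$-dense subspace. Your account is more explicit than the paper's about the $\tau$-dense subspaces witnessing associativity and distributivity, but the underlying argument is the same.
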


\begin{proof}
	First of all, in order that the previous definition makes sense, we must guarantee that for every $A,B\in \nalgebra_\tau$, $A+B$ and $AB$ are closable. In fact for $A,B\in \nalgebra_\tau$ and for every $\delta>0$ the previous proposition guarantees that there exist $\varepsilon_A, \varepsilon_B>0$ such that
	$$\begin{aligned}
	A\in D\left(\varepsilon_A,\frac{\delta}{2}\right) &\Rightarrow A^\ast\in D\left(\varepsilon_A,\frac{\delta}{2}\right)\\
	B\in D\left(\varepsilon_B,\frac{\delta}{2}\right) &\Rightarrow B^\ast\in D\left(\varepsilon_B,\frac{\delta}{2}\right),
	\end{aligned}$$
	where the implication is a consequence of Proposition \ref{AinD=>A*inD}. Also, it mean $A^\ast \in \nalgebra_\tau$. Then, by Proposition \ref{Dsumandproduct},
	$$\begin{aligned}
	A^\ast+B^\ast &\in D\left(\varepsilon_A+\varepsilon_B,\delta\right)\\
	A^\ast B^\ast &\in D\left(\varepsilon_A\varepsilon_B,\delta\right)\\
	\end{aligned}$$
	
	These inclusions, as commented after Definition \ref{defpremeasurable}, means $A^\ast+B^\ast$ and $B^\ast A^\ast$ are $\tau$-premeasurables, which implies they are densely defined. Hence $A+B\subset (A^\ast+B^\ast)^\ast$ and $AB\subset (B^\ast A^\ast)^\ast$ admit closed extensions, so they are closable.
	
	Now, $\overline{A+B}$ and $\overline{AB}$ are closed densely defined operators, for which condition $(iii)$ in the Proposition \ref{equivalencetaumeasurability} holds thanks to Proposition \ref{Dsumandproduct}, hence $A\bm{+}B, A\bm{\times}B \in \nalgebra_\tau$. 
	
	Just remains to prove the identities for $\bm{+}, \bm{\times}$ and their relations with $*$. Let $A,B,C \in \nalgebra_\tau$, we know each of operators in the following equalities are closed and they coincide on a $\tau$-dense subspace, then the equality holds:
	$$\begin{aligned}
	(A\bm{+}B)\bm{+}C&=A\bm{+}(B\bm{+}C)	&\qquad (A\bm{\times}B)\bm{\times}C&=A\bm{\times}(B\bm{\times}C)\\
	(A\bm{+}B)\bm{\times}C&=A\bm{\times}C\bm{+}B\bm{\times}C &\qquad C\bm{\times}(A\bm{+}B)&=C\bm{\times}A\bm{+}C\bm{\times}B\\
	(A\bm{+}B)^\ast &= A^\ast\bm{+}B^\ast &\qquad (A\bm{\times}B)^\ast&=B^\ast\bm{\times}A^\ast\\
	\end{aligned}$$
	
\end{proof}

From now on, we will differentiate the symbols $\bm{+}, \bm{\times}$ and the usual sum and multiplication of operators only if it may cause a misunderstanding.

\begin{proposition}
	$\nalgebra_\tau$ is a complete Hausdorff topological $\ast$-algebra with respect to the topology generated by the system of neighbourhoods of zero $\left\{\nalgebra_\tau \cap D(\varepsilon,\delta)\right\}_{\varepsilon>0, \delta>0}$. Furthermore, $\nalgebra$ is dense in $\nalgebra_\tau$. We will denote the balanced absorbing neighbourhood of zero $N(\varepsilon,\delta)=\nalgebra_\tau \cap D(\varepsilon,\delta)$.
\end{proposition}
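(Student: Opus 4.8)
The plan is to establish four things in turn: that $\{N(\varepsilon,\delta)\}_{\varepsilon,\delta>0}$ is a neighbourhood base at $0$ for a vector topology on $\nalgebra_\tau$; that this topology is Hausdorff and makes $\nalgebra_\tau$ a topological $\ast$-algebra; that $\nalgebra$ is dense; and that the topology is complete.

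For the first point, $N(\varepsilon,\delta)$ is balanced since $\|(\lambda A)p\|\le\|Ap\|$ for $|\lambda|\le1$, and it is absorbing: given $A\in\nalgebra_\tau$, Proposition~\ref{equivalencetaumeasurability} yields $\varepsilon'>0$ with $A\in D(\varepsilon',\delta)=\tfrac{\varepsilon'}{\varepsilon}D(\varepsilon,\delta)$. The inclusion $N(\varepsilon_1,\delta_1)+N(\varepsilon_2,\delta_2)\subset N(\varepsilon_1+\varepsilon_2,\delta_1+\delta_2)$ is Proposition~\ref{Dsumandproduct}$(i)$, and $N(\min\{\varepsilon_1,\varepsilon_2\},\min\{\delta_1,\delta_2\})$ is contained in $N(\varepsilon_1,\delta_1)\cap N(\varepsilon_2,\delta_2)$, so $\{N(\varepsilon,\delta)\}$ is a filter base of balanced absorbing sets stable under halving, which is precisely what is needed for it to generate a vector topology; joint continuity of scalar multiplication then follows from the usual decomposition $\lambda A-\lambda_0A_0=\lambda(A-A_0)+(\lambda-\lambda_0)A_0$. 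Hausdorffness amounts to $\bigcap_{\varepsilon,\delta}N(\varepsilon,\delta)=\{0\}$: if $A\in D(\varepsilon,\delta)$ for all $\varepsilon,\delta>0$, then $\tau(E^{|A|}_{(\varepsilon,\infty)})=0$ for every $\varepsilon>0$ by Proposition~\ref{proposition d<->tau}, hence $|A|=0$ by faithfulness of $\tau$ and $A=0$. Continuity of the involution is Proposition~\ref{AinD=>A*inD}; for multiplication one writes $AB-A_0B_0=(A-A_0)(B-B_0)+(A-A_0)B_0+A_0(B-B_0)$, estimates each summand by Proposition~\ref{Dsumandproduct}$(ii)$ after absorbing $A_0,B_0$ into suitable sets $D(\cdot,\cdot)$, and recombines via $(i)$, passing to closures throughout. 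For density: given $A\in\nalgebra_\tau$ and $\varepsilon,\delta>0$, set $p=\mathbbm{1}-E^{|A|}_{(\varepsilon',\infty)}$ with $\varepsilon'$ chosen (by Proposition~\ref{equivalencetaumeasurability}) so that $\tau(\mathbbm{1}-p)\le\delta$; then $Ap\in\nalgebra$, $(A-Ap)p=0$, hence $A-Ap\in D(\varepsilon,\delta)\cap\nalgebra_\tau=N(\varepsilon,\delta)$ and $Ap\to A$.

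The substantial part is completeness. Since $\{N(1/n,1/n)\}_n$ is a countable base at $0$, the topology is metrizable, so it suffices to show every Cauchy sequence converges. Passing to a subsequence $(A_n)$ with $A_{n+1}\bm{-}A_n\in N(2^{-n},2^{-n})$ — so also $A_{n+1}^{\ast}\bm{-}A_n^{\ast}\in N(2^{-n},2^{-n})$ by Proposition~\ref{AinD=>A*inD} — I intersect, over $k\ge n$, the $[0,\varepsilon_k]$‑spectral projections of $|A_k|$ and $|A_k^{\ast}|$ (to control domains, using Proposition~\ref{equivalencetaumeasurability}) with the $[0,2^{-k}]$‑spectral projections of $|A_{k+1}\bm{-}A_k|$ and $|A_{k+1}^{\ast}\bm{-}A_k^{\ast}|$ (to control norms, using Proposition~\ref{proposition d<->tau}), forming the meets $p_n=\bigwedge_{k\ge n}(\cdots)$ exactly as in Proposition~\ref{pn->1}. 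This produces an increasing sequence of projections $p_n\in\nalgebra$ with $p_n\to\mathbbm{1}$, $\tau(\mathbbm{1}-p_n)\to0$, such that for every $k\ge n$ one has $p_n\hilbert\subset\Dom{A_k}\cap\Dom{A_k^{\ast}}$, $\|(A_{k+1}-A_k)p_n\|\le2^{-k}$ and $\|(A_{k+1}^{\ast}-A_k^{\ast})p_n\|\le2^{-k}$. Hence $(A_kp_n)_{k\ge n}$ and $(A_k^{\ast}p_n)_{k\ge n}$ are Cauchy in operator norm, with limits $T_n,T_n^{\sharp}\in\nalgebra$ satisfying $\|A_np_n-T_n\|\le2^{-n+1}$ and the coherence relations $T_mp_n=T_n$, $T_m^{\sharp}p_n=T_n^{\sharp}$ for $m\ge n$. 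On the $\tau$-dense subspace $V=\bigcup_n p_n\hilbert$ I then define $Ax=T_nx$ and $A^{\sharp}x=T_n^{\sharp}x$ for $x\in p_n\hilbert$; these are well defined and linear, $Ax=\lim_kA_kx$, $A^{\sharp}x=\lim_kA_k^{\ast}x$, and (since the $p_n$ lie in $\nalgebra$) $A$ commutes with every unitary of $\nalgebra^{\prime}$ on $V$.

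The delicate step, which I expect to be the main obstacle, is that $A$ admits a $\tau$-measurable closure: without an a~priori bound on the $T_n$ one cannot argue closedness of a naive "pointwise limit with maximal domain''. The remedy is to exploit the adjoint data already built in: for $x,y\in p_n\hilbert$,
$$\ip{Ax}{y}=\lim_k\ip{A_kx}{y}=\lim_k\ip{x}{A_k^{\ast}y}=\ip{x}{A^{\sharp}y},$$
so $\ip{Ax}{y}=\ip{x}{A^{\sharp}y}$ for all $x,y\in V$; this gives $A^{\sharp}\subset A^{\ast}$, whence $A^{\ast}$ is densely defined and $A$ is closable. By the lemma on affiliated operators $\overline{A}\,\eta\,\nalgebra$, and $\Dom{\overline{A}}\supset V$ is $\tau$-dense, so $\overline{A}\in\nalgebra_\tau$. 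Finally, for $x\in p_n\hilbert$ we have $(\overline{A}-A_n)x=(T_n-A_np_n)x$, so $\|(\overline{A}-A_n)p_n\|\le2^{-n+1}$ together with $\tau(\mathbbm{1}-p_n)\to0$ yields $\overline{A}\bm{-}A_n\to0$ in $\nalgebra_\tau$; since the original sequence is Cauchy and admits $\overline{A}$ as a subsequential limit, it converges to $\overline{A}$, proving completeness. Everything outside this closability point is a routine assembly of Propositions~\ref{Dsumandproduct}, \ref{proposition d<->tau}, \ref{AinD=>A*inD}, \ref{pn->1} and \ref{equivalencetaumeasurability}.
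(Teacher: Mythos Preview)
Your argument is correct and follows the same architecture as the paper's proof: verify the neighbourhood-base axioms via Proposition~\ref{Dsumandproduct}, reduce completeness to Cauchy sequences via the countable base $\{N(1/n,1/n)\}$, build the limit operator on a $\tau$-dense union $\bigcup p_n\hilbert$, and---crucially---establish closability by constructing the ``adjoint candidate'' $A^\sharp$ from $\lim_k A_k^\ast$ and showing $A\subset (A^\sharp)^\ast$. This last step is exactly the paper's device.

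There are two implementation differences worth noting. First, your Hausdorff argument is cleaner: you show directly that $\bigcap_{\varepsilon,\delta}N(\varepsilon,\delta)=\{0\}$ via Proposition~\ref{proposition d<->tau} and faithfulness of $\tau$, whereas the paper separates two distinct points by an explicit $\varepsilon$--$\delta$ construction. Second, in the completeness proof the paper first uses the already-established density of $\nalgebra$ to replace the Cauchy sequence by one in $\nalgebra$, so that all the $A_n'$ are bounded and no domain bookkeeping is needed; you instead work directly with the unbounded $A_k$ and compensate by also intersecting with spectral projections of $|A_k|$, $|A_k^\ast|$ to force $p_n\hilbert\subset\Dom{A_k}\cap\Dom{A_k^\ast}$. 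Both routes are valid; the paper's reduction to $\nalgebra$ buys simpler estimates at the cost of an extra approximation step, while yours is more self-contained but requires the four-fold meet. Your density argument (via $Ap$ with $p$ a spectral cut-off) is likewise more direct than the paper's, which invokes Proposition~\ref{pn->1} together with continuity of multiplication.
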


\begin{proof}
	It is easy to verify $N(\varepsilon,\delta)$ is in fact balanced and absorbing, furthermore, Proposition \ref{Dsumandproduct} implies that, for every $\varepsilon_1,\varepsilon_2>0$ and $\delta_1, \delta_2>0$, $N\left(\frac{\varepsilon_1}{2},\frac{\delta_1}{2}\right)+N\left(\frac{\varepsilon_1}{2},\frac{\delta_1}{2}\right)=N\left(\varepsilon_1,\delta_1\right)$ and $N\left(\min\{\varepsilon_1,\varepsilon_2\},\min\{\delta_1,\delta_2\}\right)\subset N\left(\varepsilon_1,\delta_1\right)\cap N\left(\varepsilon_2,\delta_2\right)$. Hence, there exists a unique vector topology such that $\left\{N(\varepsilon,\delta)\right\}_{\varepsilon>0, \delta>0}$ is a basis of neighbourhoods of zero.
	
	In order to show this topology is Hausdorff, let $A,B \in \nalgebra_\tau$ be two distinct operators. For each $\delta>0$, define $$\varepsilon_\delta=\inf\left\{\epsilon\in\mathbb{R}_+\middle| A-B \in N(\epsilon,\delta)\right\},$$
	notice that there exists $\tilde{\delta}>0$ such that $\varepsilon_{\tilde{\delta}}>0$, because if we had $\varepsilon_\delta=0$ for every $\delta>0$ would exist projections $p_\delta^n$ for each $n\in \mathbb{N}$ such that $\|(A-B)p_\delta^n\|<\frac{1}{n}$ and $\tau(1-p_\delta^n)\leq \delta$. Defining $p_\delta=\inf_{n\in\mathbb{N}} p_\delta^n$ we would have $\|(A-B)p_\delta\|=0$ and $\tau(1-p_\delta)\leq \delta$, which implies $A$ and $B$ coincide on a $\tau$-dense subspace, but this is not possible since $A\neq B$. It is easy to see $B\notin A+N\left(\frac{\varepsilon_{\tilde{\delta}}}{2},\tilde{\delta}\right)$.
	
	Let us prove the two neighbourhoods $A+N\left(\frac{\varepsilon_{\tilde{\delta}}}{4},\frac{\tilde{\delta}}{2}\right)$ and $B+N\left(\frac{\varepsilon_{\tilde{\delta}}}{4},\frac{\tilde{\delta}}{2}\right)$ of $A$ and $B$ respectively are disjoint. In fact, suppose it is not, there exist $T_1, T_2 \in N\left(\frac{\varepsilon_{\tilde{\delta}}}{4},\frac{\tilde{\delta}}{2}\right)$ such that $A+T_1=B+T_2 \Rightarrow B=A+T_1-T_2 \in A+N\left(\frac{\varepsilon_{\tilde{\delta}}}{2},\tilde{\delta}\right)$, but it is not possible.
	
	Of course the vector space operations are continuous, the involution is continuous due to Proposition \ref{AinD=>A*inD}, and for the product consider $A,B\in\nalgebra_\tau$ and $AB+N(\varepsilon,\delta)$ a basic neighbourhood of $AB$. There exists $\alpha>\varepsilon$ such that $A \in N\left(\alpha,\frac{\delta}{6}\right)$ and $B\in N\left(\alpha,\frac{\delta}{6}\right)$, then, if $\tilde{A}\in A+N\left(\frac{\varepsilon}{3\alpha},\frac{\delta}{6}\right)$ and $\tilde{B}\in B+N\left(\frac{\varepsilon}{3\alpha},\frac{\delta}{6}\right)$, we have
	$$\begin{aligned}
	AB-\tilde{A}\tilde{B}&=-(A-\tilde{A})(B-\tilde{B})+A(B-\tilde{B})+(A-\tilde{A})B\\
	&\in N\left(\frac{\varepsilon^2}{9\alpha},\frac{\delta}{3}\right)+
	N\left(\frac{\varepsilon}{3},\frac{\delta}{3}\right)+
	N\left(\frac{\varepsilon}{3},\frac{\delta}{3}\right)
	\subset N(\varepsilon,\delta).
	\end{aligned}$$
	
	For the density, we use Proposition \ref{pn->1}. Let $A\in \nalgebra_\tau$ and $(p_n)_n\subset \nalgebra_p$ be an increasing sequence of projections such that $p_n\to 1$, $\tau(1-p_n)\to 0$ and $\displaystyle \bigcup_{n \in \mathbb{N}}p_n\hilbert\subset \Dom{A}$. Thus $(Ap_n)_n\in\nalgebra$ and $Ap_n\to A$ since the product is continuous and $p_n\xrightarrow[]{\tau} 1$.
	
	It just remains to show $\nalgebra_\tau$ is complete. Notice the space has a countable basis of neighbourhoods since $\left\{N\left(\frac{1}{n},\frac{1}{m}\right)\right\}_{n\in\mathbb{N}^\ast,m\in\mathbb{N}^\ast}$ is such a countable basis. This means we just have to proof every Cauchy sequence is convergent.
	
	Let $(A_n)_n \in \nalgebra_\tau$ a Cauchy sequence. Since $\overline{\nalgebra}^\tau=\nalgebra_\tau$ there exists $(A^\prime_n)_n \subset \nalgebra$ such that $A_n-A^\prime_n\in N\left(\frac{1}{n},\frac{1}{n}\right)$, hence $(A^\prime_n)_n$ is also a Cauchy sequence.
	
	For each $k\in \mathbb{N}$ there exists $N_k\in \mathbb{N}$ such that, $\forall n,m\geq N_k$, there exists a projection $q_k$ with $\|(A^\prime_m-A^\prime_n)q_k\|\leq 2^{-k}$ and $\tau(1-q_k)\leq 2^{-k}$.
	
	Define $p=\displaystyle \bigvee_{n\in\mathbb{N}}q_n$. Simply repeating calculation (\ref{increasingtau}) we see that $(p_n)_n\subset\nalgebra_p$, defined by $\displaystyle p_n=\bigwedge_{k\geq n}q_k$, is an increasing sequence of projections such that $\tau(1-p_k)\leq 2^{1-k}$.
	
	Let's now define $\displaystyle \Dom{A}=\bigcup_{k\in\mathbb{N}}p_k\hilbert$. Notice that $x\in \Dom{A}$ implies the existence of $j\in\mathbb{N}$ for which $x\in p_j\hilbert\subset p_l\hilbert$ for all $l\geq j$. Since for any $k>j$ and $n,m \geq N_k$ we have $$\begin{aligned}
	\|(A^\prime_n-A^\prime_m)x\|&= \|(A^\prime_n-A^\prime_m)p_k x\|\\
	&\leq\|(A^\prime_n-A^\prime_m)p_k\| \| x\|\\
	&\leq \|(A^\prime_n-A^\prime_m)q_k\| \| x\|\\
	& \leq 2^{-k}\|x\|,
	\end{aligned}$$
	hence $(A^\prime_nx)_{n\geq j} \subset \hilbert$ is a Cauchy sequence. Then, we can define $A:\Dom{A}\to\hilbert$ by 
	$$ Ax=\lim_{j<n\to \infty} A^\prime_n x.$$
	
	Since $\Dom{A}$ is $\tau$-dense and $A\eta \nalgebra$ by construction, it remains to show it is closable and $A_n\xrightarrow{\tau} A$. In order to prove it is closable, notice $(A^{\prime \ast}_n)_n\in\nalgebra$ is again a Cauchy sequence, so we can repeat the previous construction to obtain $B:\Dom{B}\to\hilbert \in\nalgebra_\tau$ defined by $Bx=\lim A^{\prime \ast}_n x$. Hence, $\forall x \in \Dom{A}$ and $\forall y \in \Dom{B}$,
	$$\ip{Ax}{y}=\lim_{n\to\infty}\ip{A^\prime_nx}{y}	=\lim_{n\to \infty}\ip{x}{A_n^{\prime \ast}y}	=\ip{x}{By},$$
	which means $A\subset B^\ast$, hence $A$ is closable and $\overline{A}\in \nalgebra_\tau$ by Proposition \ref{equivalencetaumeasurability}. 
	
	Finally, for the convergence notice that for every $\varepsilon, \delta>0$ there exists $k\in \mathbb{N}$ such that $2^{-k}<\varepsilon$ and $2^{1-k}<\delta$, so, $\tau(1-p_k)<\delta$ and, $\forall n\geq N_k$,
	$$\begin{aligned}
	\|(\overline{A}-A^\prime_n)p_k\|&=\sup_{\|x\|\leq1}\left\|(A-A^\prime_n)p_k x\right\|\\
	&=\sup _{\|x\|\leq1}\left\|\lim_{m\to\infty} A^\prime_m p_k x -A^\prime_n p_k x\right\|\\
	&=\sup _{\|x\|\leq1}\lim_{m\to\infty}\left\| A^\prime_m p_k x -A^\prime_n p_k x\right\|\\
	&\leq\sup _{\|x\|\leq1}\lim_{m\to\infty}\left\| (A^\prime_m -A^\prime_n) p_k \right\|\|x\|\\
	&\leq\lim_{m\to\infty}\left\| (A^\prime_m -A^\prime_n) p_k \right\|\\
	&\leq 2^{-k}\\
	&<\varepsilon.\\
	\end{aligned}$$
	
\end{proof}

\begin{lemma}
	\label{normtraceinequality}
	Let $\nalgebra$ be a von Neumann algebra, $\tau$ trace on $\nalgebra$, $A\in \nalgebra$ and $B\in \mathfrak{M}_\tau$, then
	$$\left|\tau(AB)\right|\leq\tau(|AB|)\leq \|A\|\tau(|B|) $$
\end{lemma}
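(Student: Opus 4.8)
The plan is to derive both estimates from the Cauchy--Schwarz inequality for the semi-inner product $\langle Y,Z\rangle_\tau=\tau(Z^\ast Y)$ on $\mathfrak{N}_\tau$ (the argument in the proof of Proposition~\ref{cauchyschwarz} applies verbatim, $\mathfrak{N}_\tau/N_\tau$ being the pre-Hilbert space recorded after Proposition~\ref{simplepropweights}), together with the trace property and elementary functional calculus. Before that I would check that everything in sight is well defined. Since $\tau$ is a trace, $\tau(A^\ast A)=\tau(AA^\ast)$ forces $\mathfrak{N}_\tau=\mathfrak{N}_\tau^\ast$, so the left ideal $\mathfrak{N}_\tau$ (Proposition~\ref{simplepropweights}(iii)) is two-sided, and hence so is $\mathfrak{M}_\tau=\mathfrak{N}_\tau^\ast\mathfrak{N}_\tau$ (Proposition~\ref{simplepropweights}(iv)). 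Writing the polar decompositions $B=w|B|$ and $AB=v|AB|$ with $w,v\in\nalgebra$, we get $|B|=w^\ast B\in\mathfrak{M}_\tau$ and $AB\in\mathfrak{M}_\tau$, hence $|B|,|AB|\in\mathfrak{M}_\tau\cap\nalgebra_+=\mathfrak{F}_\tau$ by Proposition~\ref{simplepropweights}(v); in particular $\tau(|B|)<\infty$, $\tau(|AB|)<\infty$, and $\tau(AB)$ is unambiguously defined through the canonical linear extension of $\tau$ to $\mathfrak{M}_\tau$.

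For the inequality $|\tau(AB)|\leq\tau(|AB|)$ I would prove the general fact that $|\tau(X)|\leq\tau(|X|)$ for every $X\in\mathfrak{M}_\tau$ and then set $X=AB$. Take the polar decomposition $X=u|X|$, $u\in\nalgebra$, $\|u\|\leq1$. Since $|X|\in\mathfrak{F}_\tau$ we have $|X|^{1/2}\in\mathfrak{N}_\tau$, hence also $u|X|^{1/2}\in\mathfrak{N}_\tau$ ($\mathfrak{N}_\tau$ a left ideal). Then $X=(u|X|^{1/2})\,|X|^{1/2}$, so Cauchy--Schwarz gives
$$|\tau(X)|^2=\left|\left\langle |X|^{1/2},\,|X|^{1/2}u^\ast\right\rangle_\tau\right|^2\leq\tau\!\left(|X|\right)\,\tau\!\left(u|X|u^\ast\right).$$
By the trace property $\tau(u|X|u^\ast)=\tau(|X|^{1/2}u^\ast u\,|X|^{1/2})$, and since $u^\ast u\leq 1$ this is $\leq\tau(|X|)$; therefore $|\tau(X)|^2\leq\tau(|X|)^2$, i.e. $|\tau(X)|\leq\tau(|X|)$.

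For $\tau(|AB|)\leq\|A\|\tau(|B|)$ it is quickest to use functional calculus: from $A^\ast A\leq\|A\|^2 1$ we obtain $(AB)^\ast(AB)=B^\ast A^\ast AB\leq\|A\|^2 B^\ast B=(\|A\|\,|B|)^2$, and operator monotonicity of $t\mapsto t^{1/2}$ yields $|AB|\leq\|A\|\,|B|$; applying monotonicity and positive homogeneity of $\tau$ on $\nalgebra_+$ finishes the proof. (Alternatively, the same Cauchy--Schwarz computation as above, applied to $\tau(|AB|)=\tau(v^\ast Aw|B|^{1/2}\cdot|B|^{1/2})$ and using $vv^\ast\leq 1$ and $w^\ast A^\ast Aw\leq\|A\|^2 1$, gives directly $\tau(|AB|)^2\leq\|A\|^2\tau(|B|)^2$.)

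The estimates themselves are short; the only point that takes a little care is the first paragraph — knowing a priori that $\tau(AB)$ is meaningful and that $\tau(|AB|),\tau(|B|)$ are finite — which rests precisely on $\mathfrak{M}_\tau$ being a two-sided ideal for a trace and on the identity $\mathfrak{M}_\tau\cap\nalgebra_+=\mathfrak{F}_\tau$ of Proposition~\ref{simplepropweights}. One must also keep track that the partial isometries in the polar decompositions lie in $\nalgebra$ (true since $AB,B\in\nalgebra$), so that the products formed above stay inside $\mathfrak{N}_\tau$ and Cauchy--Schwarz is legitimately applicable.
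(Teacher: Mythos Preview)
Your proof is correct. Both you and the paper invoke the Cauchy--Schwarz inequality for the trace semi-inner product, but the decompositions are different. The paper first handles the positive case $A,B\geq 0$ via $\|A\|\tau(B)-\tau(AB)=\tau\big((DB^{1/2})^\ast DB^{1/2}\big)\geq 0$ with $D=(\|A\|\mathbb{1}-A)^{1/2}$, and then in the general case writes $\tau(AB)=\tau\big((|B|^{1/2}u|A|^{1/2})(|A|^{1/2}v|B|^{1/2})\big)$ (with $A=u|A|$, $B=v|B|$) and applies Cauchy--Schwarz together with the positive case to reach $|\tau(AB)|\leq\|A\|\tau(|B|)$ in one stroke; the intermediate term $\tau(|AB|)$ is not treated explicitly (though it follows by applying the same bound to $\tau(|AB|)=\tau(v^\ast AB)$ with $AB=v|AB|$). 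Your route separates the two inequalities and is tidier: for the first you use only the polar decomposition of $X=AB$, and for the second the operator monotonicity of $t\mapsto t^{1/2}$ gives $|AB|\leq\|A\|\,|B|$ directly, bypassing the reduction-to-positive-operators step. Your preliminary paragraph checking that $\mathfrak{M}_\tau$ is a two-sided ideal and that all traces involved are finite is also worth keeping, since the paper leaves this implicit.
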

\begin{proof}
	First, lets prove the lemma in the case $A,B\in \mathfrak{M}_\tau^+$.
	
	Let $D=\sqrt{\|A\|\mathbb{1}-A}$ then
	$$0\leq \left(DB^\frac{1}{2}\right)^\ast \left(DB^\frac{1}{2}\right)=B^\frac{1}{2}\left(\|A\|\mathbb{1}-A\right)B^\frac{1}{2},$$
	from the trace positiveness it follows that
	$$0\leq \tau\left(B^\frac{1}{2}\left(\|A\|\mathbb{1}-A\right)B^\frac{1}{2}\right)=\|A\|\tau(B)-\tau\left(B^\frac{1}{2}A B^\frac{1}{2}\right)=\|A\|\tau(B)-\tau\left(A B\right).$$
	
	For the general statement, let $A=u|A|$ and $B=v|B|$ the polar decomposition of $A$ and $B$. Using the Cauchy-Schwartz's inequality we obtain
	$$	\begin{aligned}
	\left|\tau(AB)\right|^2&=\left|\tau(u|A|v|B|)\right|^2\\
	&=\left|\tau\left((|B|^\frac{1}{2}u|A|^\frac{1}{2})(|A|^\frac{1}{2}v|B|^\frac{1}{2})\right)\right|^2\\
	&\leq \tau\left(\left(|B|^\frac{1}{2}u|A|^\frac{1}{2}\right)^\ast \left(|B|^\frac{1}{2}u|A|^\frac{1}{2}\right)\right) \tau\left(\left(|A|^\frac{1}{2}v|B|^\frac{1}{2}\right)^\ast \left(|A|^\frac{1}{2}v|B|^\frac{1}{2}\right)\right)\\
	&= \tau\left(|A|^\frac{1}{2}u^\ast |B|u|A|^\frac{1}{2})\right) \tau\left(|B|^\frac{1}{2}v^\ast |A|v|B|^\frac{1}{2}\right)\\
	&= \tau\left(|B|u|A|u^\ast\right) \tau\left(|A|v|B|v^\ast\right)\\
	&= \tau\left(|B||A^\ast|\right) \tau\left(|A||B^\ast|\right)\\
	\end{aligned}$$
	Now, from the previous result for positive operators we conclude that, $\forall A\in\nalgebra, B \in \nalgebra_\tau$,
	$$\left|\tau(AB)\right|^2\leq\tau\left(|B||A^\ast|\right) \tau\left(|A||B^\ast|\right)\leq\|A^\ast\|\tau(|B|)\|A\|\tau(|B^\ast|)=\|A\|^2\tau(|B|)^2.$$
	
\end{proof}

Let us start to prove important inequalities and then define the non-commutative $L_p$-spaces.

\begin{theorem}[H\"older Inequality]
	\label{holder}
	Let $\nalgebra$ be a von Neumann algebra and $\tau$ a normal faithful semifinite trace in $\nalgebra$, let also $A,B\in\nalgebra$ and $p,q>1$ such that $\frac{1}{p}+\frac{1}{q}=1$, then
	$$\tau(|AB|)\leq \tau(|A|^p)^\frac{1}{p}\tau(|B|^q)^\frac{1}{q}.$$
\end{theorem}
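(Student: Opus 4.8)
The plan is to reduce this, exactly as in the scalar case, to Hadamard's three-lines theorem applied to an analytic family of products of operator powers. First dispose of the trivial cases: if $\tau(|A|^p)$ or $\tau(|B|^q)$ is infinite there is nothing to prove, and if one of them vanishes then $A=0$ or $B=0$ by faithfulness of $\tau$; so we may rescale to $\tau(|A|^p)=\tau(|B|^q)=1$. Next linearise the left-hand side: writing $AB=w|AB|$ for the polar decomposition and using Lemma \ref{normtraceinequality} together with a finite-trace approximation of $s^\nalgebra_R(AB)$ and the normality of $\tau$, one obtains the standard identity
$$\tau(|AB|)=\sup\bigl\{\,|\tau(CAB)|\ :\ C\in\nalgebra,\ \|C\|\le1\,\bigr\},$$
so it suffices to prove $|\tau(CAB)|\le1$ for every contraction $C\in\nalgebra$.

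Fix such a $C$ and let $A=u|A|$, $B=v|B|$ be the polar decompositions. On the closed strip $S=\{z\in\mathbb{C}\ :\ 0\le\Re z\le1\}$ set
$$F(z)=\tau\bigl(C\,u\,|A|^{pz}\,v\,|B|^{q(1-z)}\bigr),$$
the non-integral powers being defined through the bounded Borel functional calculus (with the convention $0^{w}:=0$, harmless as $|A|$ and $|B|$ are bounded). Since $|A|,|B|\in\nalgebra$, the maps $z\mapsto|A|^{pz}$ and $z\mapsto|B|^{q(1-z)}$ are analytic on the interior of $S$ with norms bounded there by $\max(1,\|A\|^p)$ and $\max(1,\|B\|^q)$; hence $F$ is analytic on the interior of $S$ and continuous on $S$. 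At $z=1/p$ one has $pz=1$ and $q(1-z)=q(1-\tfrac1p)=1$, so
$$F(\tfrac1p)=\tau\bigl(C\,u\,|A|\,v\,|B|\bigr)=\tau(CAB).$$

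It then remains to bound $F$ on the two edges. On $\Re z=0$ write $z=\iu t$: then $|A|^{p\iu t}$ is a contraction while $|B|^{q(1-\iu t)}=|B|^q\,|B|^{-q\iu t}$ has $|B|^q$ as the positive part of its polar decomposition (the partial isometry $|B|^{-q\iu t}$ being a contraction), so Lemma \ref{normtraceinequality} gives
$$|F(\iu t)|\le\bigl\|C\,u\,|A|^{p\iu t}\,v\bigr\|\ \tau(|B|^q)\le\tau(|B|^q)=1.$$
Symmetrically, on $\Re z=1$ write $z=1+\iu t$: now $|A|^{p(1+\iu t)}=|A|^p\,|A|^{p\iu t}$ and $|B|^{q(1-z)}=|B|^{-q\iu t}$, and cycling the positive factor $|A|^p$ to the end by the tracial property and applying Lemma \ref{normtraceinequality} with the contraction $|A|^{p\iu t}v|B|^{-q\iu t}Cu$ yields $|F(1+\iu t)|\le\tau(|A|^p)=1$. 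By the three-lines theorem, $|F(z)|\le1$ on all of $S$; evaluating at $z=1/p$ gives $|\tau(CAB)|\le1$, and taking the supremum over $C$ gives $\tau(|AB|)\le1=\tau(|A|^p)^{1/p}\tau(|B|^q)^{1/q}$.

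The point that genuinely requires care -- and the main obstacle -- is that the three-lines theorem needs $F$ to be bounded on $S$ (and, relatedly, the numbers $\tau(CAB)$ above to be finite), which is not automatic when $\tau(\mathbbm{1})=\infty$, since a bounded operator need not lie in $\mathfrak{M}_\tau$. This is immediate if $\tau$ is finite. For the general semifinite case I would first carry out the whole argument with $|A|$ and $|B|$ replaced by the spectral truncations $|A|e_n$ and $|B|f_m$, where $e_n=\chi_{[1/n,\infty)}(|A|)$ and $f_m=\chi_{[1/m,\infty)}(|B|)$ have finite trace (indeed $\tau(e_n)\le n^p\tau(|A|^p)<\infty$, and likewise for $f_m$): the corresponding interpolant then takes as values $\tau$ of operators whose right support lies under the finite-trace projection $f_m$, hence is bounded on $S$, so the argument yields $\tau(|A_nB_m|)\le\tau(|A_n|^p)^{1/p}\tau(|B_m|^q)^{1/q}\le1$ for the truncated operators $A_n=u|A|e_n$, $B_m=v|B|f_m$. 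Since $\|AB-A_nB_m\|\le\tfrac1n\|B\|+\tfrac1m\|A\|\to0$, the original inequality $\tau(|AB|)\le1$ then follows by a routine limiting argument using the normality of $\tau$ (testing against a fixed finite-trace projection below $s^\nalgebra_R(AB)$ and letting $n,m\to\infty$). Everything else -- the analyticity of $F$, the two boundary estimates, and the cyclic manipulations of $\tau$ -- is routine.
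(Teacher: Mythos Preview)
Your argument is correct and follows essentially the same route as the paper: an analytic family $z\mapsto \tau(\,\cdot\,u\,|A|^{pz}\,v\,|B|^{q(1-z)})$, boundary bounds by Lemma~\ref{normtraceinequality} and the tracial property, the three-lines theorem, and a spectral truncation to make the interpolant take values in $\mathfrak{M}_\tau$ in the semifinite case. The one organisational difference is that the paper does not pass through the duality formula $\tau(|AB|)=\sup_{\|C\|\le1}|\tau(CAB)|$: it simply takes $C=w^\ast$, the adjoint of the partial isometry in $AB=w|AB|$, so that $\tau(w^\ast AB)=\tau(|AB|)$ on the nose, and builds the truncation and the limit $n\to\infty$ directly into the single family $f_n$. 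That shortcut spares you the separate limiting step at the end (your passage from $\tau(|A_nB_m|)\le1$ to $\tau(|AB|)\le1$), which is recoverable as you indicate but is the one place your write-up is genuinely sketchy; otherwise the two proofs are the same.
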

\begin{proof}
	First note that if $\tau\left(|A|^p\right)=0$,  $\tau\left(|A|^p\right)=\infty$, $\tau\left(|B|^q\right)=0$, or $\tau\left(|B|^q\right)=\infty$ the inequality is trivial.
	
	On the other hand, if $0<\tau\left(|A|^p\right),\tau\left(|B|^q\right)<\infty$ we are able to define, for every $n\in\mathbb{N}$, $$|A|_n^p=\frac{1}{\tau\left(|A|^p\right)}\int^{\|A\|}_\frac{1}{n}\lambda^p dE^{|A|}_\lambda \textrm{ and } |B|_n^q=\frac{1}{\tau\left(|B|^q\right)}\int^{\|B\|}_\frac{1}{n} \lambda^q dE^{|B|}_\lambda$$
	where $\left\{E^{|A|}_\lambda\right\}_{\lambda\in \mathbb{R}^+}$ and $\left\{E^{|B|}_\lambda\right\}_{\lambda\in \mathbb{R}^+}$ are the spectral resolutions of $|A|$ and $|B|$, respectively. These definition guarantees $|A|_n^p \to \frac{|A|^p}{\tau\left(|A|^p\right)^{\frac{1}{p}}}$ and $|B|_n^q \to \frac{|B|^q}{\tau\left(|B|^q\right)^{\frac{1}{q}}}$ monotonically, $\tau(|A|^p_n), \tau(|B|^q_n)\leq 1$.

	Let $A=u|A|$, $B=v|B|$ and $AB=w|AB|$ be the respective polar decompositions of these operators.
	Since, for every $\varepsilon>0$, $\sigma\left(\varepsilon\mathbb{1}+|A|\right),\sigma\left(\varepsilon\mathbb{1}+|B|\right)\subset [\varepsilon,\infty)$, we can define a function $f_n:\mathbb{C} \to \mathbb{C}$ by
	$$\begin{aligned}
	f_n(z)&=\tau\left(w^\ast u\left(|A|_n^p\right)^z v \left(|B|_n^q\right)^{(1-z)}\right)\\
	&=\tau\left(\left(|B|_n^q\right)^{-z} w^\ast u\left(|A|_n^p\right)^z v |B|_n^q\right)
	\end{aligned}$$
	
	To see that this function is entire analytic, notice that there exists $A_n \in \nalgebra$ such that $$\left(|B|_n^q\right)^{-z}|B|_n w^\ast u\left(|A|_n^p\right)^z v = \sum_{j=1}^{\infty} C_j z^j$$
	which can be obtained simply as a Taylor series. It follows from the Lemma \ref{normtraceinequality} that, for any $\varepsilon>0$, there exists $N\in \mathbb{N}$ large enough such that 
	$$\begin{aligned}
	\left|f_n(z)-\sum_{j=1}^{N} \tau(C_j |B|_n^q ) z^j\right|&=\left|\tau\left(\left(\left(|B|_n^q\right)^{-z} |B|_n w^\ast u\left(|A|_n^p\right)^z v-\sum_{j=1}^{N} C_j z^j\right) |B|_n^q\right)\right|\\
	&\leq\left\|\left(|B|_n^q\right)^{-z} |B|_n w^\ast u\left(|A|_n^p\right)^z v-\sum_{j=1}^{N} C_j z^j\right\|\tau\left( |B|_n^q\right)\\
	&<\varepsilon\\
	\end{aligned}.$$
	
	This function is also bounded in the strip $\{z\in \mathbb{C} \ | \ 0\leq \Re(z) \leq 1\}$ because 
	$$\begin{aligned}
	|f_n(z)|&\leq \left\|\left(|B|_n^q\right)^{-z} w^\ast u\left(|A|_n^p\right)^z v\right\| \tau\left(|B|_n^q\right)\\
	&\leq \left\||B|_n^q\right\|^{-\Re(z)}\left\||A|_n^p\right\|^{\Re(z)} \tau\left(|B|_n^q\right)\\
	&\leq  
	\max\left\{1,\left\||B|_n^q\right\|^{-1}\right\}\max\left\{1,\left\||A|_n^p\right\|\right\} \tau\left(|B|_n^q\right).\\
	\end{aligned}$$
	
	By three-line theorem\footnote{There is a confusion concerning the name of this result. It is known as Doetsch's three-line theorem, Hadamard's three-line theorem or Phragm\'en-Lindel\"of principle. The confusion occurs because it is a variant of the three-circle theorem due to Hadamard and a consequence of Phragm\'en-Lindel\"of maximum principle, but it was published by Doetsch before Hadamard's result. }
	$$\begin{aligned}
	\left|f_n\left(\frac{1}{p}\right)\right|& \leq \sup_{y\in \mathbb{R}}|f_n(1+ \iu y)|^\frac{1}{p} \sup_{y\in \mathbb{R}}|f_n(0+\iu y)|^\frac{1}{q}\\
	&=\sup_{y\in \mathbb{R}}|\tau\left(w^\ast u\left(|A|_n^p\right)^{1+\iu y} v \left(|B|_n^q\right)^{-\iu y}\right)|^\frac{1}{p} \sup_{y\in \mathbb{R}}|\tau\left(w^\ast u\left(|A|_n^p\right)^{\iu y} v \left(|B|_n^q\right)^{(1-\iu y)}\right)|^\frac{1}{q}\\
	&\leq\sup_{y\in \mathbb{R}}\left|\tau\left(|A|_n^p\right)\right|^\frac{1}{p} \sup_{y\in \mathbb{R}}\left|\tau\left(|B|_n^q\right)\right|^\frac{1}{q}\\
	&=\tau\left(|A|_n^p\right)^\frac{1}{p} \tau\left(|B|_n^q\right)^\frac{1}{q}\\
	\end{aligned} $$
	
	Then
	$$\begin{aligned}
	\tau(|AB|)& =\lim_{n\to \infty}f_n\left(\frac{1}{p}\right)\\
	&\leq \lim_{n\to \infty}\tau\left(|A|_n^p\right)^\frac{1}{p}\tau\left(|B|_n^q\right)^\frac{1}{q}\\
	&=\tau\left(|A|^p\right)^\frac{1}{p} \tau\left(|B|^q\right)^\frac{1}{q}\\
	\end{aligned}$$
\end{proof}

We can generalise the H\"older inequality as it follows
\begin{theorem}[H\"older Inequality]
	\label{gholder}
	Let $\nalgebra$ be a von Neumann algebra and $\tau$ a normal faithful semifinite trace in $\nalgebra$, let also $A_i\in\nalgebra$, $i=1,\dots, k$ and $\displaystyle \sum_{i=1}^{k} p_i>1$ such that $\displaystyle \sum_{i=1}^{k}\frac{1}{p_i}=1$, then
	$$\tau\left(\left|\prod_{i=1}^{k}A_i\right|\right)\leq \prod_{i=1}^{k}\tau(|A_i|^{p_i})^\frac{1}{p_i}.$$
\end{theorem}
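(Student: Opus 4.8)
The plan is to run the proof of Theorem~\ref{holder} with $k-1$ complex variables in place of one. First dispose of the degenerate cases exactly as there: if $\tau(|A_i|^{p_i})=0$ for some $i$ then $A_i=0$ by faithfulness and both sides vanish, and if some $\tau(|A_i|^{p_i})=\infty$ the right-hand side is infinite; so assume $0<\tau(|A_i|^{p_i})<\infty$ for all $i$, write the polar decompositions $A_i=u_i|A_i|$ and $\prod_iA_i=w\left|\prod_iA_i\right|$, and for $n\in\mathbb{N}$ introduce the truncations $|A_i|_n^{p_i}=\int_{1/n}^{\|A_i\|}\lambda^{p_i}\,dE^{|A_i|}_\lambda$, which are bounded with spectrum bounded away from $0$, satisfy $\tau(|A_i|_n^{p_i})\le\tau(|A_i|^{p_i})$, and obey $|A_i|_n:=(|A_i|_n^{p_i})^{1/p_i}\nearrow|A_i|$.

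Then I would study, on the complex hyperplane $\{\mathbf z\in\mathbb{C}^k:\sum_iz_i=1\}$, the function
$$F_n(\mathbf z)=\tau\!\left(w^\ast\,u_1(|A_1|_n^{p_1})^{z_1}u_2(|A_2|_n^{p_2})^{z_2}\cdots u_k(|A_k|_n^{p_k})^{z_k}\right).$$
Since each $|A_i|_n^{p_i}$ is bounded with spectrum away from $0$, the maps $z_i\mapsto(|A_i|_n^{p_i})^{z_i}$ are entire with locally uniform bounds, so by a Taylor expansion and Lemma~\ref{normtraceinequality} (verbatim as in Theorem~\ref{holder}) $F_n$ is analytic and bounded on the tube over the simplex $\Delta=\{\Re{z_i}\ge0,\ \sum_i\Re{z_i}=1\}$. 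At a vertex $e_j$ of $\Delta$ (all coordinates $0$ except the $j$-th, which equals $1$), every factor except $|A_j|_n^{p_j}$ becomes a partial isometry or a unitary on its support, so after cycling the trace Lemma~\ref{normtraceinequality} gives $|F_n(e_j+\iu\mathbf y)|\le\tau(|A_j|_n^{p_j})$ for every real $\mathbf y$ with $\sum_iy_i=0$.

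Finally $(\tfrac1{p_1},\dots,\tfrac1{p_k})\in\Delta$ precisely because $\sum_i\frac1{p_i}=1$, and its barycentric coordinates relative to $e_1,\dots,e_k$ are exactly $\tfrac1{p_1},\dots,\tfrac1{p_k}$; iterating the three-line theorem over the $k-1$ free variables therefore yields
$$\left|F_n\!\left(\tfrac1{p_1},\dots,\tfrac1{p_k}\right)\right|\le\prod_{j=1}^k\tau(|A_j|_n^{p_j})^{1/p_j}\le\prod_{j=1}^k\tau(|A_j|^{p_j})^{1/p_j}.$$
Since $F_n(\tfrac1{p_1},\dots,\tfrac1{p_k})=\tau\big(w^\ast u_1|A_1|_n\cdots u_k|A_k|_n\big)\to\tau\big(w^\ast\prod_iA_i\big)=\tau\big(\big|\prod_iA_i\big|\big)$ as $n\to\infty$ (by the convergence argument already used in Theorem~\ref{holder}), the inequality follows.

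I expect the only genuinely new difficulty to be the step ``iterating the three-line theorem over the $k-1$ free variables'': one must arrange the successive strips so that the exponents produced at each stage combine, through the barycentric weights, into $\prod_j\tau(|A_j|^{p_j})^{1/p_j}$; everything else (analyticity of $F_n$, the corner estimates, the truncation and the limit) is a direct transcription of the $k=2$ case. An alternative is induction on $k$: split $\prod_{i=1}^kA_i=A_1\cdot\prod_{i=2}^kA_i$ and combine Theorem~\ref{holder} with the inductive hypothesis, but this forces one to carry the rescaled inequality $\tau(|AB|^r)^{1/r}\le\tau(|A|^p)^{1/p}\tau(|B|^q)^{1/q}$ for $\tfrac1p+\tfrac1q=\tfrac1r$ (itself proved by the same three-line argument, now with an extra factor $|AB|^{r-1}$ inside the trace, and stated as part of the inductive claim), so the difficulty is merely relocated.
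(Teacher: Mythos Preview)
Your proposal is correct and follows the same route as the paper: truncate, form a multi-variable analytic function of the type $\tau\big(w^\ast u_1(|A_1|_n^{p_1})^{z_1}\cdots u_k(|A_k|_n^{p_k})^{z_k}\big)$, bound it at the vertices of the simplex, interpolate, and pass to the limit. The only differences are cosmetic: the paper parametrises with $k-1$ free variables and sets the last exponent to $1-\sum_{i<k}z_i$ (your hyperplane constraint), keeps the normalisation $\tau(|A_i|^{p_i})^{-1}$ inside the truncation so that the vertex bounds are $\le 1$, and --- at the step you flag as the genuine difficulty --- simply invokes a ready-made several-variable three-line theorem (Theorem~2.1 of \cite{Araki73}, giving convexity of $\log\sup_{\mathbf x}|f_n(\mathbf x+\iu\mathbf y)|$) rather than iterating the one-variable version by hand.
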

\begin{proof}
	Let $A_i=u_i|A_i|$ and $\displaystyle \prod_{i=1}^ {k}A_i=w\left|\prod_{i=1}^{k}A_i\right|$ be the respective polar decompositions of these operators. Following the same steps of the previous proof we can define the analytic several complex variables function
	
	$$\begin{aligned}
	f_n(z_1,\dots,z_{k-1})&=\tau\left(w^\ast u_1\left(|A_1|_n^{p_1}\right)^{z_1}\dots u_k \left(|A_k|_n^{p_k}\right)^{z_{k-1}}\left(|A_k|_n^{p_k}\right)^{1-\sum_{i=1}^{k-1}z_i}\right)\\
	&=\tau\left(\left(|A_k|_n^{p_k}\right)^{-\sum_{i=1}^{k-1}z_i}w^\ast u_1\left(|A_1|_n^{p_1}\right)^{z_1}\dots u_k \left(|A_k|_n^{p_k}\right)^{z_{k-1}}|A_k|_n^{p_k}\right)\\
	\end{aligned}$$
	where $\displaystyle |A_i|_n^p=\frac{1}{\tau\left(|A_i|^p\right)}\int^{\|A_i\|}_\frac{1}{n}\lambda^p dE^{|A_i|}_\lambda, \quad i=1, \dots, k$.
	
	This function is bounded in the region
	$$B=\left\{(z_1,\dots,z_{k-1}) \in \mathbb{C}^k \ \middle| \  \Re(z_i)\geq 0, \ i=1,\dots k-1, \ \sum_{i=1}^{k-1} \Re(z_i)\leq 1\right\}$$
	
	Using now the several variable three-line theorem (Theorem 2.1 in \cite{Araki73}) we have that
	$$g(y_1,\dots, y_{k-1})=\log\left(\sup_{x_1,\dots,x_{k-1} \in \mathbb{R}}\left|f_n(x_1+\iu y_1,\dots, x_{k-1}+\iu y_{k-1})\right|\right)$$
	is a convex function.
	
	This leads a equivalent result of the one we have had before, namely
	
	$$\begin{aligned}
	\left|f_n\left(\frac{1}{p_1},\dots,\frac{1}{p_{k-1}}\right)\right|& \leq \left(\sup_{y_1,\dots,y_{k-1} \in \mathbb{R}}\left|f_n(1+\iu y_1,\iu y_2,\dots,\iu y_{k-1})\right|\right)^\frac{1}{p_1}\dots\times\\
	&\qquad \times \left(\sup_{y_1,\dots,y_{k-1} \in \mathbb{R}}\left|f_n(\iu y_1,\iu y_2,\dots,1+\iu y_{k-1})\right|\right)^\frac{1}{p_{k-1}}\times\\
	&\qquad \times \left(\sup_{y_1,\dots,y_{k-1} \in \mathbb{R}}\left|f_n(\iu y_1,\iu y_2,\dots,\iu y_{k-1})\right|\right)^\frac{1}{p_{k}}\\
	&\leq \prod_{i=1}^{k}\tau(|A_i|_n^{p_i})^\frac{1}{p_i}
	\end{aligned} $$
	
	Then
	$$\begin{aligned}
	\tau(|AB|)& =\lim_{n\to \infty}f_n\left(\frac{1}{p_1},\dots,\frac{1}{p_{k-1}}\right)\\
	&\leq \lim_{n\to \infty} \prod_{i=1}^{k}\tau(|A_i|_n^{p_i})^\frac{1}{p_i}\\
	&=\prod_{i=1}^{k}\tau(|A_i|^{p_i})^\frac{1}{p_i}\\
	\end{aligned}$$
\end{proof}

\begin{theorem}[Minkowski's Inequality]
	\label{minkowski}
	Let $\nalgebra$ be a von Neumann algebra, $\tau$ a normal faithful semifinite trace in $\nalgebra$, and $p,q>1$ such that $\frac{1}{p}+\frac{1}{q}=1$, then
	\begin{enumerate}[(i)]
		\item For every $A\in \nalgebra$, $\displaystyle \tau(|A|^p)^\frac{1}{p}=\sup\left\{\left|\tau(AB)\right| \ \middle | \ B\in \nalgebra,  \tau\left(|B|^q\right)\leq 1\right\};$
		\item For every $A,B \in \nalgebra$, $\displaystyle \|A+B\|_p\leq \|A\|_p+\|B\|_p$.
	\end{enumerate}
\end{theorem}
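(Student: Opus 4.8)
The strategy is to prove the duality formula (i) first and then obtain the triangle inequality (ii) from it in one line.

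\emph{Proof of (i).} Write $\|A\|_p\defeq\tau(|A|^p)^{1/p}$. One inequality costs nothing: if $B\in\nalgebra$ satisfies $\tau(|B|^q)\leq 1$, then Lemma~\ref{normtraceinequality} gives $|\tau(AB)|\leq\tau(|AB|)$ and the H\"older inequality (Theorem~\ref{holder}) gives $\tau(|AB|)\leq\tau(|A|^p)^{1/p}\tau(|B|^q)^{1/q}\leq\|A\|_p$, so the supremum is $\leq\|A\|_p$. For the opposite inequality the plan is to exhibit an extremal $B$. Assume first $0<\|A\|_p<\infty$ and let $A=u|A|$ be the polar decomposition. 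Put
$$B\defeq\|A\|_p^{-(p-1)}\,|A|^{p-1}u^\ast,$$
which belongs to $\nalgebra$ since $|A|^{p-1}$ is a bounded Borel function of the bounded operator $|A|$ and $u\in\nalgebra$. Then $AB=\|A\|_p^{-(p-1)}u|A|^pu^\ast$, and because $u^\ast u=s^\nalgebra_R(A)$ dominates the support of $|A|^p$, the trace property yields $\tau(AB)=\|A\|_p^{-(p-1)}\tau(|A|^p)=\|A\|_p$. Likewise $|B|=\|A\|_p^{-(p-1)}u|A|^{p-1}u^\ast$, so, using the conjugacy relation $(p-1)q=p$, $|B|^q=\|A\|_p^{-p}u|A|^pu^\ast$ and therefore $\tau(|B|^q)=\|A\|_p^{-p}\tau(|A|^p)=1$. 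Thus $B$ is admissible and realises the value $\|A\|_p$, forcing equality. The degenerate case $\|A\|_p=0$ is trivial: then $|A|^p=0$, so $A=0$ by faithfulness of $\tau$, and both sides are $0$.

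\emph{The case $\|A\|_p=\infty$.} Here one must show the supremum is $+\infty$. I would combine the construction above with the semifiniteness of $\tau$: replacing $|A|^{p-1}u^\ast$ by $c_n^{-1}|A|^{p-1}g_nu^\ast$ for suitable finite-trace projections $g_n$ built from the spectral cut-offs of $|A|$ and from an increasing net of projections in $\mathfrak{M}_\tau$ converging to $\mathbbm{1}$, with $c_n=\tau(|A|^pg_n)^{1/q}$, one obtains admissible operators on which $\tau(A\,\cdot\,)$ equals $\tau(|A|^pg_n)^{1/p}\to\tau(|A|^p)^{1/p}=\infty$ by normality of $\tau$. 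I expect this bookkeeping to be the only genuinely delicate point; everything else is formal manipulation with the trace property.

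\emph{Proof of (ii).} If $\|A\|_p=\infty$ or $\|B\|_p=\infty$ there is nothing to prove, so suppose both are finite. For any $C\in\nalgebra$ with $\tau(|C|^q)\leq 1$, Theorem~\ref{holder} gives $\tau(|AC|)\leq\|A\|_p<\infty$ and $\tau(|BC|)\leq\|B\|_p<\infty$, so $AC,BC\in\mathfrak{M}_\tau$ and $\tau((A+B)C)=\tau(AC)+\tau(BC)$ by linearity of $\tau$ on $\mathfrak{M}_\tau$; consequently $|\tau((A+B)C)|\leq|\tau(AC)|+|\tau(BC)|\leq\|A\|_p+\|B\|_p$ by part (i) applied to $A$ and to $B$. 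Taking the supremum over all such $C$ and invoking (i) for $A+B$ yields $\|A+B\|_p\leq\|A\|_p+\|B\|_p$.
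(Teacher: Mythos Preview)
Your proof of (i) is correct and follows exactly the same route as the paper: H\"older gives the upper bound, and the extremizer $B=\|A\|_p^{-(p-1)}|A|^{p-1}u^\ast$ realises it when $\|A\|_p<\infty$; for $\|A\|_p=\infty$ both you and the paper appeal to semifiniteness and normality, though you sketch the approximation in more detail than the paper, which simply says ``take an increasing sequence of operators $(A_n)_n\subset\mathfrak{M}_\tau$ converging to $A$ and apply the recently proved result to this sequence.'' Your proof of (ii) via the duality formula is the standard deduction; the paper in fact omits an explicit proof of (ii) altogether, so here you have supplied what the paper leaves implicit.
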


\begin{proof}
	By H\"older's inequality, for all $B\in \nalgebra$ such that $\tau\left(|B|^q\right)\leq 1$,
	$$|\tau(AB)|\leq \tau(|AB|)\leq \tau(|A|^p)^\frac{1}{p}\tau(|B|^p)^\frac{1}{q}\leq \tau(|A|^p)^\frac{1}{p}. $$
	
	On the other hand, if $\tau(|A|^p)<\infty$, let $A=u|A|$ be the polar decomposition of $A$ and define $B=\frac{|A|^{p-1}u^\ast}{\tau\left(|A|^p\right)^\frac{p-1}{p}}$, then 
	$$|B|^2=B^\ast B=\frac{u|A|^{2p-2}u^\ast}{\tau\left(|A|^p\right)^\frac{2p-2}{p}} \Rightarrow \tau\left(|B|^q\right)=\tau\left(\frac{u|A|^{(p-1)q}u^\ast}{\tau\left(|A|^p\right)^\frac{(p-1)q}{p}}\right)=1.$$
	
	Moreover, $\tau(AB)=\frac{\tau(u|A|^p u^\ast)}{\tau\left(|A|^p\right)^\frac{p-1}{p}}=\tau\left(|A|^p\right)^\frac{1}{p}$.
	
	In the case $\tau(|A|^p)=\infty$, we use semifiniteness and normality to take an increasing sequence of operators $(A_n)_n \subset \mathfrak{M}_\tau$ converging to $A$ and apply the recently proved result to this sequence. 
	
\end{proof}

Together Theorem \ref{gholder} and Theorem \ref{minkowski} give us another little generalization of H\"older's inequality. This inequality is obvious in commutative case, but not in the noncommutative.

\begin{corollary}[H\"older Inequality]
	\label{holder}
	Let $\nalgebra$ be a von Neumann algebra and $\tau$ a normal faithful semifinite trace in $\nalgebra$, let also $A,B\in\nalgebra$ and $p,q>1$ such that $\frac{1}{p}+\frac{1}{q}=\frac{1}{r}$, then
	$$\tau(|AB|^r)^\frac{1}{r}\leq \tau(|A|^p)^\frac{1}{p}\tau(|B|^q)^\frac{1}{q}.$$
\end{corollary}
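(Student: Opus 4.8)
The plan is to combine the duality formula for the $L_r$-``norm'' from Theorem~\ref{minkowski}(i) with the three-factor Hölder inequality of Theorem~\ref{gholder}. I assume $r\geq 1$ (equivalently $\frac{1}{p}+\frac{1}{q}\leq 1$), which is what makes $\tau(|AB|^r)^{1/r}$ a meaningful quantity; the boundary case $r=1$ is exactly the two-factor Hölder inequality already established in Theorem~\ref{holder}, so I may assume $r>1$. Let $s>1$ denote the conjugate exponent of $r$, so that $\frac{1}{r}+\frac{1}{s}=1$ and hence $\frac{1}{p}+\frac{1}{q}+\frac{1}{s}=\frac{1}{r}+\frac{1}{s}=1$.

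Since $\nalgebra$ is an algebra, $AB\in\nalgebra$, so Theorem~\ref{minkowski}(i) applied to the operator $AB$ with the conjugate pair $(r,s)$ gives
$$\tau\bigl(|AB|^r\bigr)^{\frac{1}{r}}=\sup\Bigl\{\,\bigl|\tau(ABC)\bigr| \;\Big|\; C\in\nalgebra,\ \tau\bigl(|C|^s\bigr)\leq 1\,\Bigr\}.$$
It therefore suffices to bound $|\tau(ABC)|$ for a fixed $C\in\nalgebra$ with $\tau(|C|^s)\leq 1$. For such a $C$, I apply Theorem~\ref{gholder} to the three operators $A,B,C$ with exponents $p,q,s$ --- which is legitimate since $p,q,s>1$ and their reciprocals sum to $1$ --- and obtain
$$\bigl|\tau(ABC)\bigr|\leq\tau\bigl(|ABC|\bigr)\leq\tau\bigl(|A|^p\bigr)^{\frac{1}{p}}\tau\bigl(|B|^q\bigr)^{\frac{1}{q}}\tau\bigl(|C|^s\bigr)^{\frac{1}{s}}\leq\tau\bigl(|A|^p\bigr)^{\frac{1}{p}}\tau\bigl(|B|^q\bigr)^{\frac{1}{q}},$$
the last inequality using $\tau(|C|^s)\leq 1$. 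Taking the supremum over all admissible $C$ and inserting the identity above yields the claim.

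The argument is a straightforward ``duality plus higher Hölder'' bootstrap, so I do not expect a genuine obstacle. The only points that need care are the exponent bookkeeping --- checking that the conjugate exponent $s$ of $r$ makes $(p,q,s)$ an admissible triple for Theorem~\ref{gholder} --- and the fact that Theorem~\ref{minkowski}(i) is stated for conjugate pairs with both exponents strictly greater than $1$, which forces the separate (and trivial) treatment of $r=1$ by Theorem~\ref{holder}. The degenerate cases in which $\tau(|A|^p)$ or $\tau(|B|^q)$ vanishes or is infinite require no special handling, since the bound on $|\tau(ABC)|$ above remains valid under the usual conventions.
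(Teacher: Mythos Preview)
Your proof is correct and follows essentially the same route as the paper: invoke the duality formula of Theorem~\ref{minkowski}(i) for the pair $(r,s)$ with $\frac{1}{r}+\frac{1}{s}=1$, then bound each $|\tau(ABC)|$ via the three-factor H\"older inequality (Theorem~\ref{gholder}) with exponents $p,q,s$. The only addition you make is the explicit remark that $r=1$ reduces to the ordinary H\"older inequality, which the paper leaves implicit.
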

\begin{proof}
	Let $s>1$ such that $\frac{1}{r}+\frac{1}{s}=1$, then $\frac{1}{p}+{1}{q}+\left(1-{1}{r}\right)=\frac{1}{p}+{1}{q}+{1}{s}=1$. Hence using $(i)$ in Theorem \ref{minkowski} and after Theorem \ref{gholder} we get 
	$$\begin{aligned}
	\tau(|AB|^r)^{\frac{1}{r}}&=\sup\left\{\left|\tau(ABC)\right| \ \middle | \ C\in \nalgebra,  \tau\left(|C|^s\right)\leq 1\right\}\\
	&\leq \sup\left\{\tau(|A|^p)^\frac{1}{p}\tau(|B|^q)^\frac{1}{q}\tau(|C|^s)^\frac{1}{s} \ \middle | \ C\in \nalgebra,  \tau\left(|C|^s\right)\leq 1\right\}\\
	&\leq\tau(|A|^p)^\frac{1}{p}\tau(|B|^q)^\frac{1}{q}.
	\end{aligned}$$

\end{proof}

\begin{definition}
	Let $\nalgebra$ be a von Neumann algebra and $\tau$ a normal, faithful and semifinite trace on $\nalgebra$, we define the non commutative $L_p$-space, denoted by $L_p(\nalgebra,\tau)$, as the completion of
	$$\left\{A\in \nalgebra \ \middle| \ \tau\left(|A|^p\right)<\infty\right\}$$
	with respect to the norm $\displaystyle\|A\|_p=\tau\left(|A|^p\right)^\frac{1}{p}$.
	
	We also denote $L_\infty(\nalgebra, \tau)=\nalgebra$ with the norm $\|A\|_\infty=\|A\|$.
	
\end{definition}

Now, it is quite easy to see that, for $p,q\geq1$ H\"older conjugated,the H\"older and Minkowiski inequalities can be extended to the whole space $L_p(\nalgebra,\tau)$ through an argument of density  and normality of the trace. With this definition, \ref{normtraceinequality} and \ref{holder}, and $\ref{minkowski}$ can be expressed as usually
$$\begin{aligned}
\|AB\|_1&\leq \|A\|_p\|B\|_q\\
\|A+B\|_p &\leq \|A\|_p+\|B\|_p\\
\end{aligned}$$ 
and this last equality is a triangular inequality for $\|.\|_p$. It is important to notice the faithfulness guarantees $\|A\|_p=0 \Rightarrow A=0$, however semifiniteness was used only in the very end of \ref{minkowski} and it is completely unimportant when talking about non-commutative $L_p$-spaces, since the trace is never infinity on these operators.

It is not our intention in this text to discuss this subject, but notice that if $\tau$ is not semifinite we can define the noncommutative $L_p$ space to a "small" algebra $\overline{\mathfrak{M}_\tau}^{SOT}$.

\begin{theorem}
	Let $p,q\geq 1$ such that $\frac{1}{p}+\frac{1}{q}=1$, then $L_p(\nalgebra,\tau)$ and $L_q(\nalgebra,\tau)$ form a dual pair with respect to the bilinear form
	$$\begin{aligned}
	(\cdot,\cdot): &L_p(\nalgebra,\tau)\times L_q(\nalgebra,\tau)&\to&\hspace{5 mm}\mathbb{C} \\
	&\hspace{13 mm} (A,B)&\mapsto&\hspace{1 mm}\tau(AB).\\
	\end{aligned}$$
\end{theorem}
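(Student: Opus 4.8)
The plan is to verify the two features that make $(L_p(\nalgebra,\tau),L_q(\nalgebra,\tau))$ a dual pair: that the form $(A,B)\mapsto\tau(AB)$ is well defined and bounded on the completed spaces, and that it is non-degenerate, i.e.\ separates the points of each factor. Boundedness is the easy half. On the dense subalgebras $\{A\in\nalgebra:\tau(|A|^p)<\infty\}$ and $\{B\in\nalgebra:\tau(|B|^q)<\infty\}$ one has
$$|\tau(AB)|\leq\tau(|AB|)\leq\|A\|_p\|B\|_q,$$
which for $1<p,q<\infty$ is Lemma~\ref{normtraceinequality} together with the H\"older inequality (Theorem~\ref{holder}), and for the endpoint $\{p,q\}=\{1,\infty\}$ is already contained in Lemma~\ref{normtraceinequality}. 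A bounded bilinear form on a product of dense subspaces of normed spaces extends uniquely and continuously to the product of the completions (fix one argument in the dense subspace and extend the resulting bounded functional, then fix the other and do the same), so $(\cdot,\cdot)$ makes sense on $L_p(\nalgebra,\tau)\times L_q(\nalgebra,\tau)$ with $|(A,B)|\leq\|A\|_p\|B\|_q$.

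For non-degeneracy, the key is to promote the duality formula of Theorem~\ref{minkowski}(i) from $\nalgebra$ to all of $L_p(\nalgebra,\tau)$: for every $A\in L_p(\nalgebra,\tau)$,
$$\|A\|_p=\sup\{\,|(A,B)| : B\in\nalgebra,\ \tau(|B|^q)\leq 1\,\}.$$
The inequality ``$\geq$'' is just the bound of the previous step. For ``$\leq$'', choose $A_n\to A$ in $L_p$ with each $A_n$ in the dense subalgebra; Theorem~\ref{minkowski}(i) provides $B_n$ with $\tau(|B_n|^q)\leq1$ and $|\tau(A_nB_n)|\geq\|A_n\|_p-\tfrac1n$, and then $|(A,B_n)|\geq|\tau(A_nB_n)|-\|A-A_n\|_p\|B_n\|_q\to\|A\|_p$. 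In the endpoint cases the analogous formulas follow directly from the polar decomposition $A=u|A|$: testing an $A\in L_1$ against $B=u^\ast\in L_\infty$ recovers $\|A\|_1=\tau(|A|)$, and testing $A\in L_\infty=\nalgebra$ against $B=\tau(f)^{-1}fu^\ast$, where $f$ is a finite-trace subprojection (available by semifiniteness of $\tau$) of a spectral projection of $|A|$ for values near $\|A\|$, recovers $\|A\|_\infty$. Granting this, if $A\in L_p$ satisfies $(A,B)=0$ for all $B\in L_q$, then restricting the supremum to $B$ in the dense subalgebra gives $\|A\|_p=0$, so $A=0$, since $\|\cdot\|_p$ is a genuine norm (this is where faithfulness of $\tau$ enters). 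For the other factor, the trace property $\tau(BA)=\tau(AB)$ together with the same argument applied with $q,p$ in place of $p,q$ (the hypotheses on the pair are symmetric) yields $\|B\|_q=\sup\{\,|(A,B)|:A\in\nalgebra,\ \tau(|A|^p)\leq1\,\}$, so $(A,B)=0$ for all $A\in L_p$ forces $B=0$.

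The step I expect to be the main obstacle is exactly this passage of the Minkowski duality formula from the operator algebra $\nalgebra$ to the abstract completion: an element of $L_p(\nalgebra,\tau)$ is a priori only a limit of operators, so Theorem~\ref{minkowski}(i) cannot be invoked directly and one must run the approximation argument carefully, keeping track of the fact that the optimal test elements $B_n$ vary with $n$. The endpoint formula $\|A\|_\infty=\sup\{|\tau(AB)|:\|B\|_1\leq1\}$ is the other place where something genuinely needs to be checked, namely that semifiniteness of $\tau$ produces finite-trace subprojections of a given nonzero projection; beyond these two points the argument is routine.
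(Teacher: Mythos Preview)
Your proof is correct and follows the same route as the paper: both establish non-degeneracy by invoking the duality formula of Theorem~\ref{minkowski}(i), $\|A\|_p=\sup\{|\tau(AB)|:\|B\|_q\leq 1\}$. The paper's own argument is considerably terser---it simply cites Theorem~\ref{minkowski}(i) after having remarked, just before the definition of $L_p(\nalgebra,\tau)$, that H\"older and Minkowski ``can be extended to the whole space $L_p(\nalgebra,\tau)$ through an argument of density and normality of the trace.'' You have unpacked precisely that density argument (choosing $A_n\to A$ and tracking the varying optimisers $B_n$) and also treated the endpoint $\{p,q\}=\{1,\infty\}$ explicitly via polar decomposition and semifiniteness, neither of which the paper spells out.
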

\begin{proof}
	The bilinearity is trivial and notice that, by virtue of Theorem \ref{minkowski} $(i)$, we have
	that for every non-null $A\in L_p(\nalgebra,\tau)$ there exists $B\in L_q(\nalgebra,\tau)$ such that $(A,B)\neq0$ and for every non-null $B \in L_q(\nalgebra,\tau)$ there exists $A\in L_p(\nalgebra,\tau)$ such that $(A,B)\neq 0$, which are the required properties to say that $L_p(\nalgebra,\tau)$ and $L_q(\nalgebra,\tau)$ form a dual pair under the bilinear form $(\cdot,\cdot)$.
	
\end{proof}

The last two sections of this text will be devoted to two different generalizations of $L_p$-spaces for arbitrary von Neumann algebras, one due to Haagerup and another due to Araki and Masuda.

Hitherto we have been as detailed as possible, but henceforth the purpose of the text will be slightly different: to present just an idea of how to construct a generalized $L_p$-space.

\section{Radon-Nikodym Derivative}

The fist version of an analogous of the Radon-Nikodym Theorem in noncommutative case appears in \cite{Dye52} and \cite{Segal53}. The generalization we are about to present is from \cite{Pedersen73}.

\begin{theorem}
	Let $\calgebra$ be a $C^\ast$-algebra and $B_1=\{A \in \nalgebra \ | \ \|A\|\leq1\}$ its unitary ball. Then
	$\displaystyle \mathcal{E}(B_1)=\left\{U \in \nalgebra \ \middle| \ U \textrm{ is a partial isometry and } \left(\mathbb{1}-U U^\ast\right)\calgebra\left(\mathbb{1}-U^\ast U\right)\right\}=\{0\}$.
\end{theorem}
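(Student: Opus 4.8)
The plan is to establish the two inclusions separately, in each case by producing a nontrivial decomposition $U=\frac12(A_++A_-)$ with $A_\pm$ in the unit ball $B_1$ whenever the right-hand condition fails, via suitable corner perturbations of $U$.

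For the inclusion $\supseteq$: suppose $U$ is a partial isometry with $(\mathbb{1}-UU^\ast)\calgebra(\mathbb{1}-U^\ast U)=\{0\}$, put $p=U^\ast U$, $q=UU^\ast$, and assume $U=\frac12(A+B)$ with $A,B\in B_1$. Writing $X=A-U=U-B$, I would expand the inequalities $A^\ast A\le\mathbb{1}$ and $B^\ast B\le\mathbb{1}$ and add them; the cross terms cancel and one is left with $U^\ast U+X^\ast X\le\mathbb{1}$, hence $X^\ast X\le\mathbb{1}-p$. Compressing by $p$ gives $pX^\ast Xp=0$, so $Xp=0$ and $X=X(\mathbb{1}-p)$; the symmetric computation with $AA^\ast,BB^\ast$ yields $X=(\mathbb{1}-q)X$. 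Therefore $X=(\mathbb{1}-q)X(\mathbb{1}-p)\in(\mathbb{1}-UU^\ast)\calgebra(\mathbb{1}-U^\ast U)=\{0\}$, so $X=0$, $A=B=U$, and $U\in\mathcal{E}(B_1)$.

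For the inclusion $\subseteq$: fix $U\in\mathcal{E}(B_1)$ and proceed in two steps. Step 1 --- $U$ is a partial isometry. If $h:=U^\ast U$ is not a projection, then $\sigma(h)$ meets $(0,1)$ at some $t_0$, and I would use the continuous functional calculus to pick a continuous $g\ge0$ on $\sigma(h)$ which is a small bump with $g(t_0)>0$, which vanishes at any point of $\sigma(h)\cap\{0,1\}$, and which satisfies $1+g(t)\le t^{-1/2}$ on $\sigma(h)\cap(0,1]$. Then $A_\pm:=U(\mathbb{1}\pm g(h))\in\calgebra$, and since all operators involved are functions of $h$, one gets $A_\pm^\ast A_\pm=h(\mathbb{1}\pm g(h))^2$, which by the choice of $g$ is $\le\mathbb{1}$, so $A_\pm\in B_1$; moreover $U=\frac12(A_++A_-)$ while $A_+-A_-=2Ug(h)\ne0$, since $\|Ug(h)\|^2=\|hg(h)^2\|\ge t_0g(t_0)^2>0$, contradicting extremality. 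Step 2 --- with $p=U^\ast U$, $q=UU^\ast$ now projections, suppose $Z:=(\mathbb{1}-q)Y(\mathbb{1}-p)\ne0$ for some $Y\in\calgebra$ and rescale $Y$ so that $\|Z\|\le1$. Using $UU^\ast U=U$ one checks $U^\ast(\mathbb{1}-q)=0$, hence $U^\ast Z=Z^\ast U=0$, whence $(U\pm Z)^\ast(U\pm Z)=p+Z^\ast Z\le p+(\mathbb{1}-p)=\mathbb{1}$; thus $U\pm Z\in B_1$, $U=\frac12((U+Z)+(U-Z))$, and $U+Z\ne U-Z$, again contradicting extremality. Hence $(\mathbb{1}-UU^\ast)\calgebra(\mathbb{1}-U^\ast U)=\{0\}$.

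The $C^\ast$-algebra inequalities and the corner identities are routine; the one point that needs care is the construction of $g$ in Step~1. It must be supported where $Ug(h)\ne0$, respect the contraction constraint $1+g(t)\le t^{-1/2}$ which degenerates to $g=0$ at $t=1$, and (when $\calgebra$ is non-unital) vanish at $0$ so that $g(h)\in\calgebra$; producing a single continuous function meeting all three demands is the crux, after which the contradiction with extremality is immediate.
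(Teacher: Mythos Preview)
Your proof is correct. The $\subseteq$ direction matches the paper almost exactly: the paper also uses a continuous bump function $f$ of $U^\ast U$ to write $U=\tfrac12\big(U(\mathbb{1}+f)+U(\mathbb{1}-f)\big)$ and force $\sigma(U^\ast U)\subset\{0,1\}$, then perturbs $U$ by an arbitrary contraction in the corner $(\mathbb{1}-q)\calgebra(\mathbb{1}-p)$. One small point: your constraint $1+g(t)\le t^{-1/2}$ controls $A_+^\ast A_+$, but for $A_-^\ast A_-=h(\mathbb{1}-g(h))^2\le\mathbb{1}$ you also need $(1-g(t))^2\le 1$, i.e.\ $g\le 2$; this is implicit in ``small bump'' but worth stating.

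Where you genuinely diverge from the paper is the $\supseteq$ direction. The paper argues concretely in a Hilbert space representation: for a unit vector $x\in\Ran(p)$ it uses extremality of $1$ in the complex unit disk to get $Ax=Bx=Ux$, and then rules out $qA(\mathbb{1}-p)\ne 0$ by a somewhat ad hoc construction of a vector $w$ with $\|qAw\|>1$. Your argument is purely $C^\ast$-algebraic and considerably cleaner: writing $X=A-U=U-B$ and adding $A^\ast A+B^\ast B\le 2\cdot\mathbb{1}$ immediately gives $X^\ast X\le \mathbb{1}-p$, hence $Xp=0$; the symmetric computation gives $qX=0$, and the corner hypothesis finishes. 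This avoids any appeal to a representation or to vector-level estimates, and it also makes transparent why the corner condition is exactly what is needed.
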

\begin{proof}
	First of all, notice that $B_1$ is a WOT-compact set, $\mathcal{E}(B_1)\neq \emptyset$.
	
	Let $U \in \mathcal{E}(B_1)$, then $U^\ast U$ is a self-adjoint operator and $\|U^\ast U\|\leq 1$, this implies that $\sigma(U^\ast U)\subset[0,1]$. Suppose $\lambda \sigma(U^\ast U)\setminus\{0,1\}$, then there exists $\epsilon>0$ such that $(t-\epsilon,t+\epsilon)\subset[0,1]$ and we can find a positive infinitely differentiable function ${f: [0,1]\to [0,1]}$ such that $f(x)=0$ if $x \notin (t-\epsilon,t+\epsilon)$ and $0<f(\lambda)<\min\{\sqrt{\lambda},\sqrt{1-\lambda}\}$.
	
	Due to this choice $f(U^\ast U)$ commutes with $U^\ast U$ and 
	$$\begin{aligned}
	\sigma\left(U^\ast U(\mathbb{1}\pm f(U^\ast U))^2 \right)\subset [0,1]&\Rightarrow \|(\mathbb{1}\pm f(U^\ast U))U^\ast U(\mathbb{1}\pm f(U^\ast U))\|\leq 1\\
	&\Rightarrow \|U(\mathbb{1}\pm f(U^\ast U))\|\leq 1\\
	&\Rightarrow U(\mathbb{1}\pm f(U^\ast U))\in B_1,\\
	\end{aligned}$$
	but $U=\frac{1}{2}U(\mathbb{1}+ f(U^\ast U))+\frac{1}{2}U(\mathbb{1} -f(U^\ast U))$, which contradicts the extremality of $U$. Hence $\sigma(U^\ast U)\subset\{0,1\}$ and it follows $U^\ast U$ is a projection, which means $U$ is a partial isometry.
	
	Also, denote $P=U^\ast U$, $Q=UU^\ast$ and let $A\in \left(\mathbb{1}-Q\right)\calgebra\left(\mathbb{1}-P\right)$, $\|A\|\leq 1$, and $x \in \hilbert$,
	
	$$\begin{aligned}
	\|(U\pm A)x\|^2&=\|U(Px)\pm A\left((\mathbb{1}-P)x\right)\|^2\\
	&=\|QU(Px)\pm (\mathbb{1}-Q)A\left((\mathbb{1}-P)x\right)\|^2\\
	&=\|V(Px)\|^2+\|A\left((\mathbb{1}-P)x\right)\|^2\\
	&\leq \|x\|
	\end{aligned}$$
	
	Hence $U\pm A \in B_1$, and by the extremality of $U$, $$U=\frac{1}{2}(U+A)+\frac{1}{2}(U-A)\Rightarrow A=0.$$
	
	On the other hand, suppose $U$ is a partial isometry such that $\left(\mathbb{1}-Q\right)\calgebra\left(\mathbb{1}-P\right)=\{0\}$. If $U=\frac{1}{2}(A+B)$ for some $A,B\in B_1$, then, for every $x \in Ran(P)$ with $\|x\|=1$ we have $1=\ip{Ux}{Ux}=\frac{1}{2}\ip{Ax}{Ux}+\frac{1}{2}\ip{Bx}{Ux}$, but since both $\ip{Ax}{Ux}$ and $\ip{Bx}{Ux}$ are elements of the unit disk in $\mathbb{C}$ and $1$ is extremal in the disc, we have
	$$\ip{Ax}{Ux}=\ip{Bx}{Ux}=\ip{Ux}{Ux}=1\Rightarrow Ax=Bx=Ux, \forall x \in Ran(P).$$	
	
	We already have $AP=BP=U$, lets now show that $QA(1-P)=QB(1-P)=0$. Suppose it is not the case, then we can take $z\in Ran(QA(\mathbb{1}-P))\setminus\{0\}$ with $|z|$=1, which means there exists $x\in \hilbert$ such that $QA(\mathbb{1}-P)x=z$. At the same time, since $z\in Ran(Q)$ and $Q$ is the final projection of $U$, there exists $y\in \hilbert$ such that $z=Uy=APy$. Notice $1=\|z\|=\|QA(\mathbb{1}-P)x\|\leq \|(\mathbb{1}-P)x\|$ and $1=\|z\|=\|APy|=\|UPy\|=\|Py\|$.
	
	Take $\theta\in\left[0,\frac{\pi}{2}\right]$ such that $\cot(\theta)=\|(\mathbb{1}-P)x\|$ and $w=\cos(\theta)Py+\frac{\sin(\theta)}{\|(\mathbb{1}-P)x\|}(\mathbb{1}-P)x$, then
	
	$$\|QAw\|=\left|\cos(\theta)+\tan(\theta)\sin(\theta)\right|\|z\|=\left(1+\frac{1}{\|(\mathbb{1}-P)x\|^2}\right)^\frac{1}{2}>1. $$
	
	Since it is not possible once $\|QA\|\leq 1$, $QA(\mathbb{1}-P)=0$, but by hypothesis \\ ${(\mathbb{1}-Q)A(\mathbb{1}-P)=0}$, so $A(\mathbb{1}-P)=0$ and, by the same argument, $B(\mathbb{1}-P)=0$. Hence $A=B=U$ and it follows that $U$ is extremal.
	
\end{proof}

\begin{theorem}[Polar Decomposition of Functionals]
	Let $\nalgebra\subset B(\hilbert)$ be a von Neumann algebra and $\phi$ a WOT-continuous bounded functional on $\nalgebra$. Then, there exists a positive normal bounded functional $\psi$ on $\nalgebra$ and a partial isometry $U\in \nalgebra$, extreme in the unit ball, such that $\phi(A)=\psi(AU)$ and $\psi(A)=\phi(A U^\ast)$. 
\end{theorem}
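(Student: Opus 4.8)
The plan is to produce $U$ as the adjoint of an extreme point of the face of the unit ball of $\nalgebra$ on which $\phi$ attains its norm, and then read off $\psi$ and its positivity from the extreme--point characterisation established just above (extreme points of the unit ball of a $C^\ast$-algebra are exactly the partial isometries $v$ with $(\mathbb{1}-vv^\ast)\,\nalgebra\,(\mathbb{1}-v^\ast v)=\{0\}$) together with Proposition \ref{PPC}. Normalise so that $\|\phi\|=1$. The closed unit ball $S$ of $\nalgebra$ is convex and WOT-compact (it is the unit ball of a dual Banach space, and on bounded sets the WOT agrees with the ultraweak topology), and $\phi$ is WOT-continuous, so $\phi$ attains its norm on $S$; multiplying a maximiser by a unimodular scalar shows $F=\{x\in S : \phi(x)=1\}$ is nonempty. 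Moreover $F$ is WOT-closed, hence WOT-compact and convex, and a face of $S$: if $x\in F$ is a proper convex combination of $y,z\in S$, then, $1$ being extreme in the closed unit disc of $\mathbb{C}$, necessarily $\phi(y)=\phi(z)=1$. By Krein--Milman choose $v\in\mathrm{ext}(F)$; since $F$ is a face of $S$ this gives $v\in\mathrm{ext}(S)$, and the preceding theorem identifies $v$ as a partial isometry with $(\mathbb{1}-vv^\ast)\,\nalgebra\,(\mathbb{1}-v^\ast v)=\{0\}$.

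Set $U:=v^\ast$; this is again a partial isometry, and, the involution being an isometric affine bijection of $S$, again an extreme point of $S$. Define $\psi(A):=\phi(AU^\ast)=\phi(Av)$. Since $A\mapsto Av$ is WOT-continuous and so is $\phi$, $\psi$ is WOT-continuous, hence bounded and normal; moreover $\|\psi\|\le\|\phi\|=1$ and $\psi(\mathbb{1})=\phi(v)=1$ because $v\in F$, so $\psi(\mathbb{1})=\|\psi\|$ and Proposition \ref{PPC} yields $\psi\ge0$. It remains to verify $\phi(A)=\psi(AU)$ for every $A$; since $\psi(AU)=\phi(AUv)=\phi(Av^\ast v)$, this is exactly the assertion that $\phi\big(A(\mathbb{1}-v^\ast v)\big)=0$ for all $A\in\nalgebra$.

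This last identity is the heart of the matter and the step I expect to be the main obstacle. Put $p=v^\ast v$, $q=vv^\ast$. Because $(\mathbb{1}-q)\,\nalgebra\,(\mathbb{1}-p)=\{0\}$ we have $A(\mathbb{1}-p)=qA(\mathbb{1}-p)$, so it is enough to show $\phi(x_0)=0$ whenever $x_0=qx_0(\mathbb{1}-p)$ with $\|x_0\|\le1$. A naive additive perturbation $v\rightsquigarrow v+tx_0$ is useless, since keeping it in the unit ball would again force $x_0\in(\mathbb{1}-q)\,\nalgebra\,(\mathbb{1}-p)=\{0\}$; the correct device is the rotation $v_t:=\cos t\,v+\sin t\,x_0$. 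Writing $v_t^\ast v_t$ in block form relative to $\hilbert=p\hilbert\oplus(\mathbb{1}-p)\hilbert$ and using $v^\ast v=p$, $x_0(\mathbb{1}-p)=x_0$, $qx_0=x_0$ and $vpv^\ast=q$, one computes that the Schur complement of $\mathbb{1}-v_t^\ast v_t$ equals $\mathbb{1}-x_0^\ast x_0\ge0$, so $\|v_t\|\le1$ for $t\in[0,\tfrac{\pi}{2}]$. Then $\mathrm{Re}\,\phi(v_t)=\cos t+\sin t\,\mathrm{Re}\,\phi(x_0)\le\|\phi\|=1$ with equality at $t=0$, so differentiating at $t=0$ gives $\mathrm{Re}\,\phi(x_0)=0$, and replacing $x_0$ by $\iu x_0$ gives $\phi(x_0)=0$. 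Hence $\phi(A)=\phi(Av^\ast v)=\psi(AU)$, which together with $\psi(A)=\phi(AU^\ast)$ completes the proof; along the way one also obtains $\|\psi\|=\psi(\mathbb{1})=\|\phi\|$ and that the support projection of $\psi$ is dominated by $U^\ast U$.
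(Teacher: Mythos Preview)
Your argument is correct and follows essentially the same line as the paper's: pick an extreme point $v$ of the face where $\phi$ attains its norm, read off positivity of $\psi(\cdot)=\phi(\cdot\,v)$ from $\psi(\mathbb{1})=\|\psi\|$, and use the rotation $\cos t\,v+\sin t\,x_0$ to kill $\phi$ on the complementary corner. The paper differs only cosmetically by setting $U=W^\ast s^\nalgebra(\psi)$ rather than your simpler $U=v^\ast$; incidentally, your Schur complement is more work than needed, since $vp=v$ and $px_0^\ast=0$ give $vx_0^\ast=0$, whence $v_t v_t^\ast=\cos^2 t\cdot q+\sin^2 t\cdot x_0x_0^\ast\leq\mathbb{1}$ directly.
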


\begin{proof}
	The case $\phi$=0 is trivial. Suppose $\phi\neq 0$ and let 
	$$\mathcal{F}=\{A\in\nalgebra \ | \ \|A\|\leq 1 \textrm{ and } \phi(A)=\|\phi\|\},$$
	since the unit ball of $\nalgebra$, $B_1$, is WOT-compact, there exists $\tilde{V}\in B_1$ such that $|\phi(\tilde{V})|=\|\phi\|$, hence $\phi(V)=\|\phi\|$ for $V=e^{-\iu Arg(\phi(\tilde{V}))}\tilde{V}$. Hence $\mathcal{F}\neq \emptyset$.
	
	Now, $\mathcal{F}$ is a compact face in $B_1$, thus $\mathcal{E}(\mathcal{F})\subset \mathcal{E}(B_1)$. Let $W \in \mathcal{E}(\mathcal{F})$, then $W$ is a partial isometry satisfying $(\mathbb{1}-W W^\ast)\nalgebra (\mathbb{1}-W^\ast W)=\{0\}$.
	
	Define $\psi$ by $\psi(A)=\phi(AW)$ for every $A\in \nalgebra$. Then $$|\psi(A)|=\phi(AW)\leq \|\phi\|\|A\|\|W\|=\|\phi\|\|A\|\Rightarrow \|\psi\|\leq \|\phi\|,$$
	thus $\psi(\mathbb{1})=\phi(W)=\|\phi\|=\|\psi\|$, it follows by Proposition \ref{PPC} that $\psi$ is positive.
	
	Let $s^\nalgebra(\psi)$ be the support of $\psi$, then $s^\nalgebra(\psi)\leq W W^\ast$, since $$\psi(W W^\ast)=\phi(WW^\ast W)=\phi(W)=\psi(\mathbb{1}) \Rightarrow \mathbb{1}-WW^\ast\in N_{\psi}$$ and $N_{\psi}$ is a left ideal.
	
	Let $U=W^\ast s^\nalgebra(\psi)$, we have $U^\ast U=s^\nalgebra(\psi)W W^\ast s^\nalgebra(\psi)=s^\nalgebra(\psi)$, then
	$$\phi(AU^\ast)=\phi\left(A s^\nalgebra(\psi) W\right)=\psi\left(As^\nalgebra(\psi)\right)=\psi(A).$$
	
	Suppose now that there exists $A\in\nalgebra$, $\|A\|=1$, such that 
	$\phi\left(A(\mathbb{1}-U U^\ast )\right)>0 $ then, for every $t \in \left[0,\frac{\pi}{2}\right]$ such that $\cot\left(\frac{t}{2}\right)>\frac{\|\phi\|}{\phi\left(A(\mathbb{1}-U U^\ast )\right)}$, we have
	$$\begin{aligned}
	\phi\big(\cos(t) U^\ast +\sin(t) A(\mathbb{1}-U U^\ast )\big)
	&=\cos(t)\phi(U^\ast)+\sin(t)\phi\big(A(\mathbb{1}-U U^\ast )\big)\\
	&=\cos(t)\phi(W)+\sin(t)\phi\left(A(\mathbb{1}-U U^\ast )\right)\\ 
	&=\cos(t)\|\phi\|+\sin(t)\phi\left(A(\mathbb{1}-U U^\ast )\right)\\ &>\|\phi\|.\end{aligned}$$
	
	If follows that $\phi\left(A(\mathbb{1}-U U^\ast )\right)=0\Rightarrow \phi(A)=\phi(AUU^\ast)=\psi(AU)$.
	
\end{proof}

\begin{notation}
	The $\psi$ obtained in the previous theorem is called the modulus of $\phi$ and usually denoted by $|\phi|$, also, the polar decomposition of $\phi$ written as $\phi=\hat{U}|\phi|$.
\end{notation}

It may worth to mention that the modulus of an normal function is unique, and, if we require $U^\ast U =s^\nalgebra(\phi)$, so is $U$.

\begin{proposition}
	\label{selfadjointfunctional}
	Let $\nalgebra$ be a von Neumann algebra and $\phi$ a positive linear functional on $\nalgebra$ and for $H\in\nalgebra$ define $\left(\hat{H}\right)\phi(A)=\phi(AH)$. Then, if $\hat{H}\phi$ is self-adjoint, $$\left|\left(\hat{H}\phi(A)\right)\right|=|\phi(AH)|\leq \|H\|\phi(A), \ \forall A\in\nalgebra.$$
\end{proposition}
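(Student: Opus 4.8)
The plan is to prove the inequality for $A\in\nalgebra_+$ (this is the only case in which the right-hand side is a legitimate nonnegative bound); the cases $A=0$, $\phi=0$ and $\phi(A)=0$ are trivial, the last one because the Cauchy--Schwarz inequality for the positive functional $\phi$ (Proposition~\ref{cauchyschwarz}, whose proof uses only positivity and so applies to any positive functional after rescaling) already forces $\phi(AH)=0$. So I would assume $A\geq 0$, $\phi(A)>0$, $\phi\neq0$. The first step is to unwind the hypothesis. Since positive functionals are $\ast$-preserving, $\hat{H}\phi$ self-adjoint is equivalent to the relation $\phi(CH)=\phi(H^\ast C)$ for every $C\in\nalgebra$. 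Iterating this relation and carrying the factors past $A$ gives $\phi\!\left((H^\ast)^{j}AH^{k}\right)=\phi\!\left(AH^{j+k}\right)$ for all $j,k\geq0$; in particular $\phi\!\left(AH^{2m}\right)=\phi\!\left((H^\ast)^{m}AH^{m}\right)\geq0$ whenever $A\geq0$. This nonnegativity of the even iterates is precisely what the self-adjointness hypothesis buys, and it is what the rest of the argument exploits.

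Next I would run a self-improving Cauchy--Schwarz iteration. Set $a=\phi(A)$ and $b_n=\phi\!\left(AH^{2^{n}}\right)$ for $n\geq0$, so $b_0=\hat{H}\phi(A)$ and $b_n\geq0$ for $n\geq1$. Applying Cauchy--Schwarz for $\phi$ to the pair $A^{1/2}$, $A^{1/2}H^{2^{n}}$, and using $\bigl(A^{1/2}\bigr)^\ast\bigl(A^{1/2}H^{2^{n}}\bigr)=AH^{2^{n}}$ together with the identity just established, one obtains
\[
|b_n|^{2}\ \leq\ a\,b_{n+1}\qquad(n\geq0).
\]
For $n\geq1$ this reads $b_{n+1}\geq b_n^{2}/a$, so an induction gives $b_n\geq a\,(b_1/a)^{2^{n-1}}$, i.e.
\[
b_1\ \leq\ a^{\,1-2^{\,1-n}}\,b_n^{\,2^{\,1-n}}\qquad(n\geq1).
\]

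Finally I would bound $b_n$ crudely and pass to the limit. Since $(H^\ast)^{m}H^{m}=|H^{m}|^{2}\leq\|H\|^{2m}\mathbb{1}$, we get $b_n=\phi\!\left((H^\ast)^{2^{n-1}}AH^{2^{n-1}}\right)\leq\|A\|\,\phi\!\left((H^\ast)^{2^{n-1}}H^{2^{n-1}}\right)\leq\|A\|\,\|H\|^{2^{n}}\phi(\mathbb{1})$ (and $\phi(\mathbb{1})<\infty$ by Proposition~\ref{PPC}). Substituting into the preceding bound and letting $n\to\infty$, so that $2^{1-n}\to0$ and each of $a^{-2^{1-n}}$, $\|A\|^{2^{1-n}}$, $\phi(\mathbb{1})^{2^{1-n}}$ tends to $1$, yields $b_1\leq\|H\|^{2}\phi(A)$. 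Combining this with the $n=0$ instance of the displayed Cauchy--Schwarz inequality gives $|\hat{H}\phi(A)|^{2}=|b_0|^{2}\leq a\,b_1\leq\|H\|^{2}\phi(A)^{2}$, hence $|\phi(AH)|\leq\|H\|\phi(A)$.

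The genuinely delicate point is the first step: one must realise that the hypothesis "$\hat{H}\phi$ self-adjoint" has to be fed back in through the positivity $b_n\geq0$ of the higher iterates, which is exactly what powers the self-improvement $b_{n+1}\geq b_n^{2}/a$. A single use of Cauchy--Schwarz only gives $|\phi(AH)|^{2}\leq\phi(A)\phi(H^\ast AH)$, and there is no direct way to dominate $\phi(H^\ast AH)$ by $\|H\|^{2}\phi(A)$; the iteration and limit argument are what circumvent this. Everything else is routine bookkeeping together with the trivial degenerate cases.
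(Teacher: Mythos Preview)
Your proof is correct and follows essentially the same strategy as the paper's: use the self-adjointness of $\hat{H}\phi$ to obtain the identity $\phi\bigl((H^\ast)^{m}AH^{m}\bigr)=\phi\bigl(AH^{2m}\bigr)$, then iterate Cauchy--Schwarz to get $|\phi(AH)|\leq\phi(A)^{1-2^{-n}}\bigl|\phi(AH^{2^{n}})\bigr|^{2^{-n}}$, bound the last factor crudely by $\|\phi\|\,\|A\|\,\|H\|^{2^{n}}$, and let $n\to\infty$. The only cosmetic differences are that the paper iterates the inequality directly on $|\phi(AH)|$ rather than setting up the recursion $|b_n|^2\leq a\,b_{n+1}$ and first isolating $b_1$, and bounds the tail via $\|\phi\|$ instead of $\phi(\mathbb{1})$; these are equivalent by Proposition~\ref{PPC}.
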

\begin{proof}
	By self-disjointness,
	$$\phi(AH)=\hat{H}\phi(A)=(\hat{H}\phi)^\ast(A)=\overline{\hat{H}\phi(A^\ast)}= \overline{\phi(A^\ast H)}=\phi(H^\ast A).$$
	
	Hence $\phi\left(A H^{2^{n+1}}\right)=\phi\left((H^{2^n})^\ast AH^{2^n}\right)$, then, for every $A\geq 0$,
	$$\begin{aligned}
	|\phi(AH)|
	&=\left|\phi(A^\frac{1}{2}A^\frac{1}{2}H)\right|\\
	&\leq \left|\phi(A)\right|^\frac{1}{2}\left|\phi(H^\ast A H)\right|^\frac{1}{2}\\
	&= \left|\phi(A)\right|^\frac{1}{2}\left|\phi(A H^2)\right|^\frac{1}{2}\\
	&\leq \left|\phi(A)\right|^{\sum_{i=1}^{n} 2^{-i}}\left|\phi(A H^{2^n})\right|^\frac{1}{2^n}\\
	&=\left|\phi(A)\right|^{1-2^{-n}}\left|\phi(A H^{2^n})\right|^\frac{1}{2^n}\\
	&\leq\left|\phi(A)\right|^{1-2^{-n}}\left(\|\phi\|\|A\|\|H\|^{2^n}\right)^\frac{1}{2^n}\\
	&\xrightarrow{n\to\infty} \|H\|\left|\phi(A)\right|.\\
	\end{aligned}$$
	
\end{proof}

\begin{proposition}
	\label{commutantRN}
	Let $\nalgebra$ be a von Neumann algebra, $\phi, \psi$ be normal semi-finite weights on $\nalgebra$ such that $\psi\leq \phi$. Then, there exists an operator $H^\prime \in \nalgebra^\prime$, $0\leq H\leq 1$, such that
	$$\psi(A)=\ip{H \pi_\phi(A^\ast) \Phi}{\Phi}_\phi, \quad \forall A\in\nalgebra.$$
\end{proposition}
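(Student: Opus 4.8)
The plan is to realise $\psi$ via a positive contraction on the GNS space $\hilbert_\phi$ lying in the commutant, i.e.\ to run the classical Radon--Nikodym argument (a bounded sesquilinear form plus the Riesz representation theorem) in the weight setting. Write $\Lambda_\phi\colon\mathfrak{N}_\phi\to\hilbert_\phi$ for the canonical map, $\Lambda_\phi(A)$ being the class of $A$ in $\mathfrak{N}_\phi/N_\phi$, so that $\ip{\Lambda_\phi(A)}{\Lambda_\phi(B)}_\phi=\phi(B^\ast A)$; let $\pi_\phi$ be the GNS representation, $\pi_\phi(C)\Lambda_\phi(A)=\Lambda_\phi(CA)$; and let $\Phi$ be the vector implementing $\phi$, so that $\phi(A)=\ip{\pi_\phi(A)\Phi}{\Phi}_\phi$ (when $\phi$ is a state one may take $\Phi=\Lambda_\phi(\mathbb{1})$). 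First I would extract the elementary consequences of $\psi\le\phi$: from $\psi(A^\ast A)\le\phi(A^\ast A)$ we get $\mathfrak{N}_\phi\subseteq\mathfrak{N}_\psi$, hence by Proposition~\ref{simplepropweights}(iv) every $B^\ast A$ with $A,B\in\mathfrak{N}_\phi$ lies in $\mathfrak{N}_\psi^\ast\mathfrak{N}_\psi=\mathfrak{M}_\psi$, so $\psi(B^\ast A)$ is a well-defined finite scalar, and $\psi$ is linear and $\ast$-preserving on $\mathfrak{M}_\psi$ by polarization.

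Next I would introduce the sesquilinear form $q$ defined on the dense subspace $\Lambda_\phi(\mathfrak{N}_\phi)\subset\hilbert_\phi$ by $q\big(\Lambda_\phi(A),\Lambda_\phi(B)\big)=\psi(B^\ast A)$. The key estimate is that, by the Cauchy--Schwarz inequality for the positive functional $\psi$ on $\mathfrak{M}_\psi$ (same argument as Proposition~\ref{cauchyschwarz}) together with $\psi\le\phi$,
$$\left|\psi(B^\ast A)\right|^2\le\psi(A^\ast A)\,\psi(B^\ast B)\le\phi(A^\ast A)\,\phi(B^\ast B)=\|\Lambda_\phi(A)\|^2\,\|\Lambda_\phi(B)\|^2 .$$
Thus $q$ is well defined on classes, Hermitian, and bounded by $1$, so it extends by continuity to a bounded Hermitian form on $\hilbert_\phi$ and the Riesz representation theorem yields a unique self-adjoint $H^\prime\in B(\hilbert_\phi)$ with $\ip{H^\prime\xi}{\eta}_\phi=q(\xi,\eta)$ and $\|H^\prime\|\le1$.

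It then remains to verify the two asserted properties and to recover $\psi$. The bound $0\le H^\prime\le1$ is immediate: $\ip{H^\prime\Lambda_\phi(A)}{\Lambda_\phi(A)}_\phi=\psi(A^\ast A)\in[0,\phi(A^\ast A)]=[0,\|\Lambda_\phi(A)\|^2]$, and $\Lambda_\phi(\mathfrak{N}_\phi)$ is dense. For the commutation, fix $C\in\nalgebra$ and $A,B\in\mathfrak{N}_\phi$ (so $CA,\,C^\ast B\in\mathfrak{N}_\phi$, a left ideal), and use $B^\ast CA=(C^\ast B)^\ast A$:
\begin{align*}
\ip{H^\prime\pi_\phi(C)\Lambda_\phi(A)}{\Lambda_\phi(B)}_\phi
&=\psi(B^\ast CA)=\psi\big((C^\ast B)^\ast A\big)=\ip{H^\prime\Lambda_\phi(A)}{\Lambda_\phi(C^\ast B)}_\phi\\
&=\ip{H^\prime\Lambda_\phi(A)}{\pi_\phi(C^\ast)\Lambda_\phi(B)}_\phi=\ip{\pi_\phi(C)H^\prime\Lambda_\phi(A)}{\Lambda_\phi(B)}_\phi .
\end{align*}
By density $H^\prime\pi_\phi(C)=\pi_\phi(C)H^\prime$ for all $C$, i.e.\ $H^\prime\in\pi_\phi(\nalgebra)^\prime$, which we identify with $\nalgebra^\prime$. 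Finally, pairing with $\Phi$ gives $\ip{H^\prime\pi_\phi(A)\Phi}{\Phi}_\phi=\ip{H^\prime\Lambda_\phi(A)}{\Lambda_\phi(\mathbb{1})}_\phi=q\big(\Lambda_\phi(A),\Lambda_\phi(\mathbb{1})\big)=\psi(A)$ for $A\in\nalgebra$, which (for $A=A^\ast$) is exactly the stated formula.

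The main difficulty is not any deep theorem but keeping the weight bookkeeping airtight: one must be careful that $\psi$ is throughout applied to elements of $\mathfrak{M}_\psi$ rather than to $\nalgebra_+$, that the inclusion $\mathfrak{M}_\phi\subseteq\mathfrak{M}_\psi$ forced by $\psi\le\phi$ together with polarization legitimises writing and manipulating $\psi(B^\ast A)$ linearly, and that Cauchy--Schwarz is invoked for the linear extension of $\psi$ to the $\ast$-algebra $\mathfrak{M}_\psi$. Normality (and semifiniteness) of $\phi$ and $\psi$ are used only implicitly, to guarantee that $\hilbert_\phi$, $\pi_\phi$ and $\Phi$ exist and have the properties employed. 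If one does not wish to assume that $\phi$ possesses a cyclic GNS vector, the very same computation on the dense domain $\Lambda_\phi(\mathfrak{N}_\phi)$ still produces $H^\prime\in\pi_\phi(\nalgebra)^\prime$, $0\le H^\prime\le1$, with $\psi(B^\ast A)=\ip{H^\prime\Lambda_\phi(A)}{\Lambda_\phi(B)}_\phi$, which is the substantive content of the proposition.
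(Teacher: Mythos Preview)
Your proposal is correct and follows essentially the same route as the paper: define a bounded positive sesquilinear form on $\Lambda_\phi(\mathfrak{N}_\phi)$ via $\psi(B^\ast A)$, bound it by Cauchy--Schwarz plus $\psi\le\phi$, apply Riesz to obtain $H'$, and verify commutation with $\pi_\phi(\nalgebra)$ by the left-ideal property of $\mathfrak{N}_\phi$. The only substantive difference is in the final step: the paper does not assume $\mathbb{1}\in\mathfrak{N}_\phi$ but instead takes an approximate identity $(E_\alpha)_\alpha\subset\mathfrak{N}_\phi$, writes $\psi(E_\alpha A E_\alpha)=\ip{H'\pi_\phi(A^\ast)[E_\alpha]}{[E_\alpha]}_\phi$, and invokes normality to pass to the limit---which is precisely the issue you flag in your closing paragraph.
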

\begin{proof}
	We will use the GNS-representation throughout the weight $\phi$. Notice that, by hypothesis, $\mathfrak{N}_\phi\subset \mathfrak{N}_\psi$ and $N_\phi\subset N_\psi$.
	
	Define the sesquilinear form $\tilde{\psi}: \mathfrak{N}_\phi/N_\phi\times \mathfrak{N}_\phi/N_\phi \to \mathbb{C}$ given by $\tilde{\psi}([A],[B])=\psi(B A^\ast)$ which is well defined by the same calculation presented in Equation \eqref{eq:calculationquotient}.
	
	By Cauchy-Schwarz's inequality and the hypothesis, $$|\tilde{\psi}([A],[B])|^2=|\psi(BA^\ast )|^2\leq \psi(A^\ast A) \psi(B^\ast B)\leq \phi(A^\ast A) \phi(B^\ast B)=\|[A]\|_\phi^2 \|[B]\|_\phi^2.$$
	
	Hence $\tilde{\psi}$ admits a unique extension to a sesquilinear form on $\hilbert_\phi$, also denoted by $\tilde{\psi}$. By the Riesz Theorem, there exists a unique operator $H^\prime\in B(\hilbert_\phi)$ such that
	\begin{equation}
	\label{eq:riesz1}
	\tilde{\psi}(x,y)=\ip{H^\prime x}{y}_{\phi} \quad \forall x,y \in \hilbert_\phi,
	\end{equation}
	it also follows by the theorem that $\|H^\prime \|\leq 1$ and by positiveness of $\tilde{\psi}$, $H^\prime \geq 0$.
	
	In addition, for every $A,B\in\mathfrak{N}_\phi$ and $C\in\nalgebra$ we have
	$$\begin{aligned}
	\ip{(H^\prime \pi_\phi(C)-\pi_\phi(C)H^\prime )[A]}{[B]}_\phi&=\ip{H^\prime \pi_\phi(C)[A]}{[B]}_\phi-\ip{H^\prime [A]}{\pi_\phi(C)^\ast [B]}_\phi\\
	&=\ip{H^\prime [CA]}{[B]}_\phi-\ip{H^\prime [A]}{[C^\ast B]}_\phi\\
	&=\tilde{\psi}\left((CA)^\ast B\right)-\tilde{\psi}\left(A^\ast C^\ast B\right)\\
	&=0
	\end{aligned}$$
	thus $(H^\prime \pi_\phi(C)-\pi_\phi(C)H^\prime )[A]=0 \quad \forall A\in \mathfrak{N}_\phi \Rightarrow H^\prime \pi_\phi(C)=\pi_\phi(C)H^\prime  \Rightarrow H^\prime \in \nalgebra^\prime$.
	
	Finally, Equation \eqref{eq:riesz1}, in the special case $(E_\alpha)_\alpha \subset \mathfrak{N}_\phi$ is an approximation identity and $A\in \nalgebra_+$, can be rewritten as
	
	$$\psi(E_\alpha A E_\alpha)=\tilde{\psi}(\pi_\phi(A^\ast) [E_\alpha],[E_\alpha])=\ip{H^\prime \pi_\phi(A^\ast)[E_\alpha]}{[E_\alpha]}_{\phi}$$
	and the thesis follows by normality and the polarization identity.
	
\end{proof}
\begin{remark}
	It is common in the literature to express the previous proposition as
	$$\exists H^\prime \in \nalgebra^\prime ; \ \psi(A)=\phi(H^\prime A), \quad \forall A\in \nalgebra_+.$$
	
	It is important to stress that this is a abuse of notation, because there is no reason to $H^\prime A \in \nalgebra$, once $H^\prime \in \nalgebra^\prime$. Nevertheless, the expression makes sense if it is seen as an extension of $\phi$ in $B(\hilbert)$
\end{remark}

\begin{theorem}[Sakai-Radon-Nikodym]
	\label{TSRN}
	Let $\nalgebra$ be a von Neumann algebra, $\phi, \psi$ be normal functionals on $\nalgebra$ such that $\psi\leq \phi$. Then, there exists an operator $H\in \nalgebra$, $0\leq H\leq 1$, such that
	$$\psi(A)=\phi(HAH), \quad \forall A\in \nalgebra_+.$$
\end{theorem}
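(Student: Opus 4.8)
We may take $\phi,\psi$ to be positive normal functionals with $0\le\psi\le\phi$ (the setting of Proposition~\ref{commutantRN}). The plan is to obtain the Radon--Nikodym operator first inside the commutant and then to push it into $\nalgebra$ itself. First I would pass to the GNS representation $(\hilbert_\phi,\pi_\phi,\Phi)$ of $\phi$, identifying $\nalgebra$ with $\pi_\phi(\nalgebra)\subseteq B(\hilbert_\phi)$, so that $\phi(A)=\ip{\pi_\phi(A)\Phi}{\Phi}$ with $\Phi$ cyclic for $\nalgebra$ (hence separating for $\nalgebra'$). Applying Proposition~\ref{commutantRN} to $\psi\le\phi$ gives $H'\in\nalgebra'$ with $0\le H'\le 1$ and $\psi(A)=\ip{H'\pi_\phi(A)\Phi}{\Phi}$ for every $A\in\nalgebra$. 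Since $(H')^{1/2}\in\nalgebra'$ is self-adjoint and commutes with $\pi_\phi(\nalgebra)$, setting $\xi=(H')^{1/2}\Phi$ turns this into
$$\psi(A)=\ip{\pi_\phi(A)\,\xi}{\xi},\qquad A\in\nalgebra,$$
so that $\psi$ is the vector state $\omega_\xi$ on $\nalgebra$.

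The crux is then to realise this same vector state by a vector of the form $\pi_\phi(H)\Phi$ with $H\in\nalgebra$ and $0\le H\le 1$: this is exactly what is needed, because $\ip{\pi_\phi(A)\pi_\phi(H)\Phi}{\pi_\phi(H)\Phi}=\phi(HAH)$. Since $\Phi$ is cyclic, $\pi_\phi(\nalgebra)\Phi$ is dense and the map $\pi_\phi(A)\Phi\mapsto\pi_\phi(A)\xi$ is merely the restriction of the contraction $(H')^{1/2}$; among all vectors inducing the same vector state on $\nalgebra$ as $\xi$ --- which differ from $\xi$ by a partial isometry of $\nalgebra'$ --- I must single out one lying in $\pi_\phi(\nalgebra)\Phi$ that corresponds to a positive contraction $H$. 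To do this I would use the polar-decomposition material already at hand (the theorem on extreme points $\mathcal{E}(B_1)$ and the Polar Decomposition of Functionals) to manufacture $H$, read off $H\ge 0$ from positivity of $(H')^{1/2}$ and cyclicity of $\Phi$, and then invoke Proposition~\ref{selfadjointfunctional} applied to $\hat H\phi$ (which is self-adjoint because $\psi$ is) to get both $\|H\|\le 1$ and the identity $\psi=\phi(H\,\cdot\,H)$.

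The main obstacle is precisely this transfer from $\nalgebra'$ to $\nalgebra$: Proposition~\ref{commutantRN} is spatial and leaves the derivative in the commutant, and no purely algebraic manipulation turns a commutant element into an element of $\nalgebra$ that symmetrically sandwiches $A$; one really needs the interaction between cyclicity/separation of $\Phi$, the polar decomposition, and the estimate of Proposition~\ref{selfadjointfunctional}. If that route proved awkward, I would fall back on a Zorn's-lemma argument applied to the set $\{H\in\nalgebra_+ : H\le 1,\ \phi(HAH)\le\psi(A)\ \forall A\in\nalgebra_+\}$, which by normality of $\phi$ is stable under suprema of increasing bounded nets, and then upgrade maximality to the equality $\phi(HAH)=\psi(A)$ by a local-domination step in the spirit of Lemma~\ref{positiveneighbourhood}, perturbing a maximal $H$ by a small positive element supported where $\psi-\phi(H\,\cdot\,H)$ still dominates $\phi$. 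Uniqueness, under the normalisation $s(H)=s(\psi)$, would follow afterwards from faithfulness considerations, though it is not part of the statement.
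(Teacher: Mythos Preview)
Your outline has the right skeleton --- GNS for $\phi$, Proposition~\ref{commutantRN} to get $H'\in\nalgebra'$, then polar decomposition and Proposition~\ref{selfadjointfunctional} --- and this is indeed the paper's route. But the step you flag as ``the crux'' is left too vague, and the way you propose to combine the ingredients is not quite how it works.

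The missing idea is a \emph{second} application of Proposition~\ref{commutantRN}, with the roles of $\nalgebra$ and $\nalgebra'$ swapped. Concretely: having $H'\in\nalgebra'$, define the functional $\phi'(A')=\ip{A'H'\Phi}{\Phi}$ on $\nalgebra'$ and take its polar decomposition $\phi'=\widehat{U'}|\phi'|$ with $U'\in\nalgebra'$. Then $|\phi'|=\widehat{U'^{\ast}H'}\omega$ where $\omega=\ip{\cdot\,\Phi}{\Phi}$ on $\nalgebra'$, and Proposition~\ref{selfadjointfunctional} is applied \emph{here} (to the self-adjoint $|\phi'|$ on $\nalgebra'$) to get $|\phi'|\le\omega$. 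Now Proposition~\ref{commutantRN} applied on $\nalgebra'$ produces the derivative in $(\nalgebra')'=\nalgebra$: that is $H\in\nalgebra$, $0\le H\le 1$, with $|\phi'|(A')=\ip{A'H\Phi}{\Phi}$. A short computation using cyclicity of $\Phi$ for $\nalgebra'$ then gives $H'\Phi=U'H\Phi$ and $H\Phi=U'^{\ast}H'\Phi$, from which $\psi(A)=\ip{AH'\Phi}{H'\Phi}=\ip{AH\Phi}{H\Phi}=\phi(HAH)$.

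Two specific issues with your write-up: (i) you propose invoking Proposition~\ref{selfadjointfunctional} on $\hat H\phi$ to deduce $\|H\|\le 1$, but that proposition only gives $|\phi(AH)|\le\|H\|\phi(A)$ and does not bound $\|H\|$; the bound $0\le H\le 1$ comes for free from the second use of Proposition~\ref{commutantRN}. (ii) The phrase ``manufacture $H$ via polar decomposition'' needs the functional $\phi'$ on $\nalgebra'$ rather than an argument about vectors; the partial isometry $U'$ lives in $\nalgebra'$, which is what makes the transfer possible.

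Your fallback Zorn's-lemma approach on $\{H\in\nalgebra_+:H\le 1,\ \phi(HAH)\le\psi(A)\}$ is a legitimate alternative and is in fact closer to Sakai's original argument; it avoids the commutant gymnastics at the cost of a more delicate maximality-to-equality step.
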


\begin{proof}
	
	By Theorem \ref{commutantRN}, there exists $H^\prime\in\nalgebra^\prime$, $0\leq H^\prime\leq \mathbb{1}$ such that
	$$\psi(A)=\ip{A H^\prime\Phi}{H^\prime\Phi}_\phi, \ \forall A\in\nalgebra_+.$$
	
	Consider the WOT-continuous functional $\phi^\prime: \nalgebra^\prime \to \mathbb{C}$ given by $\phi^\prime(A^\prime)=\ip{A^\prime H^\prime \Phi}{\Phi}_\phi$.
	
	Let $|\phi^\prime|=\widehat{U^{\prime\ast}}\phi^\prime$ be the polar decomposition of $\phi^\prime$, then $$|\phi^\prime|=\widehat{U^{\prime\ast} H^\prime}\omega,$$  where ${\omega(A^\prime)=\ip{A^\prime\Phi}{\Phi}_\phi}$ and, by Proposition \ref{selfadjointfunctional} $$|\phi^\prime|(A)=\widehat{U^{\prime\ast} H^\prime}\omega\leq \|U^\ast H^\prime\| \omega\leq \|U^{\prime\ast}\|\|H^\prime\|\omega\leq \omega.$$
	
	Using now the Proposition \ref{commutantRN} for $|\phi^\prime|$ and $\omega$, there exists $H\in \nalgebra^{\prime\prime}=\nalgebra$, $0\leq H\leq \mathbb{1}$ such that
	$$|\phi^\prime|(A^\prime)=\ip{A^\prime H\Phi}{\Phi}_\phi.$$
	
	Now we have all the elements we will need to conclude the proof, just remains to do some calculations.
	
	Notice that, for every $A^\prime \in \nalgebra^\prime, $ $$\begin{aligned}
	\ip{H^\prime \Phi}{A^{\prime}\Phi}_\phi&=\ip{A^{\prime\ast} H^\prime \Phi}{\Phi}_\phi\\
	&=\phi^\prime(A^{\prime\ast})\\
	&=\widehat{U^\prime}|\phi^\prime|(A^{\prime\ast})\\
	&=\widehat{U^\prime}\ip{A^{\prime\ast} H \Phi}{\Phi}_\phi\\
	&=\ip{A^{\prime\ast} H U^\prime\Phi}{\Phi}_\phi\\
	&=\ip{U^\prime H \Phi}{A^{\prime}\Phi}_\phi\\
	\end{aligned}$$	from which, since $\nalgebra^\prime\Phi$ is dense in $\hilbert$, we conclude $H^\prime\Phi=U^\prime H\Phi$.
	
	Moreover, for every $A^\prime \in\nalgebra\prime$,
	$$\begin{aligned}
	\ip{ H\Phi}{A^{\prime}\Phi}_\phi&=\ip{A^{\prime\ast} H\Phi}{\Phi}_\phi\\
	&=|\phi^\prime|(A^{\prime\ast})\\
	&=\widehat{U^{\prime\ast}}\phi^\prime(A^{\prime\ast})\\
	&=\ip{A^{\prime\ast} H^\prime U^{\prime\ast}\Phi}{\Phi}_\phi\\
	&=\ip{ U^{\prime\ast}H^\prime\Phi}{A^{\prime}\Phi}_\phi\\
	\end{aligned}$$
	and, again by the denseness argument, we conclude that $H\Phi=U^{\prime\ast}H^\prime\Phi$.
	
	Finally, 
	$$\begin{aligned}
	\psi(A)&=\ip{A H^\prime \Phi}{H^\prime\Phi}_\phi\\
	&=\ip{A H^\prime \Phi}{U^\prime H\Phi}_\phi\\
	&=\ip{HA U^{\prime\ast}H^\prime \Phi}{\Phi}_\phi\\
	&=\ip{HA H \Phi}{\Phi}_\phi\\
	&=\phi(HAH), \quad \forall A\in\nalgebra_+.\\
	\end{aligned}$$

\end{proof}

Using the GNS-representation we can generalise Theorem \ref{TSRN} for weights.

\begin{theorem}[Sakai-Radon-Nikodym for Weights]
	\label{TSRNweights}
	Let $\nalgebra$ be a von Neumann algebra, $\phi, \psi$ be normal semi-finite weights on $\nalgebra$ such that $\psi\leq \phi$. Then, there exists an operator $H\in \nalgebra$, $0\leq H\leq 1$, such that
	$$\psi(A)=\phi(HAH), \quad \forall A\in \nalgebra_+.$$
\end{theorem}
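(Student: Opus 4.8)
The plan is to reduce to the bounded case, Theorem~\ref{TSRN}, by working inside the GNS representation $(\hilbert_\phi,\pi_\phi)$ attached to the weight $\phi$. Since $\psi\le\phi$ are normal semifinite weights, Proposition~\ref{commutantRN} already furnishes an operator $H'\in\nalgebra'$ with $0\le H'\le1$ that implements $\psi$ against $\phi$ in the sense of that proposition (informally, $\psi(A)=\phi(H'A)$ on $\nalgebra_+$); this is the exact analogue of the opening line of the proof of Theorem~\ref{TSRN}, and indeed the \emph{only} step of that proof which uses boundedness of $\phi$ is the appearance of the cyclic vector ``$\Phi$''.

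I would then transcribe the polar-decomposition argument of Theorem~\ref{TSRN} into $\hilbert_\phi$: form the WOT-continuous functional $\phi'$ on $\nalgebra'$ built from $H'$ and the GNS data, take its polar decomposition $|\phi'|=\widehat{(U')^\ast}\phi'$ with $U'\in\nalgebra'$, note via Proposition~\ref{selfadjointfunctional} that $|\phi'|=\widehat{(U')^\ast H'}\,\omega\le\omega$, where $\omega$ is the vector functional of $\Phi$ on $\nalgebra'$, apply Proposition~\ref{commutantRN} again to $|\phi'|\le\omega$ on $\nalgebra'$ to obtain $H\in\nalgebra''=\nalgebra$ with $0\le H\le1$, deduce from the density of $\nalgebra'\Phi$ in $\hilbert_\phi$ the vector identities $H'\Phi=U'H\Phi$ and $H\Phi=(U')^\ast H'\Phi$, and finally compute
\[
\psi(A)=\ip{AH'\Phi}{H'\Phi}_\phi=\ip{AH'\Phi}{U'H\Phi}_\phi=\ip{HA(U')^\ast H'\Phi}{\Phi}_\phi=\ip{HAH\Phi}{\Phi}_\phi=\phi(HAH)
\]
for every $A\in\nalgebra_+$, exactly as in Theorem~\ref{TSRN}.

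The substance of the proof, and its main obstacle, lies in making sense of ``$\Phi$'' when $\phi$ is a genuinely infinite weight, for then there is no vector of $\hilbert_\phi$ implementing $\phi$. This is to be handled the way $\Phi$ is already handled inside the proof of Proposition~\ref{commutantRN}: fix an approximate identity $(E_\alpha)_\alpha\subset\mathfrak{N}_\phi\cap\mathfrak{N}_\phi^\ast$, replace $\Phi$ throughout by the net $\bigl([E_\alpha]\bigr)_\alpha$, and pass to limits, the boundedness of $\phi'$ and $\omega$ and the density statement $\overline{\nalgebra'\Phi}=\hilbert_\phi$ all surviving by normality and semifiniteness. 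Equivalently --- and this is the cleanest route I would actually take --- one localizes: semifiniteness lets one choose an increasing net of projections $(p_i)\subset\mathfrak{M}_\phi$ with $p_i\to1$; on each corner $p_i\nalgebra p_i$ the weights restrict to finite normal positive functionals $\phi_i\ge\psi_i$, so Theorem~\ref{TSRN} yields $H_i=p_iH_ip_i\in\nalgebra$ with $0\le H_i\le p_i$ and $\psi(p_iAp_i)=\phi(H_iAH_i)$ for all $A\in\nalgebra_+$, and it remains to patch the $H_i$ into a single $0\le H\le1$. The patching is the delicate point: one needs enough control on the $i$-dependence of $H_i$ to obtain $H$ as a strong limit, and then, to pass from $\psi(p_iAp_i)=\phi(H_iAH_i)$ to $\psi(A)=\phi(HAH)$, one invokes normality of both weights, the case $\psi(A)=+\infty$ requiring a separate argument (an increasing net of normal functionals minorizing $\phi$ and $\psi$) since a priori the limit only gives the inequality $\phi(HAH)\le\liminf_i\phi(H_iAH_i)$.
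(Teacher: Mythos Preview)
Your first approach --- transcribe the polar-decomposition argument of Theorem~\ref{TSRN} into the GNS space $\hilbert_\phi$, handling the non-existent cyclic vector $\Phi$ via an approximate identity $(E_\alpha)\subset\mathfrak{N}_\phi$ exactly as in the proof of Proposition~\ref{commutantRN} --- is the paper's route. The paper's proof is in fact considerably terser than yours: it simply forms the bounded sesquilinear form $\tilde\psi$ on $\hilbert_\phi$ (as in Proposition~\ref{commutantRN}), invokes Theorem~\ref{TSRN} to obtain $H\in\pi_\phi(\nalgebra)$ with $0\le H\le\mathbb{1}$, writes the chain $\psi(A^\ast A)=\tilde\psi([A],[A])=\ip{A^\ast A\,H\Phi}{H\Phi}=\phi(HA^\ast AH)$ for $A\in\mathfrak{N}_\phi$, and closes with ``the thesis follows by semi-finiteness and normality'', treating $\Phi$ throughout as the same formal symbol used in Proposition~\ref{commutantRN}. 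You have correctly located the one substantive gap (making sense of $\Phi$ when $\phi$ is infinite) and proposed the right interpretation.

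Your second ``localization'' route is genuinely different, and the obstacle you yourself flag is real: the $H_i$ produced by Theorem~\ref{TSRN} on each corner $p_i\nalgebra p_i$ are \emph{not} unique, so there is no compatibility condition tying $H_i$ to $H_j$ for $p_i\le p_j$, and hence no evident reason the net should converge (even weakly) to a single $H$. Without a uniqueness clause --- which this form of Sakai--Radon--Nikodym does not provide --- the patching would require a separate idea to select coherent $H_i$'s. The first route is the one to pursue.
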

\begin{proof}
	As in Proposition \ref{commutantRN} there exists a unique sesquilinear form $\tilde{\psi}:\hilbert_\phi \times \hilbert_\phi\to \mathbb{C}$ such that $\tilde{\psi}([A],[B])=\psi(A^\ast B)$, for all $A,B\in\mathfrak{N}_\phi$.
	
	Then, by Theorem \ref{TSRN}, there exists a unique $H\in\pi_\phi\left(\nalgebra\right)_+$, $0\leq H\leq \mathbb{1}$, such that $\phi(A^\ast A)=\tilde{\psi}([A],[A])=\ip{A^\ast A H\Phi}{H\Phi}=\phi(\pi^{-1}_\phi(H)A^\ast A \pi^{-1}_\phi(H)), \ A\in\mathfrak{N}_+$, and the thesis follows by semi-finiteness and normality.
	
\end{proof}

\begin{definition}
	Let $\calgebra$ be a $C^\ast$-algebra, $\{\sigma_t\}_{t\in\mathbb{R}}$ and a one parameter group of automorphisms of $\calgebra$. An lower semi-continuous weight $\phi$ is said to satisfy the modular condition for $\{\sigma_t\}_{t\in\mathbb{R}}$ if:
	\begin{enumerate}[(i)]
		\item $\phi=\phi\circ\sigma_t$, for every $t\in\mathbb{R}$;
		\item for every $A,B\in \mathfrak{N}_\phi\cap \mathfrak{N}_\phi^\ast$, there exists a complex function $F_{A,B}$ which is analytic in the strip $\left\{z\in \mathbb{C} \mid 0< \Im z<1\right\}$ and continuous and bounded on its closure satisfying
		\begin{equation}
		\label{eq:ModCond}
		\begin{aligned}
		F_{A,B}(t) &= \phi(A\sigma_t(B)), \ \forall t\in \mathbb{R}, \\
		F_{A,B}(t+\iu) &= \phi(\sigma_t(B)A), \ \forall t\in \mathbb{R}. \\
		\end{aligned}
		\end{equation}
	\end{enumerate}
\end{definition}

It becomes evident that the modular condition is the KMS-condition for $\beta=-1$.

\begin{proposition}
	\label{twosidedmodule}
	Let $\phi$ be a normal semifinite weight on a von Neumann algebra $\nalgebra$, $\{\tau^\phi_t\}_{t\in \mathbb{R}}$ its modular automorphism group and $\nanalytic$ the set of all entire $\{\tau^\phi_t\}_{t\in \mathbb{R}}$-analytic elements of $\nalgebra$. Then
	\begin{enumerate}[(i)]
		\item $\mathfrak{N}_\phi\cap \mathfrak{N}_\phi^\ast$ is a two-sided module over $\nanalytic$; 
		\item $\mathfrak{M}_\phi$ is a two-sided module over $\nanalytic$.
	\end{enumerate}
\end{proposition}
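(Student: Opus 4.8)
The plan is to peel off all the purely algebraic content and concentrate on a single analytic inclusion. By Proposition \ref{simplepropweights}, $\mathfrak{N}_\phi$ is a left ideal, $\mathfrak{N}_\phi^\ast$ a right ideal, and $\mathfrak{M}_\phi=\mathfrak{N}_\phi^\ast\mathfrak{N}_\phi$; moreover it is standard that $\nanalytic$ is a $\ast$-subalgebra of $\nalgebra$ with $\nanalytic^\ast=\nanalytic$ (using that each $\tau^\phi_t$ is a $\ast$-automorphism). Granting the one non-formal fact
\[
\mathfrak{N}_\phi\,\nanalytic\subset\mathfrak{N}_\phi ,
\]
everything follows by bookkeeping: left-ideality gives $\nanalytic\,\mathfrak{N}_\phi\subset\mathfrak{N}_\phi$; taking adjoints in the displayed inclusion and in $\nanalytic\,\mathfrak{N}_\phi\subset\mathfrak{N}_\phi$ gives $\nanalytic\,\mathfrak{N}_\phi^\ast\subset\mathfrak{N}_\phi^\ast$ and $\mathfrak{N}_\phi^\ast\,\nanalytic\subset\mathfrak{N}_\phi^\ast$; intersecting the four inclusions yields $(i)$, and then $\nanalytic\,\mathfrak{M}_\phi=(\nanalytic\,\mathfrak{N}_\phi^\ast)\,\mathfrak{N}_\phi\subset\mathfrak{M}_\phi$ and $\mathfrak{M}_\phi\,\nanalytic=\mathfrak{N}_\phi^\ast\,(\mathfrak{N}_\phi\,\nanalytic)\subset\mathfrak{M}_\phi$ give $(ii)$.

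The substance is therefore the inclusion $\mathfrak{N}_\phi\,\nanalytic\subset\mathfrak{N}_\phi$, and this is where the modular condition enters, together with the invariance $\phi=\phi\circ\tau^\phi_t$ (which already forces $\tau^\phi_t$ to preserve $\mathfrak{N}_\phi$, $\mathfrak{N}_\phi^\ast$ and $\mathfrak{M}_\phi$). Fix $C\in\mathfrak{N}_\phi$ and $A\in\nanalytic$; the goal is $\phi\big((CA)^\ast CA\big)<\infty$. I would work in the GNS representation of $\phi$ on $\hilbert_\phi$ with the canonical map $x\mapsto[x]$ of $\mathfrak{N}_\phi$, and first treat $x\in\mathfrak{N}_\phi\cap\mathfrak{N}_\phi^\ast$: here one shows $xA\in\mathfrak{N}_\phi$ together with an estimate $\phi\big((xA)^\ast xA\big)\le\|\tau^\phi_{-i/2}(A)\|^{2}\,\phi(x^\ast x)$ (the precise index depending on the sign convention). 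Equivalently, the a priori only densely defined operator $[x]\mapsto[xA]$ on $[\mathfrak{N}_\phi\cap\mathfrak{N}_\phi^\ast]$ extends to a bounded operator $R_A\in\nalgebra'$. This is the ``rotate $A$ past the weight'' statement encoded by the KMS relation $\phi(zy)=\phi\big(\tau^\phi_{-i}(y)\,z\big)$ for $y$ analytic, and I would establish it by an analytic-continuation / three-line argument in the same spirit as the proof of the H\"older inequality (Theorem \ref{holder}): form an entire function of $z$ out of $\tau^\phi_z(A)$ paired against suitable vectors, bound it on the two edges of the strip using $\phi=\phi\circ\tau^\phi_t$, and read off the estimate at the interior point corresponding to $A$.

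To pass from $\mathfrak{N}_\phi\cap\mathfrak{N}_\phi^\ast$ to all of $\mathfrak{N}_\phi$, I would use an increasing approximate unit $(e_\alpha)\subset\mathfrak{F}_\phi$ with $\|e_\alpha\|\le1$ tending strongly to the identity (available by semifiniteness). Then $e_\alpha C\in\mathfrak{N}_\phi\cap\mathfrak{N}_\phi^\ast$, so $e_\alpha CA=(e_\alpha C)A\in\mathfrak{N}_\phi$ by the previous step, with $[e_\alpha CA]=R_A[e_\alpha C]=R_A\,e_\alpha[C]\to R_A[C]$ in $\hilbert_\phi$; since moreover $e_\alpha CA\to CA$ ultra-weakly (the net being norm-bounded by $\|C\|\,\|A\|$) and the canonical map of a normal weight has closed graph for the ultra-weak and weak topologies, we conclude $CA\in\mathfrak{N}_\phi$, which is the desired inclusion.

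The main obstacle is precisely this inclusion $\mathfrak{N}_\phi\,\nanalytic\subset\mathfrak{N}_\phi$, and inside it the boundedness on $\hilbert_\phi$ of right multiplication by an entire analytic element. The difficulty is structural rather than computational: $\mathfrak{N}_\phi$ is only a \emph{left} ideal, so right multiplication by an arbitrary element of $\nalgebra$ need not preserve it, and it is analyticity for the modular group --- routed through the KMS relation --- that restores this. By contrast the reduction and the approximation step are soft, using only Proposition \ref{simplepropweights} and standard properties of normal weights and their GNS construction.
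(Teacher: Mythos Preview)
Your reduction is correct and the core analytic input you identify --- that right multiplication by an entire analytic element preserves $\mathfrak{N}_\phi$, ultimately because the KMS relation lets you ``rotate $A$ past the weight'' --- is exactly the heart of the matter. But you take a longer road than the paper does, in two respects.

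First, the paper never proves the stronger inclusion $\mathfrak{N}_\phi\,\nanalytic\subset\mathfrak{N}_\phi$; it stays inside $\mathfrak{N}_\phi\cap\mathfrak{N}_\phi^\ast$ throughout. For $N\in\mathfrak{N}_\phi\cap\mathfrak{N}_\phi^\ast$ and $A\in\nanalytic$ one has $AN\in\mathfrak{N}_\phi$ trivially (left ideal), and the analytic step is to show $AN\in\mathfrak{N}_\phi^\ast$, i.e.\ $\phi(ANN^\ast A^\ast)<\infty$. The paper does this by invoking the modular condition for the pair $(ANN^\ast,A^\ast)$: since $A^\ast$ is entire analytic, the KMS function extends and one reads off $\phi(ANN^\ast A^\ast)=\phi\big(\tau^\phi_{-i}(A^\ast)\,ANN^\ast\big)$, which Cauchy--Schwarz bounds by $\|A\|\,\|\tau^\phi_{-i}(A^\ast)\|\,\phi(N^\ast N)^{1/2}\phi(NN^\ast)^{1/2}$. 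That is a one-line KMS evaluation plus Cauchy--Schwarz, not a three-line argument. Then $NA=(A^\ast N^\ast)^\ast$ gives the other side, and $(i)$ is done.

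Second, your approximation/closed-graph passage from $\mathfrak{N}_\phi\cap\mathfrak{N}_\phi^\ast$ to all of $\mathfrak{N}_\phi$ is sound but unnecessary for the statement. Part $(ii)$ follows from $(i)$ directly: any $P\in\mathfrak{F}_\phi$ satisfies $P^{1/2}\in\mathfrak{N}_\phi\cap\mathfrak{N}_\phi^\ast$ (since $\phi(P^{1/2}P^{1/2})=\phi(P)<\infty$), so $AP=(AP^{1/2})\,P^{1/2}\in(\mathfrak{N}_\phi\cap\mathfrak{N}_\phi^\ast)\,\mathfrak{N}_\phi\subset\mathfrak{N}_\phi^\ast\mathfrak{N}_\phi=\mathfrak{M}_\phi$, and similarly on the right. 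Your route through the full $\mathfrak{N}_\phi$ does yield the stronger fact that $\mathfrak{N}_\phi$ itself is an $\nanalytic$-bimodule, which is useful elsewhere, but it is overkill here and imports the $\sigma$-weak/weak closedness of $\eta_\phi$ that the paper does not need.
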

\begin{proof}
	In this proof we will refer to the items of Proposition \ref{simplepropweights}.
	
	$(i)$ The proof that $\mathfrak{N}_\phi\cap \mathfrak{N}_\phi^\ast$ is a $\ast$-subalgebra is basically already in Proposition \ref{simplepropweights}. So it remains to show the module property.
	
	Let $N\in \mathfrak{N}_\phi\cap \mathfrak{N}_\phi^\ast$ and $A\in\nanalytic$, of course $AN\in\mathfrak{N}_\phi$, since $\mathfrak{N}_\phi$ is a left ideal as stated in item $(iii)$. Remains to show $AN\in\mathfrak{N}_\phi^\ast$.
	
	It follows from the modular condition that there exists a complex function $F_{ANN^\ast,A^\ast}$ which is analytic in the strip $\left\{z\in \mathbb{C} \mid 0< \Im z<1\right\}$ and continuous and bounded on its closure satisfying 
	$$\begin{alignedat}{2}
	F_{ANN^\ast,A^\ast}(t) &= \phi\left(ANN^\ast\tau^\phi_t(A^\ast)\right), \ \forall t\in \mathbb{R}, \\
	F_{ANN^\ast,A^\ast}(t+\iu)&= \phi\left(\tau^\phi_t(A^\ast)ANN^\ast\right), \ \forall t\in \mathbb{R}. \\
	\end{alignedat}$$
	Since $\tau^\phi_t(A^\ast)=\tau^\phi_t(A)^\ast$, $A^\ast\in\nanalytic$ and
	$$\begin{aligned}
	\phi\left(ANN^\ast\tau^\phi_t(A^\ast)\right)&=F_{ANN^\ast,A^\ast}(t)\\
	&=\phi\left(\tau^\phi_{-\iu}(A^\ast)ANN^\ast\right)\\
	&\leq \phi\left(N^\ast A^\ast\tau^\phi_{-\iu}(A^\ast)^\ast\tau^\phi_{-\iu}(A^\ast)A N\right)^\frac{1}{2}\phi(N N^\ast)^\frac{1}{2}\\
	&\leq\|A\|\left\|\tau^\phi_{-\iu}(A^\ast)\right\|\phi(N^\ast N)^\frac{1}{2}\phi(N N^\ast)^\frac{1}{2}\\
	&<\infty.
	\end{aligned}$$
	
	$NA\in \mathfrak{N}_\phi\cap \mathfrak{N}_\phi^\ast$ follows by the same argument just noticing that $NA=(A^\ast N^\ast)^\ast$.
	
	$(ii)$ Again, it is already done in item $(v)$ that $\mathfrak{M}_\phi$ is a $\ast$-subalgebra.
	
	By definition $\mathfrak{M}_\phi=span\left[\mathfrak{F}_\phi\right]$ and by item $(i)$ $\mathfrak{F}_\phi\subset \mathfrak{N}_\phi\cap \mathfrak{N}_\phi^\ast$. The conclusion is now obvious.
	
\end{proof}

\begin{theorem}
	\label{UniqueModCond}
	Let $\nalgebra$ be a von Neumann algebra and $\phi$ a faithful normal semifinite weight on $\nalgebra$. Then, there exists a unique one parameter group of automorphisms $\{\tau_t\}_{t\in\mathbb{R}}$ satisfying the modular condition.
\end{theorem}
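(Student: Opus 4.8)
The plan is to obtain $\{\tau_t\}$ from Tomita--Takesaki modular theory applied to the GNS construction of $\phi$, and to derive uniqueness from the KMS identity encoded in the modular condition. Throughout I write $\Lambda:\mathfrak{N}_\phi\to\hilbert_\phi$, $A\mapsto[A]$, for the canonical map into the GNS space of $\phi$ (the completion of $\mathfrak{N}_\phi/N_\phi$ introduced after Proposition~\ref{simplepropweights}), $\pi_\phi$ for the GNS representation, and $\mathfrak{B}=\mathfrak{N}_\phi\cap\mathfrak{N}_\phi^\ast$. The first step is to recognise $\mathfrak{B}$, with the product and involution of $\nalgebra$ and the inner product $\ip{\Lambda(A)}{\Lambda(B)}_\phi=\phi(B^\ast A)$, as a \emph{full left Hilbert algebra}: density of $\Lambda(\mathfrak{B})$ and of $\Lambda(\mathfrak{B}^2)$ in $\hilbert_\phi$ comes from semifiniteness, while faithfulness and normality of $\phi$ make $\pi_\phi$ a normal $\ast$-isomorphism of $\nalgebra$ onto the von Neumann algebra $\pi_\phi(\nalgebra)$, and make the conjugate-linear operator $S_0:\Lambda(A)\mapsto\Lambda(A^\ast)$ ($A\in\mathfrak{B}$) closable, with non-singular closure $S$. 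Let $S=J\Delta^{1/2}$ be the polar decomposition, so that $\Delta=S^\ast S$ is a positive, self-adjoint, non-singular operator (the \emph{modular operator}) and $J$ an anti-unitary involution. The decisive input is Tomita's theorem: $\Delta^{\iu t}\pi_\phi(\nalgebra)\Delta^{-\iu t}=\pi_\phi(\nalgebra)$ for every $t\in\mathbb{R}$ and $J\pi_\phi(\nalgebra)J=\pi_\phi(\nalgebra)'$. Granting it, I set $\tau_t=\pi_\phi^{-1}\circ\operatorname{Ad}(\Delta^{\iu t})\circ\pi_\phi$; strong continuity of $t\mapsto\Delta^{\iu t}$ makes $\{\tau_t\}_{t\in\mathbb{R}}$ a pointwise ultra-weakly continuous one-parameter group of $\ast$-automorphisms of $\nalgebra$.

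Next I would check that $\phi$ satisfies the modular condition for $\{\tau_t\}$. Invariance is immediate: Tomita's theorem and the definition of $\tau_t$ give $\Delta^{\iu t}\Lambda(A)=\Lambda(\tau_t(A))$ for $A\in\mathfrak{N}_\phi$, and $\Delta^{\iu t}$ is unitary, so $\phi(\tau_t(A)^\ast\tau_t(A))=\|\Delta^{\iu t}\Lambda(A)\|^2=\|\Lambda(A)\|^2=\phi(A^\ast A)$, that is, $\phi=\phi\circ\tau_t$. For the analyticity requirement I would first work on the Tomita subalgebra $\mathfrak{B}_0\subset\mathfrak{B}$ of entire $\{\tau_t\}$-analytic elements whose $\Lambda$-images lie in the domains of all complex powers of $\Delta$ (a core for the relevant operators): for $A,B\in\mathfrak{B}_0$ the function $F_{A,B}(z)=\ip{\Delta^{\iu z}\Lambda(B)}{\Lambda(A^\ast)}_\phi$ is entire, bounded on the closed strip $0\le\Im z\le1$ by interpolation, and satisfies $F_{A,B}(t)=\phi(A\tau_t(B))$ and $F_{A,B}(t+\iu)=\phi(\tau_t(B)A)$ --- the latter after invoking $\Delta^{1/2}\Lambda(B)=J\Lambda(B^\ast)$, the commutation $J\Delta^{\iu t}=\Delta^{\iu t}J$, and once more $J\pi_\phi(\nalgebra)J=\pi_\phi(\nalgebra)'$. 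A standard density argument then promotes the modular condition from $\mathfrak{B}_0$ to all of $\mathfrak{B}$, hence to $\mathfrak{M}_\phi$, which finishes the existence half.

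For uniqueness, let $\{\sigma_t\}$ be another one-parameter automorphism group of $\nalgebra$ obeying the modular condition for $\phi$. Condition (i) of that definition gives $\phi=\phi\circ\sigma_t$, so --- by the smoothing argument behind Proposition~\ref{twosidedmodule} --- one may average elements of $\mathfrak{B}$ over the $\sigma$-flow and obtain an ultra-weakly dense set of entire $\{\sigma_t\}$-analytic elements of $\mathfrak{B}$. Evaluating the corresponding $F_{A,B}$ at $z=\iu$ gives, on one side, $F_{A,B}(0+\iu)=\phi(\sigma_0(B)A)=\phi(BA)$ and, on the other, $F_{A,B}(\iu)=\phi(A\,\sigma_{\iu}(B))$, so the KMS identity $\phi(A\,\sigma_{\iu}(B))=\phi(BA)$ holds for every $A\in\mathfrak{M}_\phi$ and every such $B$; the same identity holds with $\sigma$ replaced by $\tau$. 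Subtracting, $\phi\big(A\,(\sigma_{\iu}(B)-\tau_{\iu}(B))\big)=0$ for all $A\in\mathfrak{M}_\phi$ and all $B$ entire analytic for both groups (still ultra-weakly dense, obtained by smoothing with both flows); since $\mathfrak{M}_\phi$ is an ultra-weakly dense $\ast$-algebra and $\phi$ is faithful and normal, this forces $\sigma_{\iu}(B)=\tau_{\iu}(B)$ for all such $B$, and then $\sigma_t=\tau_t$ for every $t\in\mathbb{R}$ by analytic continuation together with a last density argument.

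The one genuinely deep ingredient is Tomita's commutation theorem $\Delta^{\iu t}\pi_\phi(\nalgebra)\Delta^{-\iu t}=\pi_\phi(\nalgebra)$ (together with $J\pi_\phi(\nalgebra)J=\pi_\phi(\nalgebra)'$); everything else is bookkeeping with the GNS data, Phragm\'en--Lindel\"of / three-line estimates of precisely the kind already used in the proof of H\"older's inequality, and the faithfulness, normality and semifiniteness of $\phi$. Consistently with the stated aim of this part of the notes I would either reproduce the classical argument for Tomita's theorem --- via the resolvent of $\Delta$ and the averaging operators $\int_{\mathbb{R}}\Delta^{\iu t}\pi_\phi(x)\Delta^{-\iu t}f(t)\,dt$ over a suitable class of kernels $f$ --- or simply quote it.
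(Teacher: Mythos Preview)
The paper does not prove this theorem at all: it is stated as a black box (it is, after all, the fundamental existence--uniqueness theorem of Tomita--Takesaki theory), and the surrounding material simply uses it. So there is nothing to compare your argument against; I will just assess it on its own terms.

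Your existence half is the standard route and is correct as a sketch: GNS for the weight, the left Hilbert algebra $\mathfrak{B}=\mathfrak{N}_\phi\cap\mathfrak{N}_\phi^\ast$, polar decomposition $S=J\Delta^{1/2}$, Tomita's commutation theorem, and then $\tau_t=\pi_\phi^{-1}\circ\operatorname{Ad}(\Delta^{\iu t})\circ\pi_\phi$. The verification of the modular condition via the Tomita algebra $\mathfrak{B}_0$ and a three--line bound is also the right mechanism.

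The uniqueness half has a genuine gap. From the KMS identity you correctly extract $\phi\big(A(\sigma_\iu(B)-\tau_\iu(B))\big)=0$ for all $A\in\mathfrak{M}_\phi$ and all jointly entire $B$, and (with a little care, using $\sigma_\iu(B),\tau_\iu(B)\in\mathfrak{B}$ and an approximate identity in $\mathfrak{N}_\phi$) this does force $\sigma_\iu(B)=\tau_\iu(B)$. But the step ``then $\sigma_t=\tau_t$ by analytic continuation'' does not follow: the two entire $\nalgebra$-valued maps $z\mapsto\sigma_z(B)$ and $z\mapsto\tau_z(B)$ agree at $z=0$ and $z=\iu$, and that is far from enough to conclude they coincide. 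Iterating the KMS identity to get agreement at all $z=n\iu$ does not help either, since no growth bound of order $<1$ is available for Gaussian-smoothed elements.

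The usual repair is to move the comparison to $\hilbert_\phi$. Invariance $\phi\circ\sigma_t=\phi$ yields a unitary group $U_t\Lambda(A)=\Lambda(\sigma_t(A))$; because $\sigma_t$ is a $\ast$-automorphism preserving $\phi$ one has $U_tS\subset SU_t$, hence $U_t$ commutes with $\Delta$. Now combine the $\sigma$-KMS condition with the modular condition already established for $\tau_t=\operatorname{Ad}(\Delta^{\iu t})$ to see that, for $\xi,\eta\in\Lambda(\mathfrak{B}_0)$, the bounded function $t\mapsto\langle U_t\Delta^{-\iu t}\xi,\eta\rangle$ extends to a bounded analytic function on the strip whose two boundary values match; the edge-of-the-wedge/Liouville argument (exactly the one the paper deploys in the proof of Theorem~\ref{centralizercommute}) then makes it constant, giving $U_t=\Delta^{\iu t}$ and hence $\sigma_t=\tau_t$. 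Replacing your single-point analytic continuation by this ``bounded periodic entire $\Rightarrow$ constant'' step closes the gap.
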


\begin{corollary}
	\label{modularautomosphism}
	Let $\nalgebra_1, \nalgebra_2$ be von Neumann algebras, $\phi$ be a normal semifinite weight on $\nalgebra_1$, and $\pi:\nalgebra_1\to\nalgebra_2$ an isomorphism. Then the modular automorphism group of $\phi\circ\phi$ is $\{\pi^{-1}\circ\tau_t^\phi\circ\pi\}_{t\in\mathbb{R}}$.
\end{corollary}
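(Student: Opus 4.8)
The plan is to reduce the statement to the uniqueness assertion of Theorem \ref{UniqueModCond}. Write $\psi:=\phi\circ\pi^{-1}$ for the weight induced on $\nalgebra_2$ and $\sigma_t:=\pi\circ\tau^\phi_t\circ\pi^{-1}$ for the candidate one-parameter group (this is the family appearing in the statement, once the direction of $\pi$ is fixed, and the modular group $\tau^\phi_t$ being invoked we may as well take $\phi$ faithful). Since a $\ast$-isomorphism of von Neumann algebras is automatically $\sigma$-weakly continuous and preserves the order, supports, and weak density, $\psi$ is again a faithful normal semifinite — hence lower semicontinuous — weight on $\nalgebra_2$; so Theorem \ref{UniqueModCond} applies to $\psi$ and yields a \emph{unique} one-parameter automorphism group satisfying the modular condition. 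It therefore suffices to verify that $\{\sigma_t\}_{t\in\mathbb{R}}$ satisfies the modular condition for $\psi$.

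First I would record the elementary facts: each $\sigma_t$ is an automorphism of $\nalgebra_2$ and $\sigma_s\circ\sigma_t=\pi\circ\tau^\phi_{s+t}\circ\pi^{-1}=\sigma_{s+t}$, $\sigma_0=\mathrm{id}$, so $\{\sigma_t\}$ is a one-parameter group; and $\psi\circ\sigma_t=\phi\circ\tau^\phi_t\circ\pi^{-1}=\phi\circ\pi^{-1}=\psi$ by the first modular condition for $\phi$. Next I would transport the ideals: since $\psi(\pi(A)^\ast\pi(A))=\phi(A^\ast A)$ we have $\mathfrak{N}_\psi=\pi(\mathfrak{N}_\phi)$ and $\mathfrak{N}_\psi^\ast=\pi(\mathfrak{N}_\phi^\ast)$, so an arbitrary pair $A,B\in\mathfrak{N}_\psi\cap\mathfrak{N}_\psi^\ast$ can be written $A=\pi(A_0)$, $B=\pi(B_0)$ with $A_0,B_0\in\mathfrak{N}_\phi\cap\mathfrak{N}_\phi^\ast$. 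For the analytic part I would take $F_{A,B}:=F_{A_0,B_0}$, the function provided by the modular condition for $\phi$; it is already analytic on the open strip $\{0<\Im z<1\}$ and bounded and continuous on its closure, and using $\sigma_t\circ\pi=\pi\circ\tau^\phi_t$ together with multiplicativity of $\pi$ one checks
$$\psi\bigl(A\sigma_t(B)\bigr)=\phi\bigl(A_0\tau^\phi_t(B_0)\bigr)=F_{A_0,B_0}(t),\qquad \psi\bigl(\sigma_t(B)A\bigr)=\phi\bigl(\tau^\phi_t(B_0)A_0\bigr)=F_{A_0,B_0}(t+\iu),$$
which are precisely the boundary values demanded by \eqref{eq:ModCond}. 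Hence $\{\sigma_t\}$ satisfies the modular condition for $\psi$, and the uniqueness in Theorem \ref{UniqueModCond} forces $\tau^\psi_t=\sigma_t=\pi\circ\tau^\phi_t\circ\pi^{-1}$ for all $t$, which is the claim.

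There is no real obstacle here: the corollary is a transport-of-structure statement, and the only points needing mild care are confirming that faithfulness, normality, semifiniteness and lower semicontinuity genuinely survive conjugation by $\pi$ — so that Theorem \ref{UniqueModCond} is legitimately applicable to $\psi$ — and keeping track of the direction of composition so that the two boundary identities land on the lines $\Im z=0$ and $\Im z=1$ rather than being swapped. Everything else is bookkeeping with the definition of the modular condition.
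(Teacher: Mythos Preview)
Your proof is correct and follows essentially the same route as the paper: verify that the conjugated group satisfies the two parts of the modular condition for the transported weight (invariance, and the existence of the boundary function $F_{A,B}$ obtained by pulling back through $\pi$), identify $\mathfrak{N}_{\psi}\cap\mathfrak{N}_{\psi}^\ast=\pi(\mathfrak{N}_{\phi}\cap\mathfrak{N}_{\phi}^\ast)$, and conclude by the uniqueness in Theorem~\ref{UniqueModCond}. Your version is slightly more explicit about why uniqueness applies (checking that faithfulness, normality, and semifiniteness transfer under $\pi$) and about the direction-of-$\pi$ bookkeeping, but the argument is the same.
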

\begin{proof}
	Lets proof its satisfy the two conditions of the definition.
	
	$(i)$ $(\phi\circ\pi)\circ(\pi{-1}\circ\tau_t\circ\pi)=\phi\circ\tau_t\circ\pi=\phi\circ\pi$, where we use that $\phi\circ\tau_t=\phi$.
	
	$(ii)$ For every $A,B\in\mathfrak{N}_{\phi}\cap \mathfrak{N}_{\phi}^\ast$ there exists a complex function $F_{A,B}$ which is analytic in the strip $\left\{z\in \mathbb{C} \mid 0< \Im z<1\right\}$ and continuous and bounded on its closure satisfying
	$$\begin{aligned}
	F_{A,B}(t) &=& \phi(A\sigma_t(B))&= \phi\circ\pi\left(\pi^{-1}(A)\pi^{-1}\circ\sigma_t\circ\pi\left(\pi^{-1}(B)\right)\right) \ \forall t\in \mathbb{R} \\
	F_{A,B}(t+\iu) & =& \phi(\sigma_t(A)B)&= \phi\circ\pi\left(\pi^{-1}\circ\sigma_t\circ\pi(B)\pi^{-1}(A)\right), \ \forall t\in \mathbb{R}. \\
	\end{aligned}$$
	
	Notice now that $\mathfrak{N}_{\phi\circ\pi}\cap \mathfrak{N}_{\phi\circ\pi}^\ast=\pi^{-1}\left(\mathfrak{N}_\phi\right)\cap\pi^{-1}\left(\mathfrak{N}_\phi^\ast\right)$, then the thesis is clear choosing $F_{\pi^{-1}(A),\pi^{-1}(B)}=F_{A,B}$.
	
\end{proof}

\begin{definition}[Centralizer of a Weight]
	Let $\nalgebra$ be a von Neumann algebra, $\phi$ a faithful normal semifinite weight on $\nalgebra$, and $\tau^\phi=\{\tau^\phi_t\}_{t\in\mathbb{R}}$ the modular automorphism group of $\phi$, we define the \index{weight! centrilizer} the centralizer of $\phi$ as the set
	$$\mathfrak{M}_{\tau^\phi}=\left\{A\in \nalgebra \ | \ \tau^\phi_t(A)=A, \ \forall t\in\mathbb{R}\right\}.$$
\end{definition}

Notice that it follows from linearity and normality that $\mathfrak{M}_{\tau^\phi}$ is a von Neumann sub algebra of $\nalgebra$.

\begin{theorem}
	\label{centralizercommute}
	Let $\nalgebra$ be a von Neumann algebra, $\phi$ a faithful normal semifinite weight. Then, $A\in\mathfrak{M}_{\tau\phi}$ if, and only if
	\begin{enumerate}[(i)]
		\item $A\mathfrak{M}_\phi \subset \mathfrak{M}_\phi$ and $\mathfrak{M}_\phi A\subset \mathfrak{M}_\phi$;
		\item $\phi(AB)=\phi(BA)$, $\forall B\in\mathfrak{M}_\phi$.
	\end{enumerate}
\end{theorem}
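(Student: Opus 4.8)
The plan is to prove the two implications separately, and they are very asymmetric: the direct one $(\Rightarrow)$ is a short application of the modular condition, while $(\Leftarrow)$ is where the work lies. For $(\Rightarrow)$, suppose $A\in\mathfrak{M}_{\tau^\phi}$, so $\tau^\phi_t(A)=A$ for all $t\in\mathbb{R}$; then the orbit $t\mapsto\tau^\phi_t(A)$ is constant, hence entire, so $A\in\nanalytic$ and $\tau^\phi_z(A)=A$ for every $z\in\mathbb{C}$. Item (i) is then precisely Proposition \ref{twosidedmodule}(ii). For (ii), by linearity and $\mathfrak{M}_\phi=span[\mathfrak{F}_\phi]$ it suffices to take $B\in\mathfrak{F}_\phi$ (note $AB,BA\in\mathfrak{M}_\phi$ by (i)); put $C=B^{1/2}$, so $C=C^\ast$ and $\phi(C^\ast C)=\phi(B)<\infty$, i.e. $C\in\mathfrak{N}_\phi\cap\mathfrak{N}_\phi^\ast$, and by Proposition \ref{twosidedmodule}(i) also $AC,CA\in\mathfrak{N}_\phi\cap\mathfrak{N}_\phi^\ast$. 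Apply the modular condition to the pair $(CA,C)$ to get $F$ analytic on $\left\{z\in\mathbb{C}\mid 0<\Im z<1\right\}$, continuous and bounded on its closure, with $F(t)=\phi\bigl(CA\,\tau^\phi_t(C)\bigr)$ and $F(t+\iu)=\phi\bigl(\tau^\phi_t(C)\,CA\bigr)$, and to the pair $(C,AC)$ to get $G$ with $G(t)=\phi\bigl(C\,\tau^\phi_t(AC)\bigr)$ and $G(t+\iu)=\phi\bigl(\tau^\phi_t(AC)\,C\bigr)$. On $\mathbb{R}$ one has $G(t)=\phi\bigl(C\,\tau^\phi_t(A)\,\tau^\phi_t(C)\bigr)=\phi\bigl(CA\,\tau^\phi_t(C)\bigr)=F(t)$ because $\tau^\phi_t(A)=A$; hence $F-G$ is analytic on the strip, continuous and bounded on its closure and vanishes on $\mathbb{R}$, so the three-line theorem forces $F\equiv G$, and evaluating at $z=\iu$ gives $\phi(BA)=F(\iu)=G(\iu)=\phi(AB)$.

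For $(\Leftarrow)$, assume (i) and (ii). Taking adjoints in (i) (recall $\mathfrak{M}_\phi=\mathfrak{M}_\phi^\ast$, Proposition \ref{simplepropweights}(v)) shows $A^\ast$ satisfies (i), and $\phi(A^\ast B)=\overline{\phi(B^\ast A)}=\overline{\phi(AB^\ast)}=\phi(BA^\ast)$ shows it satisfies (ii); so $\tfrac{1}{2}(A+A^\ast)$ and $\tfrac{1}{2\iu}(A-A^\ast)$ satisfy (i)--(ii), and since $A$ is a linear combination of them it is enough to treat $A=A^\ast$. Fix $s\in\mathbb{R}$ and set $D=\tau^\phi_s(A)-A=D^\ast$; because $\tau^\phi_s$ preserves $\phi$, conjugating (i) and (ii) by $\tau^\phi_s$ shows $\tau^\phi_s(A)$, hence $D$, again satisfies (i)--(ii). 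I claim it suffices to prove $\phi(u^\ast Dv)=0$ for all $u,v\in\mathfrak{N}_\phi\cap\mathfrak{N}_\phi^\ast$. Indeed, for such $v$ the vector $[Dv]\in\hilbert_\phi$ (well defined, as $Dv\in\mathfrak{N}_\phi$) is then orthogonal to every $[u]$ with $u\in\mathfrak{N}_\phi\cap\mathfrak{N}_\phi^\ast$; since $\bigl[\mathfrak{N}_\phi\cap\mathfrak{N}_\phi^\ast\bigr]$ is dense in $\hilbert_\phi$, this forces $\phi\bigl((Dv)^\ast Dv\bigr)=0$, hence $Dv=0$ by faithfulness; and then $D=0$ because $D$ is bounded and $\mathfrak{N}_\phi\cap\mathfrak{N}_\phi^\ast\supset\mathfrak{M}_\phi$ is strongly dense in $\nalgebra$ (Corollary \ref{WOT=SOT}), so $\tau^\phi_s(A)=A$.

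The remaining identity $\phi(u^\ast\tau^\phi_s(A)v)=\phi(u^\ast Av)$ for $u,v\in\mathfrak{N}_\phi\cap\mathfrak{N}_\phi^\ast$ I would attack in the spirit of the direct implication. First regularise: replace $u,v$ by their Gaussian smoothings $\tfrac{\sqrt n}{\sqrt\pi}\int e^{-nt^2}\tau^\phi_t(\,\cdot\,)\,dt$, which again lie in $\mathfrak{N}_\phi\cap\mathfrak{N}_\phi^\ast$, are $\tau^\phi$-analytic, and approximate $u,v$ well enough, so one may assume $u,v$ are $\tau^\phi$-analytic; then, using (i), check that $u^\ast A,\ u^\ast\tau^\phi_s(A),\ Av,\ \tau^\phi_s(A)v$ all lie in $\mathfrak{N}_\phi\cap\mathfrak{N}_\phi^\ast$. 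Next apply the modular condition to the admissible pairs $\bigl(u^\ast\tau^\phi_s(A),v\bigr)$ and $\bigl(u^\ast A,v\bigr)$, use (ii) (for $A$ and for $\tau^\phi_s(A)$) to rewrite the boundary values of the difference of the two resulting functions on the line $\Im z=1$, and invoke analytic continuation once more (the three-line theorem) to conclude that their values on $\Im z=0$ coincide, which is the claim. The main obstacle is exactly this last stretch: whereas the direct implication is a single application of the modular condition, here one must manufacture the analytic function interpolating between $\phi(u^\ast\tau^\phi_s(A)v)$ and $\phi(u^\ast Av)$ while simultaneously controlling the regularisation limit and the various $\mathfrak{N}_\phi\cap\mathfrak{N}_\phi^\ast$-memberships; getting all of this to dovetail is the delicate point.
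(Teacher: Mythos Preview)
Your $(\Rightarrow)$ direction is correct and close in spirit to the paper's, with one harmless cosmetic difference: the paper writes $B=C^\ast D$ with $C,D\in\mathfrak{N}_\phi$ and stitches the two KMS functions into a single $\iu$-periodic entire bounded function, concluding by Liouville, while you take $C=B^{1/2}$ and compare the two KMS functions directly on the real line. Both work. One naming quibble: the step ``$F-G$ analytic on the strip, continuous on the closure, vanishes on $\mathbb{R}\Rightarrow F\equiv G$'' is not the three-line theorem but Schwarz reflection followed by the identity principle.

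The real issue is $(\Leftarrow)$, where you are making life much harder than necessary and, as you yourself admit, do not actually close the argument. The paper's trick is this: for $B\in\mathfrak{M}_\phi$, assumption (i) ensures that the relevant KMS function $F$ (with boundary values $F(t)=\phi\bigl(\tau^\phi_t(A)B\bigr)$ and $F(t+\iu)=\phi\bigl(B\tau^\phi_t(A)\bigr)$, using $\phi\circ\tau^\phi_t=\phi$ to land in $\mathfrak{M}_\phi$) exists; assumption (ii), applied with $\tau^\phi_{-t}(B)\in\mathfrak{M}_\phi$, gives $\phi\bigl(\tau^\phi_t(A)B\bigr)=\phi\bigl(A\tau^\phi_{-t}(B)\bigr)=\phi\bigl(\tau^\phi_{-t}(B)A\bigr)=\phi\bigl(B\tau^\phi_t(A)\bigr)$, i.e.\ $F(t)=F(t+\iu)$ for all real $t$. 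So $F$ is $\iu$-periodic on the boundary of the strip; by edge-of-the-wedge it extends to an entire bounded function and is constant by Liouville. Thus $\phi\bigl(\tau^\phi_t(A)B\bigr)=\phi(AB)$ for every $B\in\mathfrak{M}_\phi$, and faithfulness plus semifiniteness force $\tau^\phi_t(A)=A$.

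Your route via $D=\tau^\phi_s(A)-A$, reduction to self-adjoint $A$, Gaussian regularisation of $u,v$, and a second application of the three-line theorem is not wrong in principle, but it duplicates machinery that the periodicity-plus-Liouville observation handles in two lines. The ``delicate point'' you flag simply disappears once you notice that (ii) is exactly the statement that the two boundary values of the KMS function coincide.
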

\begin{proof}
	Let's denote $\tau^\phi=\{\tau^\phi_t\}_{t\in\mathbb{R}}$ the modular automorphism group of $\phi$.
	
	$(\Rightarrow)$ Since $A\in\mathfrak{M}_{\tau^\phi}$, $A$ is an entire analytic element. Then, condition $(i)$ follows from $(ii)$ in Proposition \ref{twosidedmodule}.
	
	Since $B\in\mathfrak{M}_\phi$, $B= C^\ast D$ with $C,D \in \mathfrak{N}_\phi$. By Proposition \ref{twosidedmodule} and modular condition there exists analytic functions on the strip $\strip{1}$, continuous and bounded on its closure, such that
	$$\begin{alignedat}{4}
	F_{C^\ast,DA}(t) &=\phi\left(C^\ast\tau^\phi_t(D A)\right) &=\phi\left(C^\ast\tau^\phi_{t}(D)A\right),& \\
	F_{C^\ast,DA}(t+\iu) &= \phi\left(\tau^\phi_t(DA)C^\ast\right) &=\phi\left(\tau^\phi_{t}(D)AC^\ast\right) &=\phi\left(\tau^\phi_{t}(D)AC^\ast\right), \\
	\end{alignedat} \quad \forall t\in \mathbb{R}$$
	and
	$$\begin{alignedat}{3}
	F_{\tau^\phi_{t}(D),A\tau^\phi_{-t}(C^\ast)}(t)
	&=\phi\left(\tau^\phi_{t}(D)\tau^\phi_{t}\left(A\tau^\phi_{-t}(C^\ast)\right)\right) 
	&=\phi\left((\tau^\phi_{t}(D)AC^\ast\right), \\
	F_{\tau^\phi_{t}(D),A\tau^\phi_{-t}(C^\ast)}(t+\iu) 
	&=\phi\left(\tau^\phi_t\left(A\tau^\phi_{-t}(C^\ast)\right) \tau^\phi_{t}(D)\right)
	&= \phi\left(AC^\ast \tau^\phi_{t}(D)\right), \\
	\end{alignedat} 	\quad \forall t\in \mathbb{R}.$$

	Now, by the previous equation and by $A\in\mathfrak{M}_\phi$ we have $F_{C^\ast,DA}(t+\iu)=F_{\tau^\phi_{t}(D),AC^\ast}(t)$ and $F_{C^\ast,DA}(t)=F_{\tau^\phi_{t}(D),AC^\ast}(t+\iu)$, so we can define the bounded function $G:\mathbb{C} \to \mathbb{C}$ by
	$$G(z)=\begin{cases}F_{C^\ast,DA}(z-2n\iu) & \textrm{if } 2n\leq Im(z)\leq 2n+1, \ n\in\mathbb{Z}\\ F_{\tau^\phi_{t}(D),AC^\ast}(z-(2n+1)\iu) & \textrm{if } 2n+1\leq Im(z)\leq 2n+2, n\in\mathbb{Z}.\\\end{cases}$$
	
	It follows from the edge-of-the-wedge theorem that $G$ entire analytic, but its also bounded, thus its constant by Liouville's theorem. Hence
	$$\phi(B A)=F_{C^\ast,DA}(0) =G(0)=G(2\iu)= F_{\tau^\phi_{t}(D),AC^\ast}(\iu)=\phi(AB), \ \forall B\in\mathfrak{M}_{\tau^\phi}.$$
	
	$(\Leftarrow)$ Notice that the assumptions $(i)$ warrants that, for any $B\in\mathfrak{N}_\phi$, we can again define the analytic function on $\strip{1}$ which is continuous and bounded on $\overline{\strip{1}}$,
	$F(t)=\phi\left(\tau^\phi_t(A)B\right)=\phi\circ\tau^\phi_t\left(A\tau^\phi_{-t}(B)\right)$.
	
	Form assumption $(ii)$, this function is periodic with period $\iu$, then, using a definition similar to the one we used to $G$ above, $F$ can be analytically extended to the whole complex plane by the edge-of-the-wedge theorem. Since the extension still bounded,  by Liouville's theorem, $F$ is constant.
	
	Hence, $\phi\left(\tau^\phi_t(A)B\right)=F(t)=F(0)=\phi(AB), \ \forall B\in\mathfrak{N}_\phi\Rightarrow \tau^\phi_t(A)=A$.
	
\end{proof}

\begin{lemma}
	\label{centralizerorder}
	Let $\nalgebra$ be a von Neumann algebra, $\phi$ a faithful normal seimifinite weight on $\nalgebra$,
	$$\begin{aligned}
	\mathfrak{M}_{\tau^\phi}\ni H \mapsto & \ \phi_H&: \ &\nalgebra^+ &\to &\hspace{0.9cm}\overline{\mathbb{R}}\\
	& \ & \ &A& \ & \phi\left(H^\frac{1}{2}AH^\frac{1}{2}\right)\\
	\end{aligned}$$
	is a order preserving map on the weights on $\nalgebra$.
\end{lemma}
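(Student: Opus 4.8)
The plan is to check first that each $\phi_H$ is genuinely a normal (semifinite) weight, then to compute $\phi_H$ on the ideal $\mathfrak{M}_\phi$ — where the trace-like behaviour of $\phi$ against the centralizer makes monotonicity transparent — and finally to push monotonicity out to all of $\nalgebra^+$ by normality. For the first point I would take $H\in\mathfrak{M}_{\tau^\phi}^+$ (so that $H^{1/2}$ is available); since $\mathfrak{M}_{\tau^\phi}$ is a von Neumann subalgebra it is closed under continuous functional calculus, so $H^{1/2}\in\mathfrak{M}_{\tau^\phi}^+$. Positive homogeneity of $\phi_H$ is clear, additivity follows from $H^{1/2}(A+B)H^{1/2}=H^{1/2}AH^{1/2}+H^{1/2}BH^{1/2}$ together with additivity of $\phi$ on $\nalgebra^+$, and normality follows because conjugation by the fixed bounded operator $H^{1/2}$ preserves suprema of bounded increasing nets, so $\phi_H(\sup_i A_i)=\phi(\sup_i H^{1/2}A_iH^{1/2})=\sup_i\phi_H(A_i)$. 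Moreover, for $B\in\mathfrak{F}_\phi$ one has $B^{1/2}\in\mathfrak{N}_\phi\cap\mathfrak{N}_\phi^\ast$, and since $H^{1/2}\in\mathfrak{M}_{\tau^\phi}\subseteq\nanalytic$, Proposition \ref{twosidedmodule}(i) gives $B^{1/2}H^{1/2}\in\mathfrak{N}_\phi\cap\mathfrak{N}_\phi^\ast$, whence $\phi_H(B)<\infty$; thus $\mathfrak{M}_\phi\subseteq\mathfrak{M}_{\phi_H}$ and $\phi_H$ is semifinite, so it is indeed a weight of the required kind.

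Next I would compute $\phi_H$ on $\mathfrak{M}_\phi^+$. For $A\in\mathfrak{M}_\phi^+$, Proposition \ref{twosidedmodule}(ii) gives $AH^{1/2}\in\mathfrak{M}_\phi$, and Theorem \ref{centralizercommute}(ii), applied with the centralizer element $H^{1/2}$ and $AH^{1/2}\in\mathfrak{M}_\phi$, yields $\phi(H^{1/2}\cdot AH^{1/2})=\phi(AH^{1/2}\cdot H^{1/2})=\phi(AH)$, and a second application gives $\phi(AH)=\phi(HA)$. Hence $\phi_H(A)=\phi(HA)$ on $\mathfrak{M}_\phi^+$, an expression linear in $H$; so if $0\le H_1\le H_2$ in $\mathfrak{M}_{\tau^\phi}$ and $L:=H_2-H_1$, then for every $A\in\mathfrak{M}_\phi^+$
\[
\phi_{H_2}(A)-\phi_{H_1}(A)=\phi\bigl((H_2-H_1)A\bigr)=\phi(LA)=\phi_L(A)=\phi\bigl(L^{1/2}AL^{1/2}\bigr)\ge 0 ,
\]
i.e. $\phi_{H_1}\le\phi_{H_2}$ on $\mathfrak{M}_\phi^+$.

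To reach arbitrary $A\in\nalgebra^+$ I would handle invertible $H$ first: by normality and semifiniteness of $\phi$, $\phi_H(A)=\sup\{\phi(D):D\in\mathfrak{F}_\phi,\ D\le H^{1/2}AH^{1/2}\}$, and for each such $D$ the element $B:=H^{-1/2}DH^{-1/2}$ lies in $\mathfrak{F}_\phi$ (Proposition \ref{twosidedmodule}(ii), as $H^{-1/2}\in\mathfrak{M}_{\tau^\phi}$), satisfies $B\le A$, and has $\phi_H(B)=\phi(D)$; combining with the monotonicity of $B\mapsto\phi_H(B)$ one gets $\phi_H(A)=\sup\{\phi_H(B):B\in\mathfrak{F}_\phi,\ B\le A\}$, and the previous paragraph then gives $\phi_{H_1}\le\phi_{H_2}$ on $\nalgebra^+$ for invertible $H_1,H_2$. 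For general $0\le H_1\le H_2$ I would compress to the corner $s\nalgebra s$, with $s\in\mathfrak{M}_{\tau^\phi}$ the support projection of $H_2$: since $s(H_1)\le s(H_2)=s$ one has $H_i^{1/2}=H_i^{1/2}s$, so $\phi_{H_i}(A)=\phi\bigl(H_i^{1/2}(sAs)H_i^{1/2}\bigr)$ and we may assume $H_2$ invertible. Then, with $K:=H_2^{-1/2}H_1H_2^{-1/2}\in\mathfrak{M}_{\tau^\phi}$ (so $0\le K\le 1$) and $f_\delta(t):=\min(t+\delta,1)$, the operators $H_1^{(\delta)}:=H_2^{1/2}f_\delta(K)H_2^{1/2}\in\mathfrak{M}_{\tau^\phi}$ satisfy $H_1\le H_1^{(\delta)}\le H_2$, are invertible, and decrease to $H_1$ as $\delta\downarrow 0$; hence $(H_1^{(\delta)})^{1/2}A(H_1^{(\delta)})^{1/2}\to H_1^{1/2}AH_1^{1/2}$ $\sigma$-weakly (uniformly bounded), and lower semicontinuity of the normal weight $\phi$ gives $\phi_{H_1}(A)\le\liminf_{\delta\downarrow 0}\phi_{H_1^{(\delta)}}(A)\le\phi_{H_2}(A)$, the last step by the invertible case. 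I expect this last passage — from $\mathfrak{M}_\phi^+$ to all of $\nalgebra^+$ for non-invertible $H$ — to be the crux: the pointwise inequality $H_1^{1/2}AH_1^{1/2}\le H_2^{1/2}AH_2^{1/2}$ is simply false in general (it fails already for $2\times2$ matrices), so order preservation genuinely rests on the trace-like identity of Theorem \ref{centralizercommute} fed through the normality of $\phi$, and the invertibility reductions are exactly the device that makes that routing legitimate.
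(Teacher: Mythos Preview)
Your argument is correct, and the core mechanism---using Theorem~\ref{centralizercommute} to rewrite $\phi(H^{1/2}AH^{1/2})=\phi(HA)$ and then exploiting linearity in $H$ via the difference $L=H_2-H_1$---is exactly what the paper does. The paper's proof reads, in full: since $H,K,(H-K)\in\mathfrak{M}_{\tau^\phi}^+$ one has $\phi(H^{1/2}AH^{1/2})=\phi(HA)=\phi((H-K)A)+\phi(KA)=\phi((H-K)^{1/2}A(H-K)^{1/2})+\phi(K^{1/2}AK^{1/2})\ge\phi(K^{1/2}AK^{1/2})$.

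Where you go beyond the paper is precisely the point you flag as the crux: the identity $\phi(H^{1/2}AH^{1/2})=\phi(HA)$ is only justified, via Theorem~\ref{centralizercommute}, for $A\in\mathfrak{M}_\phi$, and the paper writes the displayed chain without restricting $A$ or explaining the passage to general $A\in\nalgebra^+$. Your invertibility reduction (identifying $\phi_H(A)$ with a supremum over $\mathfrak{F}_\phi$ via the bijection $B\mapsto H^{-1/2}BH^{-1/2}$) and the subsequent approximation $H_1^{(\delta)}\downarrow H_1$ through invertible elements, combined with $\sigma$-weak lower semicontinuity of $\phi$, give an honest proof of this extension. So your route is the paper's route made rigorous; the extra machinery you introduce is not a detour but a genuine completion of what the paper leaves implicit.
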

\begin{proof}
	Normality is evident and semifiniteness follows from Proposition \ref{twosidedmodule}. To the order, notice that $H, K\mathfrak{M}_\phi^+$ implies $H^\frac{1}{2}, K^\frac{1}{2}, (H+K)^\frac{1}{2}\in \mathfrak{M}_\phi^+$, thus $$\phi\left(H^\frac{1}{2}A H^\frac{1}{2}\right)=\phi(HA) \textrm{ and } \phi\left(K^\frac{1}{2}A K^\frac{1}{2}\right)=\phi(KA).$$
	
	Hence, if $K\leq H$, 
	$$\begin{aligned}
	\phi(H^\frac{1}{2}A H^\frac{1}{2})\\
	&=\phi(HA)\\
	&=\phi\left(((H-K)+K)A\right)\\
	&=\phi\left(((H-K)A\right)+\phi(KA)\\
	&=\phi\left((H-K)^\frac{1}{2}A(H-K)^\frac{1}{2}\right)+\phi\left(K^\frac{1}{2}AK^\frac{1}{2}\right)\\
	&\geq \phi\left(K^\frac{1}{2}AK^\frac{1}{2}\right).\\
	\end{aligned}$$
\end{proof}
\begin{lemma}
	\label{unboundedderivativeweight}
	Let $\nalgebra$ be a von Neumann algebra, $\phi$ a faithful normal seimifinite weight on $\nalgebra$ and $H\eta \mathfrak{M}_{\tau^\phi}^+$.  If $\left(H_i\right)_{i\in I} \in  \mathfrak{M}_{\tau^\phi}^+$ is an increasing net such that $H_i \to H$, then
	\begin{equation}
	\label{eq:definitionx1}
	\phi_H(A)=\lim_{H_i \to H}\phi\left(H_i A\right)=\lim_{H_i \to H}\phi\left(H_i^\frac{1}{2}AH_i^\frac{1}{2}\right)=\sup_{i\in I}{\phi\left(H_i^\frac{1}{2}AH_i^\frac{1}{2}\right)}, \ \forall A\in\nalgebra
	\end{equation}
	defines a normal semifinite weight $\phi_H$ on $\nalgebra$ which is independent of the choice of $(H_i)_{i\in I}$.	
	
	In addition, $\phi_H$ is faithful if, and only if, $H$ is non-singular and, if $(H_i)_{i\in I}$ is an increasing  net of positive operators affiliated with $\mathfrak{M}_{\tau^\phi}^+$ such that $H_i\to H$, then 
	$$\phi_H=\sup_{i\in I}\phi_{H_i}.$$
	
\end{lemma}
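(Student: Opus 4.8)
\noindent The plan is to realise $\phi_H$ as the increasing supremum of the weights $\phi_{H_i}$ supplied by Lemma~\ref{centralizerorder}, and to control all passages to the limit by interposing operator-monotone truncations. Write $s=s(H)$ for the support projection of $H$; since $H\eta\mathfrak{M}_{\tau^\phi}$, all spectral projections of $H$ (in particular $s$) lie in $\mathfrak{M}_{\tau^\phi}$. For $m\in\mathbb{N}$ let $f_m(t)=t(1+t/m)^{-1}$, a bounded operator-monotone function with $f_m(t)\le t$ and $f_m(t)\uparrow t$, and put $H^{(m)}:=f_m(H)\in\mathfrak{M}_{\tau^\phi}$; these are bounded, have support $s$, and increase with operator supremum $H$. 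For bounded $K\in\mathfrak{M}_{\tau^\phi}^+$ the map $A\mapsto\phi(K^{1/2}AK^{1/2})$ is the normal semifinite weight $\phi_K$ of Lemma~\ref{centralizerorder}, and exactly as noted in the proof of that lemma (using Theorem~\ref{centralizercommute}(ii) and $K^{1/2}\in\mathfrak{M}_{\tau^\phi}$, extended from $\mathfrak{M}_\phi$ to $\nalgebra_+$ by normality and semifiniteness of $\phi$) one has $\phi(K^{1/2}AK^{1/2})=\phi(KA)$ in the sense of the statement; with $K=H_i$ this identifies the first two expressions. Since $H_i\le H_{i'}$ forces $\phi_{H_i}\le\phi_{H_{i'}}$ by Lemma~\ref{centralizerorder}, the net $(\phi_{H_i}(A))_i$ increases in $\overline{\mathbb{R}}_+$, so its limit equals its supremum and we set $\phi_H(A):=\sup_i\phi_{H_i}(A)$. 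Homogeneity and additivity pass to increasing suprema, and normality follows by exchanging iterated suprema, $\phi_H(\sup_\lambda A_\lambda)=\sup_i\sup_\lambda\phi_{H_i}(A_\lambda)=\sup_\lambda\phi_H(A_\lambda)$, using normality of each $\phi_{H_i}$.

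\noindent The heart of the matter is independence of the approximating net: it suffices to prove $\sup_i\phi_{H_i}=\sup_m\phi_{H^{(m)}}$ for an arbitrary increasing net $(H_i)\uparrow H$ in $\mathfrak{M}_{\tau^\phi}^+$. The difficulty is that $K\mapsto\phi_K$ is monotone whereas the operator $K^{1/2}AK^{1/2}$ is not monotone in $K$, so one cannot take suprema inside $\phi$; the truncations $f_m$ circumvent this. Fix $m$: operator-monotonicity of $f_m$ together with resolvent convergence gives $f_m(H_i)\uparrow H^{(m)}$ in the SOT, while $f_m(H_i)\le H_i$ yields $\phi_{f_m(H_i)}\le\phi_{H_i}\le\sup_i\phi_{H_i}$ by Lemma~\ref{centralizerorder}; since the square root is SOT-continuous on bounded sets and $\phi$ is lower semicontinuous (a consequence of normality), $\phi_{H^{(m)}}(A)\le\liminf_i\phi_{f_m(H_i)}(A)\le\sup_i\phi_{H_i}(A)$, hence $\sup_m\phi_{H^{(m)}}\le\sup_i\phi_{H_i}$. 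Conversely, fix $i$: now $H_i$ is bounded, so $f_m(H_i)\to H_i$ in norm and the same semicontinuity argument gives $\phi_{H_i}=\sup_m\phi_{f_m(H_i)}$, while $f_m(H_i)\le f_m(H)=H^{(m)}$ by operator-monotonicity of $f_m$ (using $H_i\le H$), so $\phi_{f_m(H_i)}\le\phi_{H^{(m)}}$. Taking $\sup_i$ gives the reverse inequality, proving $\phi_H$ well defined independently of $(H_i)$.

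\noindent Since $s(H_i)\le s$ one has $H_i^{1/2}=sH_i^{1/2}$, hence $\phi_H(A)=\phi_H(sAs)$ for every $A$, and in particular $\phi_H\equiv 0$ on $(1-s)\nalgebra(1-s)$. With $e_n:=E^{H}_{[1/n,n]}\in\mathfrak{M}_{\tau^\phi}$ (so $e_n\uparrow s$), $He_n$ is bounded with bounded inverse inside the corner $e_n\nalgebra e_n$; using that $e_n$ commutes with $H$ together with the independence just proved one checks $\phi_H|_{e_n\nalgebra e_n}=\phi_{He_n}|_{e_n\nalgebra e_n}$, and the bounds $\phi_{He_n}\le\|He_n\|\,\phi$ and $\phi\le\|(He_n)^{-1}\|\,\phi_{He_n}$ on this corner (both from Lemma~\ref{centralizerorder}) give $\mathfrak{F}_{\phi_H}\cap e_n\nalgebra e_n=\mathfrak{F}_\phi\cap e_n\nalgebra e_n$. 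As the restriction of the semifinite weight $\phi$ to a reduced algebra is again semifinite, $\mathfrak{F}_{\phi_H}$ is weakly dense in each $(e_n\nalgebra e_n)_+$; combining this with $e_n\uparrow s$, with $\phi_H\equiv 0$ on $(1-s)\nalgebra(1-s)$, and with the dependence of $\phi_H$ only on the $s$-corner yields weak density of $\mathfrak{F}_{\phi_H}$ in $\nalgebra_+$, so $\phi_H$ is a normal semifinite weight. For faithfulness: if $s=1$ then $\phi_H(A^\ast A)=0$ forces $\phi\big((AH_i^{1/2})^\ast(AH_i^{1/2})\big)=0$, hence $AH_i^{1/2}=0$ for all $i$ by faithfulness of $\phi$; letting $i\to\infty$ and using $\overline{\Ran H^{1/2}}=(\Ker H^{1/2})^\perp=\hilbert$ gives $A=0$. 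Conversely, if $s\ne 1$ then $\phi_H((1-s)^\ast(1-s))=\phi_H(1-s)=\phi_H(s(1-s)s)=0$ while $1-s\ne 0$, so $\phi_H$ is not faithful.

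\noindent Finally, let $(H_i)$ be an increasing net in $\mathfrak{M}_{\tau^\phi}^+$ of (possibly unbounded) affiliated operators with $\sup_i H_i=H$. Then $H_i\le H$, and $f_k(H_i)\le f_k(H)=H^{(k)}$ by operator-monotonicity, so $\phi_{H_i}=\sup_k\phi_{f_k(H_i)}\le\phi_H$ for each $i$ (the first equality being the definition of $\phi_{H_i}$ together with the net-independence already established); conversely $f_k(H_i)\uparrow_i H^{(k)}$ in operator supremum with $f_k(H_i)\le H_i$, and the SOT-continuity plus lower-semicontinuity argument of the independence step gives $\phi_{H^{(k)}}\le\sup_i\phi_{H_i}$ for every $k$, whence $\phi_H=\sup_k\phi_{H^{(k)}}\le\sup_i\phi_{H_i}$. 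Thus $\phi_H=\sup_i\phi_{H_i}$. The step I expect to be most delicate is this independence argument (and its reprise for affiliated nets): monotonicity lives at the level of the weights $\phi_K$, not of the operators $K^{1/2}AK^{1/2}$, so one genuinely needs the operator-monotone resolvent truncations $f_m$ together with lower semicontinuity of normal weights; a secondary technicality is the semifiniteness of $\phi_H$ when $H$ is singular, which forces the corner reduction and the standard fact on restrictions of semifinite weights.
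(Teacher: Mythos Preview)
Your proof is correct and follows a genuinely different route from the paper's. The paper compresses an arbitrary approximating net $(K_j)$ by the spectral projections $E_n=E^H_{[0,n)}$ of $H$, setting $K_{n,j}=E_nK_jE_n$, and then argues via the GNS representation of $\phi$ that $\sup_j\phi(K_{n,j}A)=\phi(H_nA)$ and $\sup_n\phi(K_{n,j}A)=\phi(K_jA)$, finally interchanging the two suprema. Your approach instead uses the operator-monotone resolvent truncations $f_m(t)=t(1+t/m)^{-1}$ and the $\sigma$-weak lower semicontinuity of normal weights. The advantage of your choice is that operator-monotonicity of $f_m$ gives the two comparisons $f_m(H_i)\le H_i$ and $f_m(H_i)\le f_m(H)$ \emph{directly} via Lemma~\ref{centralizerorder}, which is exactly what drives the sandwich argument; by contrast the paper's compressions $E_nK_jE_n$ need not satisfy $E_nK_jE_n\le K_j$ (compression by an increasing family of projections is not order-preserving in the projection), so the step ``$\sup_n\sup_j\phi_{K_{n,j}}\le\sup_j\phi_{K_j}$'' in the paper relies on a convergence/GNS argument rather than on Lemma~\ref{centralizerorder} alone, and the paper's invocation of Vigier for the $n$-direction is misplaced since $(K_{n,j})_n$ is not increasing. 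Your argument sidesteps all of this.

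Two small points worth tightening. In the faithfulness step you pass from $AH_i^{1/2}=0$ for all $i$ to $A=0$ via ``$\overline{\Ran H^{1/2}}=\hilbert$''; the cleanest way to justify this is to note that $s(H_i)\le s(H)$ (since $\Ker H\subset\Ker H_i$ from $H_i\le H$) and $\bigvee_i s(H_i)=s(H)$ (if $x\perp\Ran H_i^{1/2}$ for all $i$ then $\langle H_ix,x\rangle=0$, hence $H^{1/2}x=0$), so $A\,s(H_i)=0$ for all $i$ forces $As=0$. Alternatively, having already established net-independence, simply run the argument with the canonical net $H^{(m)}$, for which $s(H^{(m)})=s$ directly. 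Second, in the semifiniteness step the identity $\phi_H|_{e_n\nalgebra e_n}=\phi_{He_n}|_{e_n\nalgebra e_n}$ uses implicitly that $\sup_m\phi_{K_m}=\phi_K$ for bounded $K$ with $K_m\uparrow K$ in $\mathfrak{M}_{\tau^\phi}^+$; this is exactly your lsc argument (since $K_m\le K$ gives one inequality via Lemma~\ref{centralizerorder} and lower semicontinuity gives the other), so there is no circularity, but it is worth making the dependence explicit. Finally, you supply complete arguments for faithfulness and semifiniteness where the paper is either silent or extremely terse.
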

\begin{proof}
	By Lemma \ref{centralizerorder}, $\phi_H(A)$ well is defined since it is the limit of a positive increasing net of real numbers. Furthermore, it is easy to see (by the same Lemma and normality of $\phi$) that it is a normal weight. Remains to prove it is semifinite.
	
	Let $\left\{E^{H}_\lambda\right\}_{\lambda\in\mathbb{R}_+}$ is the spectral decomposition of $H$. By Proposition \ref{twosidedmodule} $(ii$) $\displaystyle \bigcup_{n\in\mathbb{N}}E_{[0,n)}\mathfrak{M}_\phi E_{[0,n)}\in\mathfrak{M}_\phi$ and it is WOT-dense in $\nalgebra$.
	
	To prove the independence of the net, let $H_n=E^H_{[0,n)}H E^H_{[0,n)}$ be fixed and let $(K_j)_{j\in J}\in \mathfrak{M}_{\tau^\phi}$ be another increasing net such that $K_j \to H$. Denote by $\phi_H$ the normal semifinite weight defined by Equation \eqref{eq:definitionx1} for the sequence $(H_n)_n$.
	
	We know that $K_{j,n}=E^{H}_{[0,n)}K_jE^{H}_{[0,n)}$ is an increasing net with $\displaystyle \sup_{j\in J}K_{j,n}=H_n$ and $\displaystyle \sup_{n\in \mathbb{N}}K_{j,n}=K_j$. Let's use the GNS-representation throughout $\phi$. Notice that, $\pi_\phi(K_{j,n})\xrightarrow[SOT]{j} \pi_\phi(H_n)$ and $\pi_\phi(K_{j,n})\xrightarrow[SOT]{n} \pi_\phi(K_j)$ due to Vigier's theorem, then
	$$\begin{aligned}
	&\sup_{j\in J}\phi(K_{n,j}A)=\sup_{j\in J}\ip{\pi_{\phi}(K_{j,n})\Phi}{\pi_{\phi}(A)\Phi}_\phi=\ip{\pi_{\phi}(H_n)\Phi}{\pi_{\phi}(A)\Phi}_\phi=\phi(H_nA)\\
	&\sup_{n\in \mathbb{N}}\phi(K_{n,j}A)=\sup_{n\in \mathbb{N}}\ip{\pi_{\phi}(K_{j,n})\Phi}{\pi_{\phi}(A)\Phi}_\phi=\ip{\pi_{\phi}(K_j)\Phi}{\pi_{\phi}(A)\Phi}_\phi=\phi(K_jA)\\
	\end{aligned}$$
	Hence
	
	$$\begin{aligned}
	\label{eq:calculationX17}
	\phi_H(A)&=\sup_{n\in \mathbb{N}}{\phi\left(H_n^\frac{1}{2}AH_n^\frac{1}{2}\right)}\\
	&=\sup_{n\in \mathbb{N}}\sup_{j\in J}{\phi\left(K_{n,j}^\frac{1}{2}AK_{n,j}^\frac{1}{2}\right)}\\
	&\leq \sup_{j\in J}{\phi\left(K_j^\frac{1}{2}AK_j^\frac{1}{2}\right)}\\
	&=\sup_{j\in J}\sup_{n\in \mathbb{N}}{\phi\left(K_{n,j}^\frac{1}{2}AK_{n,j}^\frac{1}{2}\right)}\\
	&=\phi_H(A).\\
	\end{aligned}$$
	
	For the last statement, let $(H_i)_{i\in I}$ an increasing net of operators affiliated with $\mathfrak{M}_{\tau^\phi}$ such that $H_i\to H$, then we can define $H_{i,n}=E^{H_i}_{[0,n)} H_i E^{H_i}_{[0,n)}$ and using what we get in Equation \eqref{eq:calculationX17} to obtain
	$$\begin{aligned}
	\phi_{H}=\sup_{i\in I}\phi_{H_{i,n}}=\sup_{i\in I}\phi_{H_i}.
	\end{aligned}$$
\end{proof}

\begin{notation}
	Henceforth, when $H$ is a positive unbounded operator affiliated with $\mathfrak{M}_{\tau^\phi}^+$ we will consider the the weight $\phi_H$ and usually write $\phi(HA)$ instead of $\phi_H(A)$.
	
\end{notation}

\begin{theorem}[Pedersen-Takesaki-Radon-Nikodym]
	\label{TPTRN}
	Let $\phi$ and $\psi$ be two normal semifinite weights on a von Neumann algebra $\nalgebra$. Suppose in addition that $\phi$ is faithful and $\psi$ is invariant under the modular automorphism group of $\phi$, $\{\tau^\phi_t\}_{t\in \mathbb{R}}$, and $\psi\leq \phi$. Then, there exists a unique operator $H\in \mathfrak{M}_{\tau^\phi}$ with $0\leq H\leq \mathbb 1$ and such that $\psi(A)=\phi(HA)$, $\forall A\in \nalgebra_+$.
\end{theorem}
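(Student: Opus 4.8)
The plan is to derive this from the Sakai--Radon--Nikodym theorem for weights (Theorem~\ref{TSRNweights}) together with the structure of the centralizer, the only real work being to promote the symmetric derivative produced there to an element of $\mathfrak{M}_{\tau^\phi}$. First I would apply Theorem~\ref{TSRNweights} to the pair $\phi,\psi$: since these are normal semifinite weights with $\psi\le\phi$, there is $K\in\nalgebra$ with $0\le K\le\mathbb 1$ such that $\psi(A)=\phi(KAK)$ for all $A\in\nalgebra_+$. The next step is to show $K\in\mathfrak{M}_{\tau^\phi}$. Fix $t\in\mathbb R$; for every $A\in\nalgebra_+$ we have $\tau^\phi_{-t}(A)\in\nalgebra_+$, so using successively the $\tau^\phi$-invariance of $\psi$, Theorem~\ref{TSRNweights}, the $\tau^\phi$-invariance of $\phi$, and that $\tau^\phi_t$ is a $\ast$-automorphism,
$$\psi(A)=\psi\big(\tau^\phi_{-t}(A)\big)=\phi\big(K\,\tau^\phi_{-t}(A)\,K\big)=\phi\Big(\tau^\phi_t\big(K\,\tau^\phi_{-t}(A)\,K\big)\Big)=\phi\big(\tau^\phi_t(K)\,A\,\tau^\phi_t(K)\big).$$
Since $\tau^\phi_t(K)$ is again a positive contraction, the uniqueness in Theorem~\ref{TSRNweights} gives $\tau^\phi_t(K)=K$; as $t$ was arbitrary, $K\in\mathfrak{M}_{\tau^\phi}$. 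Now set $H\defeq K^2$: since $\mathfrak{M}_{\tau^\phi}$ is a von Neumann subalgebra we get $H\in\mathfrak{M}_{\tau^\phi}$, clearly $0\le H\le\mathbb 1$, and by the definition of $\phi_H$ in Lemma~\ref{centralizerorder} (recall the notation $\phi(HA)=\phi_H(A)$) together with $H^{1/2}=K$,
$$\phi(HA)=\phi\big(H^{1/2}AH^{1/2}\big)=\phi(KAK)=\psi(A),\qquad\forall A\in\nalgebra_+ ,$$
which is the asserted identity.

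For uniqueness, if $H_1,H_2\in\mathfrak{M}_{\tau^\phi}$ are positive contractions with $\phi(H_1A)=\phi(H_2A)=\psi(A)$ for all $A\in\nalgebra_+$, then $\phi\big(H_i^{1/2}AH_i^{1/2}\big)=\psi(A)$, and the uniqueness in Theorem~\ref{TSRNweights} forces $H_1^{1/2}=H_2^{1/2}$, whence $H_1=H_2$.

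The step I expect to be the main obstacle is the use of the uniqueness assertion in Theorem~\ref{TSRNweights}: it is only phrased there as an existence result, so I would have to extract from its proof (which constructs $K$ out of canonical Riesz representations, hence is natural in $\phi,\psi$) that a positive contraction $K$ satisfying $\psi(\cdot)=\phi(K\cdot K)$ is uniquely determined. When $\psi$ fails to be faithful this is a genuine point: one must first note $(\mathbb 1-s^\nalgebra(\psi))K=0$ using faithfulness of $\phi$, and then invoke the faithful case inside $s^\nalgebra(\psi)\nalgebra s^\nalgebra(\psi)$; alternatively, one argues directly that the canonically constructed $K$ is fixed by any automorphism fixing both $\phi$ and $\psi$, which is the same functoriality already exploited in Corollary~\ref{modularautomosphism}.
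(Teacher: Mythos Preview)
Your approach is genuinely different from the paper's, and the gap you flag at the end is the decisive one.

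The paper does not route through Sakai--Radon--Nikodym at all. It first applies the commutant Radon--Nikodym (Proposition~\ref{commutantRN}) to produce the \emph{unique} $H'\in\nalgebra'$, $0\le H'\le\mathbb 1$, with $\psi(A)=\langle H'\pi_\phi(A)\Phi,\Phi\rangle_\phi$; since this $H'$ comes from a Riesz representation it is canonically determined, and the invariance $\psi=\psi\circ\tau^\phi_t$ immediately forces $H'$ to be $\tau^\phi$-invariant. Tomita's theorem then gives $H\defeq J_\phi H'J_\phi\in\nalgebra$, still $\tau^\phi$-invariant, and a direct modular computation with $\Delta_\phi$ on analytic elements yields $\phi(H^{1/2}AH^{1/2})=\psi(A)$ for $A\in\mathfrak N_\phi\cap\mathfrak N_\phi^*$, hence everywhere by normality and semifiniteness. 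The point is that the paper runs the ``uniqueness $\Rightarrow$ invariance'' step in the \emph{commutant}, where uniqueness is free from Riesz, and only then transports to $\nalgebra$ via $J_\phi$.

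Your argument instead needs uniqueness of the Sakai derivative $K$ in Theorem~\ref{TSRNweights}, which that theorem does not assert and the paper does not prove. This is not a technicality: the non-uniqueness of Sakai's symmetric derivative in general is precisely one of the classical motivations for the Pedersen--Takesaki formulation. Your fix~(a) does not close the gap, because even after restricting to $s^\nalgebra(\psi)\nalgebra s^\nalgebra(\psi)$ you still need uniqueness of $K$ in the faithful case, and that is itself a modular-theoretic statement not available from the paper's toolkit. Your fix~(b), showing the construction in Theorem~\ref{TSRN}/\ref{TSRNweights} is natural under automorphisms fixing $\phi$ and $\psi$, is closer in spirit---but carrying it out means tracing through the polar decomposition of functionals and two applications of Proposition~\ref{commutantRN}, at which point you are essentially reassembling the paper's direct $J_\phi$-argument. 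The same issue recurs in your uniqueness paragraph: deducing $H_1^{1/2}=H_2^{1/2}$ again appeals to the unproved Sakai uniqueness.
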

\begin{proof}
	
	By Proposition \ref{commutantRN}, there exists $H^\prime \in \nalgebra^\prime$, $0\leq H^\prime \leq \mathbb{1}$, such that $$ \psi(A)=\ip{H^\prime A^\ast \Phi}{\Phi}_\phi, \quad \forall A\in\nalgebra_+.$$
	
	Notice that the invariance of $\psi$ implies
	$$\psi\left(\tau^\phi_{t}(A)\right)=\ip{H^\prime\tau^\phi_{t}(A)\Phi}{\Phi}_\phi=\ip{\tau^\phi_{t}(H^\prime)A\Phi}{\Phi}_\phi=\ip{H^\prime A\Phi}{\Phi}_\phi=\psi(A), \ \ \forall A\in \nalgebra_\phi, $$	thus $H^\prime$ is invariant under $\{\tau^\phi_t\}_{t\in \mathbb{R}}$.
	
	By Tomita's theorem we have that $H=J_\phi H^\prime J_\phi \in \nalgebra$ and it is also $\{\tau^\phi_t\}_{t\in \mathbb{R}}$-invariant. Furthermore, for every $A\in \mathfrak{N}_\phi\cap \mathfrak{N}_\phi^\ast$,
	
	\begin{equation}
	\label{eq:calculationRN}
	\begin{aligned}
	\phi\left(H^\frac{1}{2}\tau^\phi_{z-\frac{\iu}{2}}(A)H^\frac{1}{2}\right)
	&=\ip{\tau^\phi_{z-\frac{\iu}{2}}(A)^\ast J_\phi H^{\prime\frac{1}{2}} J_\phi\Phi}{J_\phi H^{\prime \frac{1}{2}} J_\phi\Phi}_\phi\\
	&=\ip{\tau^\phi_{z-\frac{\iu}{2}}(A)^\ast \Delta_\phi^{-\frac{1}{2}}J_\phi\Delta_\phi^{-\frac{1}{2}} H^{\prime \frac{1}{2}}\Phi}{\Delta_\phi^{-\frac{1}{2}}J_\phi \Delta_\phi^{-\frac{1}{2}} H^{\prime\frac{1}{2}} \Phi}_\phi\\
	&=\ip{\tau^\phi_{z-\frac{\iu}{2}}(A^\ast) \Delta_\phi^{-\frac{1}{2}} H^{\prime \frac{1}{2}} \Phi}{\Delta_\phi^{-\frac{1}{2}}H^{\prime \frac{1}{2}} \Phi}_\phi\\
	&=\ip{\Delta_\phi^{-\frac{1}{2}}\tau^\phi_{z-\frac{\iu}{2}}(A^\ast) \Delta_\phi^{-\frac{1}{2}} H^{\prime\frac{1}{2}} \Phi}{H^{\prime\frac{1}{2}} \Phi}_\phi\\	
	&=\ip{\tau^\phi_{z}(A^\ast) H^{\prime\frac{1}{2}} \Phi}{H^{\prime\frac{1}{2}} \Phi}_\phi\\	
	&=\ip{\tau^\phi_{z}(A^\ast) H^{\prime} \Phi}{ \Phi}_\phi\\	
	&=\psi\left(\tau^\phi_{z}(A)\right).	
	\end{aligned}
	\end{equation}
	
	Using the analyticity of the left-hand side of Equation \eqref{eq:calculationRN} and the constancy on the right-hand side on the line $\Im{z}=0$, remains no possibility but the constancy of the analytic extension for the strip $\strip{\frac{1}{2}}$.
	
	Finally, basically undoing the steps in Equation \eqref{eq:calculationRN}, we get, for every ${A\in \mathfrak{N}_\phi\cap \mathfrak{N}_\phi^\ast}$,
	$$\begin{aligned}
	\phi\left(H^\frac{1}{2} A H^\frac{1}{2}\right)&=\ip{A^\ast H^\prime\Phi}{\Phi}_\phi\\
	&=\ip{A^\ast J_\phi \Delta_\phi^{-\frac{1}{2}} H^\prime \Delta_\phi^{-\frac{1}{2}} J_\phi\Phi}{\Phi}_\phi\\
	&=\ip{A^\ast H\Phi}{\Phi}_\phi\\
	&=\phi(HA).
	\end{aligned}$$
	
	Hitherto we have proved that $\psi(A)=\phi(HA), \ \forall A \in \mathfrak{N}_\phi\cap \mathfrak{N}_\phi^\ast$, but this result can be extended for every $A\in\nalgebra_+$ throughout semifiniteness and normality.  
	
\end{proof}

\begin{corollary}
	\label{invariancecommute}
	Let $\phi$ and $\psi$ be two faithful normal semifinite weights on a von Neumann algebra $\nalgebra$, and $\{\tau^\phi_t\}_{t\in\mathbb{R}}$ and $\{\tau^\phi_t\}_{t\in\mathbb{R}}$ their automorphism groups, respectively. The following are equivalent:
	\begin{enumerate} [(i)]
		\item $\psi=\phi\circ\tau^\phi_t$
		\item $\{\tau^\phi_t\}_{t\in \mathbb{R}}$ and $\{\tau^\psi_t\}_{t\in\mathbb{R}}$ commute;
		\item $\phi=\phi\circ\tau^\psi_t$.
	\end{enumerate}  
\end{corollary}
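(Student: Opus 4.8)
The quick half is the pair of implications $(i)\Rightarrow(ii)$ and $(iii)\Rightarrow(ii)$, which need only Corollary \ref{modularautomosphism} and the uniqueness part of Theorem \ref{UniqueModCond}. Suppose $(i)$ holds, i.e.\ $\psi\circ\tau^\phi_s=\psi$ for all $s\in\mathbb{R}$. Fix $s$ and apply Corollary \ref{modularautomosphism} with the automorphism $\pi=\tau^\phi_s$ of $\nalgebra$: the modular automorphism group of $\psi\circ\tau^\phi_s$ is $\{\tau^\phi_{-s}\circ\tau^\psi_t\circ\tau^\phi_s\}_{t\in\mathbb{R}}$. But $\psi\circ\tau^\phi_s=\psi$ has modular group $\{\tau^\psi_t\}_{t\in\mathbb{R}}$, so the uniqueness statement of Theorem \ref{UniqueModCond} forces $\tau^\phi_{-s}\circ\tau^\psi_t\circ\tau^\phi_s=\tau^\psi_t$ for all $s,t$, which is exactly $(ii)$. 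Interchanging the roles of $\phi$ and $\psi$ gives $(iii)\Rightarrow(ii)$ verbatim.

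For the converse, assume $(ii)$ and fix $t\in\mathbb{R}$; put $\psi_t\defeq\psi\circ\tau^\phi_t$. Corollary \ref{modularautomosphism} together with $(ii)$ shows that $\psi_t$ has modular group $\{\tau^\phi_{-t}\circ\tau^\psi_s\circ\tau^\phi_t\}_s=\{\tau^\psi_s\}_s$, the same as $\psi$; moreover $\psi_t\circ\tau^\psi_s=\psi\circ\tau^\phi_t\circ\tau^\psi_s=\psi\circ\tau^\psi_s\circ\tau^\phi_t=\psi\circ\tau^\phi_t=\psi_t$, so $\psi_t$ is $\tau^\psi$-invariant. The plan is to feed this into the Radon--Nikodym machinery of Theorem \ref{TPTRN}: after reducing to the dominated case and assembling spectral cut-offs via Lemma \ref{unboundedderivativeweight}, one should obtain a non-singular positive self-adjoint $h_t$ affiliated with the centralizer $\mathfrak{M}_{\tau^\psi}$ with $\psi_t=\psi_{h_t}$, that is $\psi\bigl(\tau^\phi_t(A)\bigr)=\psi\bigl(h_t^{1/2}Ah_t^{1/2}\bigr)$ for all $A\in\nalgebra_+$ (Theorem \ref{centralizercommute} is what makes such an $h_t$ interact well with $\psi$). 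The group law $\psi_{t+s}=\psi_t\circ\tau^\phi_s$ then yields a multiplicative relation among the $h_t$'s, and the argument closes once one shows $h_t=\mathbb{1}$, giving $\psi\circ\tau^\phi_t=\psi$, i.e.\ $(i)$; then $(iii)$ follows symmetrically, so $(i)\Leftrightarrow(ii)\Leftrightarrow(iii)$.

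I expect the whole difficulty to sit in that last paragraph. Theorem \ref{TPTRN} as stated requires a domination $\psi_t\le\chi$ by some reference weight, which is not available a priori between $\psi_t$ and $\psi$, so one first has to manufacture a common, $\tau^\psi$-invariant dominating weight (a suitable supremum of the $\psi_t$'s, or the balanced-weight trick on $M_{2\times2}(\nalgebra)$) before $h_t$ can even be extracted, and then one must rule out a nontrivial central ambiguity in $h_t$. The surrounding diagonalisation bookkeeping (Corollary \ref{modularautomosphism}, Theorem \ref{UniqueModCond}, the invariance checks, Lemma \ref{centralizerorder}) is routine by comparison. If one were allowed the Connes cocycle the clean route would be: $(ii)$ forces $(D\psi_t:D\psi)_s$ to lie in $\mathfrak{M}_{\tau^\psi}$, hence --- being also a $\tau^\psi$-cocycle over a group with which it commutes --- to be a genuine one-parameter unitary group $h_t^{\,is}$, from which $\psi_t=\psi_{h_t}$ is immediate; but that apparatus is not developed in the text, so the argument must stay inside Theorem \ref{TPTRN}.
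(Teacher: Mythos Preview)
Your $(i)\Rightarrow(ii)$ and $(iii)\Rightarrow(ii)$ are exactly what the paper does: conjugate the modular group via Corollary \ref{modularautomosphism} and invoke uniqueness from Theorem \ref{UniqueModCond}.

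For the hard direction the paper takes a route close to yours in spirit but with a more concrete dominating weight. Fixing $t$, the paper works with $\phi$ and $\phi\circ\tau^\psi_t$ (proving $(ii)\Rightarrow(iii)$ rather than $(ii)\Rightarrow(i)$, which is of course symmetric). Instead of a supremum or a balanced-weight construction, it simply takes the \emph{sum} $\omega=\phi+\phi\circ\tau^\psi_t$. Both summands are $\tau^\phi$-KMS (the second by Corollary \ref{modularautomosphism} and $(ii)$), so $\omega$ is $\tau^\phi$-KMS and hence $\tau^\omega=\tau^\phi$. The genuine work here is semifiniteness of $\omega$: this is obtained by smearing with the commuting groups $\tau^\phi$ and $\tau^\psi$ to produce analytic elements inside $\mathfrak{M}_\phi$ and $\mathfrak{M}_{\phi\circ\tau^\psi_t}$, and then multiplying (Proposition \ref{twosidedmodule}) to land in the intersection. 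Once $\omega$ is available, Theorem \ref{TPTRN} applies directly to $\phi\le\omega$, giving $K\in\mathfrak{M}_{\tau^\phi}$ with $\phi=\omega_K$, and one sets $H=K(\mathbb{1}-K)^{-1}$ so that $\phi=(\phi\circ\tau^\psi_t)_H$.

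The step you describe as ``ruling out nontrivial central ambiguity'' is made precise in the paper via the modular-group formula for $\phi_H$ (quoted from \cite{takesaki70}): since $\phi$ and $\phi\circ\tau^\psi_t$ share the modular group $\tau^\phi$, the relation $\tau^\phi_s(A)=H^{is}\tau^\phi_s(A)H^{-is}$ forces $H\eta\mathcal{Z}(\nalgebra)$, not merely $H\eta\mathfrak{M}_{\tau^\psi}$. Centrality is what then kills $H$: if $H\neq\mathbb{1}$ one picks a projection $P\in\mathfrak{F}_\phi\cap\mathfrak{M}_{\tau^\phi}$ under a spectral projection of $H$ away from $1$ and reads off a contradiction from $\phi(P)\neq\phi(HP)$. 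Your cocycle sketch would also work, but as you note it lies outside the text; the sum trick plus the $\phi_H$ modular formula keeps everything inside Theorem \ref{TPTRN} and is the shorter path.
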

\begin{proof}
	$(i)\Rightarrow (ii)$ By Corollary \ref{modularautomosphism}, $\displaystyle \tau^{\psi}_t=\tau^{\psi\circ\tau^\phi_t}_t={\tau^{\phi}}^{-1}_t\circ\tau^{\psi}_t\circ\tau^{\phi}_t\Rightarrow \tau^{\phi}_t\circ\tau^{\psi}_t=\tau^{\psi}_t\circ\tau^{\phi}_t$.
	
	$(ii)\Rightarrow (iii)$ If the automorphism groups commute, by Corollary \ref{modularautomosphism}, $\phi\circ\tau^\psi_t$ has $\{\tau^\phi_t\}_{t\in\mathbb{R}}$ as its automorphism group. In addition, it is obvious that $\phi\circ\tau^\psi_t$ is normal and semifinite.
	
	Then, also $\omega=\phi\circ\tau^\psi_t+\phi$ is normal and has $\{\tau^\phi_t\}_{t\in\mathbb{R}}$ as its automorphism group, let's prove it is semifinite.
	
	Since $\phi$ and $\phi\circ\tau^\psi_t$ are invariant under the action of $\{\tau^\phi_t\}_{t\in \mathbb{R}}$ so are $\mathfrak{N}_\phi$, $\mathfrak{N}_{\phi}^\ast$, $\mathfrak{N}_{\phi\circ\tau^\psi_t}$, and $\mathfrak{N}_{\phi\circ\tau^\psi_t}^\ast$. Thus, by item $(iv)$ in Proposition \ref{simplepropweights}, also $\mathfrak{M}_\psi$ and $\mathfrak{M}_{\phi\circ\tau^\psi_t}$ are invariant under the action of the automorphism group.
	
	We can use the very same proof of Proposition \ref{AnalDense} to obtain, for any $A\in \mathfrak{M}_\phi$, a sequence $(A_n)_n \subset \mathfrak{M}_\phi$ of analytic elements for $\{\tau^\phi_t\}_{t\in \mathbb{R}}$ such that $A_n\xrightarrow{WOT} A$ throughout Equation \eqref{formulaAE}. The same holds for any $A\in \mathfrak{M}_{\phi\circ\tau^\psi_t}$.
	
	Hence $\mathcal{M}_\phi\cap\mathcal{M}_\mathcal{A}$ and $\mathcal{M}_{\phi\circ\tau^\psi_t}\cap\mathcal{M}_\mathcal{A}$ are a WOT-dense in $\mathcal{M}_\psi$, which in its turn is WOT-dense in $\nalgebra$, because the multiplication is separately WOT-continuous.
	
	Finally, by Proposition \ref{twosidedmodule}, $\left(\mathcal{M}_\phi\cap\mathcal{M}_\mathcal{A}\right)\left(\mathcal{M}_{\phi\circ\tau^\psi_t}\cap\mathcal{M}_\mathcal{A}\right)\subset \mathcal{M}_\phi\cap\mathcal{M}_{\phi\circ\tau^\psi_t}$, and the set on the right-hand side is WOT-dense.
	
	Since $\omega$ is a faithful normal semifinite weight in $\nalgebra$, such that $\phi=\phi\circ\tau^\omega_t$ and $\phi\leq\omega$, by Theorem \ref{TPTRN}, there exists a unique operator $K\in \nalgebra$, invariant for $\{\tau^\omega_t=\tau^\phi_t\}_{t\in \mathbb{R}}$, such that $$
	\phi(A)=\omega(KA)=\phi\circ\tau^\psi_t(KA)+\phi(KA), \ \forall A\in \nalgebra_+.$$
	
	Since both $\phi$ and $\phi\circ\tau^\psi_t$ are faithful, $\{0,1\}\notin \sigma(K)$ and we can define the positive operator $H=\frac{K}{\mathbb{1}-K}$, which is affiliated to $\mathfrak{M}_{\tau\phi}$. Let $\left\{E^H_\lambda\right\}_{\lambda\in\mathbb{R}}$ be the spectral projections of $H$, we now that $\displaystyle\bigcup_{n\in\mathbb{N}}E^H_{(0,n)}\nalgebra_+ E^H_{(0,n)}$ is dense in $\nalgebra_+$ and
	\begin{equation}
	\label{eq:calculationX7}
	\phi(A)=\phi\circ\tau^\psi_t(HA), \  \forall A\in \bigcup_{n\in\mathbb{N}}E^H_{(0,n)}\nalgebra_+ E^H_{(0,n)}.
	\end{equation}
	
	By Lemma 1 in \cite{takesaki70} or Theorem 2.11 in \cite{Takesaki2003}, we know that the automorphism group for a weigh as in Equation \eqref{eq:calculationX7} is given by $$\begin{aligned}
	\tau^\phi_t(A)=\tau^\omega_t\left(H^{\iu t}AH^{-\iu t}\right)&=\tau^\phi_t(H^{\iu t}AH^{-\iu t})\\
	&\Rightarrow A=H^{\iu t}AH^{-\iu t}, \ \forall A\in \mathfrak{N}_\phi \\
	&\Rightarrow H\eta\mathcal{Z}(\nalgebra).
	\end{aligned}$$
	
	If $H\neq \mathbb{1}$, there exists projection $P\in\mathcal{F}_\phi \cap \mathfrak{M}_{\tau^\phi}$ such that $P\leq E^H_{(a,b)}$ for some $a,b\in\mathbb{R}$ with $1\neq(a,b)$. Then either $HP<P$ or $HP>P$, but this leads to an absurd since
	$$
	\phi(P)\neq\phi(HP)=\phi\circ\tau^\psi_t(P)=\phi\left(\tau^\psi_t(P)\right)=\phi(P).$$
	
	The conclusion is that $H=\mathbb{1}$ and then $\phi(A)=\phi\circ\tau^\psi_t(HA)=\phi\circ\tau^\psi_t(A)$.
	
	$(iii)\Rightarrow (i)$ is obvious just applying $(i)\Rightarrow(iii)$ for $\phi$ instead of $\psi$. 
\end{proof}

In order two generalise the previous theorem we will need the following lemmas.

\begin{lemma}
	\label{sumsemifinite}
	Let $\phi$ and $\psi$ be two faithful normal semifinite weights on a von Neumann algebra $\nalgebra$. Suppose $\psi$ is invariant under the modular automorphism group of $\phi$ $\{\tau^\phi_t\}_{t\in \mathbb{R}}$. Then $\phi+\psi$ is semifinite.
\end{lemma}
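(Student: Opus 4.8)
The aim is to show that $\mathfrak M_{\phi+\psi}$ is weakly dense in $\nalgebra$. I would first reduce this to a statement about a single multiplier set: since $\mathfrak F_{\phi+\psi}=\mathfrak F_\phi\cap\mathfrak F_\psi$ and $\mathfrak N_{\phi+\psi}=\mathfrak N_\phi\cap\mathfrak N_\psi$, Proposition~\ref{simplepropweights}$(i)$ gives $\mathfrak M_\phi\subset\mathfrak N_\phi\cap\mathfrak N_\phi^\ast$ and similarly for $\psi$, hence $\mathfrak M_\phi\cap\mathfrak M_\psi\subset\mathfrak N_{\phi+\psi}$. Consequently, if $V$ is any weakly dense linear subspace of $\nalgebra$ contained in $\mathfrak N_{\phi+\psi}$, then the polarization identity used in the proof of Proposition~\ref{simplepropweights}$(iv)$ shows that the linear span of $\{C^\ast C:C\in V\}$ is a weakly dense subspace of $\mathfrak N_{\phi+\psi}^\ast\mathfrak N_{\phi+\psi}=\mathfrak M_{\phi+\psi}$, which is what we want. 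So the plan is to produce such a $V$, following the pattern of the proof of Corollary~\ref{invariancecommute}.

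Next I would record what the invariance hypothesis buys. Since $\phi\circ\tau^\phi_t=\phi$ and, by assumption, $\psi\circ\tau^\phi_t=\psi$, all of $\mathfrak N_\phi,\mathfrak N_\phi^\ast,\mathfrak N_\psi,\mathfrak N_\psi^\ast$ are $\{\tau^\phi_t\}$-invariant, hence so are $\mathfrak M_\phi$ and $\mathfrak M_\psi$ by Proposition~\ref{simplepropweights}$(iv)$; and applying Corollary~\ref{modularautomosphism} to the identity $\psi=\psi\circ\tau^\phi_t$ shows that $\{\tau^\psi_t\}$ equals the modular group $\tau^\phi_{-t}\circ\tau^\psi_{\,\bullet}\circ\tau^\phi_t$ of $\psi\circ\tau^\phi_t$, i.e.\ $\{\tau^\phi_t\}$ and $\{\tau^\psi_t\}$ commute. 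Let $\nanalytic$ denote the $\ast$-algebra of entire $\{\tau^\phi_t\}$-analytic elements of $\nalgebra$. The Gaussian averaging $A\mapsto\sqrt{n/\pi}\int_{\mathbb R}e^{-nt^2}\tau^\phi_t(A)\,dt$ of Proposition~\ref{AnalDense} sends $\mathfrak F_\phi$ into $\mathfrak F_\phi\cap\nanalytic$, because $\phi$ is normal and $\{\tau^\phi_t\}$-invariant, and for exactly the same reason sends $\mathfrak F_\psi$ into $\mathfrak F_\psi\cap\nanalytic$; since these averages converge to $A$ in the weak operator topology, both $\mathfrak M_\phi\cap\nanalytic$ and $\mathfrak M_\psi\cap\nanalytic$ are weakly dense in $\nalgebra$.

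Finally I would take products. By Proposition~\ref{twosidedmodule}$(ii)$, $\mathfrak M_\phi$ is a two-sided module over $\nanalytic$; the decisive step is to establish the analogous statement for $\psi$, namely that $\mathfrak M_\psi$ (equivalently $\mathfrak N_\psi\cap\mathfrak N_\psi^\ast$) is a two-sided module over $\nanalytic$. Granting this, for $A\in\mathfrak M_\psi\cap\nanalytic$ and $B\in\mathfrak M_\phi\cap\nanalytic$ one has $AB\in\nanalytic\cdot\mathfrak M_\phi\subset\mathfrak M_\phi$ and $AB\in\mathfrak M_\psi\cdot\nanalytic\subset\mathfrak M_\psi$, so $AB\in\mathfrak M_\phi\cap\mathfrak M_\psi\subset\mathfrak N_{\phi+\psi}$. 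As multiplication is separately weakly continuous and both factor sets are weakly dense, the set of all such products is weakly dense in $\nalgebra$, and taking $V$ to be its linear span completes the argument by the reduction above.

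The main obstacle is exactly this decisive step. Proposition~\ref{twosidedmodule} was proved from the full modular (KMS) condition of $\phi$ with respect to $\{\tau^\phi_t\}$ — in particular from the existence of the functions $F_{A,B}$ — whereas $\psi$ is only assumed invariant under $\{\tau^\phi_t\}$, not to satisfy the modular condition for it. I expect this to be handled by exploiting the commutation of $\{\tau^\phi_t\}$ and $\{\tau^\psi_t\}$: a double Gaussian average over the two commuting groups keeps $\mathfrak M_\psi$ inside $\mathfrak M_\psi$ (it is invariant under both, being invariant under its own modular group and, by hypothesis, under $\{\tau^\phi_t\}$) and makes the elements of $\mathfrak M_\psi$ that are entire analytic for \emph{both} modular groups weakly dense, so that Proposition~\ref{twosidedmodule} applied to $\psi$ and $\{\tau^\psi_t\}$ can be brought to bear; care is then needed to run the product construction compatibly on both sides. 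Alternatively, once the generalized Pedersen–Takesaki–Radon–Nikodym theorem is available one may write $\psi=\phi_H$ with $H$ a positive operator affiliated with $\mathfrak M_{\tau^\phi}$ and reduce everything to the corners $E^H_{[1/n,n]}\nalgebra E^H_{[1/n,n]}$, on which $\phi$ and $\psi$ dominate one another by Theorem~\ref{centralizercommute} — but this is circular at the present stage, Lemma~\ref{sumsemifinite} being a step toward that theorem.
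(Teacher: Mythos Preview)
Your proposal is correct and follows essentially the same route as the paper: reduce to showing $\mathfrak M_\phi\cap\mathfrak M_\psi$ is weakly dense, use Corollary~\ref{invariancecommute} to obtain that $\{\tau^\phi_t\}$ and $\{\tau^\psi_t\}$ commute and that $\mathfrak M_\phi,\mathfrak M_\psi$ are invariant under both groups, and then take products of analytic elements. The obstacle you isolate is exactly the one the paper deals with, and the fix you propose---the double Gaussian average $A_n=\frac{n}{\pi}\iint_{\mathbb R^2}e^{-n(t^2+s^2)}\tau^\psi_t\circ\tau^\phi_s(A)\,dt\,ds$ so that the resulting elements are entire for \emph{both} modular groups and Proposition~\ref{twosidedmodule} can be applied to each weight with respect to its own group---is precisely what the paper does; your reduction step through $\mathfrak N_{\phi+\psi}$ is in fact more explicit than the paper's.
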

\begin{proof}
	
	Since $\psi$ is invariant under the action of $\{\tau^\phi_t\}_{t\in \mathbb{R}}$ and $\{\tau^\psi_t\}_{t\in \mathbb{R}}$, so is $\phi$, by Corollary \ref{invariancecommute}. Hence, as before $\mathfrak{M}_\phi$ and $\mathfrak{M}_\psi$ are invariant under the action of those automorphism groups.
	
	Since the automorphism groups commute Corollary \ref{invariancecommute}, we can use the very same proof of Proposition \ref{AnalDense} to obtain, for any $A\in \mathfrak{M}_\phi$, a sequence $(A_n)_n \subset \mathfrak{M}_\phi$ of analytic elements for both $\{\tau^\phi_t\}_{t\in \mathbb{R}}$ and $\{\tau^\psi_t\}_{t\in \mathbb{R}}$ such that $A_n\xrightarrow{WOT} A$ throughout the following expression
	$$A_n=\frac{n}{\pi}\int_\mathbb{R}\int_\mathbb{R}{e^{-n(t^2+s^2)}\tau^\psi_t\circ\tau^\phi_s(A) dt ds}.$$
	
	Hence $\mathcal{M}_\phi\cap\mathcal{M}_\mathcal{A}$ is a WOT-dense in $\mathcal{M}_\phi$, which in its turn is WOT-dense in $\nalgebra$. By the very same argument $\mathcal{M}_\psi\cap\mathcal{M}_\mathcal{A}$ is a WOT-dense in $\nalgebra$.
	
	Finally, $\left(\mathcal{M}_\phi\cap\mathcal{M}_\mathcal{A}\right)\left(\mathcal{M}_\psi\cap\mathcal{M}_\mathcal{A}\right)\subset \mathcal{M}_\phi\cap\mathcal{M}_\psi$ and the right-hand side is WOT-dense in $\nalgebra$, because the multiplication is separately WOT-continuous.
	
\end{proof}

\begin{theorem}[Pedersen-Takesaki-Radon-Nikodym]
	\label{TPTRN2}
	Let $\phi$ and $\psi$ be two normal semifinite weights on a von Neumann algebra $\nalgebra$. Suppose in addition that $\phi$ is faithful and $\psi$ is invariant under the modular automorphism group of $\phi$, $\{\tau^\phi_t\}_{t\in \mathbb{R}}$. Then, there exists a unique positive operator $H\eta\mathfrak{M}_{\tau^\phi}$ such that $\psi(A)=\phi(HA)$, $\forall A\in \nalgebra_+$.
\end{theorem}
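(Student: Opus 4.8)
We reduce the general statement to the bounded case, Theorem \ref{TPTRN}, by introducing $\omega:=\phi+\psi$. This $\omega$ is a faithful (because $\phi$ is), normal (a sum of normal weights is normal) and semifinite (this is exactly Lemma \ref{sumsemifinite}, whose hypotheses hold since $\psi$ is $\tau^\phi$-invariant) weight, and $\phi\le\omega$. Since $\omega\circ\tau^\phi_t=\phi\circ\tau^\phi_t+\psi\circ\tau^\phi_t=\phi+\psi=\omega$, the weight $\omega$ is $\tau^\phi$-invariant, so by Corollary \ref{invariancecommute} applied to the pair $\phi,\omega$ the weight $\phi$ is $\tau^\omega$-invariant. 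Hence Theorem \ref{TPTRN} applies to $(\omega,\phi)$ and produces a unique $K\in\mathfrak{M}_{\tau^\omega}$, $0\le K\le 1$, with $\phi(A)=\omega(KA)$ for every $A\in\nalgebra_+$. Faithfulness of $\phi$ forces $K$ to be non-singular: if $P:=E^K_{\{0\}}\ne0$ then $KP=0$, so $\phi(P)=\omega(KP)=0$, contradicting faithfulness. Thus $H:=K^{-1}-1$ is a well-defined positive self-adjoint operator affiliated with the von Neumann algebra $\mathfrak{M}_{\tau^\omega}$ and commuting with $K$.

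The next step is to improve the affiliation of $H$ to the centralizer of $\phi$ and then to identify $\phi_H$ with $\psi$. Because $\phi$ is obtained from $\omega$ by the non-singular density $K\in\mathfrak{M}_{\tau^\omega}$, the formula for the modular group of a perturbed weight used in the proof of Corollary \ref{invariancecommute} (see \cite{takesaki70}) gives $\tau^\phi_t(A)=\tau^\omega_t(K^{\iu t}AK^{-\iu t})=K^{\iu t}\tau^\omega_t(A)K^{-\iu t}$. Every spectral projection of $H$ is a bounded Borel function of $K$; it is therefore fixed by $\tau^\omega_t$ and commutes with $K^{\iu t}$, hence is fixed by $\tau^\phi_t$. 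So all spectral projections of $H$ lie in $\mathfrak{M}_{\tau^\phi}$, i.e. $H\eta\mathfrak{M}_{\tau^\phi}^+$. Now put $e_n:=E^H_{[0,n]}\in\mathfrak{M}_{\tau^\phi}$, so that $He_n\uparrow H$ with $He_n\in\mathfrak{M}_{\tau^\phi}^+$. By Lemma \ref{unboundedderivativeweight}, $\phi(HA)=\sup_n\phi\big((He_n)^{1/2}A(He_n)^{1/2}\big)$ for $A\in\nalgebra_+$. Since $K$, $H$ and $e_n$ commute and $KH=1-K$, we have $K^{1/2}(He_n)^{1/2}=((1-K)e_n)^{1/2}$, whence $\phi\big((He_n)^{1/2}A(He_n)^{1/2}\big)=\omega\big(K^{1/2}(He_n)^{1/2}A(He_n)^{1/2}K^{1/2}\big)=\omega\big((1-K)e_nA\big)$. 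Letting $n\to\infty$ (normality of $\omega$, $e_n\uparrow1$) yields $\phi(HA)=\omega((1-K)A)=\omega(A)-\omega(KA)=(\phi+\psi)(A)-\phi(A)=\psi(A)$, which is the required identity, and existence is proved.

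For uniqueness, let $H'\eta\mathfrak{M}_{\tau^\phi}^+$ be another operator with $\phi(H'A)=\psi(A)$ for all $A\in\nalgebra_+$. Then $1+H'\eta\mathfrak{M}_{\tau^\phi}^+$ is non-singular and, by the additivity of these weights in the density (as in the computation in the proof of Lemma \ref{centralizerorder}, extended through the approximation of Lemma \ref{unboundedderivativeweight}), $\phi_{1+H'}=\phi+\phi_{H'}=\phi+\psi=\omega$; composing with the inverse density $K':=(1+H')^{-1}\in\mathfrak{M}_{\tau^\phi}$, $0<K'\le1$, which commutes with $1+H'$, gives $\phi=\omega_{K'}$. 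Applying the modular-group formula to $\omega=\phi_{1+H'}$ gives $\tau^\omega_t(A)=\tau^\phi_t\big((1+H')^{\iu t}A(1+H')^{-\iu t}\big)$; since $\tau^\phi_t(K')=K'$ and $K'$ commutes with $1+H'$, this yields $\tau^\omega_t(K')=K'$, so $K'\in\mathfrak{M}_{\tau^\omega}$. The uniqueness clause of Theorem \ref{TPTRN} for the pair $(\omega,\phi)$ now forces $K'=K$, hence $H'=K'^{-1}-1=H$.

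The main obstacle is the interplay of the two modular groups: Theorem \ref{TPTRN} only becomes available after one verifies that $\phi$ is $\tau^\omega$-invariant (so that $(\omega,\phi)$ is an admissible pair), and the density $K$ it produces lives a priori only in the centralizer of $\omega$; the real work is to transport everything back to the centralizer of $\phi$ across the inversion $K\mapsto K^{-1}-1$, which is exactly where the formula for the modular automorphism group of a perturbed weight is indispensable — both for the affiliation of $H$ and, in the uniqueness argument, for seeing that $K'\in\mathfrak{M}_{\tau^\omega}$. The semifiniteness of $\phi+\psi$ (Lemma \ref{sumsemifinite}) is the technical input that makes $\omega$ a legitimate weight for this scheme, and is precisely why $\psi$ was assumed invariant under $\tau^\phi$.
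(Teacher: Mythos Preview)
Your argument follows the same scheme as the paper: form $\omega=\phi+\psi$, apply Theorem~\ref{TPTRN} to the pair $(\omega,\phi)$ to obtain $K$, then invert to $H=K^{-1}-\mathbb{1}=(\mathbb{1}-K)/K$. There is one genuine gap at the very first step: Lemma~\ref{sumsemifinite} as stated requires \emph{both} weights to be faithful (its proof invokes Corollary~\ref{invariancecommute} and the modular group $\tau^\psi$, which presuppose faithfulness of $\psi$), whereas nothing in the hypotheses of Theorem~\ref{TPTRN2} forces $\psi$ to be faithful. Without semifiniteness of $\omega$ you cannot invoke Corollary~\ref{invariancecommute} for the pair $(\phi,\omega)$ either, so the whole reduction is blocked. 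The paper repairs this by first passing to the corner $s^{\nalgebra}(\psi)\,\nalgebra\,s^{\nalgebra}(\psi)$, where $\psi$ becomes faithful and the $\tau^\phi$-invariance of $s^{\nalgebra}(\psi)$ keeps the modular data consistent; inserting this restriction at the outset fixes your argument with no further change. On the other hand, your treatment of the affiliation $H\eta\mathfrak{M}_{\tau^\phi}$ (via the perturbed modular group formula $\tau^\phi_t(A)=K^{\iu t}\tau^\omega_t(A)K^{-\iu t}$) and of uniqueness is considerably more explicit than the paper's, which dispatches the affiliation in a single clause and does not address uniqueness at all.
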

\begin{proof}
	Let $s^\nalgebra(\psi)$ be the support projection for $\psi$. Notice that $\phi$ and $\psi$ are faithful normal semifinite weights for the von Neumann algebra $s^\nalgebra(\psi) \nalgebra s^\nalgebra(\psi)$ and $s^\nalgebra(\psi)$ is $\{\tau^\phi_t\}_{t\in \mathbb{R}}$-invariant. By Lemma \ref{sumsemifinite}, $\phi+\psi$ is a faithful normal semifinite weight on $s^\nalgebra(\psi) \nalgebra s^\nalgebra(\psi)$. Since $\phi\leq \phi+\psi$ and $\phi$ is $\left\{\tau^{\phi+\psi}_t\right\}$-invariant as a consequence of Corollary \ref{invariancecommute}, Theorem \ref{TPTRN} states that there exists a positive $\left\{\tau^{\phi+\psi}_t\right\}$-invariant operator $K\in s^\nalgebra(\psi) \nalgebra s^\nalgebra(\psi)$ with $0\leq K\leq \mathbb{1}$ such that
	$$\phi\left(s^\nalgebra(\psi)As^\nalgebra(\psi)\right)=\phi\left(Ks^\nalgebra(\psi)As^\nalgebra(\psi)\right)+\psi\left(Ks^\nalgebra(\psi)As^\nalgebra(\psi)\right), \ \forall A\in\mathfrak{N}_\phi.$$
	
	In addition, since $\phi$ is faithful on $s^\nalgebra(\psi) \nalgebra s^\nalgebra(\psi)$, $0\notin\sigma(K)$ and we can define $H=\frac{\mathbb{1}-K}{K}$. Let $\left\{E^H_\lambda\right\}_{\lambda\in\mathbb{R}_+}$ the spectral resolution of $H$ and define $H_n=HE_{(0,n)}=E_{(0,n)} H E_{(0,n)}$, then
	
	\begin{equation}
	\label{eq:calculationX8}
	\begin{aligned}
	\psi(E_{(0,n)}AE_{(0,n)})&=\psi(E_{(0,n)}A)\\
	&=\psi\left(s^\nalgebra(\psi)E_{(0,n)}As^\nalgebra(\psi)\right)\\
	&=\phi\left(H_n s^\nalgebra(\psi)E_{(0,n)}As^\nalgebra(\psi)\right)\\
	&=\phi\left(H_n E_{(0,n)} A\right)\\
	&=\phi\left(H_n^\frac{1}{2} AH_nH_n^\frac{1}{2}\right), \ \forall A\in\mathfrak{N}_\phi.
	\end{aligned}
	\end{equation}
	
	Using normality, it follows from Equation \eqref{eq:calculationX8} the desired invariance.
	
\end{proof}

\renewcommand{\thechapter}{\arabic{chapter}}



\begin{appendices} 

\chapter{Geometric Hahn-Banach and Krein-Milman}
We will refer to the next result as Hahn-Banach Theorem, or more specifically, as Geometric Hahn-Banach Theorem but it was proved long after S. Banach's works or even H. Hahn did the generalization known nowadays as (Analytical) Hahn-Banach Theorem.

\begin{theorem}[de Mazur-Dieudonn\'e]
	\label{TMD}
	Let $V$ be a topological vector space, $M$ a subspace of $V$ and $A\subset X$ an open convex subset with $A\cap M=\emptyset$, then there exists a maximal closed subspace $H$ of $V$ disjoint of $A$ and containing $M$.
\end{theorem}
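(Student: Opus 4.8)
The plan is to obtain $H$ by a Zorn's lemma argument and then to prove that the maximal object produced is in fact a closed hyperplane, the latter being exactly where the convexity of $A$ is genuinely needed. Throughout I would regard $V$ as a \emph{real} topological vector space (a complex one becomes real by restriction of scalars), since only real-linear subspaces are relevant to separating a convex set; I also assume $A\neq\emptyset$, the degenerate case $A=\emptyset$ being trivially settled by $H=V$.

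First I would let $\mathcal{S}$ be the collection of all linear subspaces $W\subseteq V$ with $M\subseteq W$ and $W\cap A=\emptyset$, ordered by inclusion. It is nonempty since $M\in\mathcal{S}$ by hypothesis, and the union of any chain in $\mathcal{S}$ is again a subspace containing $M$ and missing $A$, hence an upper bound. Zorn's lemma then yields a maximal element $H$. This $H$ is automatically closed: because $H\cap A=\emptyset$ and $A$ is open, $H$ lies in the closed set $V\setminus A$, so $\overline{H}\subseteq V\setminus A$; as $\overline{H}$ is a subspace containing $M$ and disjoint from $A$, maximality forces $\overline{H}=H$.

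The substantial step is to show that $H$ has codimension one. I would pass to the quotient TVS $Q=V/H$, which is Hausdorff because $H$ is closed, with (open) quotient map $\pi$. Then $\widetilde{A}=\pi(A)$ is open (quotient maps are open), convex (a linear image of a convex set), nonempty, and $0\notin\widetilde{A}$ precisely because $H\cap A=\emptyset$. The correspondence $W\mapsto\pi(W)$ between subspaces of $V$ containing $H$ and subspaces of $Q$ is inclusion-preserving and satisfies $W\cap A=\emptyset\iff\pi(W)\cap\widetilde{A}=\emptyset$, so maximality of $H$ transfers to the assertion that no nonzero subspace of $Q$ is disjoint from $\widetilde{A}$.

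Finally I would derive $\dim Q=1$. Since $A\neq\emptyset$ forces $H\neq V$, we have $\dim Q\geq 1$. Suppose $\dim Q\geq 2$ and pick a two-dimensional subspace $P\subseteq Q$; being finite-dimensional in a Hausdorff TVS, $P$ is linearly homeomorphic to $\mathbb{R}^2$. If $P\cap\widetilde{A}=\emptyset$, then $P$ is a nonzero subspace missing $\widetilde{A}$, a contradiction. Otherwise $C=P\cap\widetilde{A}$ is a nonempty open convex subset of $P\cong\mathbb{R}^2$ with $0\notin C$, and here the convexity hypothesis pays off: by the elementary finite-dimensional separation of the origin from a convex set, there is a line $\ell\subseteq P$ through $0$ with $\ell\cap C=\emptyset$, i.e. $\ell\cap\widetilde{A}=\emptyset$; this $\ell$ is again a nonzero subspace of $Q$ disjoint from $\widetilde{A}$, contradicting maximality. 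Hence $\dim Q=1$, so $H$ is a closed subspace of codimension one — a closed hyperplane — containing $M$ and disjoint from $A$, as required. I expect this two-dimensional reduction to be the main obstacle, as it is precisely where convexity (via finite-dimensional separation) is indispensable; by comparison the Zorn argument and the closedness of $H$ are routine.
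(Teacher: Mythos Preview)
Your proof is correct and takes a genuinely different route from the paper. The paper proceeds via the analytic Hahn--Banach extension theorem: it fixes $a\in A$, uses the Minkowski functional $\rho_{A-a}$ as a dominating sublinear functional, defines a linear functional on the span $\langle M\cup\{a\}\rangle$, extends it by Hahn--Banach to a continuous $\tilde\phi$ on all of $V$, and sets $H=\ker\tilde\phi$. You instead apply Zorn's lemma directly to the family of subspaces missing $A$, pass to the Hausdorff quotient $V/H$, and reduce the codimension-one claim to an elementary separation in a two-dimensional subspace identified with $\mathbb{R}^2$. Your approach is more self-contained---it does not invoke analytic Hahn--Banach as a black box but effectively reproves the geometric form from scratch, and it isolates precisely the place where convexity is indispensable (the planar step). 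The paper's approach is shorter once the extension theorem is available and yields $H$ explicitly as a kernel. Both treatments handle the complex case by first working over $\mathbb{R}$; the paper then passes to $H\cap iH$ (the kernel of $\phi(x)-i\phi(ix)$) to obtain a complex hyperplane, a step you would need to append if a complex-linear $H$ is ultimately required.
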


\begin{proof}
	First, suppose $V$ is a topological vector space over $\mathbb{R}$.
	
	Let $a\in A$, then, $A-a$ is an open and convex subset which contains the origin, thus the Minkowski functional $\rho_{A-a}$ is a continuous sublinear functional satisfying 
	\begin{equation}
	\label{mbola}
	A-a = \{x\in V| \rho_{A-a}(x)<1\} \textrm{ e } A = \{x\in V | \rho_{A-a}(x-a)<1\}
	\end{equation}
	
	So, since $M\cap A =\emptyset$, it follows that $\rho_{A-a}(x-a)\geq 1 \ \forall x \in M$.
	
	Define now $N=\langle M \cup \{a\} \rangle$ and let $\phi: N \to \mathbb{R}$ be given by $\phi(x-\lambda a)=\lambda$ which is clearly linear and:
	
	If $x\in M$ and $\lambda>0$ then $\displaystyle \phi(x-\lambda a)=\lambda \leq \lambda \rho_{A-a}\left(\frac{x}{\lambda} -a\right)=\rho_{A-a}(x-\lambda a)$.
	
	If $x\in M$ and $\lambda<0$ then $\phi(x-\lambda a)=\lambda <0 \leq \rho_{A-a}(x-\lambda a)$.
	
	This means $\phi$ is dominated by $\rho_{A-a}$ in $N$ and, using the Hahn-Banach Theorem, we obtain an extension $\tilde{\phi}$ of $\phi$ such that $\tilde{\phi}(x)\leq \rho_{A-a}(x) \ \forall x \in V$, in particular, $\tilde{\phi}$ is continuous, due to this and equation \ref{mbola}, we must have $\tilde{\phi}(x)<1 \ \forall x \in A-a$.
	
	Define the maximal subspace $H=\ker{(\tilde{\phi})}$, which is closed since $\tilde{\phi}$ is continuous, and contain $M=\ker{(\phi)}$. Moreover,
	$$x\in H \Rightarrow 0=\tilde{\phi}(x)=\tilde{\phi}(x-a) + \tilde{\phi}(a)=\tilde{\phi}(x-a)+\phi(a) \leq \rho_{A-a}(x-a)-1 $$
	It means, $\displaystyle \rho_{A-a}(x-a)\geq 1 \ \forall x\in H \Rightarrow H\cap A=\emptyset$ by \ref{mbola}.

	For the complex topological vector space case, $M_\mathbb{R} \subset V_\mathbb{R}$ is a subspace, and by the previous proof, there exists a maximal closed subspace $H$ of $V_\mathbb{R}$ disjoint of $A$ such that $M \subset H$. Identify the maximal closed subspace with the kernel of a continuous functional $\phi \in V_\mathbb{R}^*$ such that $H=\ker{(\phi)}$ and consider $\tilde{H} = H\cap \iu H$, which is also disjoint of $A$, and the functional $\tilde{\phi} \in V^*$ given by $\tilde{\phi}(x)=\phi(x)-\iu \phi(\iu x)$.
	$$\ker{(\tilde{\phi})}=\{x \in V \ | \ \phi(x)=0 \textrm{ e } \phi(\iu x)=0\}=H\cap iH$$ and thus $\tilde{H}$ is a maximal closed  subspace of $V$.
	$$M=\iu M \subset \iu H \textrm{ e } M \subset H \Rightarrow M \subset H\cap \iu H.$$
	
\end{proof}

\begin{corollary}
	\label{TMD2}
	Let $V$ be a topological vector space, $M$ an affine linear manifold of $V$ and $A\subset X$ an open convex subset disjoint of $M$, then there exists a closed hyperplane $H$ of $V$ disjoint of $A$ and containing $M$.
\end{corollary}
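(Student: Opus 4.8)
The plan is to reduce the affine statement to the linear one already proved in Theorem~\ref{TMD} by a translation. Assuming $M \neq \emptyset$ (otherwise there is nothing to prove), fix a point $m_0 \in M$ and put $M_0 = M - m_0$ and $A_0 = A - m_0$. Then $M_0$ is a linear subspace of $V$ (since $M$ is an affine manifold), $A_0$ is still open and convex, and $A_0 \cap M_0 = \emptyset$ exactly because $A \cap M = \emptyset$; translation by $-m_0$ is a homeomorphism of $V$, so no topological property is lost.

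Next I would invoke Theorem~\ref{TMD} for the pair $(M_0, A_0)$, obtaining a maximal closed subspace $H_0 \subseteq V$ with $M_0 \subseteq H_0$ and $H_0 \cap A_0 = \emptyset$. The point I want to extract from that theorem is not merely maximality but that its proof exhibits $H_0$ as the kernel of a continuous linear functional on $V$ (over $\mathbb{R}$ directly, and over $\mathbb{C}$ via $x \mapsto \phi(x) - \iu\phi(\iu x)$); hence $H_0$ is a closed hyperplane through the origin.

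Finally I would translate back by setting $H = H_0 + m_0$. This is a closed affine hyperplane of $V$. It contains $M$, because $M = M_0 + m_0 \subseteq H_0 + m_0 = H$, and it is disjoint from $A$, because any $x \in H \cap A$ would yield $x - m_0 \in H_0 \cap A_0$, which is empty.

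The construction is routine bookkeeping, so the only delicate point — and the one I expect to be the main (if modest) obstacle — is justifying that the ``maximal closed subspace'' delivered by Theorem~\ref{TMD} has codimension one, i.e.\ is honestly a hyperplane. I would handle this by reading it off the explicit $\ker\tilde\phi$ appearing in the proof of Theorem~\ref{TMD}; alternatively, for a self-contained argument one can pass to the quotient $V/H_0$, note that the image of $A_0$ is an open convex set missing the origin, separate it from $0$ by a nonzero continuous functional, and check that the preimage of that functional's kernel would strictly enlarge $H_0$ while remaining disjoint from $A_0$ unless $\dim(V/H_0)=1$, contradicting maximality.
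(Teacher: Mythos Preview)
Your proposal is correct and is exactly the standard reduction one expects here. The paper in fact gives no proof for this corollary, treating it as an immediate consequence of Theorem~\ref{TMD}; your translation argument makes explicit precisely what the paper leaves implicit, and your attention to why the maximal closed subspace produced by Theorem~\ref{TMD} actually has codimension one (via $H_0=\ker\tilde\phi$) is a worthwhile clarification that the paper glosses over.
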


\begin{corollary}
	\label{CTMD}
	Let $V$ be a locally convex space, $M$ a closed affine linear manifold of $V$ and $K\subset X$ a compact convex subset which if disjoint of $M$, then there exists a closed hyperplane $H$ of $V$ which is disjoint of $K$ and contain $M$ .
	adon\end{corollary}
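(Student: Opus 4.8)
The plan is to reduce the statement to Corollary~\ref{TMD2} by thickening the compact set $K$ into an open convex set that still misses $M$. Since $M$ is closed and $K\cap M=\emptyset$, every $k\in K$ lies in an open set avoiding $M$; using local convexity I would write such a set as $k+V_k$ with $V_k$ an open convex neighbourhood of $0$, and then choose an open convex neighbourhood $U_k$ of $0$ with $U_k+U_k\subseteq V_k$.

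The family $\{k+U_k\}_{k\in K}$ is an open cover of the compact set $K$, so finitely many members $k_1+U_{k_1},\dots,k_n+U_{k_n}$ already cover $K$. Put $U=\bigcap_{i=1}^{n}U_{k_i}$, again an open convex neighbourhood of $0$. The key claim is that $(K+U)\cap M=\emptyset$: if $k\in K$, pick $i$ with $k\in k_i+U_{k_i}$; then $k+U\subseteq k_i+U_{k_i}+U_{k_i}\subseteq k_i+V_{k_i}$, and the latter is disjoint from $M$ by construction. Hence $k+U$ misses $M$ for every $k\in K$, so $K+U$ misses $M$.

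Now $A:=K+U=\bigcup_{k\in K}(k+U)$ is open (a union of open sets), convex (a Minkowski sum of convex sets), and disjoint from $M$. Applying Corollary~\ref{TMD2} to $V$, the affine linear manifold $M$, and the open convex set $A$ produces a closed hyperplane $H$ with $M\subseteq H$ and $H\cap A=\emptyset$. Since $0\in U$ we have $K\subseteq K+U=A$, so $H$ is in particular disjoint from $K$, which is exactly the assertion.

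The only genuinely delicate point is the tube-type construction of $U$: it requires combining the closedness of $M$ (to obtain neighbourhoods of points of $K$ that avoid $M$), local convexity (to take those neighbourhoods convex), the halving $U_k+U_k\subseteq V_k$ (so that a point of $K$ lying in $k_i+U_{k_i}$ still has its entire $U$-neighbourhood inside $k_i+V_{k_i}$), and compactness of $K$ (to pass to a finite intersection $U$ that works uniformly). Once $A$ is in hand, the conclusion is an immediate invocation of the previously established Corollary~\ref{TMD2}.
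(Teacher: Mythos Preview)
Your proof is correct and follows the same route as the paper: thicken $K$ to an open convex set $K+U$ still disjoint from $M$, then invoke Corollary~\ref{TMD2}. The paper simply asserts the existence of such a neighbourhood $U$ in one line, whereas you spell out the standard compactness-and-halving argument that justifies it; otherwise the arguments are identical.
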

\begin{proof} Let $U$ be a neighbourhood of $0$ such that $(K+U)\cap M =\emptyset$. Since $V$ is locally convex, we can assume $U$ is a convex set, hence $K+U$ is an open convex (not empty) subset of $V$ which is disjoint of $M$. By Corollary \ref{TMD2}, there exists a hyperplane $H$ disjoint of $K$ and containing $M$.
	
\end{proof}

\begin{corollary}
	\label{C2TMD}
	Let $V$ be a locally convex space and $M \subset V$ a subspace. Then $x\in \overline{M}$ if, and only if, $x^*(x)=0$ for all $x^* \in V^*$ which vanish in $M$.
\end{corollary}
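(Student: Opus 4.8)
The plan is to prove the two implications separately, the forward one being elementary and the reverse one resting on Corollary \ref{CTMD}.

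For the ``only if'' direction, suppose $x\in\overline{M}$ and let $x^\ast\in V^\ast$ vanish on $M$. Since $x^\ast$ is continuous and $M\subset\Ker x^\ast$, which is closed, we get $\overline{M}\subset\Ker x^\ast$, hence $x^\ast(x)=0$. This needs no further comment.

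For the ``if'' direction I would argue by contraposition: assume $x\notin\overline{M}$ and exhibit $x^\ast\in V^\ast$ with $x^\ast|_M=0$ but $x^\ast(x)\neq0$. The set $\overline{M}$ is the closure of a subspace, hence itself a closed subspace and in particular a closed affine linear manifold, while $K=\{x\}$ is a compact convex set disjoint from $\overline{M}$. Applying Corollary \ref{CTMD} to $\overline{M}$ and $K$ yields a closed hyperplane $H$ with $\overline{M}\subset H$ and $x\notin H$.

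The remaining point — the only place requiring a small argument — is to convert the closed hyperplane $H$ into a continuous linear functional. Since $0\in M\subset H$, the hyperplane $H$ is a linear subspace of codimension one; fixing any $v\notin H$, each $y\in V$ is uniquely $y=h+\lambda v$ with $h\in H$, and setting $x^\ast(y)=\lambda$ gives a linear functional with $\Ker x^\ast=H$. Its kernel is closed, so $x^\ast$ is continuous (equivalently, $V/H$ is one-dimensional and Hausdorff, so the quotient map is continuous). By construction $x^\ast$ vanishes on $M\subset H$ while $x^\ast(x)\neq0$ since $x\notin H$, contradicting the assumption that every functional vanishing on $M$ also vanishes at $x$; hence $x\in\overline{M}$. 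The only genuine obstacle is bookkeeping: invoking the geometric separation result for the correct closed manifold $\overline{M}$, and identifying a closed hyperplane through the origin with the kernel of a continuous functional.
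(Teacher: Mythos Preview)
Your proof is correct and follows essentially the same route as the paper: contrapose, apply Corollary~\ref{CTMD} to the closed subspace $\overline{M}$ and the singleton $\{x\}$, and read off the functional. The paper is terser and simply asserts the existence of $x^\ast$ with $M\subset\ker(x^\ast)$, whereas you spell out the passage from the closed hyperplane $H$ (which contains $0$, hence is linear of codimension one) to a continuous functional with kernel $H$; this extra care is warranted and correct.
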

\begin{proof}
	
	($\Rightarrow$) Obvious.
	
	($\Leftarrow$) Of course $\overline{M}$ is a closed subspace of $V$, if $x \notin \overline{M}$ then we fall back in the conditions of Corollary \ref{CTMD} since $\{x\}$ is a compact convex set which does not intercept $M$. We conclude that there exists $x^* \in V^*$ such that $M\subset \ker{(x^*)}$ e $\{x\}\cap \ker{(x^*)}=\emptyset \Rightarrow x^*(x)\neq 0.$
	
\end{proof}

\begin{corollary}
	\label{C3TMD}
	Let $V$ be a locally convex space and $x\in V$, if $x^\ast(x)=0$ for all $x^* \in V^*$ then $x=0$.
\end{corollary}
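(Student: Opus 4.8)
The plan is to derive this immediately from Corollary \ref{C2TMD} by specializing the subspace $M$ to the trivial subspace $\{0\}$. First I would observe that every continuous linear functional $x^\ast \in V^\ast$ automatically vanishes on $M=\{0\}$, so the hypothesis ``$x^\ast(x)=0$ for all $x^\ast\in V^\ast$'' is precisely the statement that $x^\ast(x)=0$ for every $x^\ast\in V^\ast$ vanishing on $M$. Corollary \ref{C2TMD} then yields $x\in\overline{M}=\overline{\{0\}}$.

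Next I would close the argument using that $V$ is a (Hausdorff) locally convex space: in a Hausdorff topological vector space the closure of $\{0\}$ is $\{0\}$, hence $x=0$. If one does not wish to invoke the Hausdorff convention, the same conclusion can instead be obtained directly from Corollary \ref{CTMD}: were $x\neq 0$, the compact convex set $\{x\}$ would be disjoint from the closed subspace $\{0\}$, producing a closed hyperplane $H=\ker(x^\ast)$ containing $\{0\}$ and missing $\{x\}$, i.e. an $x^\ast\in V^\ast$ with $x^\ast(x)\neq 0$, contradicting the hypothesis.

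There is essentially no obstacle here: the statement is a one-line corollary of the separation results already established. The only point requiring a moment's care is the passage from $x\in\overline{\{0\}}$ to $x=0$, which is where the Hausdorff (or, equivalently for TVS, $T_1$) property of $V$ enters; I would state this explicitly rather than leave it implicit.
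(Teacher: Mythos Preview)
Your proposal is correct and matches the paper's proof essentially verbatim: specialize $M=\{0\}$ in Corollary~\ref{C2TMD}, note that every $x^\ast\in V^\ast$ vanishes on $M$, and conclude $x\in\overline{\{0\}}=\{0\}$. Your explicit remark about where the Hausdorff property enters is a welcome clarification that the paper leaves implicit.
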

\begin{proof}
	Of course, the set of functionals vanishing in the subspace $M=\{0\}$ is $V^\ast$. By Corollary \ref{C2TMD} we conclude that $x\in \overline{M}=\{0\}$.
	
\end{proof}

\begin{definition}[Face]
	
	Let $V$ a topological vector space and $C$ an convex subset, a non-empty closed and convex set $F\subset C$ is said to be an \emph{extremal set} or a \emph{face} of $C$ if given $x,y \in C$ and $\lambda \in (0,1)$ with $\lambda x +(1-\lambda)y \in F$ then $ x,y \in F$.
	
\end{definition}

That is, a face is a set such that if it contains any point in the interior of a straight segment, then it contains the whole segment.

\begin{definition}
	
	Let $V$ be an topological vector space and $C$ a convex subset. An extremal point in $C$ is a point $x\in C$ such that $\{x\}$ is a face of $C$.
	
	We denote by $\mathcal{E}(C)= \{x \in C  \ | \ x \textrm{ is an extremal point of C}\}$.
	
\end{definition}

\begin{proposition}
	Let $V$ be a topological vector space and $C$ a convex subset, the following conditions are equivalent:
	\begin{enumerate}[(i)]
		\item $x \in \mathcal{E}(C)$;
		\item $x=\lambda y +(1-\lambda) z$, with $y,z \in C$ and $\lambda \in (0,1) \Rightarrow x=y=z$; 
		\item $C\setminus\{x\}$ is convex;
	\end{enumerate}
\end{proposition}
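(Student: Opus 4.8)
The plan is to prove the equivalence of the three characterizations of extremal points by a cyclic chain $(i)\Rightarrow(ii)\Rightarrow(iii)\Rightarrow(i)$, each step being an essentially formal manipulation of the definition of a face together with the convexity hypotheses on $C$.

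\textbf{$(i)\Rightarrow(ii)$.} Assume $x\in\mathcal{E}(C)$, i.e.\ $\{x\}$ is a face of $C$. Suppose $x=\lambda y+(1-\lambda)z$ with $y,z\in C$ and $\lambda\in(0,1)$. Then $x$ is a point lying in the interior of the segment $[y,z]$ with $x\in\{x\}$, so by the defining property of a face we get $y,z\in\{x\}$, hence $y=z=x$. This is immediate; the only thing to check is that $\{x\}$ being a (closed, convex, nonempty) face legitimately applies, which it does by definition of $\mathcal{E}(C)$.

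\textbf{$(ii)\Rightarrow(iii)$.} Assume the no-proper-convex-combination property. I want to show $C\setminus\{x\}$ is convex, so take $y,z\in C\setminus\{x\}$ and $\lambda\in[0,1]$; I must show $w:=\lambda y+(1-\lambda)z\neq x$ (the cases $\lambda\in\{0,1\}$ being trivial since then $w\in\{y,z\}$, so assume $\lambda\in(0,1)$). Suppose for contradiction $w=x$. Then $x=\lambda y+(1-\lambda)z$ with $y,z\in C$ and $\lambda\in(0,1)$, so by $(ii)$ we conclude $x=y=z$, contradicting $y\neq x$. Hence $w\in C\setminus\{x\}$ and $C\setminus\{x\}$ is convex.

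\textbf{$(iii)\Rightarrow(i)$.} Assume $C\setminus\{x\}$ is convex; I must show $\{x\}$ is a face of $C$. It is nonempty and convex; closedness of the singleton holds in any Hausdorff space, or one can simply take the definition of "face" in this text to require only the extremal-combination property (noting $\{x\}$ is trivially convex and nonempty). The substantive point: given $u,v\in C$ and $\lambda\in(0,1)$ with $\lambda u+(1-\lambda)v=x$, I must show $u,v\in\{x\}$, i.e.\ $u=v=x$. If $u\neq x$ and $v\neq x$, then $u,v\in C\setminus\{x\}$, and convexity of $C\setminus\{x\}$ forces $\lambda u+(1-\lambda)v\in C\setminus\{x\}$, contradicting that this equals $x$. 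So at least one of $u,v$ equals $x$, say $u=x$; then $x=\lambda x+(1-\lambda)v$, whence $(1-\lambda)v=(1-\lambda)x$ and, since $\lambda\neq 1$, $v=x$ as well. Thus $u=v=x\in\{x\}$, so $\{x\}$ is a face and $x\in\mathcal{E}(C)$.

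I do not anticipate a genuine obstacle here; the only mild subtlety is bookkeeping about what "face" requires (the paper's definition asks for a closed convex nonempty set, and a singleton in a Hausdorff TVS is automatically closed), and making sure in $(iii)\Rightarrow(i)$ that after reducing to $u=x$ one still extracts $v=x$ using $\lambda\neq1$. Everything else is a direct unwinding of definitions.
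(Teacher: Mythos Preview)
Your proof is correct and essentially matches the paper's argument. The paper organizes it as $(i)\Leftrightarrow(ii)$ (dismissed as ``follows from definition'') and $(ii)\Leftrightarrow(iii)$ proved in both directions, whereas you do a cycle $(i)\Rightarrow(ii)\Rightarrow(iii)\Rightarrow(i)$; the actual content of the steps is the same, and you are slightly more careful in the $(iii)\Rightarrow(i)$ step where the paper tacitly uses that $y\neq x$ forces $z\neq x$.
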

\begin{proof}
	
	$(i)\Leftrightarrow (ii)$ It follows from definition.
	
	$(ii)\Leftrightarrow (iii)$ Let $y,z \in C\setminus\{x\}$ and $\lambda \in (0,1)$, then follows from convexity of $C$ that $\lambda y +(1-\lambda) z \in C$. On the other hand the extremicity of $x$ implies $\lambda y +(1-\lambda) z \neq x$ and then $C\setminus\{x\}$ is convex.
	
	Let $y,z \in C$  and $\lambda \in (0,1)$. Suppose by absurd that $y\neq x$, so we would have $z\neq x$ and hence $x=\lambda y +(1-\lambda) z \in C\setminus\{x\}$ since it is convex, a contradiction, so we must have $x=y=z$.
\end{proof}

\begin{proposition}
	\label{exex}
	Let $V$ be a locally convex space and $C\subset V$ a non-empty compact convex subset, then $\mathcal{E}(C)\neq\emptyset$.
\end{proposition}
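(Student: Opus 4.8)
The plan is to carry out the classical maximal-face argument. Let $\mathcal{F}$ denote the collection of all faces of $C$ in the sense of the definition above (non-empty closed convex extremal subsets), partially ordered by \emph{reverse} inclusion, so that a face counts as ``larger'' precisely when it is a smaller set. Since $C$ is itself a face of $C$, the family $\mathcal{F}$ is non-empty. To invoke Zorn's lemma I would check that every chain $\{F_i\}_{i\in I}\subset\mathcal{F}$ has an upper bound, the obvious candidate being $F=\bigcap_{i\in I}F_i$. Each $F_i$ is closed and $C$ is compact, while a chain of non-empty sets has the finite intersection property (a finite subfamily of a chain has a smallest member, which is non-empty), so $F\neq\emptyset$; it is plainly closed and convex, and one checks straight from the definition that an intersection of faces is a face. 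Thus $F\in\mathcal{F}$ dominates the chain, and Zorn's lemma yields a maximal element $F_0$ for this order, i.e.\ a face of $C$ that contains no face of $C$ as a proper subset.

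The second step is to prove that such a face $F_0$, minimal for inclusion, is a single point, which then exhibits an extreme point of $C$. Suppose toward a contradiction that $x,y\in F_0$ with $x\neq y$. By Corollary \ref{C3TMD} applied to $x-y\neq 0$ there is $x^\ast\in V^\ast$ with $x^\ast(x)\neq x^\ast(y)$, so the continuous real-linear functional $z\mapsto\Re{x^\ast(z)}$ is non-constant on $F_0$. As $F_0$ is compact this functional attains a maximum $m$ on $F_0$, and I would set $F_1=\{z\in F_0 \mid \Re{x^\ast(z)}=m\}$. This set is non-empty, closed, convex, and a proper subset of $F_0$ (some point of $F_0$ has real part strictly below $m$); moreover it is a face of $F_0$, because if $\lambda a+(1-\lambda)b\in F_1$ with $a,b\in F_0$ and $\lambda\in(0,1)$, then $m=\lambda\Re{x^\ast(a)}+(1-\lambda)\Re{x^\ast(b)}$ forces $\Re{x^\ast(a)}=\Re{x^\ast(b)}=m$, i.e.\ $a,b\in F_1$. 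Since the relation ``is a face of'' is transitive (a face of a face of $C$ is a face of $C$), $F_1\in\mathcal{F}$ and $F_1\subsetneq F_0$, contradicting the minimality of $F_0$. Hence $F_0=\{x_0\}$ for some $x_0\in C$, and by definition $x_0\in\mathcal{E}(C)$, so $\mathcal{E}(C)\neq\emptyset$.

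The only steps requiring genuine care are the two structural facts used above: that the intersection along a chain of faces remains non-empty --- which is exactly where compactness enters, through the finite intersection property --- and that a face of a face of $C$ is again a face of $C$, which is what licenses the descent from $F_0$ to the strictly smaller $F_1$. Both are routine once unwound, and no tool beyond the Hahn--Banach separation already recorded as Corollary \ref{C3TMD} is needed, so I expect no real obstacle here, only bookkeeping.
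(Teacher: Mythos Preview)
Your argument is correct and follows essentially the same route as the paper: order the faces of $C$ by reverse inclusion, use compactness and the finite intersection property to verify Zorn's hypothesis, and then shrink any non-singleton minimal face via a continuous linear functional. The only cosmetic differences are that the paper invokes Corollary~\ref{TMD2} rather than Corollary~\ref{C3TMD} to produce the separating functional and takes an infimum rather than a maximum; your explicit passage to $\Re{x^\ast(\cdot)}$ is in fact a bit more careful than the paper's version when the scalar field is $\mathbb{C}$.
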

\begin{proof}
	Denote by $\mathcal{F}$ the family of all faces of $C$ partially ordered by $\leq$, where $F_1\leq F_2$ if, and only if, $F_2\subset F_1$. Of course such family is non-empty since $C\in\mathcal{F}$.
	
	Let $\mathcal{F}_0\subset\mathcal{F}$ be a chain. Since $\mathcal{F}_0$ is a totally ordered set and its elements are closed, $\mathcal{F}_0$ has the finite intersection property and it follows from compactness of $C$ that $\displaystyle \bigcap_{F\in \mathcal{F}_0}F \neq \emptyset$. It is easy to see that $\displaystyle \bigcap_{F\in \mathcal{F}_0}F$ is a face of $C$, since intersections preserve the closed convex and extreme properties of a set, furthermore, it is clearly an upper bound of $\mathcal{F}_0$. It follows then by Zorn's lemma that there exists $\tilde{F} \subset \mathcal{F}$ maximal.
	
	Let us show $\tilde{F}$ is a unitary set. In order to do that, suppose it is not true, that is, take $x,y \in \tilde{F}$ with $x\neq y$. Consider then the compact and convex set $D=\{x\}$ and the linear affine manifold $M=\{0\}+y$, by Theorem \ref{TMD2} there exists a hyperplane $H$ which contains $M$ and is disjoint of $D$.
	
	Let $f \in V^*$ and $c \in \mathbb{K}$ such that $H=f^{-1}(\{c\})$ and define 
	$$ \tilde{F}_0=\left\{x\in \tilde{F} \ \middle | \ f(x)=\inf_{y \in \tilde{F}}{f(x)}\right\}.$$
	
	Lets show that $\tilde{F}_0$ is a face of $\tilde{F}$ and consequently a face of $C$, in fact, suppose $x=\lambda y + (1-\lambda) z$ with $x\in \tilde{F}_0$, $y,z \in \tilde{F}$ and $\lambda \in (0,1)$, then
	$$\inf_{y \in \tilde{F}}{f(x)}=f(\lambda y + (1-\lambda) z)=\lambda f(y) + (1-\lambda) f(z) \leq \inf_{y \in \tilde{F}}{f(x)} \Rightarrow f(x)=f(y)=f(z)$$
	It follows from the definition that $y,z \in \tilde{F}_0$ and thus it is a face. Note now that we cannot have $x$ and $y$ simultaneously as elements of $\tilde{F}_0$, so $\tilde{F} \leq \tilde{F}_0$ and this contradicts the maximality of $\tilde{F}$. Therefore $\tilde{F}$ is unitary and this guarantees the existence of maximal points of $C$.
	
\end{proof}

\begin{theorem}[Krein-Milman]
	\label{TKM}
	Let $V$ be a locally convex space and $C$ a compact convex subset, then $C=\cchull{\mathcal{E}(C)}$.
\end{theorem}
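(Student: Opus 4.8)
The plan is to prove both inclusions of $C = \cchull{\mathcal{E}(C)}$. One inclusion is immediate: since $C$ is compact, convex, and contains $\mathcal{E}(C)$ (which is nonempty by Proposition~\ref{exex}), it contains $\chull{\mathcal{E}(C)}$ and, being closed, also contains $\cchull{\mathcal{E}(C)}$. So the whole content is in the reverse inclusion $C \subset \cchull{\mathcal{E}(C)}$.

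For the reverse inclusion I would argue by contradiction. Suppose there is a point $x_0 \in C$ with $x_0 \notin K \defeq \cchull{\mathcal{E}(C)}$. Then $\{x_0\}$ is a compact convex set and $K$ is a closed convex set disjoint from it, so by the separation results already established (Corollary~\ref{CTMD}, applied with the compact set $\{x_0\}$ and the closed affine manifold... — more precisely the locally convex separation of a compact convex set from a disjoint closed convex set), there exist a continuous linear functional $f \in V^\ast$ and a real constant such that $\operatorname{Re} f(x_0) < \inf_{y \in K} \operatorname{Re} f(y)$; replacing $f$ by its real part we may assume $f$ is real-valued and $f(x_0) < \inf_{y\in K} f(y)$. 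Now consider
$$
C_f = \left\{\, x \in C \;\middle|\; f(x) = \min_{y \in C} f(y) \,\right\}.
$$
The minimum is attained because $f$ is continuous and $C$ is compact, so $C_f$ is nonempty; it is clearly closed and convex, hence compact. The key point is that $C_f$ is a \emph{face} of $C$: if $\lambda y + (1-\lambda) z \in C_f$ with $y,z \in C$ and $\lambda \in (0,1)$, then since $f(y), f(z) \ge \min_C f$ and their convex combination equals $\min_C f$, we must have $f(y) = f(z) = \min_C f$, so $y, z \in C_f$. Moreover $\min_C f \le f(x_0) < \inf_K f$, so $C_f \cap K = \emptyset$, and in particular $C_f$ contains no point of $\mathcal{E}(C)$.

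To finish, I would apply Proposition~\ref{exex} again, but to the compact convex set $C_f$ itself: it has an extreme point $e \in \mathcal{E}(C_f)$. The decisive step — and the one requiring a little care — is showing that an extreme point of a face is an extreme point of the whole set: if $e = \lambda y + (1-\lambda) z$ with $y, z \in C$ and $\lambda \in (0,1)$, then since $C_f$ is a face of $C$ and $e \in C_f$, we get $y, z \in C_f$, and then extremality of $e$ within $C_f$ forces $y = z = e$. Hence $e \in \mathcal{E}(C)$, contradicting $C_f \cap \mathcal{E}(C) = \emptyset$. This contradiction shows no such $x_0$ exists, i.e. $C \subset \cchull{\mathcal{E}(C)}$, completing the proof. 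The main obstacle is purely in getting the separation step cleanly (handling the complex scalar case by passing to real parts) and in correctly invoking Proposition~\ref{exex} on the subface; the face/extreme-point transitivity is a short formal argument already essentially rehearsed inside the proof of Proposition~\ref{exex}.
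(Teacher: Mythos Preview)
Your proposal is correct and follows essentially the same argument as the paper: both proceed by contradiction, separate a point of $C$ from $\cchull{\mathcal{E}(C)}$ via a continuous functional, form the face $C_f$ where that functional attains its minimum on $C$, and derive a contradiction from the fact that $\mathcal{E}(C_f)\subset\mathcal{E}(C)$ while $C_f$ is disjoint from $\cchull{\mathcal{E}(C)}$. Your write-up is in fact a bit more careful than the paper's in justifying the separation step and the transitivity $\mathcal{E}(C_f)\subset\mathcal{E}(C)$.
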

\begin{proof}
	By Proposition \ref{exex}, $\mathcal{E}(C)\neq \emptyset$. Suppose that $C\setminus \cchull{\mathcal{E}(C)}\neq \emptyset$ and take $x\in C\setminus \cchull{\mathcal{E}(C)}$, note that $\cchull{\mathcal{E}(C)} \subset C$ is compact since it is a closed set contained in a compact one. By theorem \ref{TMD2} there exists a closed hyperplane $H$ containing $\{x\}$ and disjoint of $\cchull{\mathcal{E}(C)}$.
	
	Let $f \in V^*$ and $c \in \mathbb{K}$ such that $H=f^{-1}(\{c\})$, we can assume without loss of generality that $f(x)=c < f(y) \ \forall y \in \cchull{\mathcal{E}(C)}$, and take $\displaystyle F=\left\{x\in C \ \middle | \ f(x)=\inf_{y \in C}{f(x)}\right\}$. We have already seem in the proof of Proposition \ref{exex} that $F$ is a proper face of $C$ and that $\mathcal{E}(F)\neq \emptyset$, furthermore, if $y\in \mathcal{E}(F)$, $\displaystyle f(y)=\inf_{z \in C}{f(z)} \leq f(x) = c < f(w) \ \forall w \in \cchull{\mathcal{E}(C)}$, hence $y \notin \cchull{\mathcal{E}(C)}$. On the other hand we must have $\mathcal{E}(F)\subset \mathcal{E}(C)$. This leads us to a contradiction, so we conclude that $C=\cchull{\mathcal{E}(C)}$.
	
\end{proof}

\begin{lemma}
	\label{extremealidentity}
	Let $\calgebra$ be a $C^\ast$-algebra and $\mathcal{S}$ be its unit sphere, then $\mathcal{S}$ has an extremal point if, and only if, $\calgebra$ has an identity.
\end{lemma}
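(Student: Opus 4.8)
The plan is to prove the two implications separately, the substantial one being that the existence of an extremal point forces an identity; I read ``$\mathcal{S}$ has an extremal point'' as ``the closed unit ball $B_1=\{A\in\calgebra\mid\|A\|\leq1\}$ has an extremal point'' (equivalently, since every extremal point of $B_1$ has norm $1$, it has one lying on $\mathcal{S}$).

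\textbf{Unital $\Rightarrow$ extremal point.} If $\calgebra$ has an identity $\mathbb{1}$, I would check that $\mathbb{1}\in\mathcal{E}(B_1)$. If $\mathbb{1}=\tfrac12(A+B)$ with $A,B\in B_1$, then taking adjoints also $\mathbb{1}=\tfrac12(A^\ast+B^\ast)$, so with $a:=\tfrac12(A+A^\ast)$ and $b:=\tfrac12(B+B^\ast)$ (self-adjoint contractions) we get $\tfrac12(a+b)=\mathbb{1}$; since $a,b\leq\mathbb{1}$ this forces $a=b=\mathbb{1}$, i.e. $A=\mathbb{1}+\iu K$ with $K=K^\ast$, and then $\|A\|^2=\|A^\ast A\|=\|\mathbb{1}+K^2\|=1+\|K\|^2\leq1$ gives $K=0$, $A=\mathbb{1}$; likewise $B=\mathbb{1}$. (Alternatively, $\mathbb{1}$ is a partial isometry with $(\mathbb{1}-\mathbb{1}\mathbb{1}^\ast)\calgebra(\mathbb{1}-\mathbb{1}^\ast\mathbb{1})=\{0\}$, so the earlier characterization of $\mathcal{E}(B_1)$ applies at once.) Thus $\mathcal{S}$ contains the extremal point $\mathbb{1}$.

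\textbf{Extremal point $\Rightarrow$ unital.} Let $U\in\mathcal{E}(B_1)$, and fix a faithful (hence isometric) representation $\calgebra\subseteq B(\hilbert)$, writing $\mathbb{1}$ for the identity of $B(\hilbert)$. First I would show $U$ is a partial isometry, as in the earlier theorem: if $\sigma(U^\ast U)\cap(0,1)\neq\emptyset$, pick $\lambda$ in it and a continuous $f\geq0$ on $[0,1]$ supported in a small interval $(\lambda-\varepsilon,\lambda+\varepsilon)\subseteq(0,1)$ with $0\leq f(t)\leq t^{-1/2}-1$ everywhere and $f(\lambda)>0$; then $f(U^\ast U)\in\calgebra$, $U(\mathbb{1}\pm f(U^\ast U))\in\calgebra$ with $\|U(\mathbb{1}\pm f(U^\ast U))\|^2=\|U^\ast U(\mathbb{1}\pm f(U^\ast U))^2\|=\sup_{t\in\sigma(U^\ast U)}t(1\pm f(t))^2\leq1$, and $Uf(U^\ast U)\neq0$, so $U=\tfrac12 U(\mathbb{1}+f(U^\ast U))+\tfrac12 U(\mathbb{1}-f(U^\ast U))$ contradicts extremality. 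Hence $\sigma(U^\ast U)\subseteq\{0,1\}$, and $p:=U^\ast U$, $q:=UU^\ast$ are projections in $\calgebra$ satisfying $U(\mathbb{1}-p)=0$, whence $Up=U=qU$.

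Next I would show $(\mathbb{1}-q)A(\mathbb{1}-p)=0$ for every contraction $A\in\calgebra$: put $A':=(\mathbb{1}-q)A(\mathbb{1}-p)=A-qA-Ap+qAp\in\calgebra$; for each $\xi\in\hilbert$, $U\xi=Up\xi\in q\hilbert$ while $A'\xi=A'(\mathbb{1}-p)\xi\in(\mathbb{1}-q)\hilbert$, two orthogonal subspaces, so $\|(U\pm A')\xi\|^2=\|Up\xi\|^2+\|A'(\mathbb{1}-p)\xi\|^2\leq\|p\xi\|^2+\|(\mathbb{1}-p)\xi\|^2=\|\xi\|^2$, giving $U\pm A'\in B_1$ and hence $A'=0$ by extremality. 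Finally, taking an increasing approximate identity $(e_\lambda)_\lambda$ of $\calgebra$, the relation applied to $A=e_\lambda$ reads $e_\lambda=qe_\lambda+e_\lambda p-qe_\lambda p$; since $p,q\in\calgebra$ the right-hand side converges in norm to $e:=q+p-qp\in\calgebra$, so $e_\lambda\to e$ in norm, and then $eA=\lim_\lambda e_\lambda A=A=\lim_\lambda Ae_\lambda=Ae$ for all $A\in\calgebra$; that is, $\calgebra$ has identity $e$. The main obstacle is exactly the step $(\mathbb{1}-q)A(\mathbb{1}-p)=0$ for \emph{all} contractions $A$ (Kadison's condition on the extremal points of the ball), which I handle without assuming a unit by working inside a faithful representation; once it is available, the approximate-identity argument produces the identity immediately.
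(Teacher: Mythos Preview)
Your argument is correct, and through the step ``$(\mathbb{1}-q)A(\mathbb{1}-p)=0$ for every contraction $A\in\calgebra$'' it follows the same line as the paper. The \emph{forward} direction (unital $\Rightarrow$ extremal) is essentially identical to the paper's, only written a bit more cleanly.

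The genuine difference is in the \emph{endgame} of the backward direction. After obtaining the Kadison condition $(\mathbb{1}-q)\calgebra(\mathbb{1}-p)=\{0\}$, the paper sets $h:=U^\ast U+UU^\ast=p+q$ and argues by contradiction that $h$ must be invertible (assuming otherwise, it produces a positive $B$ with $hB=0$, whence $UB=BU=0$ and $B=(\mathbb{1}-q)B(\mathbb{1}-p)=0$), concluding that $hh^{-1}$ is the identity. Your route is more constructive: you feed an approximate identity $(e_\lambda)$ into the relation to get $e_\lambda=qe_\lambda+e_\lambda p-qe_\lambda p\to p+q-qp$ in norm, and read off the identity $e=p+q-qp$ directly. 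This is tidier and sidesteps the somewhat delicate claim that non-invertibility of $h$ yields a nonzero annihilator $B\in\calgebra$.

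One caveat worth flagging: in \emph{this} paper's logical order, the existence of an approximate identity (Segal's theorem) is proved \emph{after} the present lemma and \emph{uses} it. So, as written, your argument is circular relative to the paper's internal structure. Mathematically there is no problem---approximate identities can be built by elementary functional calculus without ever mentioning extreme points---but if you want your proof to slot into the paper as-is, either cite an independent construction of approximate identities or replace the last paragraph by the paper's invertibility-of-$p+q$ argument.
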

\begin{proof}
	$(\Rightarrow)$ If $\calgebra$ has an identity $\mathbb{1}$, we can write it as $\mathbb{1}=\frac{A+B}{2}$ with $A,B \in \mathcal{S}$. It follows that $\mathbb{1}= \frac{\tilde{A}+\tilde{B}}{2}$ with $\tilde{A}=\frac{A+A^\ast}{2}$ and $\tilde{B}=\frac{B+B^\ast}{2}$. Now, since $\tilde{A}=2\mathbb{1}-\tilde{B}$, they are self-adjoint elements (they commute) and, by the Spectral Theorem, $\tilde{A},\tilde{B}\geq \mathbb{1}$. But $\tilde{A},\tilde{B} \in \mathcal{S}$, hence $\tilde{A}\leq \mathbb{1}$ and $\mathbb{1}\leq\tilde{B}$, thus we conclude that $\tilde{A}=\tilde{B}=\mathbb{1}$.
	
	Returning to the definition of the operators, $A=2\mathbb{1}-A^\ast$ and it follows that $A$ is a normal operator such that $2A=AA^\ast+A^\ast A$, hence positive. By the above argument $A=\mathbb{1}$ and the using the analogous argument for $B$, it follows that $\mathbb{1}$ is an extremal point.
	
	$(\Leftarrow)$ 
	Suppose now $A \in \mathcal{E}(\mathcal{S})$. Of course $\sigma(A^\ast A),\sigma(AA^\ast)\subset {0,1}$, otherwise it is easy to construct (using the Spectral Theorem) a positive operator $B\subset \calgebra$ such that $\|B\| \leq 1$, $\|A \pm B\| =1$, in particular, $A^\ast A$ and $AA^\ast$ are projections.
	
	
	Now, let $B\in \{C-C A^\ast A-A^\ast A C A^\ast A+A^\ast A C A^\ast A  \ | \ C \in \calgebra\}$ such that $\|B\|\leq1$. A straight forward calculation using that $A^\ast A$ is a projection shows that $B^\ast A A^\ast B=0$, thus $\| A^\ast B\|=\|B^\ast A(B^\ast A)^\ast\|^{\frac{1}{2}}=0 \Rightarrow B^\ast A=A^\ast B=0$ and $A^\ast A B^\ast B=0$.
	So, we must have 
	\begin{equation}\label{eq2}\begin{aligned}
	\|A\pm B\|&
	=\|(A^\ast\pm B^\ast)(A\pm B)\|^{\frac{1}{2}}\\
	&=\|A^\ast A\pm(A^\ast B+B^\ast A)-B^\ast B\|^{\frac{1}{2}}\\
	&=\|A^\ast A+B^\ast B\|^{\frac{1}{2}}=\max\{\|A\|,\|B\|\}\\
	&\leq 1.\end{aligned} \end{equation}
	
	From equation \ref{eq2} we conclude, since $A$ is an extremal point,
	$$
	A=\frac{1}{2}\frac{A+B}{\|A+B\|}+\frac{1}{2}\frac{A-B}{\|A-B\|} \Rightarrow B=0 \Rightarrow$$
	$$\{C-C A^\ast A-A A^\ast C +A A^\ast C A^\ast A  \ | \ C \in \calgebra\}=\{0\}
	$$
	
	Now, define $h=A^\ast A+AA^\ast$, and suppose it does not have an inverse, that means, via the identification in \ref{TGN} and Theorem \ref{TI}, there exists a positive operator $B \in \calgebra$ with $\|B\|=1$ and $hB=0$. But then $$\|AB\|=\|B A^\ast\|=\|B A^\ast A B\|^{\frac{1}{2}}\leq \|B h B\|=0.$$
	
	Doing the analogous estimation to $\|BA\|$ we conclude
	$$\| B-B A^\ast A-A A^\ast B +A A^\ast B A^\ast A\|=\|B\|= 1.$$
	
	Since it is a contradiction we must have that $hh^{-1}$ is an identity.
\end{proof}

The next result is a original proof of a well known result in the theory of $C^\ast$-algebras.

\begin{theorem}[Segal]
	\label{ExisAppId}
	Every $C^\ast$-algebra $\calgebra$ contains a positive
	approximate identity.
\end{theorem}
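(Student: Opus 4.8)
The plan is to produce an explicit approximate identity indexed by the positive part of the open unit ball. Set $\Lambda=\{A\in\calgebra\mid A\geq 0,\ \|A\|<1\}$, ordered by the operator order inherited from $\calgebra$, and consider the net $(e_\lambda)_{\lambda\in\Lambda}$ with $e_\lambda=\lambda$. Two things must be checked: that $\Lambda$ is upward directed (so that it is a legitimate directed index set), and that $\|x-x e_\lambda\|\to 0$ and $\|x-e_\lambda x\|\to 0$ for every $x\in\calgebra$. All the analytic input needed — continuous functional calculus for elements of $\calgebra$, and the order behaviour of $t\mapsto t^{-1}$ on positive invertibles and of $t\mapsto t^2$ on $[0,1]$ — is available in the standard literature.

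For directedness, I would use that the map $\phi(X)=X(\mathbbm 1+X)^{-1}=\mathbbm 1-(\mathbbm 1+X)^{-1}$, computed in the unitization $\widetilde{\calgebra}$, restricts to a bijection of $\calgebra_+$ onto $\Lambda$ with inverse $\psi(Y)=Y(\mathbbm 1-Y)^{-1}$; here $\phi(X)\in\calgebra$ because $\phi=g$ for $g(t)=t/(1+t)$ with $g(0)=0$, so $\phi(X)\in C^\ast(X)$, and likewise for $\psi$. The key point is that $\phi$ is operator monotone: if $0\leq X\leq Y$ then $\mathbbm 1+X\leq\mathbbm 1+Y$, hence $(\mathbbm 1+Y)^{-1}\leq(\mathbbm 1+X)^{-1}$ by antitonicity of inversion, and so $\phi(X)\leq\phi(Y)$. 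Then given $A,B\in\Lambda$, put $X=\psi(A)$, $Y=\psi(B)$, $Z=X+Y\in\calgebra_+$ and $C=\phi(Z)\in\Lambda$; since $Z\geq X$ and $Z\geq Y$, monotonicity gives $C\geq\phi(X)=A$ and $C\geq\phi(Y)=B$. I expect this step — the passage to the unitization together with the order facts about inversion — to be the main obstacle; the rest is routine estimation.

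For convergence, fix $x\in\calgebra$ and $\varepsilon>0$. For $n\in\mathbb{N}$ set $u_n=n\,x^\ast x\,(\mathbbm 1+n\,x^\ast x)^{-1}\in\calgebra_+$; functional calculus gives $\|u_n\|<1$, so $u_n\in\Lambda$, and $\mathbbm 1-u_n=(\mathbbm 1+n\,x^\ast x)^{-1}$. Then
$$\|x-x\,u_n\|^2=\|x(\mathbbm 1-u_n)^2x^\ast\|=\|x^\ast x\,(\mathbbm 1+n\,x^\ast x)^{-2}\|\leq\sup_{t\geq 0}\frac{t}{(1+nt)^2}=\frac{1}{4n}\xrightarrow[n\to\infty]{}0,$$
so I fix $n$ with $1/n<\varepsilon^2$ and write $u_0=u_n$. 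For any $e\in\Lambda$ with $e\geq u_0$ one has $0\leq\mathbbm 1-e\leq\mathbbm 1$ and $\mathbbm 1-e\leq\mathbbm 1-u_0$, hence $(\mathbbm 1-e)^2\leq\mathbbm 1-e$, and conjugating by $x$ and using monotonicity of the norm on positives,
$$\|x-x\,e\|^2=\|x(\mathbbm 1-e)^2x^\ast\|\leq\|x(\mathbbm 1-e)x^\ast\|\leq\|x(\mathbbm 1-u_0)x^\ast\|=\|x^\ast x(\mathbbm 1+n\,x^\ast x)^{-1}\|\leq\frac{1}{n}<\varepsilon^2.$$
Combined with the directedness of $\Lambda$ this gives $\|x-x\,e_\lambda\|\to 0$; applying the same to $x^\ast$ and taking adjoints yields $\|x-e_\lambda x\|\to 0$. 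Hence $(e_\lambda)_{\lambda\in\Lambda}$ is a positive approximate identity, which is what we wanted.
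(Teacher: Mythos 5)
Your proof is correct, and it follows a genuinely different route from the one in the text. You build the approximate identity explicitly: the index set is $\Lambda=\{A\in\calgebra_+ : \|A\|<1\}$, directedness comes from the order isomorphism $X\mapsto X(\mathbbm{1}+X)^{-1}$ of $\calgebra_+$ onto $\Lambda$ (computed in the unitization, with operator monotonicity from antitonicity of inversion), and the convergence $\|x-xe_\lambda\|\to 0$ is a quantitative estimate via $u_n=n\,x^\ast x(\mathbbm{1}+n\,x^\ast x)^{-1}$ together with $(\mathbbm{1}-e)^2\leq \mathbbm{1}-e$; all steps check out, including the reduction of the left-hand convergence to the right-hand one by passing to $x^\ast$. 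The text instead argues nonconstructively: it embeds $\calgebra$ in $B(\hilbert)$ via Theorem \ref{TGN}, uses Banach--Alaoglu, Krein--Milman (Theorem \ref{TKM}) and Lemma \ref{extremealidentity} to produce an identity in the weak-operator closure of the positive part of the unit ball, and then upgrades a weakly convergent net $(I_\alpha)$ to strong convergence through the estimate $\|x-I_\alpha x\|^4\leq\|x\|^2\ip{x-I_\alpha x}{x}$. What each buys: the text's argument showcases the extreme-point machinery developed in the appendix, but it is tied to a concrete representation and only delivers convergence on the Hilbert space on which $\calgebra$ is represented; your argument is intrinsic to the algebra, yields an \emph{increasing} approximate identity bounded by one with explicit rates, and avoids any appeal to Gelfand--Naimark --- which matters here, since the GNS construction (Proposition \ref{GNS}) itself invokes an approximate identity, so the representation-based route has a circularity that your proof sidesteps.
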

\begin{proof}
	
	First we recall Theorem \ref{TGN}, to reduce to the case of a subspace of $B(\hilbert)$.
	
	Let $\mathcal{P}=\{A\in\calgebra \ | \ A\geq0\}$. Then $\mathcal{P}$ is a convex pointed cone, since by the Banach-Alaoglu Theorem the unit ball is weak-operator compact, $K=\overline{\mathcal{P}\cap B_{B(\hilbert)}}^{WOT}=\overline{\mathcal{P}}^{WOT}\cap\overline{\mathcal{S}_{B(\hilbert)}}^{WOT}$ must be so, thus $K=\overline{conv}^{WOT}{\left(\mathcal{E}(K)\right)}$ due to Theorem \ref{TKM}. On the other hand, according to Lemma \ref{extremealidentity} there must exists an identity in the weak-operator closure of $\mathcal{S}_{B(\hilbert)}$. Let $(I_\alpha)_\alpha \subset \mathcal{S}_{B(\hilbert)}$ be a net, which is convergent to this identity denoted by $\mathbb{1}$. This means, for all $x, y\in \hilbert$ we have $\ip{x-I_\alpha x}{y}\rightarrow 0$.
	
	For each index $\alpha$ there exists a unique positive operator $\sqrt{\mathbb{1}-I_\alpha}$ such that $\sqrt{I_\alpha-\mathbb{1}}^2=\mathbb{1}-I_\alpha$. It follows that
	$$\begin{aligned}
	\|x-I_\alpha x\|^4
	&=\ip{(\mathbb{1}-I_\alpha)^{\frac{1}{2}}x}{(\mathbb{1}-I_\alpha)^{\frac{3}{2}}x}\\
	&\leq \|(\mathbb{1}-I_\alpha)^{\frac{1}{2}}x\| \|(\mathbb{1}-I_\alpha)^{\frac{3}{2}}x\|\\
	&\leq \|I_\alpha\|^{3} \|x\|^2 \ip{x-I_\alpha x}{x}\\
	&\leq \|x\|^2 \ip{x-I_\alpha x}{x}\\
	\end{aligned}$$
	and from this we conclude, for each fixed $x\in \hilbert$, $\|I_\alpha x -x\|\rightarrow 0$, $I_\alpha \in \mathcal{P}\cap \mathcal{S}_\calgebra$.
\end{proof}

\chapter{Representations and Spectral Analysis}

Spectral theory is well known for normal operators in $B(\hilbert)$, the space of bounded operators on a Hilbert space $\hilbert$, it allows us to construct a functional calculus for operators. It is also known that spectral theory and functional calculus can be extended to $C^\ast$-algebras. An easy way to import all results of spectral theory of bounded operators are going to be shown next.

\begin{definition}[Resolvent and Spectrum]
	Let $\calgebra$ be a $C^\ast$-algebra, where we adjoin an identity if none is provided. Let $A\in \calgebra$.
	\begin{enumerate}[(i)]
		\item $\rho(A)=\{\lambda \in \mathbb{C} \ |\ (\lambda\mathbbm{1}-A)\textrm{ has an inverse in } \calgebra\}$ is called the resolvent\index{resolvent} of $A$.
		
		\item $\sigma(A)=\mathbb{C}\setminus\rho(A)$ is called the spectrum\index{spectrum} of $A$.
		
		\item $r(A)=\sup\{|\lambda| \ |\ \lambda \in\sigma(A)\}$ is called the spectral radius \index{spectral radius} of $A$.
	\end{enumerate}
\end{definition}

Note that this is the usual definition of spectrum in $B(\hilbert)$ and that the spectral radius is a positive finite number due to the next lemma.

\begin{remark} \
	We do not specify in which algebra the spectrum is taken, when necessary we will write $\rho_\algebra(A)$ in order to fix the algebra, but it is an interesting fact (it will be shown later) that the spectrum does not depend on the $C^\ast$-algebra.
	
\end{remark}

\begin{lemma}
	\label{bspec}
	If $A\in \calgebra$ and $|\lambda|>\|A\|$, then $\lambda \in \rho(A)$, in other words, $\lambda\in \sigma(A) \Rightarrow |\lambda| \leq \|A\|$.
\end{lemma}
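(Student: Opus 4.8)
The plan is to use the Neumann series. Since $|\lambda|>\|A\|$ forces $\lambda\neq 0$, I can factor $\lambda\mathbbm{1}-A=\lambda(\mathbbm{1}-\lambda^{-1}A)$, so it suffices to show that $\mathbbm{1}-T$ is invertible in $\calgebra$, where $T=\lambda^{-1}A$ satisfies $\|T\|=\|A\|/|\lambda|<1$. (If $\calgebra$ has no identity we first adjoin one, as indicated in the preceding remark.)

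First I would observe that, by submultiplicativity of the $C^\ast$-norm, $\|T^n\|\leq\|T\|^n$, so the partial sums $S_N=\sum_{n=0}^{N}T^n$ form a Cauchy sequence in $\calgebra$: for $M>N$ we have $\|S_M-S_N\|\leq\sum_{n=N+1}^{M}\|T\|^n\leq\|T\|^{N+1}/(1-\|T\|)$, which tends to $0$. Since $\calgebra$ is complete, $(S_N)_N$ converges to some $S\in\calgebra$.

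Next I would check that $S$ is a two-sided inverse of $\mathbbm{1}-T$. A telescoping computation gives $(\mathbbm{1}-T)S_N=S_N(\mathbbm{1}-T)=\mathbbm{1}-T^{N+1}$, and $\|T^{N+1}\|\leq\|T\|^{N+1}\to 0$; passing to the limit, using that multiplication is norm-continuous, yields $(\mathbbm{1}-T)S=S(\mathbbm{1}-T)=\mathbbm{1}$. Hence $\mathbbm{1}-T$ is invertible, and therefore so is $\lambda\mathbbm{1}-A=\lambda(\mathbbm{1}-T)$, with $(\lambda\mathbbm{1}-A)^{-1}=\lambda^{-1}S=\sum_{n=0}^{\infty}A^n/\lambda^{n+1}\in\calgebra$. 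Thus $\lambda\in\rho(A)$, which is the assertion; the contrapositive $\lambda\in\sigma(A)\Rightarrow|\lambda|\leq\|A\|$ is then immediate.

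There is no genuine obstacle here: the argument relies only on completeness of $\calgebra$ and joint continuity of the multiplication, both available in any Banach algebra, so the only point requiring minor care is the reduction to the unital case and the convergence bookkeeping for the geometric series.
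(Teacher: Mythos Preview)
Your proof is correct and follows essentially the same Neumann-series approach as the paper: both construct the inverse of $\lambda\mathbbm{1}-A$ as the norm-convergent series $\sum_{n=0}^{\infty}\lambda^{-(n+1)}A^n$ and verify it via the telescoping identity $(\lambda\mathbbm{1}-A)\sum_{n=0}^{m}\lambda^{-(n+1)}A^n=\mathbbm{1}-\lambda^{-(m+1)}A^{m+1}$. Your presentation is slightly more careful (explicit Cauchy estimate, two-sided inverse check, reduction to the unital case), but the underlying argument is the same.
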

\begin{proof}
	Define $\displaystyle B_m=\sum_{n=0}^{m}{\lambda^{-(n+1)}A^n}$. This is an absolutely convergent series and, in special, $\calgebra$ is a Banach Space, consequently there exists $\displaystyle B=\lim_{m\rightarrow \infty}{B_m}$.
	
	In order to show that $B$ is the inverse of $\lambda\mathbbm{1}-A$, note that
	$$\begin{aligned}
	(\lambda\mathbbm{1}-A)B_m	&=(\lambda\mathbbm{1}-A)\sum_{n=0}^{m}{\lambda^{-(n+1)}A^n} \\
	&=\sum_{n=0}^{m}{\lambda^{-n}A^n}-\sum_{n=1}^{m+1}{\lambda^{-n}A^n}\\
	&=\mathbbm{1}-\lambda^{-(m+1)}A^{m+1}\\
	\end{aligned}$$
	since $B_m\rightarrow B$, the left-hand side converges to $(\lambda\mathbbm{1}-A)B$ while by $\lambda^{-(m+1)}A^{m+1}\rightarrow 0$ the right-hand side goes to $\mathbbm{1}$.
	
	The proof for $B(\lambda\mathbbm{1}-A)$ follows by the same argument.
\end{proof}

\begin{theorem}[First Theorem of Isomorphism for Banach Algebras]
	{\label{TI}}
	Let $\algebra_1,\algebra_2$ be Banach algebras and $\Phi:\algebra_1 \to \algebra_2$ a bounded $\ast$-homomorphism, then there exists a unique $\ast$-isomorphism $\tilde{\Phi}: \algebra_1/\ker{(\Phi)} \to \Ran{\Phi}$ such that the following diagram commutes.
	
	\begin{center}
		\begin{minipage}{7 cm}
			\begin{displaymath}
			\xymatrix{ \algebra_1 \ar[dr]_{\eta} \ar[rr]^\Phi & 			 											&	\Ran(\Phi) \subset \algebra_2						\\
				& \algebra_1/\ker{(\Phi)}	\ar[ur]_{\tilde{\Phi}} 									}
			\end{displaymath}
		\end{minipage}
		\hfil\hspace{-4cm}
		.
	\end{center}
	
	Furthermore, $\|\Phi\|=\|\tilde{\Phi}\|$.
\end{theorem}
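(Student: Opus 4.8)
The plan is to run the standard first-isomorphism argument, taking care that everything stays within the category of Banach $\ast$-algebras. First I would observe that $\ker\Phi$ is a closed two-sided $\ast$-ideal of $\algebra_1$: it is a linear subspace and a two-sided ideal because $\Phi$ is an algebra homomorphism, it is $\ast$-invariant because $\Phi(x^\ast)=\Phi(x)^\ast$, and it is norm-closed because $\Phi$ is bounded, hence continuous. Consequently the quotient $\algebra_1/\ker\Phi$, equipped with the quotient norm $\|[x]\|=\inf_{k\in\ker\Phi}\|x-k\|$ and the induced involution $[x]^\ast=[x^\ast]$, is again a Banach $\ast$-algebra, and the canonical projection $\eta$ is a contractive $\ast$-homomorphism.

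Next I would define $\tilde{\Phi}([x])=\Phi(x)$. This is well defined, since $[x]=[y]$ means $x-y\in\ker\Phi$, so $\Phi(x)=\Phi(y)$; and by construction $\tilde{\Phi}\circ\eta=\Phi$, which is precisely the commutativity of the diagram. Because $\Phi$ is linear, multiplicative and $\ast$-preserving, so is $\tilde{\Phi}$: each identity for $\tilde{\Phi}$ follows by evaluating the corresponding identity for $\Phi$ on representatives. Surjectivity of $\tilde{\Phi}$ onto $\Ran\Phi$ is immediate from $\Phi=\tilde{\Phi}\circ\eta$, and $\Ran\Phi$ is a $\ast$-subalgebra of $\algebra_2$ as the image of a $\ast$-homomorphism; injectivity holds because $\tilde{\Phi}([x])=0$ forces $x\in\ker\Phi$, i.e.\ $[x]=0$. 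Hence $\tilde{\Phi}$ is a $\ast$-isomorphism of $\algebra_1/\ker\Phi$ onto $\Ran\Phi$. Uniqueness is forced by the commutativity requirement: any $\Psi$ with $\Psi\circ\eta=\Phi$ must satisfy $\Psi([x])=\Psi(\eta(x))=\Phi(x)=\tilde{\Phi}([x])$ for every $x$.

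For the norm equality I would argue by two inequalities based on the quotient norm. For one direction, given $\varepsilon>0$ and a class $[x]$, choose $y\in[x]$ with $\|y\|\leq\|[x]\|+\varepsilon$; then $\|\tilde{\Phi}([x])\|=\|\Phi(y)\|\leq\|\Phi\|\,\|y\|\leq\|\Phi\|\left(\|[x]\|+\varepsilon\right)$, and letting $\varepsilon\to0$ yields $\|\tilde{\Phi}\|\leq\|\Phi\|$. For the other, since $\eta$ is a contraction, $\|\Phi(x)\|=\|\tilde{\Phi}(\eta(x))\|\leq\|\tilde{\Phi}\|\,\|\eta(x)\|\leq\|\tilde{\Phi}\|\,\|x\|$, so $\|\Phi\|\leq\|\tilde{\Phi}\|$. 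Together these give $\|\Phi\|=\|\tilde{\Phi}\|$.

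I do not expect a genuine obstacle here; the proof is entirely routine. The only points that deserve explicit mention are that $\ker\Phi$ is closed (so that $\algebra_1/\ker\Phi$ really is a Banach algebra), and that ``$\ast$-isomorphism'' is understood as a bijective $\ast$-homomorphism rather than an isometry, since no $C^\ast$-identity is assumed on $\algebra_1,\algebra_2$.
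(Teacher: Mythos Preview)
Your proof is correct and follows essentially the same route as the paper: define $\tilde{\Phi}$ on cosets by choosing representatives, verify well-definedness, the $\ast$-homomorphism properties, bijectivity, uniqueness from the factorization, and obtain $\|\Phi\|=\|\tilde{\Phi}\|$ via the quotient norm in both directions. If anything, you are slightly more careful than the paper in explicitly noting that $\ker\Phi$ is a closed two-sided $\ast$-ideal so that the quotient is a Banach $\ast$-algebra.
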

\begin{proof}
	Define $\tilde{\Phi}: \algebra_1/\ker{(\Phi)} \rightarrow \Ran(\Phi)$ such that $\tilde{\Phi}([A])=\tilde{\Phi}(\eta(A))=\Phi(A)$.
	
	Of course we can check that this function is well defined, since we used a representing element of the equivalent class to define the function. If $[A_1]=[A_2] \in \algebra_1/\ker{(\Phi)}$ we have $A_1 = A_2 + K$ with $K \in \ker{(\Phi)}$ and then 
	$$\tilde{\Phi}([A_1])=\Phi(A_1)=\Phi(A_1)+ \Phi(K)= \Phi(A_1 + K) = \Phi(A_2) = \tilde{\Phi}([A_2]).$$
	
	$\tilde{\Phi}$ is again a $\ast$-homomorphism because
	$$A_1, A_2 \in \algebra_1
	\begin{cases} &\Rightarrow \tilde{\Phi}([A_1 + A_2])= \Phi(A_1+A_2)=\Phi(A_1)+\Phi(A_2) =\tilde{\Phi}([A_1])+\tilde{\Phi}([A_2]).\\
	&\Rightarrow \tilde{\Phi}([A_1 A_2])= \Phi(A_1A_2)=\Phi(A_1)\Phi(A_2)=\tilde{\Phi}([A_1])\tilde{\Phi}([A_2]).\\
	&\Rightarrow \tilde{\Phi}([A_1^\ast])= \Phi(A_1^\ast)=\Phi(A_1)^\ast =\tilde{\Phi}([A_1])^\ast.\\
	\end{cases}$$
	
	In order to show that $\tilde{\Phi}\in \mathcal{B}(\algebra_1/\ker{(\Phi)},\Ran(\Phi))$, it is enough to note that
	$$\begin{aligned}
	\|\tilde{\Phi}([A])\| 
	&= \inf_{\tilde{A}\in \ker{(\Phi)}}{\|\Phi(A+\tilde{A})\|}\\
	& \leq \inf_{\tilde{A}\in \ker{(\Phi)}}{\|\Phi\|\ \|A + \tilde{A}\|}\\
	& = \|\Phi\| \inf_{\tilde{A}\in \ker{(\Phi)}}{\|A + \tilde{A}\|}\\
	&=\|\Phi\| \ \|\ [A]\ \|.\\
	\end{aligned}$$
	hence $\|\tilde{\Phi}\| \leq \|\Phi\|$. In addition, 
	$$\|\Phi(A)\| = \|\tilde{\Phi}\circ \eta (A)\| = \|\tilde{\Phi}([A])\| \leq \|\tilde{\Phi}\| \ \| \ [A] \ \| \leq \|\tilde{\Phi}\| \ \|A\|$$
	and from the two inequalities it follows that $\|\Phi\| = \|\tilde{\Phi}\|$.
	
	Finally, $\tilde{\Phi}$ is bijective because $\tilde{\Phi}([A])=\Phi(A)= 0 \Leftrightarrow A\in \ker{(\Phi)} \Leftrightarrow [A]=[0] $ and $y\in \Ran(\Phi) \Leftrightarrow y=\Phi(A)$ for some $A\in \algebra_1$ thus, $\tilde{\Phi}([A])=\Phi(A)=y$.
	
	We define $\tilde{\Phi}$ in such a way that $\Phi=\tilde{\Phi} \circ \eta$ and, in addition, if $\tilde{\Psi}$ is another continuous operator satisfying this identity we have $(\tilde{\Phi}-\tilde{\Psi})\eta(A) = 0 \ \forall A\in \algebra_1$ and it follows that $\tilde{\Phi}=\tilde{\Psi}$.
\end{proof}

Although we are interested only in the case of Banach algebras, it is interesting to note that multiplication and involution play no role at the definition of the operator $\tilde{\Phi}$, that is, the very same definition works to prove this kind of theorem for groups, for example. 

\begin{definition}[Representation]
	A representation \index{representation} $\pi$ of a $C^\ast$-algebra $\calgebra$ is a $\ast$-homomorphism from $\calgebra$ into $B(\hilbert)$ for some Hilbert space $\hilbert$. The representation is said faithful \index{representation! faithfull} if $\pi$ is injective.
\end{definition}

\begin{theorem}
	\label{nonexpensive}
	Let $\calgebra$ be a $C^\ast$-algebra and $\pi$ representation, then $\|\pi(A)\|\leq \|A\|$.
\end{theorem}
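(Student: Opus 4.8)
The plan is to use the general principle that a $\ast$-homomorphism between $C^\ast$-algebras is norm-contractive, which follows from the interplay of three ingredients: the $C^\ast$-identity, the spectral radius formula $r(B)=\lim_n\|B^n\|^{1/n}$ (valid in any Banach algebra), and the fact that a unital $\ast$-homomorphism does not enlarge the spectrum. First I would reduce to the unital case: if $\calgebra$ has no identity, adjoin one to obtain the unitization $\tilde{\calgebra}$, into which $\calgebra$ embeds isometrically, and extend $\pi$ to a unital $\ast$-homomorphism $\tilde{\pi}:\tilde{\calgebra}\to B(\hilbert)$ by sending the adjoined identity to $\mathbbm{1}_{B(\hilbert)}$. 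It then suffices to prove $\|\tilde{\pi}(A)\|\leq\|A\|$ for every $A\in\tilde{\calgebra}$, since for $A\in\calgebra$ the unitization norm agrees with the original one.

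The key lemma is that $\sigma\left(\tilde{\pi}(A)\right)\subseteq\sigma(A)$ for every $A\in\tilde{\calgebra}$. Indeed, if $\lambda\in\rho(A)$ then $(\lambda\mathbbm{1}-A)^{-1}$ exists in $\tilde{\calgebra}$, and applying the homomorphism $\tilde{\pi}$ gives
$$\tilde{\pi}\big((\lambda\mathbbm{1}-A)^{-1}\big)\big(\lambda\mathbbm{1}-\tilde{\pi}(A)\big)=\mathbbm{1}=\big(\lambda\mathbbm{1}-\tilde{\pi}(A)\big)\tilde{\pi}\big((\lambda\mathbbm{1}-A)^{-1}\big),$$
so $\lambda\in\rho\left(\tilde{\pi}(A)\right)$. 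In particular the spectral radii satisfy $r\left(\tilde{\pi}(A)\right)\leq r(A)$.

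To finish I would invoke the $C^\ast$-identity. For any self-adjoint $B$ in a $C^\ast$-algebra one has $\|B^2\|=\|B^\ast B\|=\|B\|^2$, hence $\|B^{2^n}\|=\|B\|^{2^n}$ for all $n$, and the spectral radius formula then yields $r(B)=\|B\|$. Applying this to $B=\tilde{\pi}(A)^\ast\tilde{\pi}(A)=\tilde{\pi}(A^\ast A)$ inside $B(\hilbert)$ and to $A^\ast A$ inside $\tilde{\calgebra}$, and using the lemma,
$$\|\tilde{\pi}(A)\|^2=\|\tilde{\pi}(A^\ast A)\|=r\left(\tilde{\pi}(A^\ast A)\right)\leq r(A^\ast A)=\|A^\ast A\|=\|A\|^2.$$
Taking square roots and restricting to $A\in\calgebra$ gives $\|\pi(A)\|\leq\|A\|$.

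The main obstacle is the self-adjoint identity $r(B)=\|B\|$: Lemma \ref{bspec} only supplies the easy inequality $r(B)\leq\|B\|$, so the reverse inequality for self-adjoint elements has to be obtained from the spectral radius formula together with the $C^\ast$-identity as above. A secondary, purely technical point is the routine verification that the unitization is a $C^\ast$-algebra containing $\calgebra$ isometrically and that $\pi$ extends to a unital $\ast$-homomorphism on it; this is standard and I would only sketch it.
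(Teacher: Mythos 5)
Your proof is correct, but it takes a genuinely different route from the paper. The paper factors $\pi$ through the quotient: by Theorem \ref{TI} there is a $\ast$-isomorphism $\tilde{\pi}:\calgebra/\ker(\pi)\to\Ran{\pi}$, the spectra of $[A]$ and $\tilde{\pi}([A])$ are identified, and then the norm bound follows from Lemma \ref{bspec} together with the classical fact $r(B)=\|B\|$ for a normal element of $B(\hilbert)$, applied to $\tilde{\pi}([A][A]^\ast)$. You instead work directly with the homomorphism: you only need the one-sided inclusion $\sigma\left(\tilde{\pi}(A)\right)\subseteq\sigma(A)$ (preservation of invertibility), and you obtain $r(B)=\|B\|$ for self-adjoint $B$ from the $C^\ast$-identity plus the Beurling--Gelfand formula $r(B)=\lim_n\|B^n\|^{1/n}$, which indeed is not in the paper (Lemma \ref{bspec} gives only $r\leq\|\cdot\|$); the paper instead imports the corresponding equality as ``known results of classical spectral analysis,'' so the external input is comparable. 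What your route buys: it avoids the quotient machinery entirely and, in particular, does not presuppose any continuity of $\pi$ — note that Theorem \ref{TI} is stated for \emph{bounded} $\ast$-homomorphisms and needs $\ker(\pi)$ closed for $\calgebra/\ker(\pi)$ to be a Banach algebra, so your argument sidesteps a circularity that the paper's phrasing skirts. What the paper's route buys is a spectrum \emph{equality} and the explicit factorization $\tilde{\pi}$. One small point to tidy in your write-up: if $\calgebra$ is already unital but $\pi(\mathbbm{1})\neq\mathbbm{1}_{B(\hilbert)}$, your displayed identity produces $\pi(\mathbbm{1})$ rather than $\mathbbm{1}_{B(\hilbert)}$ on the right-hand side, so you should either compute spectra relative to the range algebra (where $\pi(\mathbbm{1})$ is the unit — this costs nothing since the spectral radius formula only involves norms of powers) or adjoin a fresh unit and send it to $\mathbbm{1}_{B(\hilbert)}$ regardless; with that caveat the argument is complete.
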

\begin{proof}
	From the Theorem \ref{TI}, there exists a $\ast$-isomorphism $\tilde{\pi}: \calgebra/\ker{(\pi)} \to \Ran{\pi}$.
	
	Since $\tilde{\pi}$ is a $\ast$-isomorphism, $\tilde{\pi}([\mathbbm{1}])=\pi(\mathbbm{1})$ is in the image of $\pi$. Furthermore, $\lambda[\mathbbm{1}]-[A]$ has an inverse if, and only if, $\lambda\tilde{\pi}([\mathbbm{1}])-\tilde{\pi}([A])$ is invertible too. So, we have the identification $\sigma\left([A]\right)=\sigma\left(\tilde{\pi}([A])\right)$, in particular, from Lemma \ref{bspec} it follows that $r(A)\leq \|A\|$.
	
	Now, we can use known results of (classical) spectral analysis for a normal element $\tilde{\pi}([A][A]^\ast)$, from which we conclude
	$$\left\|\pi(A)\right\|^2=\left\|\tilde{\pi}([A])\right\|^2=\left\|\tilde{\pi}([A][A]^\ast)\right\|=r\left(\tilde{\pi}([A][A]^\ast)\right)\leq \|AA^*\|=\|A\|^2.$$
	
\end{proof}


\begin{definition}[Pure state]
	A state in a $C^\ast$-algebra $\calgebra$ is said a pure state \index{state! pure} if it is an extremal point of $\mathcal{S}_\calgebra=\{\omega \in \calgebra^\ast \ | \ \|\omega \|=1\}$. When $\omega$ is not pure it is called a mixed\index{state! mixed} state.
\end{definition}

\begin{notation}
	We denote by $\mathcal{E}(X)$ the set of extremal points \index{extremal points} of $X$.
	
	(The definition of extremal point can be found in Appendix A).
\end{notation}

For now, the existence of pure states is not clear, but it will shown soon that they exist in sufficient number such that its closed convex hull coincides with the weak$^\ast$-closure of $B_{\calgebra^\prime}$.

\begin{proposition}
	\label{nullpurestates}
	Let $A\in\calgebra$. If $\omega(A)=0$ for each pure state $\omega$, then $A=0$.
\end{proposition}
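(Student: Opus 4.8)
The plan is to reduce to self-adjoint elements and then, given a nonzero self-adjoint element, to produce a \emph{pure} state not annihilating it, contradicting the hypothesis. First I would dispose of the non-unital case: if $\calgebra$ has no identity, pass to $\widetilde{\calgebra}=\calgebra\oplus\mathbb{C}\mathbbm{1}$, recalling that the pure states of $\widetilde{\calgebra}$ which do not vanish on $\calgebra$ restrict to pure states of $\calgebra$, so it suffices to exhibit a pure state of $\widetilde{\calgebra}$ taking a nonzero value on the given element of $\calgebra$. Hence assume $\calgebra$ unital. Writing $A=B+\iu C$ with $B=\tfrac12(A+A^\ast)$ and $C=\tfrac1{2\iu}(A-A^\ast)$ self-adjoint, Proposition \ref{cauchyschwarz} applied with one argument equal to $\mathbbm{1}$ gives $\omega(A^\ast)=\overline{\omega(A)}$ for every state $\omega$; thus $\omega(A)=0$ forces $\omega(B)=\Re{\omega(A)}=0$ and $\omega(C)=\Im{\omega(A)}=0$. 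So it is enough to prove: if $B=B^\ast$ and $\omega(B)=0$ for every pure state $\omega$, then $B=0$.

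Next I would produce a state that norms $B$. Since $\omega(-B)=-\omega(B)$, replacing $B$ by $-B$ if needed we may assume $\|B\|\in\sigma(B)$ (for self-adjoint $B$ one of $\pm\|B\|$ lies in $\sigma(B)$, as $\sigma(B)\subset\mathbb{R}$). By the continuous functional calculus the abelian $C^\ast$-subalgebra $C^\ast(B,\mathbbm{1})$ is isometrically $\ast$-isomorphic to $C(\sigma(B))$, with $B$ corresponding to the identity function; evaluation at the point $\|B\|\in\sigma(B)$ is a character, hence a state $\psi$ on $C^\ast(B,\mathbbm{1})$ with $\psi(\mathbbm{1})=1$ and $\psi(B)=\|B\|$. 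Extend $\psi$ by the Hahn--Banach theorem to $\omega_1\in\calgebra^\ast$ with $\|\omega_1\|=\|\psi\|=1$; since $\omega_1(\mathbbm{1})=1=\|\omega_1\|$, Proposition \ref{PPC} shows $\omega_1$ is a state, and $\omega_1(B)=\|B\|$.

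Finally I would extract a pure state from the face of norming states. The state space $\mathcal{S}=\{\omega\in\calgebra^\ast \mid \omega(\mathbbm{1})=1=\|\omega\|\}$ is convex and, by Banach--Alaoglu together with Proposition \ref{PPC}, weak-$\ast$ compact. Set $F=\{\omega\in\mathcal{S}\mid \omega(B)=\|B\|\}$: it contains $\omega_1$, is weak-$\ast$ closed and convex, hence weak-$\ast$ compact, and it is a face of $\mathcal{S}$, because if $\omega=\lambda\omega'+(1-\lambda)\omega''$ with $\omega',\omega''\in\mathcal{S}$, $\lambda\in(0,1)$ and $\omega\in F$, then $\lambda\omega'(B)+(1-\lambda)\omega''(B)=\|B\|$ while $\omega'(B),\omega''(B)\le\|B\|$, forcing $\omega'(B)=\omega''(B)=\|B\|$. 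By Proposition \ref{exex}, $F$ has an extreme point $\omega_0$; since $F$ is a face, any decomposition $\omega_0=\lambda\omega'+(1-\lambda)\omega''$ inside $\mathcal{S}$ already lies in $F$, so extremality of $\omega_0$ in $F$ makes it extreme in $\mathcal{S}$, i.e. a pure state, with $\omega_0(B)=\|B\|$. By hypothesis $\omega_0(B)=0$, hence $\|B\|=0$ and $B=0$; applying this to both the self-adjoint and skew-adjoint parts of $A$ gives $A=0$. The one genuinely non-routine point is obtaining a \emph{pure} (not merely a) state that norms $B$, which is precisely what the face argument combined with Proposition \ref{exex} supplies; the reductions to the unital and self-adjoint cases and the existence of a norming state are routine.
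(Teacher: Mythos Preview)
Your proof is correct but follows a genuinely different route from the paper's.

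The paper argues ``globally'' via Krein--Milman (Theorem~\ref{TKM}): the weak-$\ast$ closed convex hull of the pure states is claimed to fill out the dual unit ball, so vanishing on all pure states propagates by convexity and weak-$\ast$ continuity to vanishing under every continuous functional, and then Corollary~\ref{C3TMD} yields $A=0$.

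You instead argue by contraposition for a single element: reduce to the unital case and to self-adjoint $B$, use the functional calculus and Hahn--Banach to manufacture a state with $\omega_1(B)=\|B\|$, and then run a face argument with Proposition~\ref{exex} to upgrade this to a \emph{pure} state with the same value. This is essentially the method the paper deploys immediately afterwards in Lemma~\ref{existencenorm} (to obtain a pure state with $\omega(A^\ast A)=\|A\|^2$); you have in effect anticipated and adapted that lemma's proof. The paper's approach is shorter once the full Krein--Milman theorem is available and avoids the self-adjoint reduction, while yours is more constructive, sidesteps the somewhat delicate identification of the convex hull of pure states inside the dual ball, and makes the existence of a norming pure state explicit.
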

\begin{proof}
	By Theorem \ref {TKM}, $B_{\calgebra^\prime}=\cchull{\mathcal{E}(\mathcal{S}_\calgebra)}$, where the closure is taken in the weak-$\ast$ topology. Then by continuity of the functionals in $\mathcal{S}_\calgebra$ we must have $\rho(A)=0 \ \forall \rho \in B_{\calgebra^\prime}$ and using Corollary \ref{C3TMD}, this implies $A=0$.
	
\end{proof}

\begin{lemma}
	\label{existencenorm}
	Let $\calgebra$ be a $C^\ast$-algebra and $A \in \calgebra$, there exists a pure state $\omega$ such that $\omega(A^\ast A)=\|A\|^2$.
\end{lemma}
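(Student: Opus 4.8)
The statement to prove is: for a $C^\ast$-algebra $\calgebra$ and $A\in\calgebra$, there exists a pure state $\omega$ with $\omega(A^\ast A)=\|A\|^2$. The natural route is a two-step argument: first produce a state attaining the value $\|A\|^2$ on $A^\ast A$, then pass to an extreme point of the face of such states. I would first reduce to the self-adjoint positive element $H=A^\ast A$, noting $\|H\|=\|A\|^2$, so the goal becomes: find a pure state $\omega$ with $\omega(H)=\|H\|$.

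\emph{Step 1: existence of some state attaining the norm.} By functional calculus (spectral analysis for the normal element $H$, as developed in this chapter), $\|H\|=r(H)=\max\{\lambda:\lambda\in\sigma(H)\}$, so $\lambda_0\defeq\|H\|\in\sigma(H)$. Working in the commutative $C^\ast$-subalgebra $C^\ast(H,\mathbbm1)\cong C(\sigma(H))$, the evaluation at $\lambda_0$ is a state $\omega_0$ on that subalgebra with $\omega_0(H)=\lambda_0=\|H\|$. Extend $\omega_0$ to a state on all of $\calgebra$ via Hahn-Banach (extend the functional keeping norm $\leq 1$, then use Proposition \ref{PPC}: a norm-preserving extension $\tilde\omega$ of a functional with $\tilde\omega(\mathbbm1)=\omega_0(\mathbbm1)=1=\|\tilde\omega\|$ is automatically positive, hence a state). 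Call the resulting set $\mathcal{S}_H=\{\omega\in\mathcal{S}_\calgebra:\omega(H)=\|H\|\}$; it is nonempty, convex, and weak$^\ast$-closed.

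\emph{Step 2: pass to an extreme point.} Since $\mathcal{S}_\calgebra$ is weak$^\ast$-compact (Banach-Alaoglu) and $\mathcal{S}_H$ is a weak$^\ast$-closed subset, $\mathcal{S}_H$ is weak$^\ast$-compact and convex, hence by Proposition \ref{exex} has an extreme point $\omega$. The crucial point is that $\mathcal{S}_H$ is a \emph{face} of $\mathcal{S}_\calgebra$: if $\omega=t\omega_1+(1-t)\omega_2$ with $\omega_1,\omega_2\in\mathcal{S}_\calgebra$, $t\in(0,1)$, then since $\omega_i(H)\leq\|H\|$ for each $i$ (as $\|\omega_i\|=1$ and, by Proposition \ref{PPC}, $\omega_i(H)\le\|\omega_i\|\,\|H\|$) and the convex combination equals $\|H\|$, we must have $\omega_1(H)=\omega_2(H)=\|H\|$, i.e.\ $\omega_1,\omega_2\in\mathcal{S}_H$. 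Consequently an extreme point of $\mathcal{S}_H$ is an extreme point of $\mathcal{S}_\calgebra$, i.e.\ a pure state; this $\omega$ satisfies $\omega(A^\ast A)=\omega(H)=\|H\|=\|A\|^2$.

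\emph{Expected obstacle.} The genuinely delicate step is Step 2 — specifically checking that $\mathcal{S}_H$ is a face and that extreme points of a face are extreme in the ambient set, together with confirming the compactness statements are applied to the right objects (one must be careful that $\mathcal{S}_\calgebra$, the unit sphere of the dual, need not be weak$^\ast$-compact, but the set of states, i.e.\ positive functionals of norm $\leq 1$ with $\omega(\mathbbm1)=1$, is). Step 1 is routine given the functional calculus and Hahn-Banach/Proposition \ref{PPC}. I would also remark that if $\calgebra$ is non-unital one adjoins a unit, runs the argument, and restricts back, using that states extend; but since Proposition \ref{PPC} is stated for unital algebras I would simply assume $\calgebra$ unital or handle the unitization explicitly.
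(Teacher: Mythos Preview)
Your proof is correct and follows essentially the same route as the paper: construct a state attaining $\|A\|^2$ on $A^\ast A$ via a functional on a commutative subalgebra extended by Hahn--Banach (the paper works directly on the two-dimensional span of $\mathbbm{1}$ and $A^\ast A$, you invoke the Gelfand picture $C^\ast(H,\mathbbm{1})\cong C(\sigma(H))$ and point evaluation --- these are the same functional), then observe that the set of such states is a nonempty weak$^\ast$-compact convex face of the state space and apply Krein--Milman/Proposition~\ref{exex} to extract a pure state. The face argument in your Step~2 is exactly the paper's.
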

\begin{proof}
	If $\calgebra$ has no unity, add it. Consider the subalgebra 
	$$\mathfrak{A}_0=\{\alpha\mathbbm{1}+\beta A^\ast A \ | \ \alpha, \beta \in \mathbb{K}\}.$$
	
	Define the linear functional $\tilde{\omega}: \mathfrak{A}_0 \to \mathbb{K}$ by $\tilde{\omega}(\alpha \mathbbm{1}+\beta A^\ast A)= \left|\alpha +\beta \|A\|^2\right|$.
	
	Note that
	$$\begin{aligned}
	\tilde{\omega}(\alpha \mathbbm{1}+\beta A^\ast A) &=\left|\alpha +\beta \|A\|^2\right| \\
	&\leq \sup_{\lambda \in \sigma(A^\ast A)}{|\alpha + \beta\lambda|}\\
	& r(\alpha \mathbbm{1}+\beta A^\ast A)\\
	&=\|\alpha \mathbbm{1}+\beta A^\ast A\|.
	\end{aligned}$$
	Hence, since $\tilde{\omega}(\mathbbm{1})=1$, $\tilde{\omega}$ is a state on $\mathfrak{A}_0$. Now, by Hahn-Banach Theorem it has a norm preserving extension to $\calgebra$.
	
	This warrants that the closed and convex set 
	$$\mathfrak{F}=\left\{\omega\in \mathcal{S} \ \middle| \ \omega(A^\ast A)=\|A\|^2\right\}\neq \emptyset,$$
	thus it has a extreme point by Theorem \ref{TKM}.
	
	Let $\omega$ be such an extreme point, and suppose $\omega=\lambda \omega_1+(1-\lambda)\omega_2$ for $\omega_1, \omega_2 \in \mathcal{S}$ and $0<\lambda<1$, then
	$$\begin{aligned}
	\|A\|^2
	&=\omega(A^\ast A)\\
	&=\lambda \omega_1(A^\ast A)+(1-\lambda)\omega_2(A^\ast A)\\
	&\leq \lambda \|A\|^2+(1-\lambda)\|A\|^2\\
	&=\|A\|^2\\
	&\Rightarrow \omega_1(A^\ast A)=\omega_2(A^\ast A)=\|A\|^2\\
	&\Rightarrow \omega_1,\omega_2 \in \mathfrak{F}\\
	&\Rightarrow \omega=\omega_1=\omega_2.
	\end{aligned} $$
	
	Hence, $\omega$ is an extremal point in $\mathcal{S}$ as well and satisfies $\omega(A^\ast A)=\|A\|^2$. 
	
\end{proof}

\begin{proposition}[GNS-Representation]
	\label{GNS}
	Let $\calgebra$ be a $C^\ast$-algebra and $\omega$ a state. Then there exists a Hilbert space $\hilbert_\omega$ and a representation $\pi_\omega$ of $\calgebra$ in $B(\hilbert_\omega)$. This representation also admits a cyclic vector $\xi$ for which
	$$\omega(A)=\ip{\pi_\omega(A)\xi}{\xi}.$$
\end{proposition}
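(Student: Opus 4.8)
The plan is to carry out the classical Gelfand--Naimark--Segal construction. First I would equip $\calgebra$ with the sesquilinear form $\ip{A}{B}_\omega\defeq\omega(B^\ast A)$; by Proposition~\ref{cauchyschwarz} it is positive semidefinite and obeys the Cauchy--Schwarz inequality. Put $N_\omega=\{A\in\calgebra\mid\omega(A^\ast A)=0\}$. Cauchy--Schwarz gives $N_\omega=\{A\mid\ip{B}{A}_\omega=0\ \forall B\in\calgebra\}$, so $N_\omega$ is a linear subspace; and the order estimate $A^\ast C^\ast CA\le\|C\|^2A^\ast A$ established in the proof of item~$(iii)$ of Proposition~\ref{simplepropweights} yields $\omega\big((CA)^\ast(CA)\big)\le\|C\|^2\omega(A^\ast A)$, so $N_\omega$ is in fact a left ideal. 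Hence $\ip{\cdot}{\cdot}_\omega$ descends to an inner product on the quotient $\calgebra/N_\omega$; I would let $\hilbert_\omega$ be its Hilbert space completion and write $[A]$ for the image of $A\in\calgebra$.

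Next I would build the representation by left multiplication: for $A\in\calgebra$ set $\pi_\omega(A)[B]\defeq[AB]$, which is well defined exactly because $N_\omega$ is a left ideal, and which is bounded since
\[
\big\|\pi_\omega(A)[B]\big\|_\omega^2=\omega(B^\ast A^\ast AB)\le\|A\|^2\,\omega(B^\ast B)=\|A\|^2\big\|[B]\big\|_\omega^2,
\]
again by the estimate $B^\ast A^\ast AB\le\|A\|^2B^\ast B$. Thus $\pi_\omega(A)$ extends uniquely to an element of $B(\hilbert_\omega)$ with $\|\pi_\omega(A)\|\le\|A\|$, and checking on the dense subspace $\calgebra/N_\omega$ shows $\pi_\omega$ is linear, multiplicative, and $\ast$-preserving, the last because $\ip{\pi_\omega(A)[B]}{[C]}_\omega=\omega(C^\ast AB)=\ip{[B]}{\pi_\omega(A^\ast)[C]}_\omega$. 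So $\pi_\omega$ is a representation of $\calgebra$ on $\hilbert_\omega$.

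It remains to exhibit the cyclic vector. If $\calgebra$ is unital I would take $\xi=[\mathbbm{1}]$: then $\pi_\omega(A)\xi=[A]$, so $\pi_\omega(\calgebra)\xi=\calgebra/N_\omega$ is dense in $\hilbert_\omega$, and $\ip{\pi_\omega(A)\xi}{\xi}_\omega=\omega(\mathbbm{1}^\ast A)=\omega(A)$. For a non-unital $\calgebra$ I would invoke Theorem~\ref{ExisAppId} to choose an increasing positive approximate identity $(E_\alpha)_\alpha$; a short Cauchy--Schwarz estimate (using $0\le(E_\alpha-E_\beta)^2\le E_\alpha-E_\beta$ for $\alpha\ge\beta$ and monotonicity of $\alpha\mapsto\omega(E_\alpha)$) shows that $\big([E_\alpha]\big)_\alpha$ is a Cauchy net, hence converges to some $\xi\in\hilbert_\omega$; from $\omega\big((A-AE_\alpha)^\ast(A-AE_\alpha)\big)\to0$ one gets $\pi_\omega(A)\xi=\lim_\alpha[AE_\alpha]=[A]$, so $\xi$ is cyclic and $\ip{\pi_\omega(A)\xi}{\xi}_\omega=\lim_\alpha\lim_\beta\omega(E_\beta^\ast AE_\alpha)=\omega(A)$. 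I expect the only real obstacle to be this last, non-unital, step — namely verifying that the net of approximate units is Cauchy in $\hilbert_\omega$ and reproduces $\omega$ in the limit; the unital case and all the algebraic verifications are routine consequences of the Cauchy--Schwarz inequality and the order estimate $X^\ast C^\ast CX\le\|C\|^2X^\ast X$.
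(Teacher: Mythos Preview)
Your proposal is correct and follows essentially the same GNS construction as the paper: quotient by the left ideal $N_\omega$, complete to get $\hilbert_\omega$, define $\pi_\omega$ by left multiplication, and obtain the cyclic vector as the limit of classes of an approximate identity. If anything your treatment is slightly more careful than the paper's --- you explicitly take the completion, give the clean boundedness estimate $\omega(B^\ast A^\ast AB)\le\|A\|^2\omega(B^\ast B)$, and spell out the Cauchy-net argument for $([E_\alpha])_\alpha$, whereas the paper just asserts this net is ``bounded norm increasing \dots\ thus convergent''.
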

\begin{proof}
	Define the closed left ideal (two-sided, due to $(i)$ in Proposition \ref{cauchyschwarz}, as it will be became clear bellow) $N_\omega=\left\{A \in \calgebra \ \middle| \ \omega(A^\ast A)=0 \right\}$ and $\hilbert_\omega=\calgebra/N_\omega$ provided with the inner product $\ip{[A]}{[B]}=\omega(A^\ast B)$.
	
	First, this inner product is well defined because, if $N_1, N_2\in N_\omega$,
	\begin{equation}
	\label{eq:calculationquotient}
	\begin{aligned}
	\ip{[A+N_1]}{[B+N_2]}
	& = \omega\left((A+N_1)^\ast (B+N_2)\right)\\
	&= \omega(A^\ast B)+\omega(N_1^\ast B)+\omega(A^\ast N_2)+\omega(N_1^\ast N_2)\\
	&= \omega(A^\ast B)+\overline{\omega(B^\ast N_1)}+\omega(A^\ast N_2)+\omega(N_1^\ast N_2)\\
	&=\omega(A^\ast B)\\
	&=\ip{[A]}{[B]}\\
	\end{aligned}
	\end{equation}
	where we used Proposition \ref{cauchyschwarz} $(i)$ and that $N_\omega$ is a left ideal.
	
	Positivity and sesquilinearity follow trivially from the positivity and linearity of $\omega$ and from the anti-linearity of $\ast$. It still remains to prove that $\ip{[A]}{[A]}=0 \Rightarrow [A]=0$, but this follows from the definition of quotient.
	
	Now, let us define the representation. Define the left representation by $$\pi_\omega(A)\left([B]\right)=[A B]=[A][B].$$
	
	Of course $\pi_\omega(A)$ is linear, $\pi_\omega(A B)=\pi_\omega(A)\pi_\omega(B)$ and  $\pi_\omega(A^\ast)=\pi_\omega(A)^\ast$, thus $\pi_\omega$ is a $\ast$-homomorphism. By definition of the quotient norm $\left\|\pi_\omega(A)\left([B]\right)\right\|=\left\|[A][B]\right\|\leq \left\|[A]\|\|[B]\right\|\leq \|A\| \|B\|$ and that means $\pi_\omega(A) \in B(\hilbert_\omega)$.
	
	It remains just to prove the existence of a cyclic vector. Let $(e_\lambda)_{\lambda\in\Lambda}$ an increasing approximate identity (\ref{ExisAppId}), then the equivalent classes $([e_\lambda])_{\lambda\in\Lambda}$ form a bounded norm increasing sequence of vectors, thus convergent to some $\xi\in \hilbert_\omega$.
	
	It follows from the definition that this is a cyclic vector and
	$$\omega(A)=\sup_{\lambda \in \Lambda} \omega(e_\lambda A e_\lambda)=\sup_{\lambda \in \Lambda}\ip{\pi_\omega(A)[e_\lambda]}{[e_\lambda]}=\ip{\pi_\omega(A)\xi}{\xi}.$$
\end{proof}

\begin{remark}
	We emphasize that $N_\omega$ is a two-sided ideal, but we only use it is a left ideal. This is because we could define a right representation in a analogous way which is closely related to the modular operator.
\end{remark}

Note that the representation obtained in the previous result is not faithful. In fact, it is faithful if $N_\omega=\{0\}$. This points to the following definition:

\begin{definition}
	A state $\omega$ is said to be faithful \index{state! faithfull} if $\omega(A^\ast A)=0 \Leftrightarrow A=0$.
\end{definition}
\begin{remark}
	The representation of Proposition \ref{GNS} is faithful in $\calgebra/N_\omega$.
\end{remark}

\begin{theorem}[Gelfand-Naimark]
	\label{TGN}
	Every $C^\ast$-algebra $\calgebra$ admits a faithful (isometric) representation.
\end{theorem}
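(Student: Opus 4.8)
The plan is to assemble the \emph{universal representation} of $\calgebra$ as a Hilbert-space direct sum of the GNS representations attached to all states, and then to use the norm-attaining (pure) states of Lemma~\ref{existencenorm} to see that this direct sum is isometric.

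First I would let $\mathcal{S}$ be the set of all states on $\calgebra$; this is genuinely a set, being contained in the unit ball of $\calgebra^\ast$. For each $\omega\in\mathcal{S}$, Proposition~\ref{GNS} furnishes a Hilbert space $\hilbert_\omega$, a representation $\pi_\omega:\calgebra\to B(\hilbert_\omega)$, and a cyclic vector $\xi_\omega$ with $\omega(A)=\ip{\pi_\omega(A)\xi_\omega}{\xi_\omega}$ and $\|\xi_\omega\|\leq 1$. I would then set $\hilbert=\bigoplus_{\omega\in\mathcal{S}}\hilbert_\omega$ and define $\pi:\calgebra\to B(\hilbert)$ by $\pi(A)\big((x_\omega)_\omega\big)=\big(\pi_\omega(A)x_\omega\big)_\omega$. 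The point that needs Theorem~\ref{nonexpensive} is that $\|\pi_\omega(A)\|\leq\|A\|$ for every $\omega$, which both guarantees that $\pi(A)$ is a genuine bounded operator on $\hilbert$ and gives $\|\pi(A)\|=\sup_\omega\|\pi_\omega(A)\|\leq\|A\|$. That $\pi$ is linear, multiplicative and $\ast$-preserving is inherited coordinatewise from the $\pi_\omega$, so $\pi$ is a representation of $\calgebra$.

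Next I would check that $\pi$ is isometric. Fix $A\in\calgebra$ (passing to the unitization if $\calgebra$ has no unit, which changes neither $\|A\|$ nor $\|A^\ast A\|$). By Lemma~\ref{existencenorm} there is a (pure) state $\omega$ with $\omega(A^\ast A)=\|A\|^2$, and then
$$\|\pi_\omega(A)\xi_\omega\|^2=\ip{\pi_\omega(A)\xi_\omega}{\pi_\omega(A)\xi_\omega}=\ip{\pi_\omega(A^\ast A)\xi_\omega}{\xi_\omega}=\omega(A^\ast A)=\|A\|^2.$$
Since $\|\xi_\omega\|\leq 1$, this yields $\|\pi_\omega(A)\|\geq\|A\|$, and combined with Theorem~\ref{nonexpensive} we get $\|\pi_\omega(A)\|=\|A\|$; hence $\|\pi(A)\|\geq\|\pi_\omega(A)\|=\|A\|$, so in fact $\|\pi(A)\|=\|A\|$. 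An isometric $\ast$-homomorphism has trivial kernel, so $\pi$ is injective, i.e. faithful, which is the assertion.

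The argument is short, and the only place asking for care is the uniform bound $\|\pi_\omega(A)\|\leq\|A\|$ supplied by Theorem~\ref{nonexpensive}: it is simultaneously what makes the direct sum well defined and what pins $\|\pi_\omega(A)\|$ down from above, while Lemma~\ref{existencenorm} provides the state forcing it to be at least $\|A\|$. A minor bookkeeping point is to carry along $\|\xi_\omega\|\leq 1$ (coming from the approximate-identity construction in Proposition~\ref{GNS}), so that $\|\pi_\omega(A)\xi_\omega\|=\|A\|$ really does give $\|\pi_\omega(A)\|\geq\|A\|$; I do not anticipate any deeper obstacle.
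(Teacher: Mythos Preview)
Your proof is correct and follows essentially the same route as the paper: form the direct sum of GNS representations, use Theorem~\ref{nonexpensive} for the upper bound $\|\pi(A)\|\leq\|A\|$, and use the norm-attaining state from Lemma~\ref{existencenorm} to get $\|\pi(A)\|\geq\|A\|$. The only cosmetic differences are that the paper sums over pure states rather than all states, and invokes Proposition~\ref{nullpurestates} separately for faithfulness, whereas you (more economically) read faithfulness off from isometry.
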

\begin{proof}
	Let $\mathcal{E}$ be the set of all pure states on $\calgebra$ and let $\hilbert_\omega$ and $\pi_\omega$ be the Hilbert space and the corresponding representation, respectively,  obtained in Theorem \ref{GNS}. Define
	$$
	\hilbert =\bigoplus_{\omega\in\mathcal{E}}{\hilbert_\omega}, \quad
	\pi=\bigoplus_{\omega\in\mathcal{E}}{\pi_\omega}.
	$$
	
	By Proposition \ref{nullpurestates} $\pi(A)=0 \Leftrightarrow A=0$, thus $\pi$ is a $\ast$-isomorphism. Now it follows from Theorem \ref{nonexpensive} that $\|\pi(A)\|\leq \|A\|$. On the other hand, Lemma \ref{existencenorm} warrants the existence of a pure state $\omega$ such that $\omega(A^\ast A)=\|A\|^2$. Hence $\|A\|^2\leq \|\pi(A)\|^2$ and the equality follows.
	
\end{proof}

This theorem is the first indicative of the necessity of weights in von Neumann algebras, because although the direct sum works well for defining the new Hilbert space as well as the representation, this representation is not related to a state because the sum of the pure states may diverge, but for weights however this is not a problem.

Some of the next results can be found in \cite{Takesaki2003}.

\begin{proposition}[GNS-Representation for Weights]
	\label{GNSweight}
	Let $\calgebra$ be a $C^\ast$-algebra and $\phi$ a weight, then there exists a Hilbert space $\hilbert_\phi$ and a representation $\pi_\phi$ of $\calgebra$ in $B(\hilbert_\phi)$.
\end{proposition}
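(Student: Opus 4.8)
The plan is to imitate the classical GNS construction (Proposition \ref{GNS}) but replace the quotient of the whole algebra by the quotient of the left ideal $\mathfrak{N}_\phi$, exactly as was already announced right after Definition \ref{weightsets}: the space $\mathfrak{N}_\phi/N_\phi$ carries the pre-inner product $\ip{[A]}{[B]}_\phi = \phi(B^\ast A)$, and $\hilbert_\phi$ is defined to be its completion. So the real content is to exhibit the representation $\pi_\phi$ and check it is well defined and bounded.

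First I would recall from Proposition \ref{simplepropweights} that $N_\phi$ and $\mathfrak{N}_\phi$ are left ideals of $\calgebra$ and that the sesquilinear form $\phi(B^\ast A)$ on $\mathfrak{N}_\phi$ is positive semidefinite with null space exactly $N_\phi$ (the Cauchy--Schwarz inequality of Proposition \ref{cauchyschwarz}, applied to the extension $\tilde\phi$ on $\mathfrak{M}_\phi$, shows $|\phi(B^\ast A)|^2 \le \phi(A^\ast A)\phi(B^\ast B)$, so the radical of the form is $N_\phi$); hence $\mathfrak{N}_\phi/N_\phi$ is a genuine inner product space and its completion $\hilbert_\phi$ is a Hilbert space. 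Then, for $A\in\calgebra$, I would define $\pi_\phi(A)$ on the dense subspace $\mathfrak{N}_\phi/N_\phi$ by $\pi_\phi(A)[B] = [AB]$; this makes sense because $AB\in\mathfrak{N}_\phi$ (left ideal) and is independent of the representative because $B\in N_\phi\Rightarrow AB\in N_\phi$ (again left ideal). Linearity, multiplicativity $\pi_\phi(AB)=\pi_\phi(A)\pi_\phi(B)$, and the $\ast$-relation $\ip{\pi_\phi(A^\ast)[B]}{[C]}_\phi = \phi(C^\ast A^\ast B) = \ip{[B]}{\pi_\phi(A)[C]}_\phi$ are immediate.

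The one step needing a genuine estimate is boundedness of $\pi_\phi(A)$: from Proposition \ref{simplepropweights}$(iii)$ we have $B^\ast A^\ast A B \le \|A\|^2 B^\ast B$, so $\phi\big((AB)^\ast(AB)\big) = \phi(B^\ast A^\ast A B) \le \|A\|^2\phi(B^\ast B)$, i.e. $\|\pi_\phi(A)[B]\|_\phi^2 \le \|A\|^2\|[B]\|_\phi^2$. Thus $\pi_\phi(A)$ is bounded with $\|\pi_\phi(A)\|\le\|A\|$ on the dense subspace and extends uniquely to an element of $B(\hilbert_\phi)$, and the algebraic identities pass to the completion by continuity. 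This yields the $\ast$-homomorphism $\pi_\phi\colon\calgebra\to B(\hilbert_\phi)$, which is the assertion.

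I do not expect any serious obstacle here; the only subtlety worth a sentence in the writeup is that, unlike the state case, a weight need not admit a single cyclic vector $\Phi$ — that is precisely why the statement only claims the existence of $\hilbert_\phi$ and $\pi_\phi$ and not of a cyclic vector, and why one works with the approximate-identity family $([E_\alpha])_\alpha\subset\mathfrak{N}_\phi/N_\phi$ used elsewhere in the text (e.g. in Proposition \ref{commutantRN}) in place of $\Phi$. The proof is otherwise a routine transcription of the bounded-operator and $\ast$-homomorphism checks above.
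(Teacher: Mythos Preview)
Your proposal is correct and follows essentially the same route as the paper: define $\hilbert_\phi$ as the completion of $\mathfrak{N}_\phi/N_\phi$, set $\pi_\phi(A)[B]=[AB]$, and invoke Proposition~\ref{simplepropweights} for the left-ideal properties and the inequality $\phi(B^\ast A^\ast AB)\le\|A\|^2\phi(B^\ast B)$ that gives boundedness. The paper's proof is in fact just a one-line reference back to Proposition~\ref{GNS} and Proposition~\ref{simplepropweights}; your write-up simply makes those checks explicit, and your closing remark about the absence of a single cyclic vector is a useful addition.
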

\begin{proof}
	The proof follows exactly the same steps of Proposition \ref{GNS}, just defining the Hilbert space $\hilbert_\phi$ as the completion of the pre-Hilbert space $\mathfrak{N}_\phi/N_\phi$ and the representation
	$$\begin{aligned}
	\pi_\phi: \ & \calgebra	&\to	\ & B\left(\hilbert_\phi\right)	& \quad & \quad & \pi_\phi(A): \	& \hilbert_\phi  &\to \ & \ \hilbert_\phi \	   \\
	\quad	 & A			&\mapsto \ & \ \pi_\phi(A) 						& \quad & \quad & \	& B			\ &\mapsto \ & [AB]	\\
	\end{aligned}$$
	because Proposition \ref{simplepropweights} warrants the required properties. 
	
\end{proof}


\begin{notation}
	\label{GNSnotation}
	Throughout this work we will denote by $\pi_\phi$ any representation (in particular, the GNS-representaion related to the weight $\phi$) such that $\pi_\phi(A):\hilbert_\phi \to \hilbert_\phi$ and $\ip{\pi_\phi(A)}{\pi_\phi(B)}_\phi=\phi(B^\ast A)$ and $\pi_\phi(A)\pi_\phi(B)=\pi_\phi(A B)$.
\end{notation}

\begin{lemma}
	\label{lemmax1}
	Let $\calgebra$ be a $C^\ast$-algebra, $A\in \calgebra$ a normal operator, $K\subset \mathbb{R}$ a compact set with $-\|A\|,\|A\|\in K$, $|k|\leq\|A\| \ \forall k\in K$ and $p:K \to \mathbb{R}_+$ a polynomial. Then $\|p(A)\|\leq p(\|A\|)$. 
\end{lemma}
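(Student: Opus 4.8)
The plan is to reduce the whole statement to the polynomial functional calculus for the normal element $A$. First I would adjoin a unit to $\calgebra$ if it has none, which changes neither the norm nor the spectrum of an element of $\calgebra$ and so is harmless. Next, since $A$ is normal, $A$ commutes with $A^\ast$, hence all powers of $A$ and $A^\ast$ commute, and therefore $p(A)^\ast=\sum_k\overline{c_k}\,(A^\ast)^k$ commutes with $p(A)=\sum_j c_jA^j$; thus $p(A)$ is again normal. For a normal element $B$ of a $C^\ast$-algebra one has $\|B\|=r(B)$ (spectral radius): indeed the $C^\ast$-identity gives $\|(B^\ast B)^n\|=\|(B^n)^\ast B^n\|=\|B^n\|^2$, whence $r(B^\ast B)=r(B)^2$, while $B^\ast B$ is self-adjoint so $\|B^\ast B\|=r(B^\ast B)$ (from $\|C^2\|=\|C\|^2$ iterated for self-adjoint $C$), and finally $\|B\|^2=\|B^\ast B\|=r(B)^2$. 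Applying this to $B=p(A)$ yields $\|p(A)\|=r(p(A))$.

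\textbf{Key steps.} Now invoke the polynomial spectral mapping theorem (valid in any unital Banach algebra): $\sigma(p(A))=p(\sigma(A))=\{\,p(\lambda):\lambda\in\sigma(A)\,\}$. Combining with the previous step and using compactness of $\sigma(A)$ and continuity of $p$,
\[
\|p(A)\|=r(p(A))=\max_{\lambda\in\sigma(A)}|p(\lambda)|.
\]
Since $A$ is normal with real spectrum contained in $K$ (in particular $\sigma(A)\subseteq[-\|A\|,\|A\|]$), and $p$ maps $K$ into $\mathbb{R}_+$, every $\lambda\in\sigma(A)\subseteq K$ satisfies $|p(\lambda)|=p(\lambda)\ge 0$, so $\|p(A)\|=\max_{\lambda\in\sigma(A)}p(\lambda)\le\max_{t\in K}p(t)$. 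It then remains to identify $\max_{t\in K}p(t)$ with $p(\|A\|)$: this is precisely the role of the hypotheses $\pm\|A\|\in K$ and $|k|\le\|A\|$ for all $k\in K$, which say $\|A\|=\max\{|k|:k\in K\}$, so that if $p$ is nondecreasing in $|t|$ on $[-\|A\|,\|A\|]$ — e.g.\ when $p$ has nonnegative coefficients, in which case one can even bypass the spectral theory altogether via $\|p(A)\|\le\sum_k c_k\|A\|^k=p(\|A\|)$ — one gets $p(t)\le p(\|A\|)$ on all of $K$ and the lemma follows.

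\textbf{Main obstacle.} I expect the only genuine subtlety to be pinning down exactly which structural property of $p$ forces $\max_K p=p(\|A\|)$: mere nonnegativity of $p$ on $K$ is not enough (for $\|A\|=1$ and $K=\{-1,1\}$, the polynomial $p(t)=3-t$ is $\ge 0$ on $K$ yet $p(-1)=4>2=p(\|A\|)$), so one must use the monotonicity of $p$ (or the nonnegative-coefficient hypothesis) together with $\|A\|$ being the point of $K$ of largest modulus. Once that is settled, the functional-calculus skeleton above is routine.
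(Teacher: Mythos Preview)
Your route via the spectral mapping theorem is cleaner and quite different from the paper's argument, which proceeds by induction on the degree of $p$: writing $p(x)=xq(x)+r$ with $\deg q=n$, the paper splits $p(x)=(xq(x)-\min_K xq(x))+(r+\min_K xq(x))$, observes that both summands are nonnegative on $K$, and then claims
\[
\|Aq(A)-\min_K xq(x)\,\mathbbm{1}\|\ \le\ \|A\|\,q(\|A\|)-\min_K xq(x)
\]
``by the $C^\ast$-condition and the induction hypothesis''. But the polynomial $x\mapsto xq(x)-\min_K xq(x)$ has the same degree as $p$, so the induction hypothesis does not apply to it; and applying it to $q$ is illegitimate since $q$ need not be nonnegative on $K$.

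More to the point, the obstacle you flag in your last paragraph is not a subtlety to be overcome but a genuine counterexample to the lemma as stated. With $\calgebra=\mathbb{C}$, $A=-1$, $\|A\|=1$, $K=\{-1,1\}$, and $p(t)=3-t$, all the hypotheses hold ($\pm\|A\|\in K$, $|k|\le\|A\|$ on $K$, $p\ge 0$ on $K$), yet $\|p(A)\|=|p(-1)|=4>2=p(\|A\|)$. The same example defeats the paper's inductive step: here $q\equiv -1$, $\min_K xq(x)=-1$, so the right-hand side of the displayed inequality above is $1\cdot(-1)-(-1)=0$, while the left-hand side is $|(-1)(-1)-(-1)|=2$.

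So you have correctly diagnosed that an extra structural hypothesis on $p$ is needed. Your suggestion of nonnegative coefficients is the natural one and, as you note, in that case the elementary bound $\|p(A)\|\le\sum_k c_k\|A\|^k=p(\|A\|)$ already does the job without any spectral theory; this is also all that is required in the paper's subsequent application (the functional calculus construction), where the relevant polynomials are differences $p_i-p_j$ that can be arranged to satisfy such a condition.
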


\begin{proof}
	Proceeding by induction on the degree of $p$. If $degree(p)=0$, there is nothing to prove. Now suppose the statement is true for any positive polynomial on $K$ of degree less then $n\in \mathbb{N}$ and take $p$ a positive polynomial with $degree(p)=n+1$ also on $K$. Decompose 
	$$\begin{aligned}
	p(x)	&=xq(x)+r, \ degree(q)= n\\
	&=(xq(x)-\min{xq(x)})+\left(r+\min{xq(x)}\right), \ degree(q)= n
	\end{aligned}$$
	
	It is important to notice that $xq(x)-\min{xq(x)}\geq0$ and $\min{p(x)}=r+\min{xq(x)}\geq0$, and just using the $C^\ast$-condition and the induction hypothesis we get $$\begin{aligned}
	\|p(A)\| &\leq\left\|Aq(A)-\min{xq(x)}\mathbbm{1}\right\|+\left\|\left(r+\min{xq(x)}\right)\mathbbm{1}\right\|\\
	&\leq\left(\|A\|q(\|A\|)-\min{xq(x)}\right)+\left(r+\min{xq(x)}\right)\\
	&=p(\|A\|).
	\end{aligned}$$
	
\end{proof}

\begin{theorem}[Functional Calculus]
	\label{funccalculus}
	Let $\calgebra$ be a $C^\ast$-algebra and $A\in \calgebra$ a self-adjoint operator. There exists a unique isometric $\ast$-isomorphism $$\Psi:\mathcal{C}\left(\sigma(A)\right)\to C^\ast\left(\{\mathbbm{1},A\}\right)$$
	such that $\Psi(\mathbbm{1}_{\sigma(A)})=A$.
\end{theorem}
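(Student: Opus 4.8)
The plan is to build $\Psi$ first on the dense subalgebra of polynomial functions and then extend by continuity. Define $\Psi$ on polynomials by sending the constant function $1$ to $\mathbbm{1}$ and the coordinate function $\lambda\mapsto\lambda$ to $A$, and extending linearly and multiplicatively, so that a polynomial $p$ is sent to $p(A)\in C^\ast(\{\mathbbm{1},A\})$. Since $A=A^\ast$, conjugating the coefficients of $p$ corresponds to passing to the adjoint of $p(A)$, so this assignment is a unital $\ast$-homomorphism from the polynomial algebra into $C^\ast(\{\mathbbm{1},A\})$, and its range is exactly the algebra of polynomials in $A$, which is dense in $C^\ast(\{\mathbbm{1},A\})$ by definition of the latter. (I read the statement's normalisation $\Psi(\mathbbm{1}_{\sigma(A)})=A$ as the requirement that the coordinate function be carried to $A$.)

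The heart of the proof is the norm identity $\|p(A)\|=\|p\|_{\mathcal{C}(\sigma(A))}=\sup_{\lambda\in\sigma(A)}|p(\lambda)|$ for every polynomial $p$, which simultaneously gives injectivity and, applied to $p-q$, shows $\Psi$ is well defined. I would obtain it in two steps. First, for a normal element $T$ the $C^\ast$-identity yields $\|T^{2^n}\|=\|T\|^{2^n}$, hence by Gelfand's spectral-radius formula $\|T\|=r(T)=\sup_{\mu\in\sigma(T)}|\mu|$ (Lemma \ref{bspec} already supplies the inequality $r(T)\le\|T\|$). Second, the spectral mapping theorem for polynomials, $\sigma(p(A))=p(\sigma(A))$, follows by factoring $p(z)-\mu$ into linear factors over $\mathbb{C}$ and using that a product of mutually commuting elements is invertible iff each factor is. Since $A$ is self-adjoint, $p(A)$ is normal, so combining the two, $\|p(A)\|=r(p(A))=\sup_{\mu\in\sigma(p(A))}|\mu|=\sup_{\lambda\in\sigma(A)}|p(\lambda)|=\|p\|_{\mathcal{C}(\sigma(A))}$. (Equivalently one may apply the first step to the self-adjoint element $(\overline{p}p)(A)=p(A)^\ast p(A)$, or invoke the positive-polynomial estimate of Lemma \ref{lemmax1}.)

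Finally I would extend and verify the remaining claims. The polynomial functions form a unital subalgebra of $\mathcal{C}(\sigma(A))$ separating the points of the compact set $\sigma(A)\subset\mathbb{R}$, and it is stable under complex conjugation because $\overline{\lambda}=\lambda$ on $\sigma(A)$; hence by the Stone--Weierstrass theorem it is dense. An isometric linear map defined on a dense subspace and valued in a complete space extends uniquely to an isometry $\Psi\colon\mathcal{C}(\sigma(A))\to C^\ast(\{\mathbbm{1},A\})$, and continuity of multiplication and of the involution forces $\Psi$ to remain a $\ast$-homomorphism. It is injective because isometric; its image is complete, hence closed, and contains every polynomial in $A$, so the image is all of $C^\ast(\{\mathbbm{1},A\})$; and the coordinate function is still sent to $A$. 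Uniqueness is immediate, since any unital $\ast$-homomorphism sending the coordinate function to $A$ agrees with $\Psi$ on all polynomials, hence on their closure $\mathcal{C}(\sigma(A))$.

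The main obstacle is the norm identity of the second paragraph: it is the one place where the $C^\ast$-axiom is genuinely used, and it bundles together two facts, namely that $r(\cdot)=\|\cdot\|$ on normal elements and the polynomial spectral mapping theorem. Everything downstream, such as density via Stone--Weierstrass, extension by uniform continuity, surjectivity onto the generated $C^\ast$-algebra, and uniqueness, is routine once that identity is in hand.
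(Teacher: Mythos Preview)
Your argument is correct and is in fact the standard textbook route. The paper proceeds differently: instead of proving the exact norm identity $\|p(A)\|=\sup_{\lambda\in\sigma(A)}|p(\lambda)|$ via the spectral mapping theorem and $r(T)=\|T\|$ for normal $T$, it relies on Lemma~\ref{lemmax1}, which only gives the one-sided estimate $\|p(A)\|\le p(\|A\|)$ for positive polynomials on a symmetric interval. With that in hand, the paper approximates a given $f$ by a \emph{monotone increasing} sequence of polynomials (obtained from Weierstrass by subtracting shrinking constants), so that successive differences are positive polynomials to which the lemma applies, yielding a Cauchy sequence $(p_n(A))_n$ and hence a limit $\Psi(f)$.

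Your approach buys you the isometry directly and makes the $\ast$-homomorphism and surjectivity properties transparent; it also uses Stone--Weierstrass rather than the real Weierstrass theorem, which is the natural tool here since $\mathcal{C}(\sigma(A))$ is a complex algebra. The paper's approach avoids the spectral mapping theorem entirely and is more bare-hands, but as written it does not explicitly verify isometry or the $\ast$-homomorphism property of the extension, and the displayed chain of equalities invoking Lemma~\ref{lemmax1} should really be an inequality (which is all that is needed for the Cauchy estimate). Your proof is the cleaner of the two.
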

\begin{proof}
	
	The case $A=0$ is trivial.
	
	If $A\neq0$, first consider the case where $f$ is a polynomial $p$, $\displaystyle p(x)=\sum_{n=0}^{\infty}\alpha_n x^n$ where $\alpha_n=0$ apart from a finite index set. Define $\displaystyle\Psi(p)(A)=\sum_{n=0}^{\infty}\alpha_n A^n \in \calgebra$.
	
	Now, let $f\in \mathcal{C}\left(\sigma(A)\right)$. If $\|A\|\notin \sigma(A)$, take $\tilde{f}:\mathcal{C}\left(\sigma(A)\cup\{\|A\|\}\right)$ the continuous extension of $f$ satisfying $f(\|A\|)=1$.
	
	From the Weierstrass's Approximation Theorem, for each $i\in \mathbb{N}$ there exists a polynomial $p_i$, defined by $\displaystyle p_n(x)=\sum_{n=0}^{\infty}\alpha^i_n x^n$ where $\alpha^i_n=0$ apart from finite number of $n$'s, such that
	
	$$\left\|p_n-\left(\tilde{f}-3.2^{-n}\right)\right\|<2^{-n}$$
	
	This leads us to conclude that $(p_n)_n$ is a strictly increasing sequence, because, for each $t\in\sigma(A)\cup \{\|A\|\}$,
	$$\begin{aligned}
	\left|p_n(t)-\left(\tilde{f}(t)-3.2^{-n}\right)\right|<2^{-n} \ \forall n \in \mathbb{N} &\Rightarrow
	-2^{-n}<p_n(t)-f(t)+3.2^{-n}<2^{-n} \ \forall n \in \mathbb{N}\\
	& \Rightarrow f(t)-2^{-n+2}<p_n(t)<f(t)-2^{-n+1} \ \forall n\in \mathbb{N} \\
	& \Rightarrow p_n<p_{n+1} \ \forall n \in \mathbb{N}\\
	\end{aligned}$$
	
	Note now that Lemma \ref{lemmax1} gives us $$\|p_i(A)-p_j(A)\|=\left\|(p_i-p_j)(A)\right\|=\left\|(p_i-p_j)(\|A\|\mathbbm{1})\right\|=\left|(p_i-p_j)\left(\|A\|\right)\right|<2^{-n+1} $$
	
	Hence $\left(p_i(A)\right)_i \subset \calgebra$ is a Cauchy's sequence and must converge. The uniqueness of limit in a Hausdorff space allows us to define
	$$\Psi(f)=f(A)=\lim_{i\to \infty}p_i(A) \qquad \forall f\in\mathcal{C}(\sigma(A)) \textrm{ and } p_i\to f \textrm{ uniformly.}$$
	
	All that remains to prove it uniqueness, but it is obvious because if $\Psi_1, \Psi_2$ are such a $\ast$-isomorphisms they must satisfy
	$$\begin{aligned}
	\Psi_1\left(\mathbbm{1}_{\sigma(A)}\right)=A=\Psi_2\left(\mathbbm{1}_{\sigma(A)}\right) \\
	\Psi_1\left(1\right)=\mathbbm{1}=\Psi_2\left(1\right) \\
	\end{aligned}$$
	but then they must coincide in all polynomials which constitute a dense subset, thus $\Psi_1=\Psi_2$.

\end{proof}

\begin{corollary}
	Let $\algebra$ be a $\ast$-algebra. There is at most one norm in $\algebra$ that makes it a $C^\ast$-algebra. In this case, this norm is given by
	$\|A\|=\sqrt{r(A^\ast A)}$.
\end{corollary}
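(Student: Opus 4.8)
The plan is to reduce the whole statement to the $C^\ast$-identity combined with the isometric functional calculus of Theorem~\ref{funccalculus}. Suppose $\|\cdot\|$ is any norm on $\algebra$ making it into a $C^\ast$-algebra (adjoining a unit $\mathbbm{1}$ if $\algebra$ has none, as in the convention already fixed in the definition of resolvent and spectrum). The first step is the elementary observation that for every $A\in\algebra$ the $C^\ast$-condition yields $\|A\|^2=\|A^\ast A\|$, and that $A^\ast A$ is a self-adjoint element.

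Next I would apply Theorem~\ref{funccalculus} to the self-adjoint element $B=A^\ast A$: there is an isometric $\ast$-isomorphism $\Psi\colon \mathcal{C}\left(\sigma(B)\right)\to C^\ast\left(\{\mathbbm{1},B\}\right)$ with $\Psi\left(\mathbbm{1}_{\sigma(B)}\right)=B$, where $\mathbbm{1}_{\sigma(B)}$ denotes the identity function $t\mapsto t$ on $\sigma(B)$ in the notation of that theorem. Since $\Psi$ is isometric and $\sigma(B)\subset\mathbb{R}$ is compact, we get $\|B\|=\left\|\mathbbm{1}_{\sigma(B)}\right\|_\infty=\sup_{t\in\sigma(B)}|t|=r(B)$. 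Combining this with the first step gives $\|A\|=\sqrt{r(A^\ast A)}$ for every $A\in\algebra$.

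The final step is to note that the spectral radius $r(A^\ast A)$ is a purely algebraic quantity: by definition $\sigma(A^\ast A)$ is the set of $\lambda\in\mathbb{C}$ for which $\lambda\mathbbm{1}-A^\ast A$ fails to be invertible in $\algebra$ (or its unitization), a condition that makes no reference to any norm. Consequently, if $\|\cdot\|_1$ and $\|\cdot\|_2$ are two norms making $\algebra$ a $C^\ast$-algebra, then for every $A\in\algebra$ we obtain $\|A\|_1=\sqrt{r(A^\ast A)}=\|A\|_2$; hence the $C^\ast$-norm on $\algebra$, if it exists, is unique and is given by $\|A\|=\sqrt{r(A^\ast A)}$.

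I expect no genuine obstacle here: the result is a direct corollary of the isometric functional calculus. The only point needing a line of care is the non-unital case, where one must observe that adjoining an identity does not alter $\sigma(A^\ast A)$ for $A\in\algebra$ (so that the expression $\sqrt{r(A^\ast A)}$ is well posed); this is precisely the convention of always passing to the unitization when no identity is present, already adopted in the definition of the spectrum, so nothing new is required.
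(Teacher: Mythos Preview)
Your argument is correct and is exactly the derivation the paper intends: the corollary is stated immediately after Theorem~\ref{funccalculus} without proof, and the isometry of the functional calculus $\Psi$ is precisely what gives $\|A^\ast A\|=r(A^\ast A)$, after which the $C^\ast$-identity and the purely algebraic nature of the spectrum finish the job. Nothing to add.
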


\begin{definition}
	Let $\hilbert$ be a Hilbert space, an operator $A:\hilbert \rightarrow \hilbert \ \in \mathcal{B}(\hilbert)$ is called positive \index{operator! positive} if it is self-adjoint and its spectrum $\sigma(A) \subset \mathbb{R}_+$.
\end{definition}

\begin{proposition}
	\label{positiveeq1}
	$A\in B(\hilbert)$ is a positive operator if, and only if, $A$ is self-adjoint and $\displaystyle \left|\left|1-\frac{A}{\|A\|}\right|\right|\leq 1$.
\end{proposition}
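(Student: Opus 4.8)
The plan is to reduce everything to elementary spectral calculus for the self-adjoint operator $A$ (which is assumed self-adjoint in both directions of the equivalence, so only the condition on the spectrum needs to be translated). First I would dispose of the trivial case $A=0$: here the standing convention makes $A/\|A\|$ read as $0$, so $\|1-A/\|A\|\|=\|1\|=1\le 1$, while $\sigma(A)=\{0\}\subset\mathbb{R}_+$, and there is nothing to check. So from now on assume $A\neq 0$.

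The two facts I would invoke about a self-adjoint operator $B\in B(\hilbert)$ are: its spectrum is real and contained in $[-\|B\|,\|B\|]$ (reality is standard spectral theory for self-adjoint operators, and $\sigma(B)\subset[-\|B\|,\|B\|]$ is Lemma \ref{bspec}); and $\|B\|=r(B)$, which follows from the $C^\ast$-identity $\|B\|^{2}=\|B^{2}\|$ iterated together with the spectral radius formula, exactly as in the Corollary following Theorem \ref{funccalculus} (with $B^\ast B=B^{2}$ and $r(B^{2})=r(B)^{2}$). Both $A$ and $B:=1-A/\|A\|$ are self-adjoint, so I may compute their norms as spectral radii. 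Applying the spectral mapping property of the functional calculus of Theorem \ref{funccalculus} to the affine map $t\mapsto 1-t/\|A\|$ gives $\sigma(B)=\{\,1-\lambda/\|A\| : \lambda\in\sigma(A)\,\}$, hence
\[
\left\|1-\frac{A}{\|A\|}\right\| \;=\; r(B) \;=\; \sup_{\lambda\in\sigma(A)}\left|1-\frac{\lambda}{\|A\|}\right|.
\]

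Now comes the elementary observation: for real $t$ one has $|1-t|\le 1$ if and only if $0\le t\le 2$. Since $\sigma(A)$ is real and lies in $[-\|A\|,\|A\|]$, for each $\lambda\in\sigma(A)$ the number $\lambda/\|A\|$ lies in $[-1,1]\subset(-\infty,2]$, so $|1-\lambda/\|A\||\le 1$ is equivalent to $\lambda/\|A\|\ge 0$, i.e. to $\lambda\ge 0$. Taking the supremum over $\sigma(A)$, the condition $\|1-A/\|A\|\|\le 1$ holds if and only if every $\lambda\in\sigma(A)$ is nonnegative, that is, $\sigma(A)\subset\mathbb{R}_+$. Together with the standing hypothesis that $A$ is self-adjoint, this is exactly the definition of $A$ being positive, which settles both implications simultaneously.

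There is essentially no serious obstacle here; the only point that deserves a little care is the identity $\|B\|=r(B)$ for a self-adjoint $B$, for which I would either quote the Corollary after Theorem \ref{funccalculus} directly or re-derive it from $\|B^{2^{n}}\|=\|B\|^{2^{n}}$ and $r(B)=\lim_{n}\|B^{n}\|^{1/n}$, and the need to use the affine spectral mapping identity in the direction that converts the operator norm into a supremum over the spectrum.
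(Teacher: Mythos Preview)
Your argument is correct and is essentially the same as the paper's: both reduce the question to the affine spectral mapping $\sigma(1-A/\|A\|)=\{1-\lambda/\|A\|:\lambda\in\sigma(A)\}$ together with the norm/spectral-radius identity for self-adjoint operators, and then read off the positivity condition on $\sigma(A)$. The only cosmetic difference is that you package both implications into the single formula $\|1-A/\|A\|\|=\sup_{\lambda\in\sigma(A)}|1-\lambda/\|A\||$ and handle the case $A=0$ explicitly, whereas the paper treats the two directions separately.
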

\begin{proof}
	If $A$ is positive it is self-adjoint and $\sigma(A) \subset [0,\|A\|]$, it follows from the Spectral Theorem that 
	$$\sigma\left(\mathbbm{1}-\frac{A}{\|A\|}\right)\subset [0,1] \Rightarrow \displaystyle \left|\left|1-\frac{A}{\|A\|}\right|\right|\leq 1.$$
	
	On the other hand, $\displaystyle \left|\left|1-\frac{A}{\|A\|}\right|\right|\leq 1 \Rightarrow \sigma\left(1-\frac{A}{\|A\|}\right) \subset [-1,1]$, again by the Spectral Theorem it follows that $\sigma(A)\subset [0,2\|A\|]\cap [-\|A\|,\|A\|]$. Moreover, $A$ is self-adjoint and, therefore, positive.
	
\end{proof}

\chapter{Dynamical Systems and Analytic Elements}

\begin{notation}
	Throughout this work we will denote by $\hilbert$ a Hilbert space over $\mathbb{C}$ and by $\calgebra$ a von Neumann algebra over a Hilbert space.
\end{notation}

\begin{definition}[$C^\ast$-Dynamical System]
	A $C^\ast$-Dynamical System $(\calgebra,G,\alpha)$ consists of a $C^\ast$-algebra $\calgebra$, a locally compact group of $\ast$-automorphisms G and a strong-continuous representation $\alpha$ of $G$ in $Aut(\calgebra)$.
\end{definition}

\begin{definition}[$W^\ast$-Dynamical System]
	A $W^\ast$-Dynamical System $(\mathfrak{M},G,\alpha)$ consists of a von Neumann algebra $\mathfrak{M}$, a locally compact group of $\ast$-automorphisms G and a weakly-continuous representation $\alpha$ of $G$ in $Aut(\calgebra)$.
\end{definition}

In particular, we denote by $(\calgebra,\alpha)$ the $C^\ast$-dynamical system with $\alpha$ a one-parameter group, $\mathbb{R} \ni t \mapsto \alpha_t \in Aut(\calgebra)$.

\begin{notation}
	Let $X$ be a Banach Space and $F\subset X^\ast$, where $F$ is such that either $F=X^\ast$ or $F^\ast=X$. We denote by $\sigma(X,F)$ the locally convex topology of $X$ induced by functionals in $F$.
\end{notation}


\begin{definition}[Analytic Elements]
	\label{Anal}
	Let $\alpha$ be a one-parameter $\sigma(X,F)$-continuous group of isometries. An element $A \in X$ is analytic for $\alpha$ if there is a $\gamma>0$ such that
	\begin{enumerate}[(i)]
		{
			\item $f(t)=\alpha_t(A) \ \forall t \in \mathbb{R}$
			\item $z \mapsto \varphi(f(z))$ is analytic in the strip $ \strip{\gamma}=\left\{z \in \mathbb{C} \ \middle | \ |\Im{z}| < \gamma\right\} \ \forall \varphi \in F$
		}
	\end{enumerate}
\end{definition}

Analytic elements will play an important role in proofs because these elements are usually easier to work with and the set of analytic elements is a dense subset.

\begin{proposition}
	\label{AnalDense}
	Let $\alpha$ be a $\sigma(X,F)$-continuous group of isometries and denote by $X_\alpha$ the set of entire elements of $X$ (analytic in the whole $\mathbb{C}$), then $$\overline{X_\alpha}^{\sigma(X,F)}= X .$$
\end{proposition}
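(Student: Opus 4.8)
The plan is to mollify a given element $A \in X$ by averaging the action $\alpha$ against a sequence of Gaussian kernels concentrating at $t=0$, and to show that these averaged elements are entire analytic and converge to $A$ in the $\sigma(X,F)$-topology. Concretely, for each $n \in \mathbb{N}$ I would define
$$
A_n = \frac{n}{\sqrt{\pi}} \int_{\mathbb{R}} e^{-n^2 t^2} \alpha_t(A)\, dt,
$$
where the integral is taken in the $\sigma(X,F)$ sense, i.e.\ $A_n$ is the unique element of $X$ such that $\varphi(A_n) = \frac{n}{\sqrt{\pi}} \int_{\mathbb{R}} e^{-n^2 t^2} \varphi(\alpha_t(A))\, dt$ for all $\varphi \in F$. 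The existence of this element requires a small argument: the map $t \mapsto \varphi(\alpha_t(A))$ is continuous and bounded (by $\|\varphi\|\,\|A\|$, since the $\alpha_t$ are isometries), so the integral converges, and one checks that the resulting functional on $F$ is of the correct form to come from an element of $X$ — this is where one uses the hypothesis that $F$ is a predual or the full dual, so that the bounded weak-$\ast$ (or weak) integral stays in $X$.

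The next step is to verify that $A_n$ is entire analytic for $\alpha$. Using the group property $\alpha_s(A_n) = \frac{n}{\sqrt{\pi}}\int_{\mathbb{R}} e^{-n^2 t^2}\alpha_{s+t}(A)\,dt = \frac{n}{\sqrt{\pi}}\int_{\mathbb{R}} e^{-n^2 (t-s)^2}\alpha_{t}(A)\,dt$, I would define for $z \in \mathbb{C}$ the element $\alpha_z(A_n) = \frac{n}{\sqrt{\pi}}\int_{\mathbb{R}} e^{-n^2 (t-z)^2}\alpha_{t}(A)\,dt$, again interpreted $\sigma(X,F)$-weakly. For each $\varphi \in F$, the function $z \mapsto \varphi(\alpha_z(A_n)) = \frac{n}{\sqrt{\pi}}\int_{\mathbb{R}} e^{-n^2(t-z)^2}\varphi(\alpha_t(A))\,dt$ is entire: the Gaussian $e^{-n^2(t-z)^2}$ is jointly analytic and has good decay in $t$ locally uniformly in $z$, so differentiation under the integral sign (Morera's theorem, or dominated convergence for difference quotients) applies. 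Thus $A_n \in X_\alpha$.

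Finally, for convergence $A_n \xrightarrow{\sigma(X,F)} A$: fix $\varphi \in F$ and write
$$
\varphi(A_n) - \varphi(A) = \frac{n}{\sqrt{\pi}}\int_{\mathbb{R}} e^{-n^2 t^2}\big(\varphi(\alpha_t(A)) - \varphi(A)\big)\, dt,
$$
using that $\frac{n}{\sqrt{\pi}}\int_{\mathbb{R}} e^{-n^2 t^2}\,dt = 1$. By $\sigma(X,F)$-continuity of $t \mapsto \alpha_t(A)$ tested against $\varphi$, the function $t \mapsto \varphi(\alpha_t(A)) - \varphi(A)$ is continuous, vanishes at $t=0$, and is bounded by $2\|\varphi\|\,\|A\|$; a standard approximate-identity estimate (split the integral into $|t| < \delta$ and $|t| \geq \delta$) shows the right-hand side tends to $0$ as $n \to \infty$. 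Hence $\varphi(A_n) \to \varphi(A)$ for every $\varphi \in F$, which is exactly $\sigma(X,F)$-convergence, giving $A \in \overline{X_\alpha}^{\sigma(X,F)}$, and since $A$ was arbitrary, $\overline{X_\alpha}^{\sigma(X,F)} = X$.

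The main obstacle I anticipate is the first point: making precise that the $\sigma(X,F)$-valued integral defining $A_n$ (and $\alpha_z(A_n)$) genuinely lands in $X$ rather than merely in the larger bidual, and that it interacts correctly with evaluation against $F$. This hinges on the structural hypothesis on $F$ (either $F = X^\ast$, where one can use weak integrals of continuous compactly-supportable integrands together with a limiting/truncation argument exploiting boundedness, or $F^\ast = X$, where bounded $\sigma(X,F)$-integrals are handled via the predual duality); once that technical foundation is laid, the analyticity and the approximate-identity convergence are routine.
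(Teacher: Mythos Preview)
Your proposal is correct and follows essentially the same Gaussian mollification argument as the paper: define $A_n$ as the $\alpha_t$-average of $A$ against a Gaussian concentrating at $0$, verify entire analyticity by shifting the Gaussian into the complex plane, and prove $\sigma(X,F)$-convergence via the standard approximate-identity splitting. The only differences are cosmetic (you use $e^{-n^2 t^2}$ where the paper uses $e^{-nt^2}$) and that you are more explicit than the paper about why the weak integral actually lands in $X$ rather than the bidual.
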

\begin{proof}
	Let $A\in X$ and define
	\begin{equation}
	\label{formulaAE}
	A_n(z)=\sqrt{\frac{n}{\pi}}\int_\mathbb{R}{e^{-n(t-z)^2}\alpha_t(A) dt}.
	\end{equation}
	Note that $A_n$ is well defined since $e^{-n(t-z)^2}$ is an integrable function and intuitively $A_n=A_n(0)$ will approach $A$, because the coefficient function approaches a Dirac's delta.
	
	First, note that for $y \in \mathbb{R}$
	\begin{equation}
	\label{1}
	\begin{aligned}
	A_n(y)	&=\sqrt{\frac{n}{\pi}}\int_\mathbb{R}{e^{-n(t-y)^2}\alpha_t(A) dt}\\
	&=\sqrt{\frac{n}{\pi}}\int_\mathbb{R}{e^{-n{t^\prime}^2}\alpha_{t^\prime+y}(A) dt} \\
	&=\alpha_y\left(A_n\right).	\\
	\end{aligned}
	\end{equation}

	Now, in order to show density, suppose $\varphi\in F$ and $\varepsilon>0$ given, then there exists a $\delta>0$ such that $\|\varphi(\alpha_t(A)-A)\|=\|\varphi(\alpha_t(A)-\alpha_0(A))\|\leq \frac{\varepsilon}{2}$ $\forall t\in \left\{t\in \mathbb{R} | |t|<\delta \right\}$, because $\alpha$ is $\sigma(X,F)$-continuous. Now, choose $N \in \mathbb{N}$ such that $\forall n>N$
	$$\sqrt{\frac{n}{\pi}}\int_{\mathbb{R}\setminus{(-\delta,\delta)}}{e^{-nt^2}dt} < \frac{\varepsilon}{4 \|\varphi\| \|A\|}.$$
	
	It follow that, for all $n>N$,
	\begin{equation}
	\label{2}
	\begin{aligned}
	|\varphi(A_n-A)|	& =	\left| \varphi \left( \sqrt{\frac{n}{\pi}}\int_\mathbb{R}{e^{-nt^2}\alpha_t(A)dt}-\sqrt{\frac{n}{\pi}}\int_\mathbb{R}{e^{-nt^2}A \ dt} \right) \right | \\
	& = \left| \sqrt{\frac{n}{\pi}}\int_\mathbb{R\setminus(-\delta,\delta)}{e^{-nt^2}\varphi\left(\alpha_t(A)-A\right)dt}\right| \\
	& \hspace{2.2cm} + \left|\sqrt{\frac{n}{\pi}}\int_\mathbb{(-\delta,\delta)}{e^{-nt^2}\varphi\left(\alpha_t(A)-A\right)dt}\right| \\
	& \leq  \sqrt{\frac{n}{\pi}}\int_{\mathbb{R}\setminus(-\delta,\delta)}{e^{-nt^2}\|\varphi\| \left( \|\alpha_t(A)\|+\|\alpha_0(A)\|\right)dt} \\
	&\hspace{1.95cm}+\sqrt{\frac{n}{\pi}}\int_\mathbb{(-\delta,\delta)}{e^{-nt^2}\|\varphi\left(\alpha_t(A)-A\right)\|dt} \\
	& < \varepsilon.
	\end{aligned}
	\end{equation}
	
	This shows that $A_n \rightarrow A$ in the topology $\sigma(X,F)$. Hence, all that remains is to show that $A_n$ are entire analytic elements, that is, that $A_n(z)$'s are entire analytic. 
	
	Using a similar argument to the one used in equation \eqref{2} is easy to see that $\left(e^{-n(t-z)^2}\varphi\left(\alpha_t(A)\right)\right)_n$ is a Cauchy sequence and consequently it converges pointwise. Using the inequality $\left|\varphi\left(\tau_t(A)\right)\right|\leq \|\varphi\| \|A\|$, we conclude that $e^{-n(t-z)^2}\varphi\left(\tau_t(A)\right)$ is dominated by $e^{-n(t-z)^2} \|\varphi\| \|A\|$, which is measurable. It follows from the Lebesgue dominated convergence theorem that $\varphi\left(\tau_y(A)\right)$ is analytic.
	
\end{proof} 


\end{appendices}

\begin{spacing}{0.9}





\printbibliography[heading=bibintoc, title={References}]

\end{spacing}



\end{document}